\newtheorem{theorem}{Theorem}
\newtheorem{proposition}{Proposition}
\newtheorem{lemma}{Lemma}
\newcommand{\R}{\mathbb{R}}
\def\bZ{\mathbb{Z}}	
\def\bE{\mathbb{E}}	
\renewcommand{\P}{\mathbb{P}}
\newcommand{\I}{\mathds{1}}	
\newcommand{\argmin}{\mathop{\mbox{argmin}}}
\renewcommand{\d}{\mathtt{d}}
\newcommand{\var}{{\rm var}}
\def\cov{{\mbox{cov}}}
\title{\bf 
Gold standard process Markovian poisoning: a~semiparametric approach
}
\author{
Claire Lacour and Pierre Vandekerkhove
}
\date{}
\begin{document}


\maketitle

\begin{abstract}
We consider in this paper  a stochastic process that mixes  in time, according to a non-observed stationary Markov selection process, two separate sources of randomness: i)  a stationary process which distribution is accessible (gold standard);  ii)  a pure  i.i.d. sequence which distribution is unknown (poisoning process).  In this framework we propose to estimate, with two different approaches,  the transition of the hidden Markov selection process  along with the distribution, not supposed to belong to any parametric family,  of the unknown i.i.d. sequence, under minimal (identifiability, stationarity and dependence in  time) conditions. We show that 
 both estimators provide consistent  estimations of the  Euclidean  transition parameter, and also prove that one of them, which is  $\sqrt{n}$-consistent, allows to establish a functional central limit theorem about the unknown poisoning sequence cumulative distribution function. The numerical performances of our estimators are illustrated through various challenging examples.
\end{abstract}
\noindent\textbf{Keywords.}
Contamination, hidden Markov, chronological mixture, semiparametric, by-passed source, model geometry.

\section{Introduction}
Semiparametric latent models have been extensively studied in the last two decades and proved to be adequate tools in some situations where various non-labeled sources 
of randomness are observed and parametric assumption about the output features is not easy to figure out. This field of research arised with the paper of \cite{Hall03} who consider a mixture of two distributions in $\R^d$, $d\geq 3$,  each having independent components. In that seminal work the authors prove, for the first time,  that this model is identifiable and can be consistently estimated without assuming any parametric  assumption about the mixed distributions, see Theorem 4.1 in  \cite{Hall03} regarding the nonparametric 2-order margin distribution no-tensoring condition. Latter \cite{Bordes06a} and \cite{Hunter07} consider jointly, but with two separate approaches, an univariate semiparameric two component mixture model with symmetric components equal up to a shift parameter. The authors prove the identifiability of their model and propose accordingly consistent estimation methods, the $\sqrt{n}$-consistency not being considered yet in these papers.  More recently  \cite{butucea_VDK12} and \cite{Butucea17} investigate the consistency and rates of convergence, including the asymptotic normality, of a new class of Fourier-based estimators for this model. Following a similar  approach, \cite{Werner2020} propose an adaptive estimation in the supremum norm for semiparametric mixtures of regression with symmetric noises.
A generalized semiparametric version of the Expectation-Maximization (EM) algorithm is also proposed in \cite{BORDES20075429} to deal with symmetric shifted $m$-component mixture models, see also the MM-algorithm proposed in \cite{Levine11} for the multivariate case. 

 In \cite{Bordes06b}, the authors consider the classic  contamination model:
\begin{eqnarray}\label{contamination1}
F(x)=pF_0(x)+(1-p)F_1(x),\quad x\in \R,
\end{eqnarray}
where $p\in ]0,1[$ is a proportion parameter, $F$ denotes the cumulative distribution function (cdf) of  independent and identically distributed (i.i.d.) observations, $F_0$ denotes a known cdf modelling  a gold standard, and $F_1$ is an unknown (contaminant)  but symmetric cdf, modelling a departure from $F_0$, without any parametric assumption about it. This model  has been extensively studied over the last decades  and is related to various applications, see  \cite{shen}   for a comprehensive survey on this  topic. This model is of particular interest when considering   generic situations distorted by an unexpected  event: i)  impact of pandemics on mortality,  see \cite{milhaud2}; ii)  the presence of diseased tissues in microarray  analysis, see  \cite{MBJ06}, and \cite{Benjamini},  \cite{Donoho_Jin2004} for the related multiple testing problem, iii)  variables observation,  such as metallicity and radial velocity of stars, in the background of the Milky Way, see  \cite{WMSW09}; iv) trees diameters modeling  in the presence of  extra varieties, see \cite{pod}. 
From a semiparametric perspective let us mention \cite{Patra},  who provide an interesting well-posed identifiability definition for model (\ref{contamination1}) along with the  corresponding estimation approach and rates of convergence.  In \cite{Pommeret_VDK2019}  a testing procedure  is proposed to investigate if the  unknown component $F_1$, involved in (\ref{contamination1}), belongs to a specific parametric family. In \cite{milhaud2}, respectively \cite{MPSV24}, the authors consider the 2-sample, resp. $K$-sample, testing problem about the equality of the unknown $F_1$-component. In  \cite{MPSV24} the authors also provide a model-based clustering procedure to  collect groups of  samples sharing the same contaminant component (within the group).
Motivated by the detection of rare cells in flow cytometry, \cite{Gaucher25} study the test $\{F=F_0\}$ against $\{F=pF_0+(1-p)F_1\}$ from 3 samples with respective distributions $F_0$, $F_1$, $F$.

A chronological   extension of mixture models are the so-called Hidden Markov Models (HMM) which are basically mixtures of randomness picked by a latent Markov chain.  The identifiability for this model, in the  finite discrete case,  has been studied first  in \cite{Gilbert59}  when  the asymptotic behavior  of the Maximum Likelihood Estimator (MLE)  is fully addressed in  \cite{Baum66}. This model has been reconsidered in the early 90's by \cite{Leroux92} where  the latent Markov chain has a finite state space but the emissions of the hidden Markov model  lie in an Euclidean space (not necessarily finite). In \cite{Leroux92} the author  proves  the consistency of  the MLE under minimal conditions when the asymptotic normality  for this  model is established latter by  \cite{Bickel98}. Since then the inference  for  HMMs has been extensively studied in the parametric case and an excellent monography about this topic  is proposed in  \cite{Cappe05}, see also \cite{Bouguila22} for  recent developments  and applications.

More recently \cite{Gassiat16} introduced a nonparametric finite translation HMM. The authors establish identifiability conditions 
and  provide a Fourier based consistent estimator of the number of populations, of the translation parameters along with  the distribution of two consecutive latent variables, which they  prove to be asymptotically normal under mild dependence assumptions. The authors also  propose a nonparametric estimator of the unknown translated density. In case the latent variables form a Markov chain, the authors  prove that their estimator is minimax adaptive over regularity classes of densities. In \cite{Alexandrovich16}  the authors investigate the nonparametric identification and maximum likelihood estimation for finite-state HMMs. In \cite{DGL2016} the authors address the estimation issue for a general HMM with  nonparametric
modeling of the emission distributions. They propose a new penalized least-squares estimator, based on projections of the emission distributions onto nested subspaces of increasing complexity,  for the emission distributions which is statistically optimal and practically tractable. They also  prove a non asymptotic oracle inequality for their  nonparametric estimator of the emission distributions. A consequence is that this new estimator is rate minimax adaptive up to a logarithmic term. For a better overview on semi/nonparametric mixtures or Hidden Markov Models we recommend the two excellent surveys by \cite{gassiat19} and \cite{xiang2019}.

In this paper we propose to investigate the semiparametric estimation of a model sharing some common features with the  Hidden Markov Mixture of Markov Models (H4M) introduced in \cite{VDK05}. These latent models are  basically based on $K$ independent Markov processes (having their own dynamic) which observation is picked randomly by a latent Markov chain valued in the (label) state-space $\left\{1,\dots,K \right\}$, see Section \ref{H4Msection} for a short presentation under  $K=2$. Such models are interesting for their  ability to describe discrete time series with: (i) abrupt changes, when the latent variable undergoes a change
of state; (ii) local stationarity, during stages where the latent Markov chain remains in the same state; (iii)
multimodal marginal distributions from mixture structure; and (iv) phase-type feedback
effects (continuation  of trajectories from the past).  In \cite{VDK05} the author  proposes a general tractable approach for estimating parametrically  these models (admitting parametrization of the stationary distribution and identifiability) and checks in detail that the  assumptions are fully satisfied for a Markov mixture of two linear AR(1) models with Gaussian noise. A Monte Carlo method is also proposed to calculate the split data likelihood of the  H4M  when no analytic expression for the invariant probability densities of the independent  Markov processes is known.

The semiparametric model, sharing features with the H4M, we will consider in this paper,   can be defined by three independent processes: i) a stationary stochastic  process which  distribution is accessible (gold standard); ii) a sequence of i.i.d. random variables which distribution is unknown and not supposed to belong to any parametric family (poisoning process); iii) a 2-state Markov chain picking observations in time from the two previous sources  of randomness, let say label 0 for the gold standard process  and label 1 for  the i.i.d. sequence.  Even though this model looks simpler than a general H4M it does still embed the complexity of a classical HMM since part of the time (during phases where  latent Markov chain visits state 0)  the data has an identified stochastic dynamic, when the rest of the time it has an unknown  i.i.d. pattern exactly like a standard  HMM (conditionally on the fact that the latent Markov chain visits state 1). The bridge made by this model between stochastic dynamics (possibly Markovian)   and i.i.d. patterns  through a Markovian state-process could be  of particular interest in the cyber-security field as explained in the two following paragraphs.

In  \cite{Dass2021} an interesting description of HMMs used in cyber security is provided. Briefly,  nowadays HMMs find their applications across several domains such as natural language processing and machine-learning due to the simplicity of adapting the model to predict unknown/hidden state sequences. The prediction is based on the features or observations emitted from each state. HMMs are, as a consequence,  very useful in the cyber security domain as cyber-attacks are often conducted in several phases or steps where these steps may not always be conspicuous as attackers often
try to mask their activities. However, HMM can help in identifying patterns in the data such as network trace spread across time and can evidently help in determining attacks, see \cite{Holgado2020}. 

On the other hand, as  considered  for example in    \cite{Korczynski14},  a popular method to classify internet traffic  activities is the Markovian {\it fingerprinting} of applications based on training data. As explained in  \cite{Korczynski14}, the past research on traffic analysis and classification showed that once we are able to generate a unique signature based on the packet or message payload (e.g. HTTP request headers), we can classify applications with high accuracy. Unfortunately, such approaches fail in case of encrypted traffic. In their work  the authors propose a payload-based method to identify application flows encrypted with the Secure Socket Layer/Transport Layer Security (SSL/TLS) protocol, which is a fundamental cryptographic protocol suite supporting secure communication over the Internet.
 A fingerprint can be considered as any distinctive feature allowing identification of a given traffic class. In  \cite{Korczynski14}, a fingerprint corresponds to a first-order homogeneous Markov chain reflecting the dynamics of an SSL/TLS session.  They can also serve to reveal intrusions (security failure) trying to exploit the SSL/TLS protocol by establishing abnormal communications with a server.
 
To summarize we think that our model could help somehow to analyse inside one single model the intrusion of hackers, which Markovian plan and secret activity is unknown (HMM-type),  attacking a system which Markovian  fingerprint is known. During the hacking attack one can think that the nature of the fingerprint is altered (departure from what it is usually observed) and replaced by another signal supposed to be  i.i.d. (for technical  matters). We will not address in this paper the actual treatment of  such a problem but invite any practitioner to test our method on their data if applicable, see Section \ref{num_perf} and formula  (\ref{monte_carlo_cdf_F0_G0})  for practical implementation based on the knowledge of a training data issued from the fingerprint source (gold standard).

In that setup we investigate the identifiability of our model: interestingly, the dependence overt time of the gold standard process is crucial to ensure it. 
We propose a  semiparametric minimum contrast approach based on a adequate  1 and 2-order empirical cdf of the observed process  comparison and we show that our estimators   are strongly consistent under very mild technical assumptions. Moreover we prove that, under some  mixing properties, our estimation of the latent Markov chain transition matrix along with the cdf of the unknown poisoning  sequence is $\sqrt{n}$-consistent. 
The model is detailed in Section~\ref{sec:model}, when  identifiability and consistency results for the parametric part (transition of the latent Markov chain) are stated in Section~\ref{sec:est}. The asymptotic normality is proved in Section~\ref{sec:asymptoticnormality} and the nonparametric cdf estimation of  the unknown  i.i.d. sequence is addressed in Section~\ref{sec:nonparametric}.
Finally
 Section~\ref{num_perf} presents numerical implementations of our procedure over a collection of challenging submodels. 
Concluding remarks are discussed in Section~\ref{sec:conclusion} when proofs are gathered in Section~\ref{sec:proofs}.

\section{Chronological mixtures of stochastic processes}
\label{sec:model}
In this section we  present successively  the H4M and the so called gold standard process poisoning model (our model of interest). To the best of our knowledge, both models have the unique particularity  to mix in time, according to a non-observed stationary Markov selection process, pieces of independent stochastic processes. Note for example that Hidden Markov Models (HMMs)  mix only observations that are conditionally independent given a latent  Markov chain. That specific  structure  allows  favorably to solve  the three HMM standard problems, which are respectively the likelihood, decoding and parameters learning,  in linear computing time thanks,  respectively,   to the Forward, Viterbi and Forward-Backward algorithms, see \citet{Cappe05} for further details.  Unfortunately no such estimation strategies based on filtering and smoothing equations can be derived when considering chronological mixtures of Markov processes. This is the main challenge of this new class of chronological latent models.

\subsection{Hidden Markov Mixtures of Markov Models (H4M)}\label{H4Msection}
In \citet{VDK05}, the author  introduce a new class of missing data process, so called  Hidden Markov Mixture of Markov processes (H4M). This model is defined, in its simplest version, by considering three independent  stationary Markov processes denoted $X=(X_i)_{i\geq 1}$, $Y^0=(Y^0_i)_{i\geq 1}$ and $Y^1=(Y^1_i)_{i\geq 1}$. The Markov chain $X$ is valued in $\left\{0,1\right\}$ with unknown transition matrix  $\Pi$ and natural parametrization
\begin{eqnarray}\label{transition_matrix}
\Pi_{\theta}=\begin{bmatrix} 
	\Pi_\theta(0,0) & \Pi_\theta(0,1) \\
	\Pi_\theta(1,0) & \Pi_\theta(1,1)
	\end{bmatrix}=\begin{bmatrix} 
	1-\alpha & \alpha \\
	\beta & 1-\beta
	\end{bmatrix},
\end{eqnarray}
parametrized by $$\theta=(\alpha,\beta)\in [\delta,1-\delta]^2,\quad \text{ with }0<\delta<1/2.$$  The true value of the parameter $\theta^*$ is such that $\Pi_{\theta^*}=\Pi$.
The invariant probability vector associated to $\Pi_\theta$ is denoted $(\pi_\theta(0),\pi_\theta(1))=(\frac{\beta}{\alpha+\beta},\frac{\alpha}{\alpha+\beta})$. The Markov process $Y^0$ is valued in a  measurable state space $(E, \mathcal E)$ provided with a finite dominating measure $\lambda$, and has  a transition kernel density, with respect to this reference measure, $q^0(\cdot|\cdot)$  usually unknown (but can be  supposed to be known in a contamination modeling perspective as presented in the next section), when the Markov process $Y^1$, also  valued in $(E,\mathcal E)$, has  an unknown transition kernel $q^1(\cdot|\cdot)$ (which can be reduced to a simple non-conditional density function $f^1$ as considered in the next section). In other words
$\P(Y^j_{n+1}\in dv |Y^j_n=u)=q^j(v|u) d\lambda(v)$, $j=0,1$.
In this setup the observed process $Z=(Z_i)_{i\geq 1}$ is defined as follows:
\begin{eqnarray}\label{model}
Z_i=\I_{\left\{X_i=0\right\}}Y^{0}_i+\I_{\left\{X_i=1\right\}}Y^{1}_i,\qquad i\geq 1.
\end{eqnarray}
We display in Figure \ref{dessin} a simple situation in which  the Markov process $Y^0$ is observed at time $t=1$ and 4 and by-passed by an i.i.d. process $Y^1$ during time $t=2$ and 3.

\begin{figure}
\begin{center}
\begin{tikzpicture}[->,>=stealth',shorten >=1pt,auto,node distance=3cm,semithick]
  \tikzstyle{every state}=[fill=none,draw=black,text=black]
   \tikzset{insert/.style={draw=none,text=white}}
     
  \node[state]         (A)               {\textcolor{blue}{$Y_1^0$}};
  \node[state]         (B)  [right of=A]  {$Y_2^0$};
  \node[state]         (C)  [right  of=B] {$Y_3^0$};
  \node[state]         (D)  [right  of=C]       {\textcolor{blue}{$Y_4^0$}};
  \node[]              (E)  [right=0.6cm of D]       {...};
  \path (A) edge              node {$q^0(\cdot|Y_1^0)$} (B)
        (B) edge              node {$q^0(\cdot|Y_2^0)$} (C)
        (C) edge              node {$q^0(\cdot|Y_3^0)$} (D)
        (D) edge              node {} (E);
        
   \node[state]         (A1) [below=1cm of A]    {$Y_1^1$};        
    \node[state]         (B1)  [right of=A1]  {\textcolor{blue}{$Y_2^1$}};
  \node[state]         (C1)  [right  of=B1] {\textcolor{blue}{$Y_3^1$}};
  \node[state]         (D1)  [right  of=C1]       {$Y_4^1$};
  \node[]              (E1)  [right=0.6cm of D1]       {...};
        
         \path (A) edge[blue] node{} (B1)
            (B1) edge[blue] node{} (C1)
            (C1) edge[blue] node{} (D);
\end{tikzpicture}
\end{center}
\caption{Illustration of a Markovian mixture of a Markov process and an i.i.d. sequence.  Here $X_1=0, X_2=1,X_3=1,X_4=0$.}\label{dessin}
\end{figure}
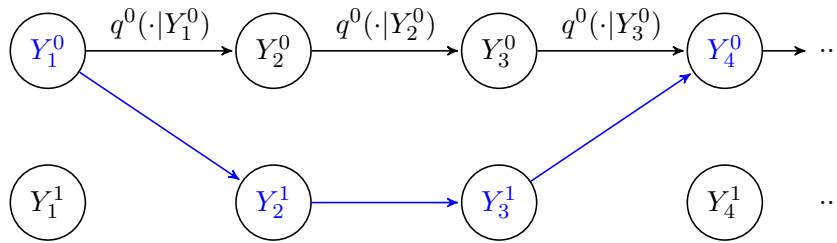


As studied in \citet[proof of Lemma 1]{VDK05}, model (\ref{model}) is stationary and geometrically $\alpha$-mixing provided that the processes $Y^0$ and $Y^1$ are themselves geometrically $\alpha$-mixing.
Based on  a $n$-trajectory $(Z_1,\dots,Z_n)$ from $Z$, the basic statistical challenge  is to  provide and study, under weak technical conditions, a statistical method to recover the true  transition matrix $\Pi_{{\theta}^*}$ along with the unknown transition kernels $q^j(\cdot|\cdot)$, $j=0,1$, when these ones are supposed to belong to parametric families.  Given the untractability of the H4Ms complete likelihood, see expression (8) in \citet{VDK05}, the author consider an idea introduced by  \cite{Ryden:1994aa} for HMMs,  based on a more tractable $m$-splitted likelihood, $m$ being the length of the splittng.  \cite{Ryden:1994aa} proves that the Maximum Split Data likelihood estimate (MSDLE) is consistent and asymptotically normal under standard  technical conditions (identifiability, ergodicity, regularity, etc.). In   \citet[Section 2]{VDK05}, the author provides a set of conditions (C1--7) under which the MDSLE adapted to the H4Ms is also strongly consistent and asymptotically normal, see \citet[Theorems 1 and 2]{VDK05}. In Section 4 of that paper, a Monte Carlo procedure is also proposed to compute the stationary probability density function $f(z)$ of  a generic transition density kernel $q(\cdot|\cdot)$.
Note that this step is crucial in order to compute and maximize  the $m$-splitted likelihood  involving $f^j(z)$'s, generally analytically unknown (except for Gaussian AR(1) processes),  at the beginning of each local $m$-likelihood. 


\subsection{Gold standard process poisoning}
In this section, for generality purpose, we will not necessarily suppose that the $Y^0$ and $Y^1$ processes have a  probability density function (pdf) with respect to some reference  measure. We will instead consider the cdf of the processes to describe their random features.  Thus, from now on, the space $E$ is supposed to be a subset of $\R$.
This first point being stated, let us define now our gold standard process poisoning model.   We can simply describe it by pointing  two noticeable departures from the H4M: i) the  so called gold standard process $Y^0$ is no longer a Markov process but more generally  a stationary mixing process which 1st and 2nd order distributions are fully accessible;  ii) the   so called poisoning process $Y^1$  is no longer  Markovian but made of a collection of i.i.d. random variables with unknown common cdf the poisoning sequence $F^1$.
For illustration purpose, one can look at Figure \ref{dessin} and consider that the distribution of the  $Y^0$ process is known but  no longer driven by a Markovian dynamic (omit the transitions $q^0(\cdot|\cdot)$).
Our goal is to estimate, given a sample $(Z_1,\dots,Z_n)$ from the above model,  the transition $\Pi_\theta$  of the underlying Markov chain $X$ along with $F^1$ the unknown cdf of $Y^1$.
Our statistical methodology will be basically grounded on the 1st and 2nd order cdfs of the $Y^0$ and $Y^1$ processes. We will thus consider the following distribution functions 
\begin{eqnarray*}
F^j(x)=\P(Y_i^j\leq x),  \quad j=0,1\quad\mbox{and}\quad G^0(x,y)=\P(Y_i^0\leq x, Y_{i+1}^0\leq y),
\end{eqnarray*}
for all  $i\geq 1$ and $(x,y)\in E^2$. Let us now define the 1st and 2nd order distributions of the observed process $Z$ defined in (\ref{model}) with the above specifications.\\

\noindent{\it \bf First order distribution}. Let us denote by $F(\cdot)$ the stationary cumulative distribution  function of process $Z$ given by
\begin{eqnarray}\label{1order}
F(x)&=&\pi(0) F^0(x)+\pi(1)F^1(x)\nonumber\\
&=&\frac{\beta^*}{\alpha^*+\beta^*} F^0(x)  +  \frac{\alpha^*}{\alpha^*+\beta^*}F^1(x),
\end{eqnarray}
for all $x\in E$. In order to simplify future computation, we denote  
$$p^*=\pi(0)=\frac{\beta^*}{\alpha^*+\beta^*} ,\quad\mbox{and}\quad  r^*=\pi(1)=\frac{\alpha^*}{\alpha^*+\beta^*}= 1-p^*.$$
Thus the stationary distribution vector  of $X$ is $(p^*,r^*)^T$ and 
\begin{eqnarray*}
F(x)=p^* F^0(x)  +  r^*F^1(x),
\end{eqnarray*}
for all $x\in E$, where $F$ is the cdf of $Z$. %
From (\ref{1order}) we can build-up a pseudo-parametric cdf family based on the knowledge (observability) of $F(\cdot)$, which is:
\begin{eqnarray}\label{parametricfamily}
\mathcal F=\left\{ F^1_{\theta}, ~\theta= (\alpha,\beta)\in \Theta   \right\},
\end{eqnarray}
where
\begin{eqnarray}\label{cdfinversion}
F^1_{\theta}(x)= \frac1{r} \left(  F(x)-p F^0(x) \right), \quad p=\frac{\beta}{\alpha+\beta} ,\quad r=\frac{\alpha}{\alpha+\beta}= 1-p,
\end{eqnarray}
and the parametric space  is denoted and defined by  $\Theta:=[\delta,1-\delta]^2$, for some $\delta \in ]0,1/2[$.
Let  us observe from (\ref{1order}),  that if  we consider $\theta=\theta^*$ in the above parametrized function (\ref{cdfinversion}), we pointly retrieve  that $F^1_{\theta^*}=F^1$.\\

\noindent{\it \bf Second order distribution}. Let us denote by $G(\cdot,\cdot)$ the 2nd order stationary cdf  of process $Z$ given by
\begin{eqnarray*}\label{2order}
G (x,y)
&=&\pi(0)\Pi(0,0)G^0(x,y)+\pi(0)\Pi(0,1)F^0(x)F^1(y)\nonumber\\
&&+\pi(1)\Pi(1,0) F^1(x)F^0(y)+\pi(1)\Pi(1,1) F^1(x)F^1(y),
\end{eqnarray*}
for all $(x,y)\in E^2$. 
Now let us  denote 
$$\lambda_1=\frac{\beta(1-\alpha)}{\alpha+\beta},\quad 
\lambda_2=\lambda_3=\frac{\alpha \beta}{\alpha+\beta},\quad
\lambda_4=1-\lambda_1-2\lambda_2=\frac{\alpha(1-\beta)}{\alpha+\beta},$$
which leads to naturally consider  the true corresponding versions of these parameters (when $\theta=\theta^*$)
$$\lambda_1^*=\pi(0)\Pi(0,0),\quad 
\lambda_2^*=\pi(0)\Pi(0,1), \quad \lambda_3^*=\pi(1)\Pi(1,0),\quad
\lambda_4^*=\pi(1)\Pi(1,1).$$
Note that $\lambda_1^*+\lambda_2^*=p^*$ and $\lambda_3^*+\lambda_4^*=r^*$. By this reparametrization we obtain 
 a more interpretable/tractable representation of the  $Z$ process 2nd order cdf:
\begin{eqnarray}\label{G-decomp}
G=\lambda_1^* G^0+\lambda_2^* F^0\otimes F^1+\lambda_3^* F^1\otimes F^0+\lambda_4^* F^1\otimes F^1
\end{eqnarray}
with the notation $(\phi\otimes\psi)(x,y)=\phi(x)\psi(y)$, for all $(x,y)\in E^2$.

\section{Identifiability and parametric estimation}
\label{sec:est}

We are now able to propose two discrepancy functions, denoted $ \mathbf{d}(\cdot)$ and $ \mathbf{s}(\cdot)$, for the parametric part of our estimation problem. Indeed consider the so called  {\it $(\theta^*,\theta)$-deviation quantity}, comparing the true 2nd order distribution of $Z$ with its natural (\ref{G-decomp}) based  reconstruction under parameter $\theta$ with $F^1_\theta$ picked in the parametric family $\mathcal F$, see expression (\ref{parametricfamily}),   defined by 
\begin{eqnarray}\label{deviation_quant}
& \Delta(\theta,x,y)=G(x,y)
-\underbrace{\left [\lambda_1 G^0(x,y)+\lambda_2 F^0(x)F^1_\theta(y)
+\lambda_3F^1_\theta(x)F^0(y)+\lambda_4F^1_\theta(x)F^1_\theta(y)\right ]},\\
&~~~~~~~~~~~~~~~~~~~~~~~~~~~~~~\mbox{parametric $\theta$-reconstruction of $G$}\nonumber 
\end{eqnarray}
and,  for  a given finite weight measure $dH$, consider 
\begin{eqnarray}\label{contrasts}
 \mathbf{d}(\theta)=\iint \Delta^2(\theta,x,y) dH(x,y), \quad 
\mathbf{s}(\theta)=\sup_{(x,y)\in E^2} |\Delta(\theta,x,y)|,
\end{eqnarray}
where the $\iint $ symbol stands for $\int_{E\times E}$. For sake of simplicity and without loss of generality  we will suppose that $\iint dH(x,y)=1$.\\
We will focus from now on  our study on the discrepancy function $\mathbf{d}(\cdot)$ but $\mathbf{s}(\cdot)$ will basically inherit the same consistency properties except the $\sqrt{n}$-convergence which requires some smoothness properties that unfortunately do not hold for $\mathbf{s}(\cdot)$. We can observe that  $\mathbf{d}(\cdot)$ is a non-negative function   that satisfies the following property:
\begin{center}
{\it { if} $\theta=\theta^*$ {then}   $ \Delta(\theta,x,y)=0$ for almost all  $(x,y)\in E^2$, and  $ \mathbf{d}(\theta^*)=0=\mathbf{s}(\theta^*)$}. 
\end{center}
The following proposition ensures that the converse is true  under a certain condition.
\begin{proposition} \label{contrast}
The $(\theta^*,\theta)$-deviation quantity defined in (\ref{deviation_quant}) can be expressed  as follows
\begin{equation}\label{Deltabar}
\Delta(\theta,\cdot)=c_1(\theta^*,\theta) (G^0-F^0\otimes F^0)+c_2(\theta^*,\theta) (F^1-F^0)\otimes (F^1-F^0)
\end{equation}
where $c_1$ and $c_2$ are only depending on $\theta$ and $\theta^*$, see  (\ref{c1c2}) for close form expressions. Moreover, we have the implication 
$$\left\{c_1(\theta^*,\theta)=0\quad \text{and}\quad c_2(\theta^*,\theta)=0 \right\}\Longrightarrow \theta=\theta^*.$$
\end{proposition}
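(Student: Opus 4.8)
The plan is to establish the two assertions separately: the closed-form decomposition \eqref{Deltabar} is pure algebraic bookkeeping, and the implication then reduces to solving a two-equation polynomial system on $\Theta$.

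\emph{Step 1: the decomposition.} The pivot is the identity
\[
r\,(F^1_\theta-F^0)=F-F^0=r^*(F^1-F^0),
\]
which follows at once from \eqref{cdfinversion} and \eqref{1order}: indeed $rF^1_\theta=F-pF^0$ and $r^*F^1=F-p^*F^0$, and subtracting $rF^0$ resp. $r^*F^0$ and using $p+r=p^*+r^*=1$ gives the claim; in particular $F^1_\theta-F^0=\tfrac{r^*}{r}(F^1-F^0)$. I then expand $G$ (as in \eqref{G-decomp}) and the bracketed parametric reconstruction of \eqref{deviation_quant} by substituting $F^1=F^0+(F^1-F^0)$ and $F^1_\theta=F^0+(F^1_\theta-F^0)$, and regroup along the four tensor blocks $F^0\otimes F^0$, $F^0\otimes(F^1-F^0)$, $(F^1-F^0)\otimes F^0$, $(F^1-F^0)\otimes(F^1-F^0)$. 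Their coefficients are combinations of the $\lambda_i$'s; from $\pi_\theta\Pi_\theta=\pi_\theta$ and the fact that the rows of $\Pi_\theta$ sum to one one reads off $\lambda_1+\lambda_2=p$, $\lambda_2+\lambda_4=\lambda_3+\lambda_4=r$ (and the corresponding starred identities with $p^*,r^*$). Because of the displayed identity, the two cross blocks appear in $G$ with coefficient $r^*$ applied to $F^1-F^0$, and in the reconstruction with coefficient $\lambda_2+\lambda_4=r$ applied to $F^1_\theta-F^0=\tfrac{r^*}{r}(F^1-F^0)$, hence with the \emph{same} net coefficient $r^*$ applied to $F^1-F^0$; they therefore cancel exactly in $\Delta$. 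The $G^0$ and $F^0\otimes F^0$ blocks merge (using $\lambda_2+\lambda_3+\lambda_4=1-\lambda_1$) into a multiple of $G^0-F^0\otimes F^0$, and the diagonal block survives. This yields \eqref{Deltabar} with $c_1(\theta^*,\theta)=\lambda_1^*-\lambda_1$ and $c_2(\theta^*,\theta)=\lambda_4^*-\tfrac{(r^*)^2}{r^2}\lambda_4$; substituting $\lambda_1=p(1-\alpha)$, $\lambda_4=r(1-\beta)$ and the expressions for $p,r$ gives the closed forms of \eqref{c1c2}. In particular $c_1,c_2$ depend on $(\theta,\theta^*)$ only, which is transparent from the construction.

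\emph{Step 2: the implication.} Now $c_1=0$ means exactly $\lambda_1=\lambda_1^*$, and, factoring $c_2=r^*\bigl[(1-\beta^*)-\tfrac{r^*}{r}(1-\beta)\bigr]$ and using $r,r^*\in(0,1)$, $c_2=0$ is equivalent to $r(1-\beta^*)=r^*(1-\beta)$. Writing $r=\alpha/(\alpha+\beta)$, $r^*=\alpha^*/(\alpha^*+\beta^*)$, $p=\beta/(\alpha+\beta)$ and clearing the (nonvanishing, since all of $\alpha,\beta,\alpha^*,\beta^*$ lie in $[\delta,1-\delta]$) denominators turns the two relations into
\[
\beta(1-\alpha)(\alpha^*+\beta^*)=\beta^*(1-\alpha^*)(\alpha+\beta),\qquad
\alpha(1-\beta^*)(\alpha^*+\beta^*)=\alpha^*(1-\beta)(\alpha+\beta).
\]
Adding them, expanding, and cancelling the common term $(\alpha+\beta)(\alpha^*+\beta^*)$ leaves $(\alpha^*+\beta^*)\alpha(\beta+\beta^*)=(\alpha+\beta)\alpha^*(\beta+\beta^*)$; dividing by $\beta+\beta^*\ge 2\delta>0$ gives $\alpha\beta^*=\alpha^*\beta$, i.e. $\alpha/(\alpha+\beta)=\alpha^*/(\alpha^*+\beta^*)$, that is $r=r^*$ and $p=p^*$. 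Plugging $p=p^*$ into $\lambda_1=\lambda_1^*$, which reads $p(1-\alpha)=p^*(1-\alpha^*)$, forces $\alpha=\alpha^*$, and then $\alpha\beta^*=\alpha^*\beta$ forces $\beta=\beta^*$. Hence $\theta=\theta^*$.

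\emph{Main obstacle.} The delicate point is entirely in Step 1: tracking the eight tensor blocks (four from $G$, four from the reconstruction), verifying the coefficient identities for the $\lambda_i$ and $\lambda_i^*$, and checking that the cross blocks cancel \emph{exactly}—this is what makes the decomposition collapse to just two terms and is where the specific form \eqref{cdfinversion} of $F^1_\theta$ is used. The polynomial manipulation in Step 2 is routine; one only needs that every quantity divided out ($\alpha+\beta$, $\alpha^*+\beta^*$, $\beta+\beta^*$, $r$, $r^*$, $p^*$) is bounded away from $0$, which is guaranteed by $\theta,\theta^*\in[\delta,1-\delta]^2$.
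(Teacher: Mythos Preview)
Your proof is correct. Both steps land on the same $c_1,c_2$ as the paper, and your implication argument is sound.

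The routes differ in the details. For Step~1, the paper first rewrites the reconstruction $G_\theta$ entirely in terms of $F$, $F^0$, and $G^0$ (obtaining a $\theta$-independent $F\otimes F$ term that cancels in $G_{\theta^*}-G_\theta$), and only then substitutes $F-F^0=r^*(F^1-F^0)$. You instead expand $G$ and $G_\theta$ directly in the $(F^1-F^0)$ tensor blocks and use $r(F^1_\theta-F^0)=r^*(F^1-F^0)$ to force the cross blocks to match; this is arguably more transparent, since the cancellation of the mixed terms is visible from the start rather than emerging after a longer reparametrisation in $(r,b)$.

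For Step~2 the paper's trick is shorter: it forms the single combination $c_1-(r/r^*)c_2$ and observes that it factors as $(r^*-r)(-2+b^*+b)$ with $b=1-\beta$, $b^*=1-\beta^*$; since $b+b^*<2$ on $\Theta$, one reads off $r=r^*$ immediately, and then $c_1=0$ gives $b=b^*$. Your approach (clearing denominators, adding the two equations, and extracting $\alpha\beta^*=\alpha^*\beta$) is a bit more laborious but equally valid and uses the same positivity guarantees from $\theta,\theta^*\in[\delta,1-\delta]^2$.
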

This proposition is proved in Section~\ref{proofcontrast}. This leads us to introduce the fundamental identifiability condition.\\

\noindent \textbf{ Assumption LinInd}: 
The family $
\left\{G^0-F^0\otimes F^0,\, (F^1-F^0)\otimes (F^1-F^0)\right\}$ is linearly independent {in the following sense:
for all set $\Gamma$ 
such that $\iint_{\Gamma} dH=1$, the two functions restricted to $\Gamma$ are linearly independent}.\\

In a mixture model $pF^0+(1-p)F^1$, it is natural to assume that $F^1\neq F^0$ to ensure identifiability. The case where $G^0=F^0\otimes F^0$ corresponds to an independent process $Y^0$ (in this case $Z$ is a hidden Markov chain).  {Note that this independent case is excluded from our study.} Under assumption  {\bf LinInd}, the parameter value $\theta^*$ is the unique minimizer of $\mathbf{d}(\cdot)$, {\it i.e.} we have:
\begin{equation}\label{injectivitecontraste}
\mathbf{d}(\theta)=0\Longleftrightarrow \theta=\theta^*.
\end{equation}   
{When considering the supremum discrepancy $\mathbf{s}(\cdot)$, it is sufficient to assume the linear independence in the basic sense.  We directly have that
$$\mathbf{s}(\theta)=0 \Longleftrightarrow\theta=\theta^*.$$
For the  integral discrepancy $\mathbf{d}(\cdot)$, we have to ensure that $\iint \Delta^2dH=0$ implies $\Delta=0$, hence our assumption  needs to hold over sets of $H$-measure 1.  We can alternatively suppose the linear independence in the basic sense,  assuming in addition that  $F^1$ and $G^0$ are continuous and $dH$ admits a continuous density with respect to the Lebesgue measure}.\\

{
Since $\Delta$ only depends on known quantities $F^0,G^0$ and on the distribution of the observations, the equivalence 
$\Delta(\theta,\cdot)=0\Longleftrightarrow \theta=\theta^*$ allows us to write the following result.
 \begin{proposition}
 Under  {\rm\textbf{LinInd}}, $\theta^*$ and then $F^1$ are identifiable.
\end{proposition}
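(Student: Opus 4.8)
\emph{Proof plan.} The plan is to package everything into Proposition~\ref{contrast} together with the inversion identity (\ref{cdfinversion}). Recall what ``identifiable'' means here: if the law of the observed process $Z$ is generated once from a pair $(\theta^*,F^1)$ satisfying \textbf{LinInd} and once from another admissible pair $(\theta,\widetilde F^1)$ (with $\widetilde F^1$ a genuine cdf on $E$ and the \emph{same}, fixed gold standard $Y^0$, hence the same $F^0$ and $G^0$), then necessarily $\theta=\theta^*$ and $\widetilde F^1=F^1$. Since the construction of $F^1_\theta$ in (\ref{cdfinversion}) and of the deviation quantity $\Delta(\theta,\cdot)$ in (\ref{deviation_quant}) use only the first- and second-order cdfs $F$ and $G$ of $Z$ together with the known $F^0$ and $G^0$, it suffices to show that these objects determine $(\theta^*,F^1)$ uniquely; the remainder of the law of $Z$ is not needed.

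First I would exploit the first-order distribution (\ref{1order}): $F=p^*F^0+r^*F^1$ is accessible from the data, so the pseudo-parametric family $\mathcal F$ of (\ref{parametricfamily}) is well defined. Equating the first-order cdf produced by $(\theta,\widetilde F^1)$, namely $\tfrac{\beta}{\alpha+\beta}F^0+\tfrac{\alpha}{\alpha+\beta}\widetilde F^1$, with $F$ (and using $\tfrac{\alpha}{\alpha+\beta}>0$ on $\Theta$) forces $\widetilde F^1=\tfrac1r(F-pF^0)=F^1_\theta$ with $(p,r)$ as in (\ref{cdfinversion}); in particular $F^1_\theta$ is then a bona fide cdf. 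Next I would equate the second-order cdf of $Z$ produced by $(\theta,F^1_\theta)$ — which, following the structure of (\ref{G-decomp}), is exactly the bracketed ``parametric $\theta$-reconstruction of $G$'' appearing in (\ref{deviation_quant}) — with the true $G$. This yields $\Delta(\theta,x,y)=0$ for \emph{every} $(x,y)\in E^2$, not merely $H$-almost everywhere.

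Finally I would invoke Proposition~\ref{contrast}: by (\ref{Deltabar}) the pointwise identity $\Delta(\theta,\cdot)\equiv 0$ reads
\[
c_1(\theta^*,\theta)\,\bigl(G^0-F^0\otimes F^0\bigr)+c_2(\theta^*,\theta)\,\bigl(F^1-F^0\bigr)\otimes\bigl(F^1-F^0\bigr)\equiv 0 .
\]
Since this holds on all of $E^2$, I may take $\Gamma=E^2$ in \textbf{LinInd} (recall $\iint dH=1$), so the two displayed functions are linearly independent on $\Gamma$ and therefore $c_1(\theta^*,\theta)=c_2(\theta^*,\theta)=0$; the implication stated in Proposition~\ref{contrast} then gives $\theta=\theta^*$. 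Consequently $\widetilde F^1=F^1_\theta=F^1_{\theta^*}=F^1$, which is the asserted identifiability; equivalently, $\theta^*$ is the unique zero of $\mathbf d(\cdot)$, cf.\ (\ref{injectivitecontraste}), and once $\theta^*$ is pinned down, $F^1=\tfrac1{r^*}(F-p^*F^0)$ is a function of the data and of $F^0$ alone. The only step with any content beyond Proposition~\ref{contrast} is the translation ``$Z$ has a prescribed law $\Rightarrow\Delta(\theta,\cdot)\equiv0$'', plus the observation that equal laws make $\Delta$ vanish \emph{identically}, so that linear independence in the basic sense — rather than over every $H$-measure-one set — already suffices; I do not anticipate any genuine obstacle.
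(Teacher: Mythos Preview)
Your proposal is correct and follows essentially the same route as the paper: the paper simply observes that $\Delta(\theta,\cdot)$ depends only on the known $F^0,G^0$ and on the distribution of $Z$, so the equivalence $\Delta(\theta,\cdot)=0\Leftrightarrow\theta=\theta^*$ (from Proposition~\ref{contrast} together with \textbf{LinInd}) pins down $\theta^*$, after which $F^1$ is recovered by (\ref{cdfinversion}). Your write-up is just a more detailed unpacking of this argument, including the observation (also noted in the paper) that pointwise vanishing of $\Delta$ makes basic linear independence sufficient here.
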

}

{It is an already observed phenomenon that dependence of latent variables allows identifiability in mixture models  that are not identifiable in the dependent case, see \cite{gassiat19}.  Informally,  dependence allows to store  more information (implicit identifiability constraints)  than independence. Our study is in line with these results, which also require linear independence assumptions.}\\

Empirical versions of the contrast functions   $\mathbf{d}$ and $\mathbf{s}$ can now be proposed
\begin{eqnarray}\label{emp_d_s}
{\mathbf d}_n(\theta)=\iint \Delta_n^2(\theta,x,y) dH(x,y),   \quad \mathbf{s}_n(\theta)=\sup_{(x,y)\in E^2} |\Delta_n(\theta,x,y) |,
\end{eqnarray}
where  for all $(x,y)\in E^2$, the empirical $(\theta^*,\theta)$-deviation version of (\ref{deviation_quant}) is defined by
\begin{eqnarray*}
\Delta_n(\theta,x,y)=\hat G_n(x,y)-\left[\lambda_1G^0(x,y)-\lambda_2 F^0(x)\hat F^1_{n,\theta}(y)-\lambda_3\hat F^1_{n,\theta}(x)F^0(y)-\lambda_4\hat F^1_{n,\theta}(x)\hat F^1_{n,\theta}(y)\right],
\end{eqnarray*}
where 
\begin{equation} \label{inv-plug-emp-cdf}
\hat F^1_{n,\theta}(x)=\frac{1}{r}\left(\hat F_n(x)-pF^0(x)\right),\quad x\in E,
\end{equation} 
with the standard  1st and 2nd order empirical cdfs
\begin{eqnarray}
\hat F_n(x)=\frac{1}{n}\sum_{i=1}^{n}\I_{\left\{Z_i\leq x\right\}}, \quad
\hat G_n(x,y)=\frac{1}{n-1}\sum_{i=1}^{n-1} \I_{\left\{Z_i\leq x,Z_{i+1}\leq y\right\}},
\end{eqnarray}
for all $(x,y)\in \R^2$. We finally consider two possible parametric estimators of $\theta$:
\begin{equation}
\label{defestimateur}
\hat{\theta}_n=\argmin_{\theta\in \Theta} \mathbf{d}_n(\theta), \quad \mbox{and}\quad \tilde{\theta}_n=\argmin_{\theta\in \Theta} \mathbf{s}_n(\theta).
\end{equation}

\begin{theorem} \label{theo:CVestim}
Assume that the sequence $(Z_i)_{i\geq 1}$ is stationary and assumption {\rm\textbf{LinInd}} holds, then our both estimators, defined in (\ref{defestimateur}),  are strongly consistent, {\it i.e.}
  $$\hat \theta_n\stackrel{a.s.}{\longrightarrow} \theta^*\quad  \text{and} \quad  \tilde \theta_n\stackrel{a.s.}{\longrightarrow} \theta^*, \quad \text{ as }\quad n\rightarrow +\infty.$$
\end{theorem}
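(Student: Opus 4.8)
The plan is to run the classical minimum–contrast (M–estimator) consistency scheme on the compact parameter set $\Theta=[\delta,1-\delta]^2$: establish a uniform law of large numbers for the criterion, then combine it with the well–separated–minimum identification coming from Proposition~\ref{contrast} and Assumption \textbf{LinInd}. I would write the argument for $\hat\theta_n$; the one for $\tilde\theta_n$ is identical with $\mathbf{s}_n,\mathbf{s}$ in place of $\mathbf{d}_n,\mathbf{d}$, using that $\mathbf{s}$ is also continuous on $\Theta$ with unique zero $\theta^*$ under \textbf{LinInd} (this zero property is the one recorded just after \eqref{injectivitecontraste}).

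\emph{Step 1 (uniform convergence of the empirical cdf's).} First I would show $\sup_x|\hat F_n(x)-F(x)|\to 0$ and $\sup_{(x,y)}|\hat G_n(x,y)-G(x,y)|\to 0$ a.s. The pointwise limits $\hat F_n(x)\to F(x)$ and $\hat G_n(x,y)\to G(x,y)$ (a.s., each fixed argument) follow from the ergodic theorem applied to the stationary sequences $(Z_i)_{i\ge1}$ and $(Z_i,Z_{i+1})_{i\ge1}$; here one uses that the process is not merely stationary but \emph{ergodic}, which holds in our model since the latent chain $X$ is an ergodic two–state chain and $Y^0$ is mixing. The passage to uniform convergence is then the standard Glivenko–Cantelli device: monotonicity of the (bivariate) cdf, convergence on a fixed countable dense grid, and convergence at the boundary values.

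\emph{Steps 2--3 (transfer to the criterion, then argmin).} On $\Theta$ the coefficients $p=\beta/(\alpha+\beta)$, $r=\alpha/(\alpha+\beta)$ and $\lambda_1,\dots,\lambda_4$ are continuous in $\theta$, with $r\ge\delta>0$ and $p\ge\delta>0$, so $\hat F^1_{n,\theta}(x)-F^1_\theta(x)=\tfrac1r(\hat F_n(x)-F(x))$ is bounded, uniformly in $(\theta,x)$, by a constant times $\sup_x|\hat F_n-F|$. Expanding $\Delta_n(\theta,x,y)-\Delta(\theta,x,y)$ as a finite sum of products of uniformly bounded factors (cdf's and the $\lambda_i$) with a single factor from $\{\hat G_n-G,\ \hat F_n-F\}$ yields
\[
\sup_{\theta\in\Theta}\ \sup_{(x,y)\in E^2}\ \bigl|\Delta_n(\theta,x,y)-\Delta(\theta,x,y)\bigr|\ \xrightarrow[n\to\infty]{\mathrm{a.s.}}\ 0 ,
\]
hence, since $\iint dH=1$ and $\Delta_n,\Delta$ are uniformly bounded, $\sup_{\theta\in\Theta}|\mathbf{d}_n(\theta)-\mathbf{d}(\theta)|\to0$ a.s. (and $|\mathbf{s}_n(\theta)-\mathbf{s}(\theta)|\le\sup_{x,y}|\Delta_n-\Delta|$ handles $\mathbf{s}$). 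Next, $\mathbf{d}$ is continuous on $\Theta$ — for each $(x,y)$, $\Delta(\theta,x,y)$ is continuous in $\theta$ because $r\ge\delta>0$, and uniformly bounded, so dominated convergence applies — and, by Proposition~\ref{contrast} and \textbf{LinInd}, $\mathbf{d}\ge0$ with $\mathbf{d}(\theta)=0\iff\theta=\theta^*$; thus $\theta^*$ is a well–separated minimizer: $\varepsilon_V:=\inf_{\theta\in\Theta\setminus V}\mathbf{d}(\theta)>0$ for every neighbourhood $V$ of $\theta^*$. Since $\mathbf{d}_n$ is continuous on the compact $\Theta$ the minimizer $\hat\theta_n$ exists and $\mathbf{d}_n(\hat\theta_n)\le\mathbf{d}_n(\theta^*)$, so with Step~1--2,
\[
\mathbf{d}(\hat\theta_n)\ \le\ \mathbf{d}_n(\hat\theta_n)+\sup_{\theta}|\mathbf{d}_n-\mathbf{d}|\ \le\ \mathbf{d}_n(\theta^*)+\sup_{\theta}|\mathbf{d}_n-\mathbf{d}|\ \le\ 2\sup_{\theta}|\mathbf{d}_n-\mathbf{d}|\ \xrightarrow[n\to\infty]{}\ 0
\]
a.s.\ (using $\mathbf{d}(\theta^*)=0$). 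Hence a.s.\ $\mathbf{d}(\hat\theta_n)<\varepsilon_V$ eventually, forcing $\hat\theta_n\in V$; as $V$ is arbitrary, $\hat\theta_n\to\theta^*$ a.s. The same chain with $\mathbf{s}$ gives $\tilde\theta_n\to\theta^*$ a.s.

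\emph{Expected main obstacle.} The only genuinely non-routine point is Step~1: obtaining Glivenko–Cantelli-type uniform convergence of the one- and two-dimensional empirical cdf's for a dependent sequence, and making precise that ``stationary'' must effectively be read as ``stationary and ergodic'' so that the a.s.\ limits are the deterministic $F$ and $G$ rather than random functions of the invariant $\sigma$-field; the bivariate statement in particular requires the grid/monotonicity argument and cannot be quoted directly from the classical i.i.d.\ theorem. Steps~2--3 are then a routine compactness-plus-uniform-convergence argument.
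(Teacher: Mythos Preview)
Your proof is correct and follows essentially the same route as the paper: identifiability via Proposition~\ref{contrast} and \textbf{LinInd} gives the well-separated minimum, a uniform bound on $|\mathbf{d}_n-\mathbf{d}|$ in terms of $\|\hat F_n-F\|_\infty$ and $\|\hat G_n-G\|_\infty$ (the paper's Proposition~\ref{convergencecontraste}), Glivenko--Cantelli for these empirical cdf's (the paper's Proposition~\ref{GlivenkoCantelli}, which cites a result of Athreya valid for stationary sequences), and then the standard compactness/argmin argument. Your explicit flagging of the ergodicity issue in Step~1 is a useful clarification that the paper leaves implicit in its citation; otherwise the two arguments coincide.
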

\begin{proof}
The convergence of $\hat \theta_n$ in Theorem~\ref{theo:CVestim}  is proved using  a classical result about  the minimum contrast estimators theory, see \cite{vandervaart} or \cite{DCD}, using equivalence \eqref{injectivitecontraste}, Lemma~\ref{lipschitz} (regularity of $\mathbf{d}(\cdot)$ and $\mathbf{d}_n(\cdot)$, see Section~\ref{sec:preuveconsistance}) and Propositions~\ref{convergencecontraste} and \ref{GlivenkoCantelli} (uniform convergence of $\mathbf{d}_n(\cdot)$ towards $\mathbf{d}(\cdot)$, see Section~\ref{sec:preuveconsistance}). The same type of proof holds for $\tilde \theta_n$.
\end{proof}

{Note that we could have use characteristic functions to define our  constrasts,  instead of distribution functions. The algebraic computations are the same, as well as Proposition~\ref{contrast}.  Nevertheless the consistency requires a Glivenko-Cantelli theorem,  which does not exist in the same way for characteristic functions.  Denoting $c_n$ the empirical characteristic function and $c=\bE(c_n)$ the true characteristic function, the convergence of $c_n$ toward $c$ does not hold on the whole real line, see \cite{FeuervergerMureika}.  Moreover,  the process $\sqrt{n}(c_n-c)$ converges only under specific assumptions,  see \cite{Csorgo} and this convergence is required pointly  to prove  the asymptotic normality, that we will study in the next section.}

\section{Asymptotic Normality}
\label{sec:asymptoticnormality}
In order to establish the $\sqrt{n}$-consistency of the minimum contrast estimator $\hat \theta_n$ associated with ${\mathbf d}(\cdot)$, we need  to introduce an additional  stationarity/mixing condition about the stochastic processes $X$ and $Y^0$, $Y^1$ being just made of i.i.d random variables,  involved in our model. \\
Let us define, for any generic stationary process $\tilde Y$ and all $(t,k)\in \mathbb{N}^*\times \mathbb{N}$, the sequence of  $\alpha$-mixing coefficients associated  to the stochastic process $\tilde Y$ by:
\begin{eqnarray}\label{alphacoef}
\alpha^{\tilde Y}(k)=\sup_{A\in \mathcal{F}^t_{\tilde Y,1},B\in \mathcal{F}^{\infty}_{\tilde Y,t+k+1}}|\P(A\cap B)-\P(A)\P(B)|,\quad k\geq 1,
\end{eqnarray}
where $\mathcal{F}^{t_2}_{\tilde Y,t_1}$ denotes, for all $t_2>t_1\geq 1$, the $\sigma$-algebra generated by $(\tilde Y_{t_1},\dots,\tilde Y_{t_2})$.

\noindent \textbf{Assumption Mix}: The stochastic processes $X$ and $Y^0$ are  {strictly stationary and $Y^0$ is}  $\alpha$-mixing with $\alpha^{Y^0}(k)=O(k^{-a})$ for  $a>1$.


\begin{theorem} \label{theo:normasymp}
Assume that $F$ is continuous. Assume also {\rm\textbf{LinInd}} and {\rm\textbf{Mix}}. 
Assume that $\theta^*$ is an interior point of $\Theta=[\delta,1-\delta]^2$.
Then we have the following central limit behavior
$$\sqrt{n}\left( \hat \theta_n -\theta^*\right)
\stackrel{d}{\longrightarrow}
 \mathcal N(0, \Sigma),\quad \mbox{as}\quad n\rightarrow +\infty,$$  
where the covariance matrix  $\Sigma$  is detailed in Section~\ref{preuvecov}.
\end{theorem}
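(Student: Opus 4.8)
The plan is to deploy the classical asymptotic theory for minimum-contrast estimators (see \cite{vandervaart} or \cite{DCD}), fed by a functional central limit theorem for the first- and second-order empirical distribution functions of the observed process. Since Theorem~\ref{theo:CVestim} already gives $\hat\theta_n\stackrel{a.s.}{\longrightarrow}\theta^*$ and $\theta^*$ is interior, the first-order condition $\nabla\mathbf{d}_n(\hat\theta_n)=0$ holds for $n$ large enough, and a mean-value expansion of $\nabla\mathbf{d}_n$ around $\theta^*$ yields
\begin{equation*}
0=\nabla\mathbf{d}_n(\theta^*)+H_n\,(\hat\theta_n-\theta^*),\qquad H_n=\nabla^2\mathbf{d}_n(\bar\theta_n),
\end{equation*}
with $\bar\theta_n$ between $\theta^*$ and $\hat\theta_n$ (componentwise, an abuse I will not belabour). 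Once $H_n$ is shown to converge in probability to an invertible matrix $H$, this already delivers $\sqrt n$-consistency, and the theorem reduces to a central limit theorem for the rescaled score $\sqrt n\,\nabla\mathbf{d}_n(\theta^*)$, since then $\sqrt n(\hat\theta_n-\theta^*)=-H^{-1}\sqrt n\,\nabla\mathbf{d}_n(\theta^*)+o_P(1)$ and Slutsky's lemma gives the result with $\Sigma=H^{-1}WH^{-1}$, $W$ being the asymptotic covariance of the score.

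For the Hessian I would differentiate $\mathbf{d}(\theta)=\iint\Delta^2(\theta,\cdot)\,dH$ twice; since $\Delta(\theta^*,\cdot)=0$, the term carrying $\nabla^2_\theta\Delta$ drops out and $H:=\nabla^2\mathbf{d}(\theta^*)=2\iint\big(\nabla_\theta\Delta(\theta^*,\cdot)\big)\big(\nabla_\theta\Delta(\theta^*,\cdot)\big)^{\top}dH$. Differentiating the representation \eqref{Deltabar} and using $c_1(\theta^*,\theta^*)=c_2(\theta^*,\theta^*)=0$ gives $\nabla_\theta\Delta(\theta^*,\cdot)=\nabla_\theta c_1(\theta^*,\theta^*)\,(G^0-F^0\otimes F^0)+\nabla_\theta c_2(\theta^*,\theta^*)\,(F^1-F^0)\otimes(F^1-F^0)$, whence $H=2\,J\mathcal{G}J^{\top}$, where $J$ is the $2\times2$ matrix whose columns are $\nabla_\theta c_1(\theta^*,\theta^*)$ and $\nabla_\theta c_2(\theta^*,\theta^*)$ (available in closed form from \eqref{c1c2}), and $\mathcal{G}$ is the $dH$-Gram matrix of the pair $\{G^0-F^0\otimes F^0,(F^1-F^0)\otimes(F^1-F^0)\}$. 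Assumption \textbf{LinInd} is exactly what makes $\mathcal{G}$ positive definite, while a direct computation shows $J$ is non-singular, so $H$ is positive definite. The convergence $H_n\stackrel{P}{\longrightarrow}H$ then follows from $\bar\theta_n\to\theta^*$ together with the uniform convergence of $\mathbf{d}_n$ and its $\theta$-derivatives (which are polynomial in $\hat F_n,\hat G_n$ with coefficients smooth in $\theta$), i.e.\ from the same ingredients as Lemma~\ref{lipschitz} and Propositions~\ref{convergencecontraste}--\ref{GlivenkoCantelli}, extended to second derivatives.

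The core of the proof is the CLT for the score. Writing $\dot F_n=\hat F_n-F$, $\dot G_n=\hat G_n-G$ and using \eqref{inv-plug-emp-cdf} (so that $\hat F^1_{n,\theta^*}-F^1=\frac1{r^*}\dot F_n$), and since $\Delta(\theta^*,\cdot)=0$, a direct expansion gives, uniformly in $(x,y)$,
\begin{equation*}
\Delta_n(\theta^*,x,y)=\dot G_n(x,y)-\frac1{r^*}\Big[\lambda_2^*F^0(x)\dot F_n(y)+\lambda_3^*\dot F_n(x)F^0(y)+\lambda_4^*\big(F^1(x)\dot F_n(y)+\dot F_n(x)F^1(y)\big)\Big]+R_n(x,y),
\end{equation*}
with $\|R_n\|_\infty=O_P(n^{-1})$ the bilinear-in-$\dot F_n$ remainder; thus $\sqrt n\,\Delta_n(\theta^*,\cdot)$ is, up to a uniform $o_P(1)$, a fixed bounded linear image of $\big(\sqrt n\,\dot F_n,\sqrt n\,\dot G_n\big)$. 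I would then establish a functional CLT for the latter: the chain $X$ has finite state space with transition entries in $[\delta,1-\delta]$, hence is geometrically $\alpha$-mixing, $Y^1$ is i.i.d., and \textbf{Mix} gives $\alpha^{Y^0}(k)=O(k^{-a})$ with $a>1$; since $Z_i$ and $(Z_i,Z_{i+1})$ are measurable functions of the independent triple $(X_i,Y^0_i,Y^1_i)$, they inherit $\alpha$-mixing at rate $O(k^{-a})$, which is summable. Combined with the continuity of $F$, classical empirical-process central limit theorems for strongly mixing sequences give the joint convergence in the sup norm of $\sqrt n(\hat F_n-F)$ and $\sqrt n(\hat G_n-G)$ to a centered Gaussian pair $(\mathbb{B}_1,\mathbb{B}_2)$ whose covariance is the absolutely convergent series of lagged covariances of the indicators $\I_{\{Z_1\le x\}}$ and $\I_{\{Z_1\le x,\,Z_2\le y\}}$. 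By the continuous mapping theorem $\sqrt n\,\Delta_n(\theta^*,\cdot)$ converges to the centered Gaussian process $\mathbb{G}(x,y)=\mathbb{B}_2(x,y)-\frac1{r^*}\big[\lambda_2^*F^0(x)\mathbb{B}_1(y)+\lambda_3^*\mathbb{B}_1(x)F^0(y)+\lambda_4^*(F^1(x)\mathbb{B}_1(y)+\mathbb{B}_1(x)F^1(y))\big]$, and since $\|\Delta_n(\theta^*,\cdot)\|_\infty=O_P(n^{-1/2})$ while $\nabla_\theta\Delta_n(\theta^*,\cdot)\to\nabla_\theta\Delta(\theta^*,\cdot)$ uniformly almost surely, the score satisfies
\begin{equation*}
\sqrt n\,\nabla\mathbf{d}_n(\theta^*)=2\iint\sqrt n\,\Delta_n(\theta^*,x,y)\,\nabla_\theta\Delta(\theta^*,x,y)\,dH(x,y)+o_P(1)\stackrel{d}{\longrightarrow}\mathcal N(0,W),
\end{equation*}
with $W=4\iint\iint\cov\big(\mathbb{G}(x,y),\mathbb{G}(x',y')\big)\,\nabla_\theta\Delta(\theta^*,x,y)\,\nabla_\theta\Delta(\theta^*,x',y')^{\top}\,dH(x,y)\,dH(x',y')$, finite because the covariance kernel of $\mathbb{G}$ is bounded and $dH$ is a probability measure. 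Feeding this and $H_n\stackrel{P}{\longrightarrow}H$ into the expansion of the first paragraph gives the announced limit with $\Sigma=H^{-1}WH^{-1}$, whose explicit form is recorded in Section~\ref{preuvecov}.

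The step I expect to be the main obstacle is the functional CLT for the bivariate empirical distribution function $\hat G_n$ under the merely polynomial mixing rate $a>1$: this is close to the borderline regime for the empirical-cdf process of a mixing sequence to be Donsker, so tightness of the bivariate process — together with the transfer of the mixing coefficients to the lag-$2$ process $(Z_i,Z_{i+1})$ and through the functional $Z_i=\I_{\{X_i=0\}}Y^0_i+\I_{\{X_i=1\}}Y^1_i$ — requires care, as does the summability of the lagged covariances defining the limiting covariance kernel (again where $a>1$ is used). The remaining steps — uniform negligibility of the bilinear remainder $R_n$, the uniform convergence $\nabla_\theta\Delta_n\to\nabla_\theta\Delta$, and the non-degeneracy of $H$ — are comparatively routine given the regularity and Glivenko--Cantelli results already at hand.
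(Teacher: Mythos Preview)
Your proposal is correct and follows the same minimum-contrast architecture as the paper (first-order condition, Hessian convergence to an invertible limit, CLT for the score, Slutsky). The paper introduces one simplification you do not use: instead of working in $\theta=(\alpha,\beta)$ and invoking a mean-value expansion of $\nabla\mathbf{d}_n$ (with the componentwise abuse you flag), it first reparametrizes via $\theta\mapsto v=(v_1,v_2)$ with $v_1=-1+2r-rb$, $v_2=1-b/r$. In these coordinates both $\mathtt{d}(v)=\iint(v_1T_1+v_2T_2+T_3)^2dH$ and its empirical counterpart are \emph{exactly} quadratic in $v$, so the Hessians are constant matrices, the Taylor expansion is an identity, and invertibility reduces immediately to the Gram matrix of $\{T_1,T_2\}$ being nonsingular, i.e.\ to \textbf{LinInd}. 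This is precisely your factorization $H=2J\mathcal{G}J^\top$ with the Jacobian $J$ absorbed into the change of variables; the delta method then transports the limit back to $\theta$. Your direct route gives $H$ explicitly in the original parametrization at the cost of the vector-MVT machinery; the paper's route sidesteps that entirely. For the joint functional CLT of $(\sqrt n\,\dot F_n,\sqrt n\,\dot G_n)$, which you rightly single out as delicate, the paper does not invoke a generic joint Donsker result: it reduces to support $[0,1]$ via $U_i=F(Z_i)$ and observes that $\mathbb{F}_{n-1}(x)=\tilde{\mathbb{G}}_n(x,1)$, so the pair $(\mathbb{F}_n,\mathbb{G}_n)$ is a continuous linear image of $\mathbb{G}_n$ alone, whence joint convergence follows from the bivariate empirical CLT for $(Z_i,Z_{i+1})$ (Rio, Theorem~7.3) and the continuous mapping theorem—this avoids having to verify joint tightness directly.
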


A crucial step in  the proof of this theorem  is the use of  empirical Central Limit Theorem for process $Z$ along with the  bivariate process $(Z_i,Z_{i+1})_{\i\geq 1}$.  The continuity of $F$ is assumed to use the CLT result of \cite{Rio2014}, which also requires $\alpha$-mixing.
Note that the $\alpha$-mixing assumption is the weakest mixing condition insofar as  all other mixing ($\beta,\rho, \phi, $ etc.) imply $\alpha$-mixing. Moreover note that  the required $\alpha$-mixing rate decrease is rather slow (polynomial in $n$).

Let us mention  the more modern notion of weak dependence, which gather dependence conditions that are weaker than mixing.  Empirical Central Limit Theorems have been proved for weak dependent processes, typically for $\alpha$, $\beta$ and $\theta$ weak dependence, see \cite{DDLLLP} where examples of such weak dependent sequences are also given.  Nevertheless the implication $Y^0$ weak-dependent $\Rightarrow$ $(Z_i,Z_{i+1})_{i\geq 1}$ weak-dependent is long to detail,  especially for "two-points-in-the-future" coefficients $\tilde\alpha_2, \tilde\beta_2, \theta_2$.  Moreover the condition of decrease is more restrictive since it is $n^{-a}$ with $a>4$ or $a>8$ according to coefficients (the better condition of  \cite{dedecker2010} with  $a>1$ is published only for univariate sequences).

\section{Nonparametric estimation}
\label{sec:nonparametric}
Once the parametric part of the model is estimated, one can recover the nonparametric part $F^1$ by using a classical inversion formula. Indeed we can estimate 
the nonparametric part by considering, based on expression (\ref{inv-plug-emp-cdf}),  the following  plug-in estimator 
\begin{eqnarray}\label{plug-in}
\hat F_n^1(x)=\hat F^1_{n,\hat \theta_n}(x)=\frac{1}{\hat r_n} \left(\hat F_n(x)-\hat {p}_n F^0(x)  \right),\quad \mbox{$x\in E$},
\end{eqnarray}
with $\hat r_n=1-\hat p_n = \hat\alpha_n/(\hat\alpha_n+\hat\beta_n)$. We can expect, because  $\hat \theta_n\rightarrow \theta^*$ almost surely as $n\rightarrow +\infty$,  that $\hat F_n^1=\hat F^1_{n,\hat \theta_n}$ will converge to $F^1_{\theta^*}=F^1$ with the classical convergence rates. This point is stated in the two following theorems.

\begin{theorem} \label{theo:normasymp}
Assume that $F$ is continuous. Assume also {\rm\textbf{LinInd}} and {\rm\textbf{Mix}} and that $\theta^*$ is an interior point of $\Theta$.
Then we have the convergence in distribution,  in the space $B(\R^2)$ of real-valued and bounded functions over $\R^2$
\begin{eqnarray}
\sqrt{n}\left( \hat F_n^1 -F^1\right)\rightsquigarrow \mathcal{G}, \quad \mbox{as} \quad n\rightarrow +\infty,
\end{eqnarray}  
{where $\mathcal{G}$ is  a zero-mean  Gaussian process.}
\end{theorem}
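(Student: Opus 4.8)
The plan is to linearize the plug-in estimator \eqref{plug-in} around $F^1=F^1_{\theta^*}$ and to reduce the statement to the empirical process of $Z$ together with the central limit theorem for $\hat\theta_n$ from Section~\ref{sec:asymptoticnormality}. Write $\phi(\theta,x)=\tfrac1r\bigl(F(x)-pF^0(x)\bigr)$ with $p=\beta/(\alpha+\beta)$, $r=\alpha/(\alpha+\beta)$, so that by \eqref{cdfinversion} and \eqref{inv-plug-emp-cdf} one has $F^1=\phi(\theta^*,\cdot)$ while $\hat F^1_{n,\theta}$ equals $\phi(\theta,\cdot)$ with $F$ replaced by $\hat F_n$. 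Since $\phi(\theta,x)=\tfrac{F(x)}{r}-\tfrac{p}{r}F^0(x)$ is \emph{affine} in the $F$-slot, substituting $\hat F_n$ for $F$ contributes exactly $\tfrac1{\hat r_n}(\hat F_n-F)$, while $\theta\mapsto\phi(\theta,x)$ is smooth on the compact $\Theta=[\delta,1-\delta]^2$ uniformly in $x$ (there $r\ge\delta$ and $|F|,|F^0|\le1$); a first-order Taylor expansion in $\theta$ at $\theta^*$ then gives, uniformly in $x$,
\begin{equation*}
\sqrt n\bigl(\hat F_n^1-F^1\bigr)(x)=\frac{1}{\hat r_n}\sqrt n\bigl(\hat F_n-F\bigr)(x)+\dot\phi(x)^{\top}\sqrt n\bigl(\hat\theta_n-\theta^*\bigr)+\sqrt n\,R_n(x),
\end{equation*}
with $\dot\phi(x)=\nabla_\theta\phi(\theta^*,x)$ a bounded deterministic $\R^2$-valued function (proportional to $F^1-F^0$) and $\sup_x|R_n(x)|\le C|\hat\theta_n-\theta^*|^2$. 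By the $\sqrt n$-consistency of $\hat\theta_n$ (Section~\ref{sec:asymptoticnormality}), $\sqrt n\sup_x|R_n(x)|=O_\P(n^{-1/2})=o_\P(1)$; by Theorem~\ref{theo:CVestim}, $\hat r_n\to r^*$ a.s., so, $\sqrt n(\hat F_n-F)$ being uniformly $O_\P(1)$, the factor $\tfrac1{\hat r_n}$ may be replaced by $\tfrac1{r^*}$ at the cost of an $o_\P(1)$. Hence, in the space $B(\R)$ of bounded real functions,
\begin{equation*}
\sqrt n\bigl(\hat F_n^1-F^1\bigr)=\frac{1}{r^*}\sqrt n\bigl(\hat F_n-F\bigr)+\dot\phi(\cdot)^{\top}\sqrt n\bigl(\hat\theta_n-\theta^*\bigr)+o_\P(1),
\end{equation*}
and everything reduces to the \emph{joint} weak convergence of $\bigl(\sqrt n(\hat F_n-F),\sqrt n(\hat\theta_n-\theta^*)\bigr)$.

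For the joint convergence I would invoke the empirical central limit theorem of \cite{Rio2014}: under continuity of $F$ and assumption \textbf{Mix}, both $Z$ and the bivariate sequence $(Z_i,Z_{i+1})_{i\ge1}$ are stationary and $\alpha$-mixing at rate $O(k^{-a})$, $a>1$ (their mixing coefficients being controlled by those of $(X,Y^0,Y^1)$), whence $\bigl(\sqrt n(\hat F_n-F),\sqrt n(\hat G_n-G)\bigr)$ converges weakly to a tight, jointly zero-mean Gaussian limit $(\mathbb G,\mathbb G')$. The proof of the central limit theorem for $\hat\theta_n$ in Section~\ref{sec:asymptoticnormality} passes through an asymptotically linear representation $\sqrt n(\hat\theta_n-\theta^*)=L\bigl(\sqrt n(\hat F_n-F),\sqrt n(\hat G_n-G)\bigr)+o_\P(1)$ for a bounded linear functional $L$ (this is exactly what produces $\Sigma$), so $\bigl(\sqrt n(\hat F_n-F),\sqrt n(\hat\theta_n-\theta^*)\bigr)$ is, up to $o_\P(1)$, the image of the joint empirical process under $(f,g)\mapsto(f,L(f,g))$ and converges jointly to $(\mathbb G,\mathcal N)$, $\mathcal N=L(\mathbb G,\mathbb G')$, jointly Gaussian.

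It then suffices to apply the continuous mapping theorem to the continuous linear map $(f,v)\mapsto r^{*-1}f+\dot\phi(\cdot)^{\top}v$ from $B(\R)\times\R^2$ to $B(\R)$, and Slutsky to absorb the $o_\P(1)$ term, obtaining
\begin{equation*}
\sqrt n\bigl(\hat F_n^1-F^1\bigr)\rightsquigarrow\mathcal G:=\frac{1}{r^*}\mathbb G+\dot\phi(\cdot)^{\top}\mathcal N.
\end{equation*}
Being a linear image of jointly Gaussian limits, $\mathcal G$ is a zero-mean Gaussian process, and it is tight because $\mathbb G$ is tight and the second summand lives in a two-dimensional subspace; its covariance $\operatorname{cov}(\mathcal G(x),\mathcal G(y))$ is assembled from those of $\mathbb G$ and $\mathcal N$ and the cross-covariance of $\mathbb G$ with $L(\mathbb G,\mathbb G')$, and recorded in Section~\ref{preuvecov}.

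The elementary linearization is not the difficulty; the real obstacle is the \emph{joint} limit, i.e.\ re-reading the proof of Section~\ref{sec:asymptoticnormality} so as to exhibit $\sqrt n(\hat\theta_n-\theta^*)$ as a continuous linear functional of the \emph{same} empirical process $(\sqrt n(\hat F_n-F),\sqrt n(\hat G_n-G))$ that drives $\hat F_n^1$, and carrying the bivariate empirical CLT for $(Z_i,Z_{i+1})$ through the dependence induced by the latent chain $X$. Both ingredients are already present (at least implicitly) in Section~\ref{sec:asymptoticnormality}, so the remaining work --- the explicit covariance bookkeeping and measurability of the $\argmin$ --- is routine.
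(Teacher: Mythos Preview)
Your proposal is correct and follows essentially the same route as the paper: linearize the plug-in estimator, express $\sqrt{n}(\hat\theta_n-\theta^*)$ as an asymptotically linear functional of the joint empirical process $(\mathbb{F}_n,\mathbb{G}_n)$ via the machinery of Section~\ref{sec:asymptoticnormality}, and conclude from the joint weak convergence of $(\mathbb{F}_n,\mathbb{G}_n)$ in Lemma~\ref{convergenceprocessusempiriques} together with Slutsky and the continuous mapping theorem. The only cosmetic difference is that the paper first writes the exact identity $\hat F_n^1-F^1=\tfrac{1}{r^*}(\hat F_n-F)+(\hat\beta_n/\hat\alpha_n-\beta^*/\alpha^*)(\hat F_n-F^0)$ and then Taylor-expands $\beta/\alpha$ in the $v$-coordinates, whereas you Taylor-expand $\phi(\theta,\cdot)$ in $\theta$ directly; both yield the same limit $\mathcal G=\tfrac{1}{r^*}\mathcal B_F+D\varphi_2(v^*)\ddot{\mathtt d}^{-1}\Psi(\mathcal B_F,\mathcal B_G)\,(F-F^0)$.
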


\begin{theorem} \label{theo:risqueF}
Assume that $F$ is continuous. Assume also {\rm\textbf{LinInd}} and {\rm\textbf{Mix}} with $a>4$ and that $\theta^*$ is an interior point of $\Theta$.
Then there exists a positive constant $C$ depending on $\theta^*,G^0, F^1, \delta $ such that
$$
\bE\|\hat \theta_n -\theta^*\|\leq \frac{C}{\sqrt{n}}, 
$$
where $\|\cdot\|$ is the Euclidean norm, and
$$\bE\|\hat F_n^1 -F^1\|_{\infty}\leq \frac{C}{\sqrt{n}}.$$
\end{theorem}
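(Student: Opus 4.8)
The plan is to prove the $n^{-1/2}$ rate in expectation for $\hat\theta_n$ by upgrading the identifiability equivalence (\ref{injectivitecontraste}) to a quantitative quadratic lower bound for the limiting contrast $\mathbf d$, and by combining it with second–moment control of the empirical distribution functions entering $\mathbf d_n$; the bound on $\hat F^1_n$ then follows from the inversion formula (\ref{plug-in}) and a short Lipschitz argument. \emph{(Step 1 --- curvature of $\mathbf d$.)} By Proposition~\ref{contrast} we may write $\mathbf d(\theta)=c(\theta)^{\top}M\,c(\theta)$ with $c(\theta)=(c_1(\theta^*,\theta),c_2(\theta^*,\theta))^{\top}$ and $M$ the $L^2(dH)$ Gram matrix of $\{G^0-F^0\otimes F^0,\,(F^1-F^0)\otimes(F^1-F^0)\}$, positive definite under \textbf{LinInd}. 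Since $\theta\mapsto\Delta(\theta,x,y)$ is $C^2$ on $\Theta$ with uniformly bounded derivatives (the coefficients $\lambda_j,p,r$ being rational in $\theta$ with denominators bounded below on $\Theta$; cf.\ Lemma~\ref{lipschitz}), $\mathbf d$ is $C^2$, has $\theta^*$ as its unique minimizer on the compact $\Theta$, and $H^*:=\nabla^2\mathbf d(\theta^*)$ is positive definite --- equivalently $Dc(\theta^*)$ is invertible (which can be checked from the closed forms (\ref{c1c2}) and is in any case needed for the covariance $\Sigma$ of Theorem~\ref{theo:normasymp} to be well defined). A Taylor expansion near $\theta^*$ together with a compactness argument on $\{\|\theta-\theta^*\|\ge\varepsilon\}$ then furnishes $\kappa>0$ with $\mathbf d(\theta)\ge\kappa\|\theta-\theta^*\|^2$ for all $\theta\in\Theta$ (and $H^*\succeq 2\kappa I$).

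\emph{(Step 2 --- stochastic control.)} Write $\xi_n=\hat F_n-F$, $\eta_n=\hat G_n-G$, $R_n=\|\xi_n\|_\infty+\|\eta_n\|_\infty$. The key algebraic fact is that $\hat F^1_{n,\theta}-F^1_\theta=r^{-1}\xi_n$ depends on $\theta$ only through the bounded factor $r^{-1}\le\delta^{-1}$, so the fluctuation $W_n(\theta,\cdot):=\Delta_n(\theta,\cdot)-\Delta(\theta,\cdot)$ is, uniformly in $\theta\in\Theta$, a combination of $\eta_n$ and of $\xi_n$ multiplied by functions that are uniformly bounded with uniformly bounded $\theta$-derivatives; hence $\|W_n(\theta)\|_\infty,\|\nabla_\theta W_n(\theta)\|_\infty,\|\nabla^2_\theta W_n(\theta)\|_\infty\le C R_n$ pathwise. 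Because $\Delta(\theta^*,\cdot)\equiv 0$ this propagates to $\sup_\theta|(\mathbf d_n-\mathbf d)(\theta)|\le CR_n$, $\sup_\theta\|\nabla^2\mathbf d_n(\theta)-\nabla^2\mathbf d(\theta)\|\le CR_n$, and, using $\nabla\mathbf d(\theta^*)=0$, $\|\nabla\mathbf d_n(\theta^*)\|=\|\nabla\mathbf d_n(\theta^*)-\nabla\mathbf d(\theta^*)\|\le CR_n$. It remains to bound $\bE R_n^2$: since $Y^1$ is i.i.d.\ and, by \textbf{Mix}, $X$ (an ergodic finite chain) and $Y^0$ are stationary with $\alpha^{Y^0}(k)=O(k^{-a})$, both $Z$ and the pair process $(Z_i,Z_{i+1})_{i\ge 1}$ are functions of the jointly $\alpha$-mixing process $(X,Y^0,Y^1)$ and are therefore $\alpha$-mixing with coefficients $O(k^{-a})$ (as in \cite{VDK05}, Lemma~1); with $a>4$, the moment inequalities of \cite{Rio2014} for the sup-norm of the empirical distribution function of a one- and a two-dimensional mixing sequence give $\bE\|\xi_n\|_\infty^2=O(1/n)$ and $\bE\|\eta_n\|_\infty^2=O(1/n)$, so $\bE R_n^2=O(1/n)$ and, by Jensen, $\bE R_n=O(n^{-1/2})$.

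\emph{(Step 3 --- localization and conclusion.)} Fix $\varepsilon_0>0$ with $\bar B(\theta^*,\varepsilon_0)\subset\operatorname{int}\Theta$ and $\nabla^2\mathbf d\succeq\kappa I$ on $\bar B(\theta^*,\varepsilon_0)$, and set $\rho_0=\min\{\kappa/(2C),\kappa\varepsilon_0^2/(2C)\}$ with $C$ the constant of Step~2. On $B_n:=\{R_n\le\rho_0\}$: as $\hat\theta_n$ minimizes $\mathbf d_n$ and $\mathbf d(\theta^*)=0$, $\mathbf d(\hat\theta_n)\le 2\sup_\theta|(\mathbf d_n-\mathbf d)(\theta)|\le 2CR_n$, so Step~1 gives $\|\hat\theta_n-\theta^*\|\le\varepsilon_0$; hence $\hat\theta_n\in\operatorname{int}\Theta$, $\nabla\mathbf d_n(\hat\theta_n)=0$, the integrated Hessian $\int_0^1\nabla^2\mathbf d_n(\theta^*+t(\hat\theta_n-\theta^*))\,dt\succeq\tfrac\kappa2 I$, and the first-order Taylor identity $0=\nabla\mathbf d_n(\theta^*)+[\,\cdots\,](\hat\theta_n-\theta^*)$ yields $\|\hat\theta_n-\theta^*\|\le\tfrac2\kappa\|\nabla\mathbf d_n(\theta^*)\|\le C'R_n$. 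On $B_n^c$ we merely use $\|\hat\theta_n-\theta^*\|\le\operatorname{diam}\Theta$ and $\P(B_n^c)\le\bE R_n^2/\rho_0^2=O(1/n)$. Combining, $\bE\|\hat\theta_n-\theta^*\|\le C'\bE R_n+\operatorname{diam}(\Theta)\,\P(B_n^c)=O(n^{-1/2})$, the first assertion. For the second, decompose $\hat F^1_n-F^1=(\hat F^1_{n,\hat\theta_n}-F^1_{\hat\theta_n})+(F^1_{\hat\theta_n}-F^1_{\theta^*})=\hat r_n^{-1}\xi_n+(F^1_{\hat\theta_n}-F^1_{\theta^*})$: the first term has sup-norm $\le\delta^{-1}\|\xi_n\|_\infty$ (since $\hat\theta_n\in\Theta$ forces $\hat r_n\ge\delta$) and the second is $\le L\|\hat\theta_n-\theta^*\|$ by the mean value theorem applied to $\theta\mapsto F^1_\theta=r^{-1}(F-pF^0)$, Lipschitz on $\Theta$ with constant $L$ depending only on $\delta$; taking expectations and using Steps~2--3 gives $\bE\|\hat F^1_n-F^1\|_\infty=O(n^{-1/2})$.

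The main obstacle is Step~2, and within it the \emph{bivariate} bound $\bE\|\hat G_n-G\|_\infty^2=O(1/n)$: one must verify that $(Z_i,Z_{i+1})_{i\ge 1}$ inherits $\alpha$-mixing at the polynomial rate (routine from the mutual independence of $X,Y^0,Y^1$, in the spirit of \cite{VDK05}) and then apply a moment inequality for the $L^\infty$-norm of the empirical distribution function of a \emph{two-dimensional} mixing sequence --- it is precisely this two-dimensional index set that forces the stronger decay $a>4$ in \textbf{Mix}, as opposed to the $a>1$ sufficient for the mere weak convergence underlying Theorem~\ref{theo:normasymp}. The remaining steps are either soft (the localization/Taylor argument of Step~3, the Lipschitz argument for $F^1$) or consist in routinely propagating the pathwise bound $\|W_n(\theta)\|_\infty\le CR_n$ through the smooth, uniformly bounded functionals defining $\mathbf d_n$ and its first two $\theta$-derivatives.
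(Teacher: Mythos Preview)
Your argument is correct and follows a genuinely different route from the paper's. The paper exploits the reparametrization $\theta\mapsto v=(v_1,v_2)$ of Section~\ref{sec:preuvenormasymp}, under which $\mathtt d_n=\mathbf d_n\circ g$ is \emph{exactly} quadratic with constant Hessian $\ddot{\mathtt d}_n$; the Taylor identity $\ddot{\mathtt d}_n(\hat v_n-v^*)=-\dot{\mathtt d}_n(v^*)$ is then global and exact, and the bound is obtained by writing $\hat v_n-v^*=\ddot{\mathtt d}^{-1}[\ddot{\mathtt d}_n x_n+(\ddot{\mathtt d}-\ddot{\mathtt d}_n)x_n]$ and using the a.s.\ boundedness of $x_n=\hat v_n-v^*$ on the compact $V$ --- no localization/good-event argument is needed. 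You instead work directly in $\theta$, establish the curvature bound $\mathbf d(\theta)\ge\kappa\|\theta-\theta^*\|^2$, and use the good/bad-event split $\{R_n\le\rho_0\}$; this is a more generic M-estimation template that does not rely on the special quadratic structure and, as a side benefit, cleanly handles the ``$\hat\theta_n$ on the boundary'' issue for each fixed $n$. The crucial probabilistic input is the same in both proofs: uniform moment bounds $\sup_n\bE\|\mathbb F_n\|_\infty^q<\infty$ and $\sup_n\bE\|\mathbb G_n\|_\infty^q<\infty$. Here the paper does not invoke \cite{Rio2014} (which it uses only for the functional CLT, under $a>1$) but proves these bounds in Lemma~\ref{supprocessusempiriques} via a Kiefer-process strong approximation for $\alpha$-mixing sequences in $\R^d$ (Dhompongsa), whose rate condition $a>d+2$ with $d=2$ is precisely the origin of the assumption $a>4$; your attribution of these moment bounds to \cite{Rio2014} should therefore be replaced by the strong-approximation argument (or an equivalent reference) to make Step~2 fully self-contained.
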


This result is not a straightforward corollary of the previous one but requires specific processing, see the proof in Section~\ref{preuverisqueF}. Using \cite{DRM14}, the mixing condition can be replaced by: 
$Y^0$ is  $\beta$-mixing with $\beta_n=O(n^{-a})$ for  $a>1$.
We can also prove that, for all integer $q\geq 1$, $\bE(\|\hat \theta_n -\theta^*\|^q)\leq {C(q)}n^{-q/2}$ and $\bE(\|\hat F_n^1 -F^1\|_{\infty}^q)\leq {C(q)}n^{-q/2}$.

\bigskip

One interesting lead of research, which is out of the reach of this paper, would be the nonparametric estimation of the $Y^1$ sequence density , denoted $f^1$. 
The natural candidate to estimate $f^1$, is the following plug-in semiparametric estimator
\begin{eqnarray}\label{inversed_pdf}
\hat f^1_n(x)=\frac{1}{\hat r_n}\left( \hat f_n(x)-\hat p_nf_0(x)\right), \quad \mbox{where}\quad \hat p_n=\frac{\hat \beta_n}{\hat \alpha_n+\hat \beta_n}\quad \mbox{and}\quad  \hat r_n=\frac{\hat \alpha_n}{\hat \alpha_n+\hat \beta_n}
\end{eqnarray}
and $\hat f_n(x)=(nh_n)^{-1}\sum_{i=1}^n K_{h_n}(x-Z_i)$, where $K(\cdot)$ is a probability kernel,  $K_h(\cdot)=h^{-1}K(\cdot/h)$, for all $h>0$, and $h_n$ is a bandwidth  parameter that goes to zero as $n$ goes to infinity.
In that setup we could indeed investigate the local decoding problem which is the prediction of the fact that observations $Z_i$ is poisoned  (not generated from the gold standard) or not.  In fact the conditional distribution of the Markov  latent  couple  $(X_{2i},X_{2i+1})$ associated with the observed couple $(Z_{2i},Z_{2i+1})$, for $i\geq 1$,  is given for all $(k,l)\in \{0,1\}^2$ and all $(z,z')\in E^2$,  by
\begin{eqnarray}\label{decoding}
\P_{\theta,f}(X_{2i}=k,X_{2i+1}=l |Z_{2i}=z,Z_{2i+1}=z'):=\frac{\eta_{\theta,f^1}(k,l;z,z')}{\P_{\theta,f}(Z_{2i}=z,Z_{2i+1}=z')}, \nonumber
\end{eqnarray}
where 
\begin{eqnarray*}
\P_{\theta,f}(Z_{2i}=z,Z_{2i}=z')=\lambda_1g^0(z,z')+\lambda_2f^0(z)f(z')+\lambda_3f(z)f^0(z')+\lambda_4 f(z)f(z'),
\end{eqnarray*}
and 
\begin{eqnarray*}
&&\eta_{\theta,f}(0,0;z,z')=\lambda_1g^0(z,z'),\quad \eta_{\theta,f}(0,1;z,z')=\lambda_2f^0(z)f(z'),\\
&&\eta_{\theta,f}(1,0;z,z')= \lambda_3f(z)f^0(z'), \quad \eta_{\theta,f}(1,1;z,z')=\lambda_4 f(z)f(z').
\end{eqnarray*}
Suppose now that we run our estimation method and get an estimator $\hat \theta_n$ of $\theta^*$ along with a density estimator $\hat f^1_n$ of $f^1$, we can easily estimate 
, for all $(k,l)\in \{0,1\}^2$ and all $(z,z')\in E^2$, the true decoding probabilities $\P_{\theta^*,f^1}(X_{2i}=k,X_{2i+1}=l|Z_{2i}=z,Z_{2i+1}=z')$ by
\begin{eqnarray}\label{predict}
\mbox{predict}_i((k,l)|z_1^n):=\P_{\hat \theta_n,\hat f^1_n}(X_{2i}=k,X_{2i+1}=l|Z_{2i}=z,Z_{2i+1}=z').
\end{eqnarray}
Note that we could also predict sequences of length greater than two by deriving corresponding conditional probabilities in the spirit of (\ref{decoding}) and the following material (obviously heavier to compute).

\section{Numerical performances}
\label{num_perf}
In this section we propose to investigate the numerical performances of our semiparametric estimation method developed for model (\ref{model})
in various situations enhancing more or less the observability and mixing properties of the involved processes.  For this purpose we propose to consider the following models
\begin{eqnarray}
\quad Z_j=(1-X_j)Y_{j}^0+X_jY_{j}^1,\quad j=1,\dots,n
 \end{eqnarray}
where for $k=1,\dots,n-1$ and an i.i.d. sequence of Gaussian noises $(\varepsilon_k)_{k\geq 1}$  drawn from a ${\mathcal N} (m_0,v_0^2)$ distribution:
\begin{eqnarray*}
X_{1:n}&\sim& \mbox{Markov}(n,\Pi_{\theta},\pi_\theta):~X_1\sim \pi_\theta, \P_\theta(X_{k+1}=\cdot |X_k=\cdot )=\Pi_{\theta}(\cdot,\cdot)\\
                  Y_{1:n}^0&\sim& \mbox{AR(1)}(n,\varphi,m_0,v_0): Y_1^0\sim \mathcal{N}\left(\mu_0,\var_0 \right),~Y_{k+1}^0=\varphi Y_{k}^0+\varepsilon_k\\
                  Y_{1:n}^1&\sim& \mathcal{N}^{{\otimes}n}(m,v^2).
                \end{eqnarray*}
The parameter $\varphi\in ]0,1[$,  is the regression coefficient  of the AR(1) process when $(\mu_0,\var_0)=\left (\frac{m_0}{1-\varphi}, \frac{v_0^2}{1-\varphi^2}\right)$ are respectively the stationary  (marginal) mean and variance of the process $Y_{0}$. All along  our simulations we will take for simplicity $(m_0,v_0^2)=(1,1)$. 
To practically estimate/compute, as we should do when no closed-form  expression is available,  the 1st and 2nd order cdfs $F^0(\cdot)$ and $G^0(\cdot,\cdot)$  associated to the known gold standard  process we generate two independent i.i.d. samples of size $N$
\begin{eqnarray*}
&&Y^0[i]\sim \mathcal{N}(\mu_0,\var_0)\\
&&(Y^0_{1},Y^0_{2})[i]\quad \mbox{where}\quad Y^0_{1}[i]\sim \mathcal{N}(\mu_0,\var_0)\quad \mbox{and }Y^0_{2}[i]=\varphi Y^0_{1}[i]+\varepsilon[i],
\end{eqnarray*}
for $i=1,\dots,N$, where $(\varepsilon[i])_{1\leq i\leq N}$ is an i.i.d. sequence drawn from the ${\mathcal N} (m_0,v_0^2)$ distribution. From these samples we compute 
\begin{eqnarray}\label{monte_carlo_cdf_F0_G0}
\hat F_N^0(x)=\frac{1}{N}\sum_{i=1}^N {\mathbb I}_{\{Y^0[i]\leq x\}},\quad \hat G_N^0(x,y)=\frac{1}{N}\sum_{i=1}^N {\mathbb I}_{\{(Y^0_{1},Y^0_{2})[i]\leq (x,y)\}}, \quad (x,y)\in E^2,
\end{eqnarray}
which are uniformly strongly consistent estimators of the first and second order cdfs of the known AR(1)-process $Y^0$. Note that for $N$  large  enough with respect to $n$  we can achieve a satisfactory level of precision in regard of  the stochastic  fluctuations involved in the $n$-empirical contrasts defined in (\ref{emp_d_s}). In our simulations we considered $N=2n$ with satisfactory results in terms of computing time and accuracy.  The $H$ distribution considered in our simulation is the  $\mathcal U^{\otimes 2}$ where $\mathcal U$ denotes the  uniform distribution over $[\min_{1\leq i\leq n}(Z_i),\max_{1\leq i\leq n}(Z_i)]$ in order to get the most neutral weight function as possible. Note that any heavily tailed distribution over $\R$ would have done the same job.

\subsection{Two  strong observability cases}
We consider first two setups based on a strong observability of the contaminant outputs with expectation $2.5\times\mu_0$ and  standard deviation $0.8\times \sqrt{\var_0}$ (way higher than the AR(1)  process marginal expectation and smaller standard deviation) and weak  or strong  qualitative  mixing  properties of the latent Markov chain.
\begin{itemize}
\item  $\mbox{(S0)}_{strong}$: $\theta=(0.7,0.8)$, $\varphi=0.7$ and $(m,v)=(2.5\times \mu_0,0.8\times \sqrt{\var_0})$.
\item $\mbox{(S0)}_{weak}$: $\theta=(0.3,0.2)$, $\varphi=0.7$ and $(m,v)=(2.5\times \mu_0,0.8\times \sqrt{\var_0})$.
\end{itemize}
For illustration purpose we display in red in Figures \ref{Fig:S0strong} and \ref{Fig:S0weak} trajectories corresponding respectively to model $\mbox{(S0)}_{strong}$ and $\mbox{(S0)}_{weak}$. Note that we display  in blue an extra  informative dummy process which is $X1=X\times \bE(Y^1)$, where $X$ denotes the non-observed 2-state Markov chain. Note that $X1$ must not be confused with $X_1$ the first observation of the Markov process $X$.
This way, when $\left\{X1=0\right\} $ we are informed  that we observe the known AR(1)-process $Y^0$, in contrast, when  $\left\{X1=m\neq 0\right\}$ we know that we observe the i.i.d. process $Y^1$ centered at $m=\bE(Y^1)$.
\begin{figure}[!htb]
\begin{minipage}{0.48\textwidth}
     \centering
     \includegraphics[width=7cm,height=4cm]{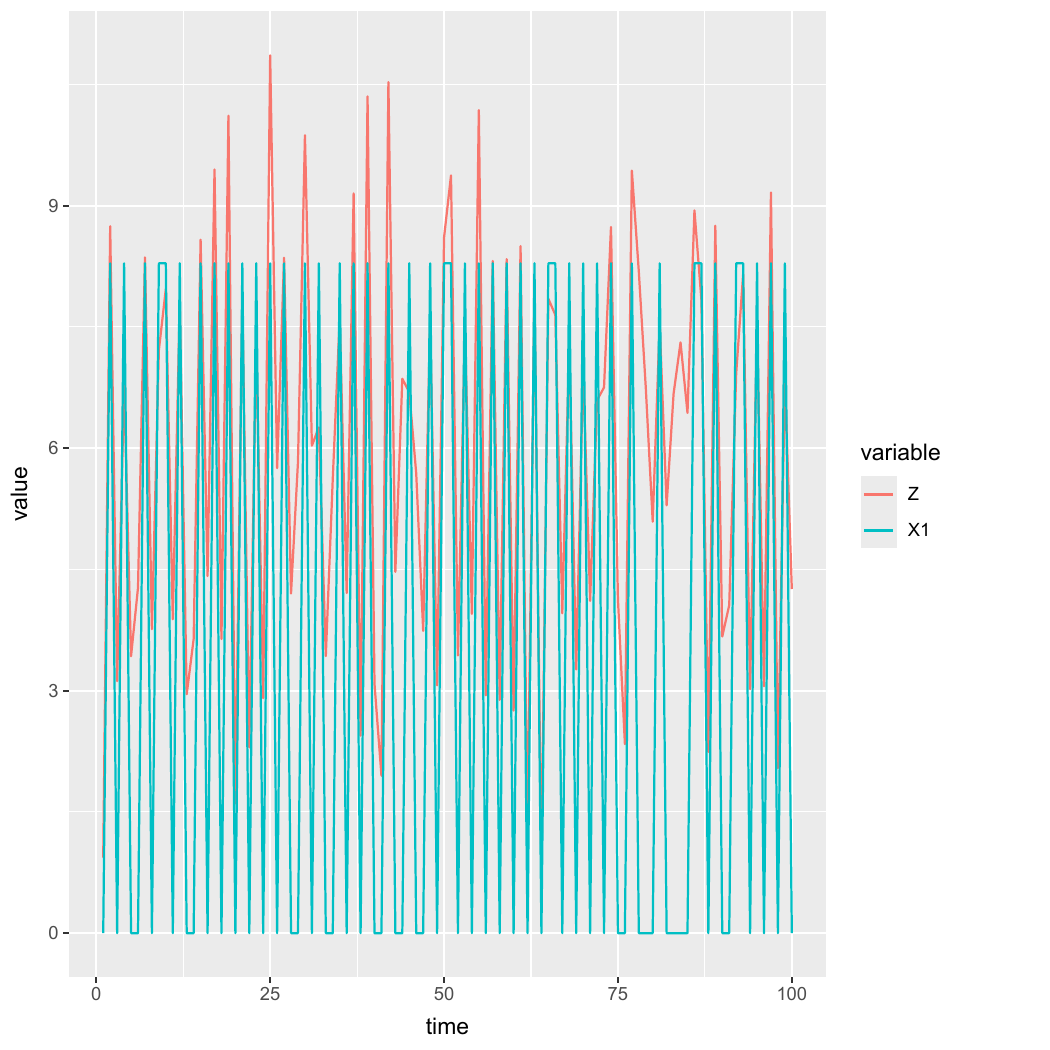}
     \caption{$\mbox{(S0)}_{strong}$ trajectory}\label{Fig:S0strong}
   \end{minipage}\hfill
   \begin{minipage}{0.48\textwidth}
     \centering
     \includegraphics[width=7cm,height=4cm]{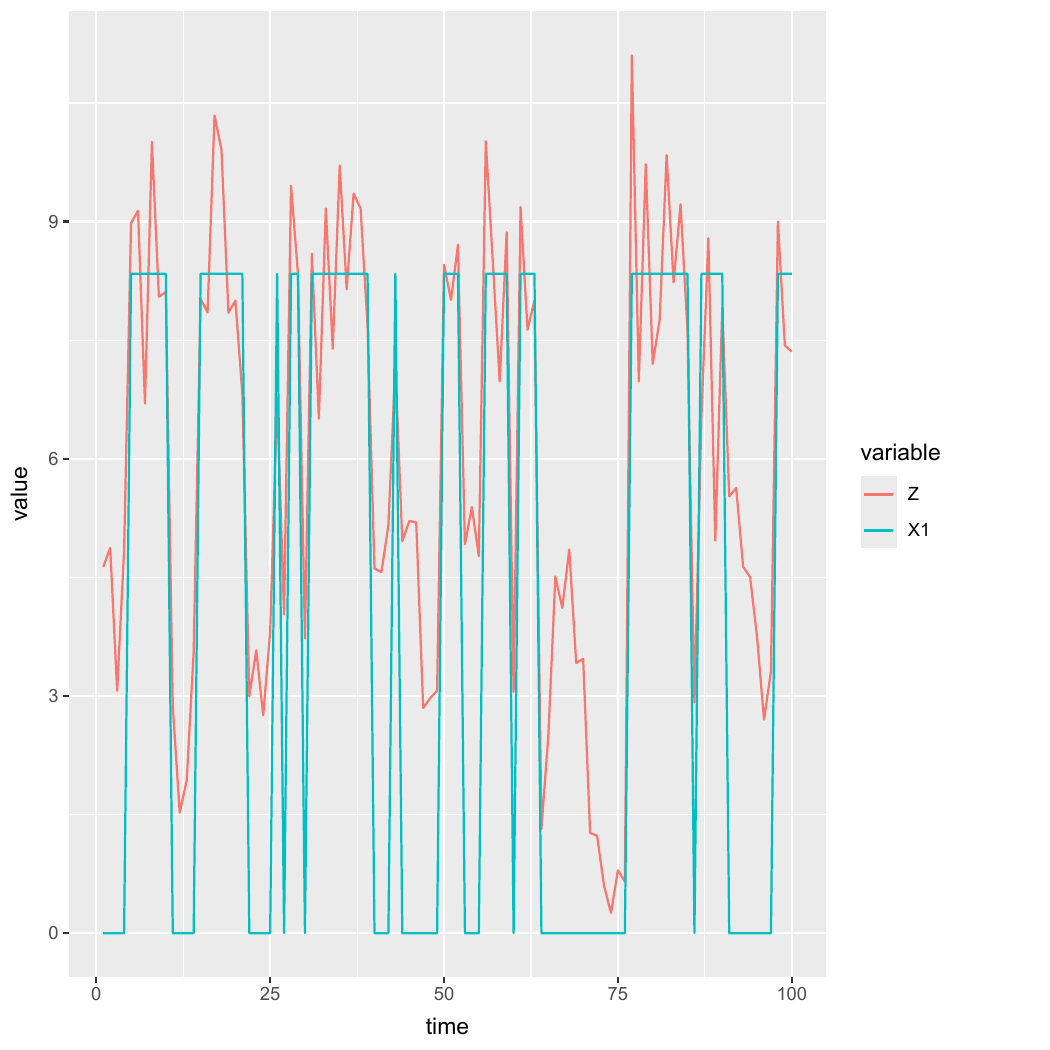}
     \caption{$\mbox{(S0)}_{weak}$ trajectory}\label{Fig:S0weak}
   \end{minipage}
\end{figure}\\
In Table \ref{tab:performanceS0_strong_weak} we provide the bias and standard deviation based performance results collected from  a 100-repetition Monte Carlo scheme (used consistently in this section except if explicitly mentioned),  of our estimators $\hat \theta_n=(\hat \alpha_n,\hat \beta_n)$ and $\tilde \theta_n=(\tilde \alpha_n,\tilde \beta_n)$, respectively associated with  discrepancies $\mathbf{d}(\cdot)$ (integration-based)
and $\mathbf{s}(\cdot)$ (sup-based).
\begin{table}[!b]
		\centering
	\scalebox{1}{
		\begin{tabular}{ccc|cc}
		\hline
			 $\mbox{(S0)}_{strong}$ & $\hat \alpha_n$& $\hat \beta_n$& $\tilde \alpha_n$&$\tilde \beta_n$ \\
			\hline
			$n=1,000$    & (0.036,0.118)&(-0.004,0.023)& (0.017,0.127)&(-0.007,0.035)\\ 
			$n=3,000$    &  (0.034,0.056)& (-0.010,0.013)& (0.015,0.083)&(-0.005,0.019)\\  
			$n=5,000$    &  (-0.007,0.052)& (0.001,0.010)& (0.028,0.078)&(-0.002,0.013)\\   
			\hline
		 $\mbox{(S0)}_{weak}$ & $\hat \alpha_n$& $\hat \beta_n$& $\tilde \alpha_n$&$\tilde \beta_n$ \\
			\hline
			$n=1,000$    & (-0.218,0.116)      &(-0.134,0.096)            & (-0.048,0.057)&(-0.024,0.111)\\ 
			$n=3,000$   &  (-0.051,0.103)     & (-0.010,0.114)           & (-0.012,0.043)&(0.011,0.069)\\  
			$n=5,000$    &  (-0.051,0.101)        & (-0.011,0.112)           & (0.002,0.033)& (0.015,0.065)\\       
			 \end{tabular}
		}
	\caption{Bias and standard deviation based performances  under model $\mbox{(S0)}_{strong}$ and $\mbox{(S0)}_{weak}$,  for $n=1,000$, $n=3,000$ and $n=5,000$ (in rows) of estimators
	$\hat \theta_n=(\hat \alpha_n,\hat \beta_n)$ and $\tilde \theta_n=(\tilde \alpha_n,\tilde \beta_n)$, respectively associated with  discrepancies
	 $\mathbf{d}(\cdot)$ (integration-based)
and $\mathbf{s}(\cdot)$ (sup-based), are respectively displayed in the last two columns.}\label{tab:performanceS0_strong_weak}
\end{table}\\
Finally to illustrate the bivariate asymptotically Gaussian behavior of our estimator $\hat \theta=(\hat \alpha,\hat \beta)$ we display in Figure \ref{fig:TCL2D} the $\sqrt{n}$-centered sample of estimators we obtain when we run $\ell=10,000$ estimations on simulated $\mbox{(S0)}_{strong}$ models, {\it i.e.}  $(\sqrt{n}[\hat \theta_{i,n}-\frac{1}{n}\sum_{j=1}^\ell \hat \theta_{j,n}])_{1\leq i\leq \ell}$ where  $\hat \theta_{i,n}$ denotes the $i$-th estimation output, $i=1,\dots,\ell$,  based on $n$ observations  with $n=1,000$, $3,000$ and $5,000$. For simplicity matters we only keep 4 digits  in our estimation statistics (bias,  variance or standard deviation).

\begin{figure}[!htb]
\includegraphics[width=5cm,height=4cm]{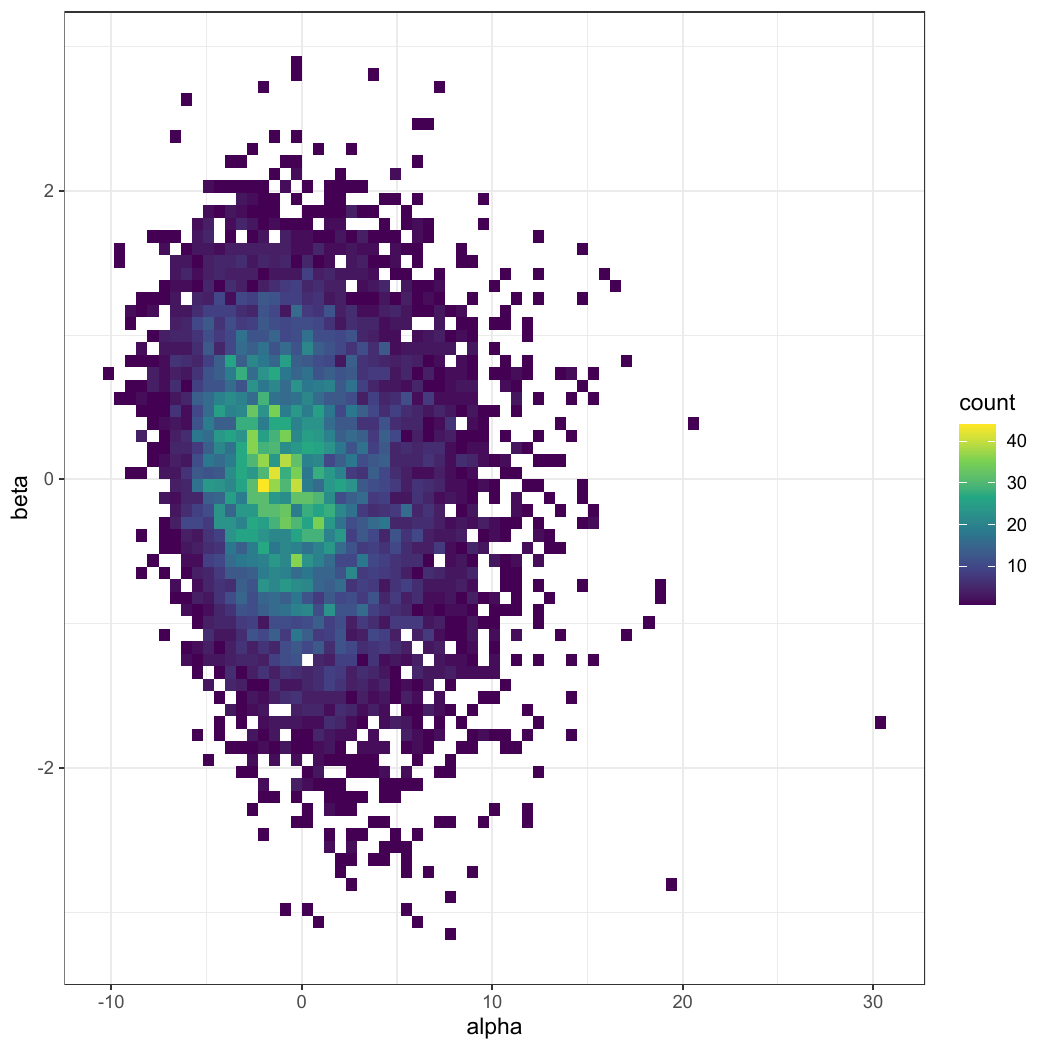}
\includegraphics[width=5cm,height=4cm]{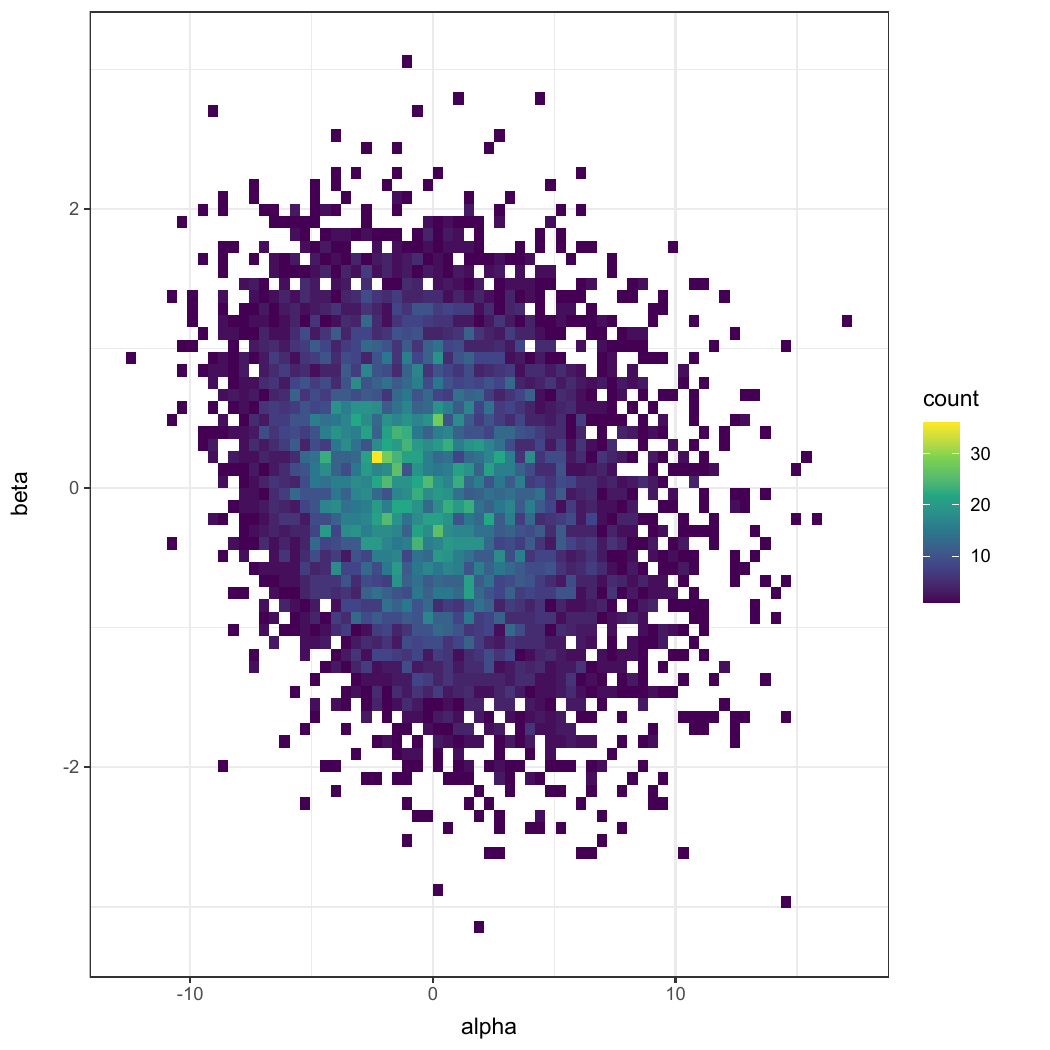}
 \includegraphics[width=5cm,height=4cm]{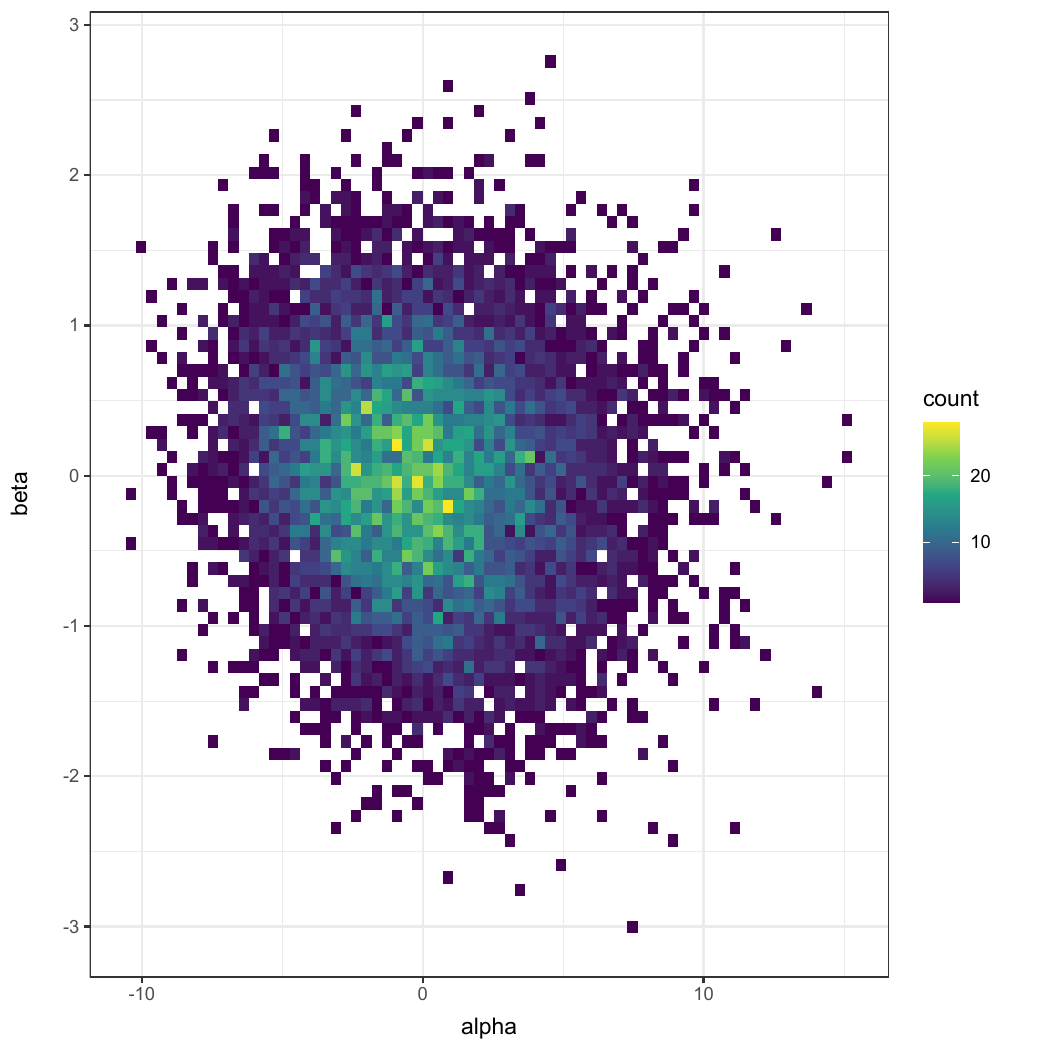}
 \caption{$N$-sample of $\sqrt{n}$-centered $\hat \theta_n$ estimators  under model $\mbox{(S0)}_{strong}$, with $N=10,000$ repetitions  and $n=1,000$, $3,000$ and $5,000$ observations.}\label{fig:TCL2D}
  \end{figure}
The estimated covariance matrices based on $N=10,000$ repetitions are:
\begin{eqnarray*}
\hat \Sigma_{1,000}=\begin{bmatrix}
  14.111& -0.543\\
-0.543& 0.623
\end{bmatrix},\quad 
\hat\Sigma_{3,000}=\begin{bmatrix}
   14.736& -0.698\\
    -0.698&  0.607
\end{bmatrix},
\quad 
\hat\Sigma_{5,000}=\begin{bmatrix}
11.709& -0.334\\
-0.334&  0.536
\end{bmatrix}.
\end{eqnarray*}
\noindent {\it \bf Comments on $\mbox{(S0)}_{strong}$ and $\mbox{(S0)}_{weak}$}. Note first  that the higher $\alpha$ and $\beta$ are the more frequently the Markov chain switches from a state to another. As a consequence, as shown comparatively  in Figures \ref{Fig:S0strong} and \ref{Fig:S0weak}, the $\mbox{(S0)}_{weak}$ model shows longer periods of time  where the hidden Markov states can be almost visually  guessed   when contrarily the $\mbox{(S0)}_{strong}$ model shows a high level of instability which makes the hidden Markov states hard to figure out (and thus probably to estimate).  We show in the performance Table \ref{tab:performanceS0_strong_weak} that this intuitive idea claiming that model $\mbox{(S0)}_{weak}$ would be easier to estimate than $\mbox{(S0)}_{strong}$ is actually not completely accurate. Indeed we can clearly see that the behavior, in terms  of  bias and standard deviation on both $\hat \theta_n$ and $\tilde \theta_n$, is globally  better under $\mbox{(S0)}_{strong}$ than $\mbox{(S0)}_{weak}$ especially when the sample size turns to be large.
If we now  look specifically  at the  results under $\mbox{(S0)}_{strong}$ it happens that  estimator $\hat \theta_n$ performs better than $\tilde \theta_n$ when the opposite happens under model $\mbox{(S0)}_{weak}$.
In order to go deeper into the asymptotic analysis  of our estimators we display in Figure \ref{fig:normalityS0strong}, respectively  Figure \ref{fig:normalityS0weak},  see Appendix section,  the empirical  $\sqrt{n}$-distribution (centered and normalized)  of our estimators  under $\mbox{(S0)}_{strong}$, resp. $\mbox{(S0)}_{weak}$. Clearly under $\mbox{(S0)}_{strong}$ the CLT  ``bell-regime" is reached roughly starting from $n=1,000$ for both estimators when under model $\mbox{(S0)}_{weak}$  we are not even close when considering  $n=5,000$, even if $\tilde\theta$ looks  slightly more Gaussian.

\subsection{Challenging cases with lower observability}
The goal of this section is to provide a  better understanding about  how sensitive the \textbf{ LinInd} condition is. In fact we can see, according to the \textbf{ LinInd} condition,  that the worst scenario  happens when the  stationary distribution of the known  stochastic process $Y^0$ is close to  the distribution of the i.i.d. sequence $Y^1$ and the chronological  dependence of $Y^0$ is weak, {\it i.e.} close to the independence setup ($\varphi\simeq 0$ in AR(1) case).  
For this purpose, we define  four extra  simple setups to challenge (with still reasonable asymptotic  performance results) our estimation methods: 
\begin{itemize}
\item (S1): $\theta=(0.2,0.4)$, $\varphi=0.7$ and $(m,v)=(1.5\times \mu_0,0.8\times \sqrt{\var_0})$.
\item (S2): $\theta=(0.2,0.4)$, $\varphi=0.7$ and $(m,v)=(2\times \mu_0,0.8\times \sqrt{\var_0})$.
\item (S3): $\theta=(0.6,0.3)$, $\varphi=0.7$ and $(m,v)=(1.5\times \mu_0,0.8\times \sqrt{\var_0})$.
\item (S4): $\theta=(0.6,0.3)$, $\varphi=0.5$ and $(m,v)=(2\times \mu_0,0.8\times \sqrt{\var_0})$.
\end{itemize}
For illustration purpose we display in Figures \ref{Fig:S1} to  \ref{Fig:S4},  trajectories corresponding respectively to models $\mbox{(S1)}$ and $\mbox{(S4)}$.
\begin{figure}[!htb]
\begin{minipage}{0.48\textwidth}
     \centering
     \includegraphics[width=7cm,height=4cm]{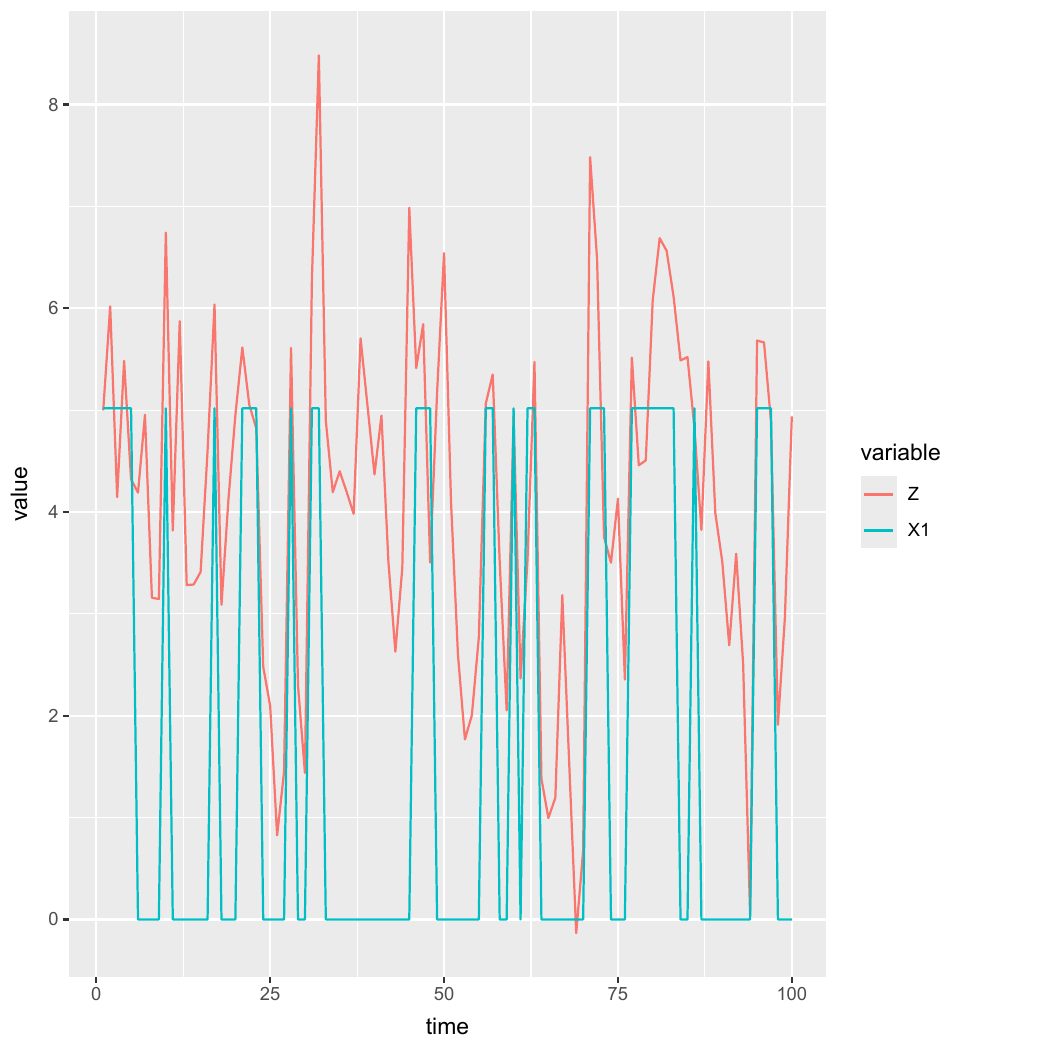}
     \caption{(S1) trajectory}\label{Fig:S1}
   \end{minipage}\hfill
    \begin{minipage}{0.48\textwidth}
     \centering
      \includegraphics[width=7cm,height=4cm]{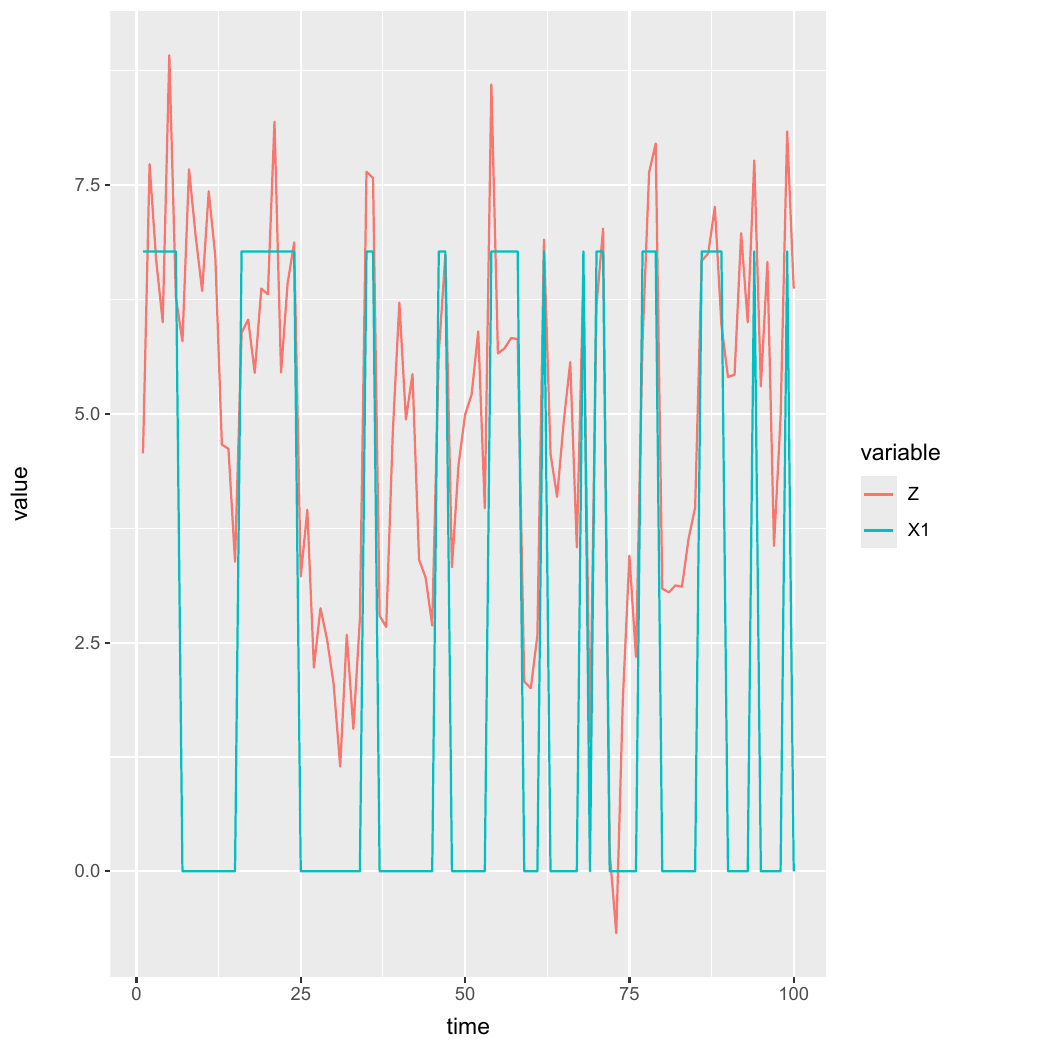}
     \caption{(S2)  trajectory}\label{Fig:S2}
     \end{minipage}\hfill
   \begin{minipage}{0.48\textwidth}
     \centering
     \includegraphics[width=7cm,height=4cm]{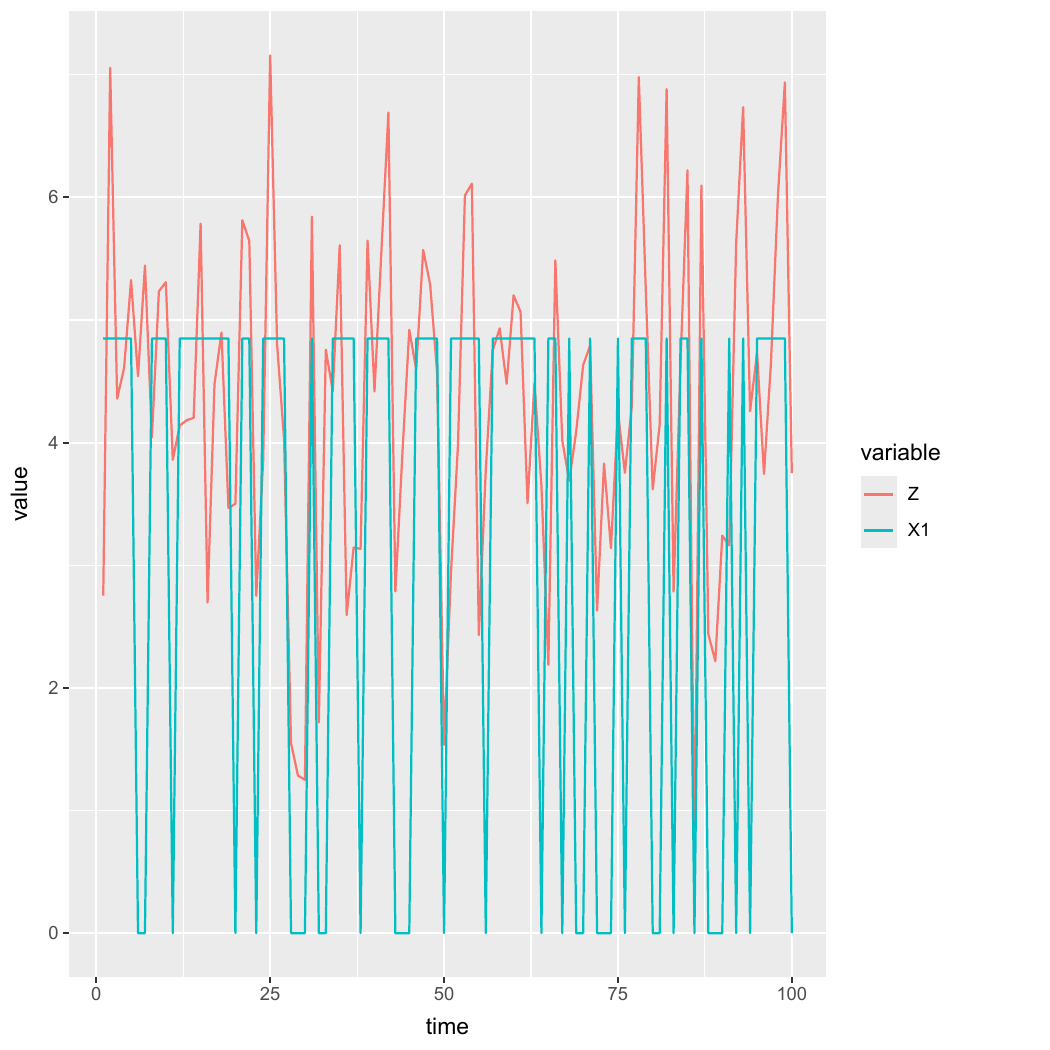}
     \caption{(S3) trajectory}\label{Fig:S3}
   \end{minipage}\hfill
   \begin{minipage}{0.48\textwidth}
     \centering
     \includegraphics[width=7cm,height=4cm]{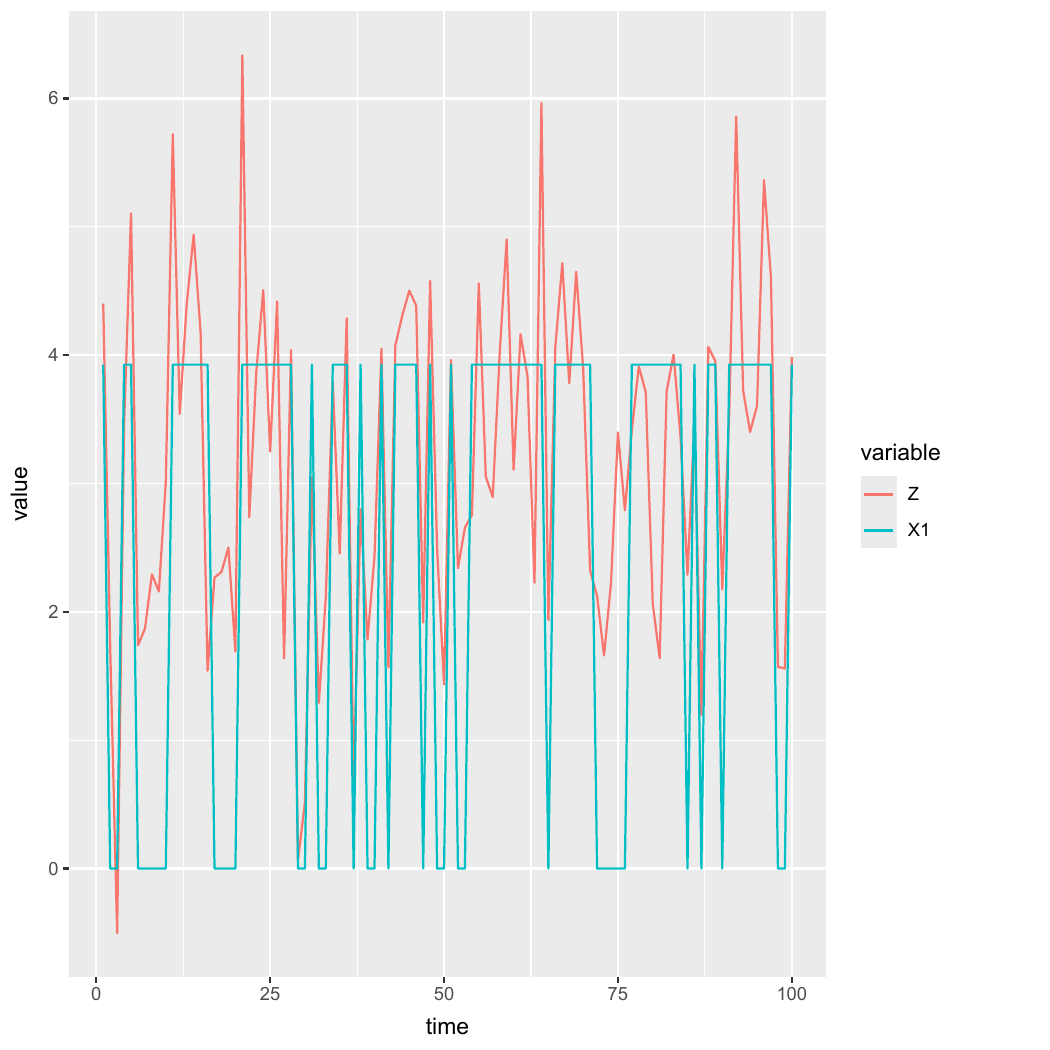}
     \caption{(S4)  trajectory}\label{Fig:S4}
   \end{minipage}
\end{figure}\\

\noindent {\it \bf Comments on models (S1--4)}.  Since models (S1) and (S2) have the  same transition values, taken not too large, we can see in  Figures \ref{Fig:S1} and \ref{Fig:S2} that long periods of time in which  process $Y^0$ and the noise sequence  $Y^1$ are consistently  observed  can happen with sometimes a good separability of the sources $Y^0$ and $Y^1$. Note that we increased the observability of model (S2) versus (S1) by increasing the expectation of $Y^1$ from 5 to a value about $6.66$. More specifically under (S1) we have $(\mu_0,m)\sim (3.33,5)$ when under (S2) we have $(\mu_0,m)\sim (3.33,6.66)$ with  a  positive jump in the expectation-gap, corresponding to $m-\mu_0$, going from about  1.66 to 2.33.
In model (S3) we only increase the jump frequencies by taking higher transition values but exact same AR(1) dynamic and observability as in model (S1).
We can clearly see the impact of this change in Figure \ref{Fig:S3} where the displayed  pattern looks clearly more erratic.  In model (S4) we keep the same transition values as in (S3) but  we drop the value of the regression coefficient from 0.7 to 0.5 which also reduces the expectation of the noise process $Y^1$ from 5 to 4. Let us stress out here that under (S3) we have $(\mu_0,m)\sim (3.33,5)$ when under (S4) we have $(\mu_0,m)\sim (2,4)$ with a jump in the expectation-gap from about  1.66 to 2 which should intuitively make the estimation easier (we will show that this is not necessarily true through the performance simulation results).
These preliminary comments done, let us comment on the performance results collected in Table \ref{tab:performanceS1-4}. 

As expected, the fact that we increase the observability of the jumps between model (S1) and (S2) has  a dramatic impact on  our estimators, reducing almost by half  the variance under (S2) compared to  (S1). We could also expect, similarly to what happened when studying the preliminary models $\mbox{(S0)}_{strong}$ and $\mbox{(S0)}_{weak}$,  that (S3) would  behave significantly  better than (S1). This is indeed the case for $n$ large (not really  noticeable for  $n=5,000$) when considering the estimator $\hat \theta$, when the sup-based estimator $\tilde \theta$ rather struggles in better performing  under (S3) compared to (S1). Finally it is very surprising (at  first glance)  to see that model (S4), which apparently has  a favorable observability gap,  is pretty badly estimated when using $\hat \theta_n$, when estimator $\tilde \theta_n$ achieve performances much closer to what observed in (S3). This could be explained by  the fact  that the regression coefficient  $\varphi$  is weaker under (S4) making the connection/link between two consecutive data-points in time  less obvious (jump from an AR-dynamic or purely independent).   This departure of behavior between $\hat \theta$ and $\tilde \theta$ should come from a too subtle variation (uncertainty management reflecting in the contrast function)  in term  $\Delta_n(\theta,x,y)$, involved in the empirical contrast functions (\ref{emp_d_s}),  along $\R^2$ when $\theta$ moves over $\Theta$. We think that some singularity about $\theta^*$ (slight global extrema) may be easier to capture with a local (singularity oriented contrast) $\tilde \theta$ type estimator  than a smooth but global $\Delta_n$-based contrast  such as $\hat \theta$.
Now to go deeper into the asymptotic analyses of our estimators, we also display in Figures \ref{fig:normalityS1}--\ref{fig:normalityS4}, see Appendix section,  the $\sqrt{n}$-normalized distribution  of our estimators $\hat \theta$ and $\tilde \theta$. In connection with the poor  performances of our estimators  under (S1), we can observe in Figure \ref{fig:normalityS1} that this also translates into the fact that our estimators do not reach their asymptotic normality regime for $n=5,000$ or  $10,000$. Nevertheless for $n=20,000$ we can observe a bell-like distribution for our integral-based estimator $\hat \theta$ illustrating, even in a challenging setup,  the Central Limit Theorem  established in Theorem \ref{theo:normasymp}. In contrast the $\sqrt n$-normalized distribution of both  $\hat \theta$ and $\tilde \theta$ estimators, show  bell-curves  under models (S2) and (S3)  starting from $n=5,000$ (smaller sample size considered in that study) under (S2) and $n=10,000$ under (S3). Finally, again in connection with the poor performances of our estimators  under (S4), the asymptotic normality regime is clearly not achieved for $n=5,000$ or $10,000$ with very different behaviors depending on the estimator: bimodality for $\hat \theta$ (symptom of a spurious minima of the contrast) and flatness for $\tilde \theta$ (lack of precision). However  for  $n=20,000$ the results turn out  to be slightly more  encouraging  especially for $\hat \theta$ which is proved to be asymptotically normal, see  Theorem \ref{theo:normasymp}.


\begin{table}[!b]
	
		\centering
	\scalebox{1}{
		\begin{tabular}{ccccc}
		\hline
			Model (S1)& $\hat \alpha_n$& $\hat \beta_n$& $\tilde \alpha_n$&$\tilde \beta_n$ \\
			\hline
			$n=5,000$    & (-0.099,0.075)      &(-0.186,0.208)            & (-0.034, 0.044)&(-0.016,0.186)\\ 
			$n=10,000$   &  (-0.037,0.065)     & (-0.072,0.183)           & (-0.057, 0.044)&(-0.154,0.124)\\  
			$n=20,000$    &  (-0.001,0.018)        & (0.053,0.098)           & (-0.022,0.026)& (-0.058,0.106)\\ 
			\hline
			Model (S2)& $\hat \alpha_n$& $\hat \beta_n$& $\tilde \alpha_n$&$\tilde \beta_n$ \\
			\hline
			$n=5,000$    &  ( -0.011,0.022)     & (0.033,0.105)         & (-0.037,0.029)&(-0.125,0.112) \\ 
			$n=10,000$   &  (-0.010,0.016)         & (-0.041,0.084)             &(-0.013,0.010)&(-0.044,0.085)\\   
			$n=20,000$    &  (-0.005,0.006) & (-0.012,0.053)           &(-0.020,0.012) &(-0.089,0.067)\\ 
			\hline 
			Model (S3)& $\hat \alpha_n$& $\hat \beta_n$& $\tilde \alpha_n$&$\tilde \beta_n$ \\
			\hline
			$n=5,000$    &  (-0.051,0.126)         & (-0.022,0.102)            &(-0.032,0.050)  &(0.010,0.130) \\  
			$n=10,000$   & (-0.002,0.020)         & (0.010,0.061)           & (-0.025,0.039)&(-0.010,0.098)\\  
			$n=20,000$    &  ( -0.000,0.016)        & (-0.003,0.049)             & (-0.007,0.022)&(0.017,0.082)\\
			\hline 
			Model (S4)& $\hat \alpha_n$& $\hat \beta_n$& $\tilde \alpha_n$&$\tilde \beta_n$ \\
			\hline
			$n=5,000$    & (-0.186,0.259)      &(-0.124,0.145)            & (-0.096,0.083)&(-0.052,0.150)\\ 
			$n=10,000$   &  (-0.169,0.233)     & (-0.099,0.147)           & (-0.025,0.041)&(0.044,0.146)\\  
			$n=20,000$    &  (-0.033,0.109)        & (-0.020,0.103)           & (-0.027,0.038)& (0.021,0.120)\\
			        
			 \end{tabular}
		}
	\caption{Bias and standard deviation based performances under model (S1--4) for $n=10,000$, $n=20,0000$ and $n=30,0000$ (in rows) and discrepancies
	 $\mathbf{d}(\cdot)$ (integration based)
$\mathbf{s}(\cdot)$ (sup based)
	}\label{tab:performanceS1-4}.
\end{table}

\subsection{Functional estimator behavior}
In this section we aim to illustrate the asymptotic behavior of our  plug-in inversion based functional estimator defined in (\ref{plug-in}). For this purpose 
we display in Figure \ref{plug-in-curves} some panels of 10 inversed cdfs deduced  from preliminary parametric estimation steps where $(\hat r_n,\hat p_n)=\left(\frac{\hat \alpha_n}{\hat \alpha_n+\hat \beta_n},\frac{\hat \beta_n}{\hat \alpha_n+\hat \beta_n}\right)$. An interesting information  is provided by Figure \ref{emp_Z_cdf_curves}, in which we display a panel of 10 empirical cds of the observed process $Z$ (on which our method is based) compared to the true $Z$-cdf $F$.  Note that Figure  \ref{plug-in-curves} and Figure \ref{emp_Z_cdf_curves} have been generated independently (there is no color-correspondence between the curves).
This figure allows to visualize in particular the type of functional estimation quality we have in input of our semiparametric estimation method and how it deteriorates after the parametric estimation step combined with the plug-in inversion step, see expression (\ref{plug-in}).
For clarity and interpretability  matters  we propose to run our semiparametric inversion based approach on model $(\mbox{S0})_{strong}$ which is identified as a model easy to estimate since reasonably  reliable parametric estimators can be obtained for samples size such as $n=1,000$ and $5,000$. Note that we could have trimmed/regularized our estimator (\ref{plug-in}) to only  keep the positive part of our inversed curve, see expression (\ref{plug-in}), which would  have provided much more satisfactory cdf-like curves. In Figure \ref{plug-in-curves} we preferred exactly to keep the original version of our estimator in order to clearly illustrate  how the  sample size $n$  impacts, because of the convergence results stated in Theorems   \ref{theo:normasymp} and  \ref{theo:risqueF},      the  regularity/cdf-conformity of our plug-in inversion based functional estimator (\ref{plug-in}). In fact we can see in Figure \ref{plug-in-curves} that the left side of the target $F^1$ curve is  in general pretty badly estimated (with obvious consequences over the  whole curve)  when the sample size is low ($n=1,000$) this drawback being almost solved 
without any trick when $n$ turns to be large ($n=5,000$). To explain this bad left-side behavior, one can go back to the following $F^1$-error decomposition: 
\begin{eqnarray}
F^1(x)-\hat F^1_n(x)&=&\frac{1}{r^*}\left(F(x)-p^*F^0(x)\right)-\frac{1}{\hat r_n}\left(\hat F_n(x)-\hat p_nF^0(x)\right)\nonumber\\
&=&\frac{1}{r^*}\left(F(x)-\hat F_n(x) \right)+\left(\frac{1}{r^*}-\frac{1}{\hat r_n}\right)\hat F_n(x)-\left(\frac{p^*}{r^*}-\frac{\hat p_n}{\hat r_n}\right) F^0(x)\nonumber\\
&\simeq&\frac{1}{r^*}F(x)-\left(\frac{p^*}{r^*}-\frac{\hat p_n}{\hat r_n}\right) F^0(x), \quad \mbox{in the left-tail of $F$}\label{left-tail}\\
&\simeq& \left(\frac{1}{r^*}-\frac{1}{\hat r_n}\right)\hat F_n(x)-\left(\frac{p^*}{r^*}-\frac{\hat p_n}{\hat r_n}\right) F^0(x),\quad \mbox{in the right-tail of $F$}\label{right-tail}.
\end{eqnarray}
Approximation (\ref{left-tail}) is based on the fact that in the left-tail of $F$, especially for $n$ (very) small, we have very few observations which makes $\hat F_n$ negligible compared to $F$ and $F_0$ (exact quantities). This phenomenon is still reminiscent for $n=1,000$ when we look closely at Figure \ref{emp_Z_cdf_curves} for design  values between -2 and 3 (negative bias). Note that the left component of our 1-order model (\ref{1order}),  under $(\mbox{S0})_{strong}$,   is $F^0$ (located  remotely on the left side of $F^1$)  which is weighted by $\beta^*/(\alpha^*+\beta^*)\simeq 0.533$. As a consequence the left-tail of $F$ is essentially estimated on a sample of size  just a bit larger than 500 when $n=1,000$.
On the other hand, approximation  (\ref{right-tail}) is based on the fact that in the right-tail of $F$, the difference $F-\hat F_n$ is very close to 0, even for small values of $n$, since we aggregate all the data
in $\hat F_n(x)$ to approximate $F(x)$ (both $F$ and $\hat F_n$ are very close to one). We then see that in the left-tail of $F$ there is a sort of mixed impact of the poor $F$ estimation combined with the fitting quality of the parameter estimates, when in the right-tail of $F$ the bias mainly depends on the parameter estimates  fitting. This explains, according to us, why the fitting of the curves displayed in Figure \ref{plug-in-curves}  looks way better on the right-tail  compared to the   left-tail  especially when $n$ increases and the Euclidean parameters fitting improves with the $\sqrt{n}$-regime, see Theorem \ref{theo:normasymp}.
\begin{figure}[!htb]
 \includegraphics[width=7cm,height=6cm]{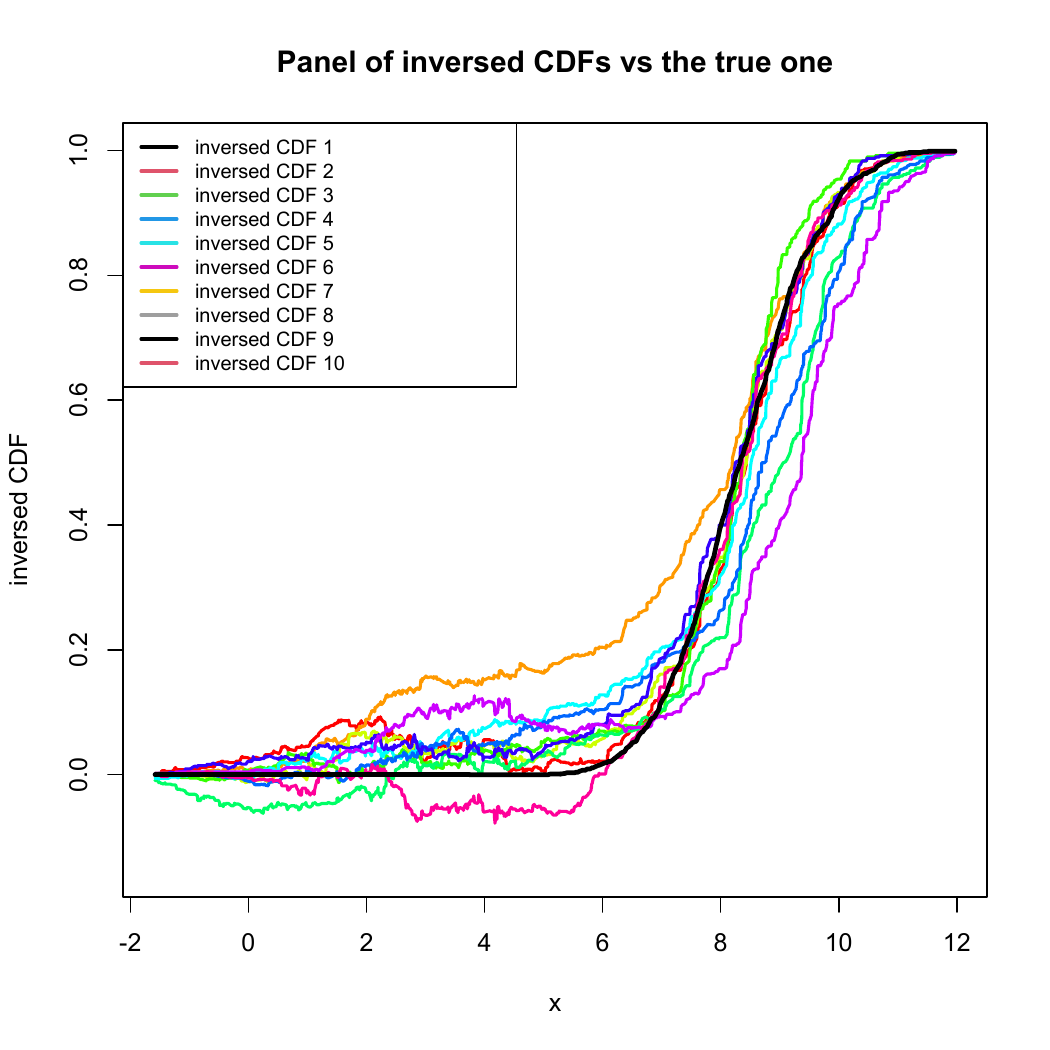}
 \includegraphics[width=7cm,height=6cm]{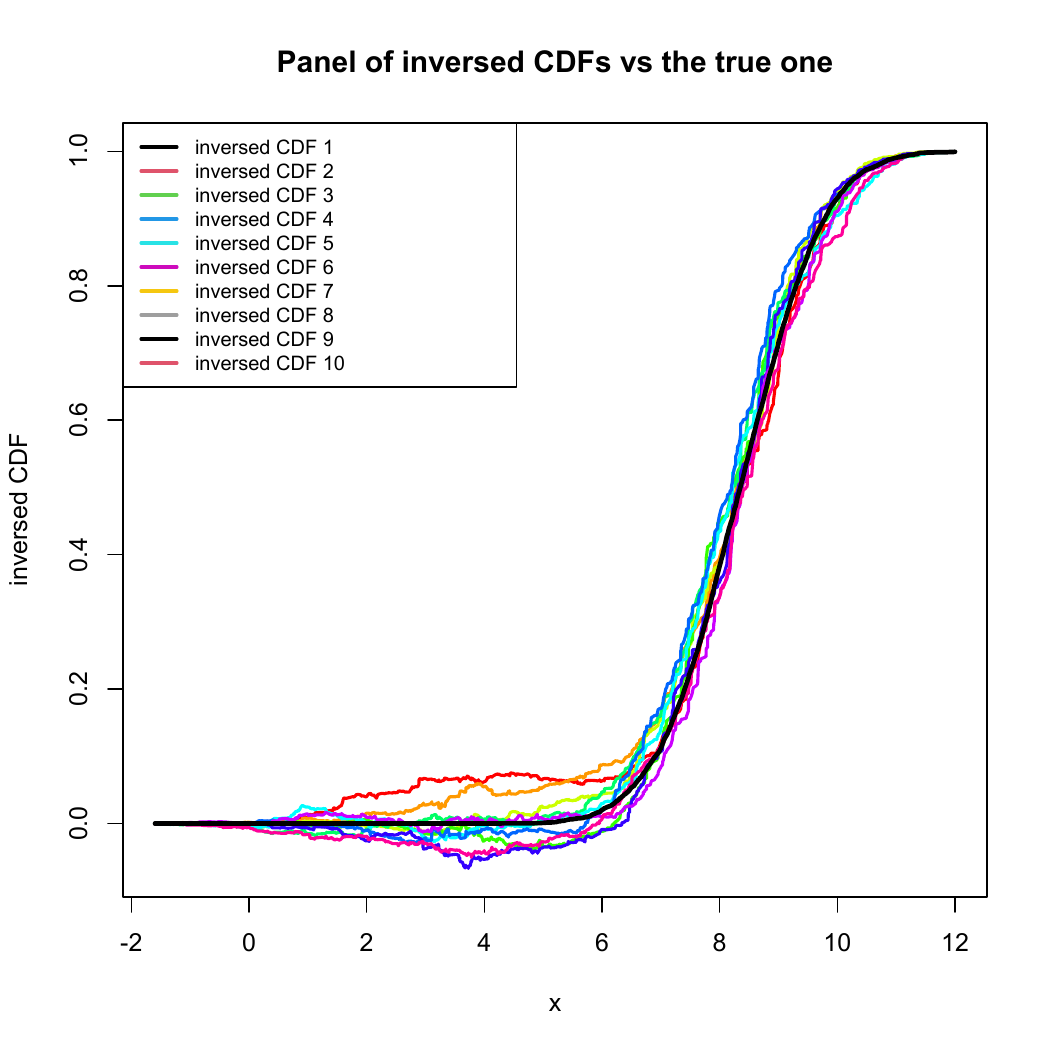}
 \caption{Panels of 10 plug-in inversion based functional estimator (\ref{plug-in}) of $F^1$ (displayed in black bold) under model $(\mbox{S0})_{strong}$  respectively for  $n=1,000$ and $5,000$.}\label{plug-in-curves}
\end{figure}
\begin{figure}[!htb]
 \includegraphics[width=7cm,height=6cm]{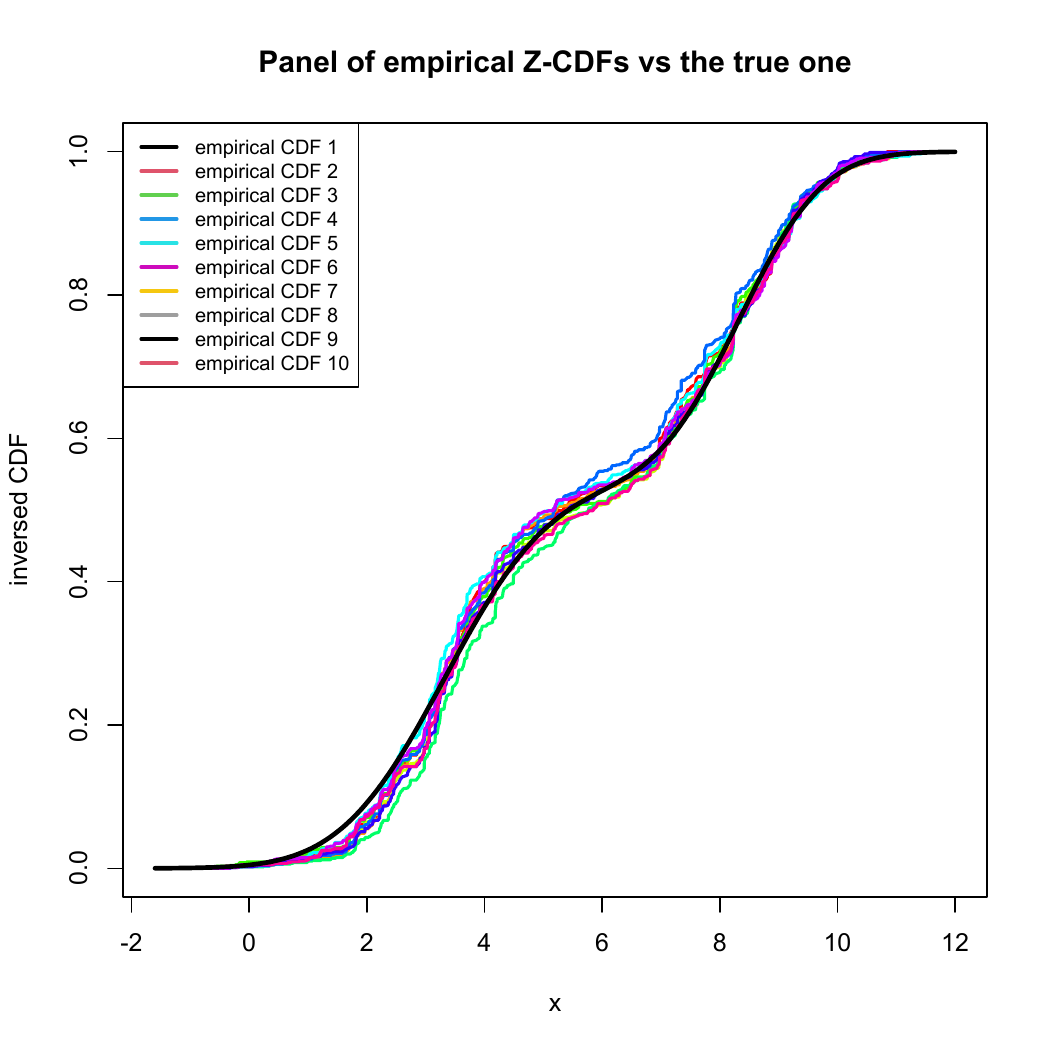}
 \includegraphics[width=7cm,height=6cm]{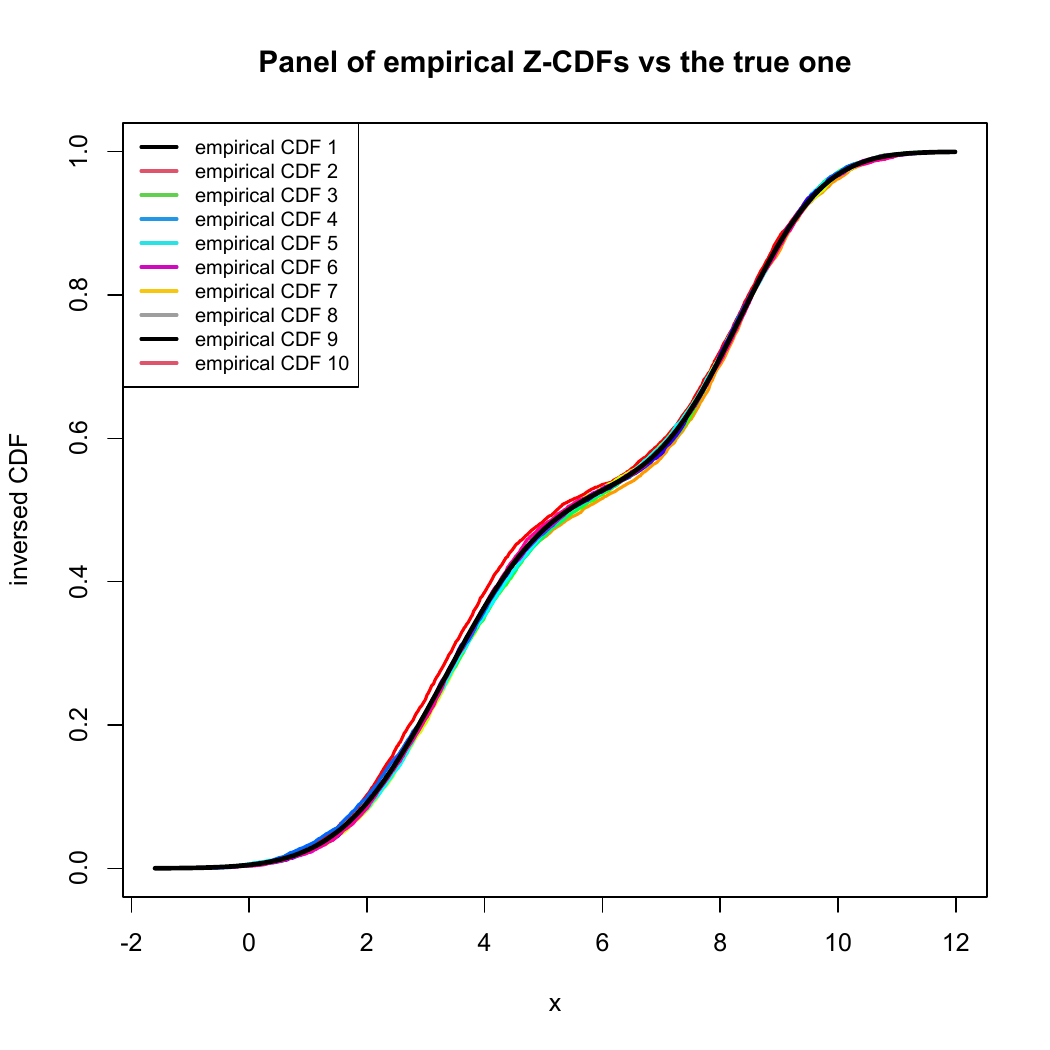}
 \caption{Panels of 10 empirical $Z$-cdfs of $F$ (displayed in black bold) under model $(\mbox{S0})_{strong}$  respectively for  $n=1,000$ and $5,000$.}\label{functional_CLT}\label{emp_Z_cdf_curves}
\end{figure}

\section{Concluding remarks}\label{sec:conclusion}

In this paper we introduced a new class of semiparametric chronological  mixture models: the so-called gold standard Markovian poisoning  model.
This model is an extension of  the classical semiparametric contamination model which has been extensively studied in the last two decades, see for example \cite{Patra} or \cite{MPSV24} and references therein.
We have in particular  tackled the semiparametric estimation for our model,  by proposing two minimum contrast estimators
of the latent Markov chain  transition matrix, which we prove, for one of them,  the $\sqrt{n}$-consistency and the strong consistency for the other.  We also stated  a functional central limit theorem along with a ``finite sample size" rate of convergence for the plug-in estimator of  $F^1$ (the unknown i.i.d. poisoning sequence cdf).
\noindent The simulation setups we investigated  interestingly  highlighted the role of the distance between the $Y^1$ and $Y^0$  distributions, see assumption {\bf LinInd}, but also the role of the amount of
 time-dependence carried by the  known  stochastic process $Y^0$, on the estimation performances. The proofs of our theoretical results (see Section~\ref{sec:preuvenormasymp}) also shown the crucial role played by:
 \begin{itemize}
\item $T_1=G^0-F^0\otimes F^0$, which translates the departure from independence over time  about  the gold standard process $Y^0$,
\item $T_2=r^{*2}(F^1-F^0)\otimes (F^1-F^0)$, which detects  the departure  between the mixed processes $Y^1$ and $Y^0$  stationary distributions. 
\end{itemize}

\noindent These quantities are found in particular in the Hessian of the contrast function and then   in the  $\sqrt{n}$-limit covariance matrix $\Sigma$ of our estimator $\hat \theta_n$, see  Theorem \ref{theo:normasymp} and its proof in  Section \ref{preuvecov}. The main tools for the proofs are a judicious  change of variable  (to make pointly appear $T_1$ and $T_2$), see definition of the $g$ and $h$ homeorphisms in  Section~\ref{sec:preuvenormasymp}, and the joint convergence of  the empirical processes $\mathbb F_n:=\sqrt{n}(\hat F_n-F)$  and $\mathbb{G}_n:=\sqrt{n}(\hat G_n-G)$  in the case of  non-independent observations. 
We also proved a non-asymptotic bound for our parametric and non-parametric estimators, using Kiefer process and strong approximation of the empirical distribution function for mixing sequences in $\R^2$, see the proof of Lemma~\ref{supprocessusempiriques} in Section \ref{preuverisqueF}.
Our method can be extended to the case where the distribution of the gold standard process in unknown but separately observed (training data). Indeed, we can then replace $G^0(x,y)$ by 
$$\tilde G^0_N(x,y)=\frac{1}{N-1}\sum_{k=1}^{N-1}\I_{\{Y^0[k]\leq x, Y^0[k+1]\leq y\}},\quad (x,y)\in E^2,$$ 
supposing we have at our disposal additional observations $(Y^0[k])_{1\leq k\leq N}$, at the cost of an additional  error term of order $1/\sqrt{N}$ (negligible if $N$ is large with respect to $n$).
The extension to time-dependent poisoning processes, {\it i.e.} $Y^1$ supposed to be stationary mixing or Markovian,  is the next challenge. It will require the use of the 3-order distribution of $(Z_i,Z_{i+1},Z_{i+2})$. Preliminary computations indicate that it seems possible to apply our previous method but with cumbersome technicalities (numerous intricate terms).
Finally we discussed, at the end of Section \ref{sec:nonparametric}, a decoding strategy, see expression (\ref{predict}), to recover the  states of the latent  picking Markov chain involved in  our model. This method is based on an inversed-kernel density estimate of the $Y^1$ pdf, see expression (\ref{inversed_pdf}), which will be studied in a latter work.

\section{Proofs}
\label{sec:proofs}

\subsection{Proof of Proposition~\ref{contrast}}
\label{proofcontrast}

Recall that, since  $\alpha\neq 0$, we have  $r\neq 0$, where $r=\alpha/(\alpha+\beta)$.
Denote now
$$G_\theta=\lambda_1 G^0+\lambda_2 F^0\otimes F^1_\theta
+\lambda_3F^1_\theta \otimes F^0+\lambda_4F^1_\theta \otimes F^1_\theta.$$
Replacing $F^1_\theta$ by its value $(F-pF^0)/r$ and using $\lambda_3=\lambda_2$, we obtain
\begin{eqnarray*}
G_\theta=\lambda_1 G^0+\left(\frac{\lambda_2}{r}-\frac{p\lambda_4}{r^2}\right) (F^0\otimes F+F\otimes F^0)
+\frac{\lambda_4}{r^2}F\otimes F+\left(-2\frac{\lambda_2p}{r}+\frac{p^2\lambda_4}{r^2}\right)F^0\otimes F^0.
\end{eqnarray*}
We then replace the $\lambda_i$'s by their value
$$\lambda_1=p(1-\alpha),\quad 
\lambda_2=\lambda_3=p\alpha=r\beta, \quad
\lambda_4=r(1-\beta),$$
and  use the notation $b=1-\beta$ to obtain the following decomposition
\begin{eqnarray*}
G_\theta=p(1-\alpha) G^0+\left(\beta-\frac{pb}{r}\right) (F^0\otimes F+F\otimes F^0)
+\frac{b}{r}F\otimes F+\left(-2\beta p+\frac{p^2b}{r}\right)F^0\otimes F^0.
\end{eqnarray*}
Next, we only keep variables $r$ and $b$, which gives
\begin{eqnarray}
\label{Ggamma}
G_\theta&=&(1-2r+rb) G^0+\left(1-\frac{b}{r}\right) (F^0\otimes F+F\otimes F^0)
+\frac{b}{r}F\otimes F+\left(-2+2r-rb+\frac{b}{r}\right)F^0\otimes F^0
\nonumber\\ 
&=&(1-2r+rb) \left(G^0-F^0\otimes F^0\right)+\left(1-\frac{b}{r}\right) (F^0\otimes F+F\otimes F^0-F^0\otimes F^0)
+\frac{b}{r}F\otimes F\nonumber\\
&=&(1-2r+rb) \left(G^0-F^0\otimes F^0\right)+\left(\frac{b}{r}-1\right) (F-F^0)\otimes (F-F^0)
+F\otimes F.
\end{eqnarray}
Since $F-F^0={r^*}(F^1-F^0)$, we can also write
\begin{eqnarray*}
G_\theta
&=&(1-2r+rb) \left(G^0-F^0\otimes F^0\right)+{\frac{r^{*2}}{r^2}}\left(rb-r^2\right) (F^1-F^0)\otimes (F^1-F^0)
+F\otimes F.
\end{eqnarray*}
Now, since $G=G_{\theta^*}$, we can write the following $\Delta$ decomposition
\begin{eqnarray*}
\Delta(\theta, \cdot)&=&G_{\theta^*}-G_\theta
=[(1-2r^*+r^*b^*)-(1-2r+rb)] \left(G^0-F^0\otimes F^0\right)\\
&&+[(r^*b^*-r^{*2})-{\frac{r^{*2}}{r^2}}(rb-r^2)] (F^1-F^0)\otimes (F^1-F^0).
\end{eqnarray*}
The equality \eqref{Deltabar} is now proved by taking  
\begin{equation}\label{c1c2}
c_1(\theta^*,\theta)=-2r^*+r^*b^*+2r-rb, \quad \mbox{and}\quad c_2(\theta^*,\theta)={\frac{r^*}{r}(b^*r-r^*b)}.
\end{equation}
Moreover, if $c_1(\theta^*,\theta)=0$ and $c_2(\theta^*,\theta)=0$ then 
\begin{eqnarray*}
0=c_1(\theta^*,\theta)-c_2(\theta^*,\theta)r/r^*=-2r^*+r^*b^*+2r-rb-b^*r+r^*b=(r^*-r)(-2+b^*+b).
\end{eqnarray*}
But recall that $b$ and $b^*$ belong to $[\delta,1-\delta]$ thus $b+b^*<2$ and
necessarily $r=r^*$. 
Using again $c_1(\theta^*,\theta)=0$, this gives $b=b^*$
and finally $(\alpha, \beta)=(\alpha^*, \beta^*)$. \qed

\subsection{Consistency proofs}
\label{sec:preuveconsistance}

\begin{lemma}\label{lipschitz}
For all $(\theta, \theta')\in \Theta^2$,  where $\Theta=[\delta,1-\delta]$ with $\delta\in]0,1/2[$, we have
\begin{eqnarray*}
|{\mathbf d}(\theta)-{\mathbf d}(\theta')|\leq  C(\delta) \|\theta-\theta'\|_1,\quad 
\mbox{and} \quad |{\mathbf d}_n(\theta)-{\mathbf d}_n(\theta')|  \leq  C(\delta) \|\theta-\theta'\|_1,
\end{eqnarray*}
where $\| \cdot\|_1$  is the norm defined for any generic vector $u=(u_1,u_2)^T\in \R^2$ by $\|u\|_1=|u_1|+|u_2|$. As a consequence  ${\mathbf d}(\cdot)$ and ${\mathbf d}_n(\cdot)$ are  uniformly continuous mappings on $\Theta$.
The same results  hold also for ${\mathbf s}(\cdot) $ and ${\mathbf s}_n(\cdot)$.
\end{lemma}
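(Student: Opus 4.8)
The plan is to show that $\theta\mapsto\Delta(\theta,x,y)$ (and its empirical counterpart $\Delta_n$) is Lipschitz in $\theta$ uniformly in $(x,y)$, and then to transfer this to $\mathbf d$, $\mathbf d_n$, $\mathbf s$, $\mathbf s_n$ by elementary manipulations. First I would recall from \eqref{deviation_quant} that $\Delta(\theta,x,y)$ is a polynomial expression in the four coefficients $\lambda_1,\dots,\lambda_4$ and in $F^1_\theta(x),F^1_\theta(y)$, all evaluated against the fixed functions $G^0,F^0,G$, every one of which is a cdf and hence bounded by $1$ in absolute value. The coefficients $\lambda_i=\lambda_i(\theta)$ are rational functions of $(\alpha,\beta)$ with denominator $\alpha+\beta$; since $\theta=(\alpha,\beta)\in[\delta,1-\delta]^2$ we have $\alpha+\beta\ge 2\delta>0$, so each $\lambda_i$ is $C^1$ on $\Theta$ with gradient bounded by a constant $C(\delta)$, hence $C(\delta)$-Lipschitz in $\|\cdot\|_1$. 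Likewise $p=\beta/(\alpha+\beta)$ and $r=\alpha/(\alpha+\beta)=1-p$ obey $r\ge\delta/(1-\delta)$ bounded away from $0$, so $1/r$ and $p/r$ are Lipschitz on $\Theta$; consequently, from \eqref{cdfinversion}, $\theta\mapsto F^1_\theta(x)=\frac1r(F(x)-pF^0(x))$ is Lipschitz in $\theta$ with a constant independent of $x$ (using $|F(x)|,|F^0(x)|\le 1$), and moreover $|F^1_\theta(x)|$ itself is uniformly bounded by some $M(\delta)$ on $\Theta\times E$. For the empirical version exactly the same reasoning applies with $F$ replaced by $\hat F_n$ and $G$ by $\hat G_n$, which are still bounded by $1$; the only $\theta$-dependence is through the same $\lambda_i(\theta)$ and $F^1_{n,\theta}$, so the Lipschitz constant is the same $C(\delta)$, deterministically (no dependence on $n$ or on the sample).

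Next I would combine these pieces. Writing $\Delta(\theta,x,y)$ as a finite sum of products, each factor being either a fixed bounded function or one of the $\theta$-dependent quantities $\lambda_i(\theta)$, $F^1_\theta(x)$, $F^1_\theta(y)$ just shown to be bounded and Lipschitz in $\theta$, the standard fact that a product of uniformly bounded, uniformly Lipschitz functions is uniformly Lipschitz gives
$$|\Delta(\theta,x,y)-\Delta(\theta',x,y)|\le C(\delta)\,\|\theta-\theta'\|_1\qquad\text{for all }(x,y)\in E^2,$$
and identically for $\Delta_n$ in place of $\Delta$ (with the same $C(\delta)$). For the sup-discrepancy this is immediate: $|\mathbf s(\theta)-\mathbf s(\theta')|=|\sup_{(x,y)}|\Delta(\theta,\cdot)|-\sup_{(x,y)}|\Delta(\theta',\cdot)||\le\sup_{(x,y)}|\Delta(\theta,x,y)-\Delta(\theta',x,y)|\le C(\delta)\|\theta-\theta'\|_1$, and the same for $\mathbf s_n$. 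For the integral discrepancy $\mathbf d(\theta)=\iint\Delta^2\,dH$ I would use $|a^2-b^2|=|a-b|\,|a+b|$ together with the uniform bound $\sup_{\theta,x,y}|\Delta(\theta,x,y)|\le B(\delta)$ (which follows from the boundedness just established) to write
$$|\mathbf d(\theta)-\mathbf d(\theta')|\le\iint|\Delta^2(\theta,\cdot)-\Delta^2(\theta',\cdot)|\,dH\le 2B(\delta)\iint|\Delta(\theta,\cdot)-\Delta(\theta',\cdot)|\,dH\le 2B(\delta)\,C(\delta)\,\|\theta-\theta'\|_1,$$
using $\iint dH=1$; renaming $2B(\delta)C(\delta)$ as $C(\delta)$ gives the claim, and the empirical case $\mathbf d_n$ is word-for-word the same. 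Uniform continuity on the compact $\Theta$ is then automatic from the Lipschitz property.

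The only mildly delicate point — not really an obstacle — is to make sure every constant genuinely depends on $\delta$ alone and not on $n$, $(x,y)$, or the realisation of the data; this is handled by systematically using $\alpha+\beta\ge 2\delta$, $r\ge\delta/(1-\delta)$, and $\|G^0\|_\infty,\|F^0\|_\infty,\|G\|_\infty,\|F\|_\infty\le 1$ together with $\|\hat F_n\|_\infty,\|\hat G_n\|_\infty\le 1$. Since $|F^1_{n,\theta}(x)|=|\frac1r(\hat F_n(x)-p F^0(x))|\le\frac1r(1+1)=2/r$ is likewise bounded by $M(\delta)$ uniformly, $\Delta_n$ enjoys the same uniform bound $B(\delta)$ as $\Delta$, so no separate argument is needed for the empirical functionals. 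I expect the write-up to be routine once the boundedness-and-Lipschitzness of $\lambda_i(\theta)$, $1/r$, $p/r$, $F^1_\theta$, $F^1_{n,\theta}$ on $\Theta$ is recorded explicitly.
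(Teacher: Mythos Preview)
Your proposal is correct and complete: all the ingredients (Lipschitzness of $\lambda_i$, $1/r$, $p/r$, $F^1_\theta$, $\hat F^1_{n,\theta}$ on $\Theta$; uniform boundedness of every factor; the product rule for Lipschitz functions; the $|a^2-b^2|$ trick; the reverse triangle inequality for $\mathbf s$) are in place, and the key observation that the empirical versions satisfy the same bounds because $\|\hat F_n\|_\infty,\|\hat G_n\|_\infty\le 1$ is exactly right. One tiny slip: the lower bound on $r=\alpha/(\alpha+\beta)$ is $\delta$ (take $\alpha=\delta$, $\beta=1-\delta$), not $\delta/(1-\delta)$; this does not affect anything.

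The paper, however, takes a different and somewhat slicker route. Instead of treating $\Delta_n(\theta,\cdot)$ as a generic polynomial in the $\lambda_i(\theta)$ and $\hat F^1_{n,\theta}$, it first rewrites $\hat G_{n,\theta}$ via the algebraic identity already used in Proposition~\ref{contrast} (equation~\eqref{Ggamma}), obtaining
\[
\hat G_{n,\theta}=pa\,(G^0-F^0\otimes F^0)+\Bigl(\tfrac{b}{r}-1\Bigr)(\hat F_n-F^0)\otimes(\hat F_n-F^0)+\hat F_n\otimes\hat F_n,
\]
with $a=1-\alpha$, $b=1-\beta$. The difference $\Delta_n(\theta,\cdot)-\Delta_n(\theta',\cdot)$ then collapses to only two terms, with scalar coefficients $p'a'-pa$ and $b'/r'-b/r$, and explicit elementary bounds give the constant $C(\delta)=16\delta^{-4}$. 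Your approach trades this algebraic shortcut for a more modular argument that would survive a change of parametrization; the paper's buys sharper explicit constants and reuses the structural simplification that drives the identifiability proof.
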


\begin{proof}
Denote 
$$\hat G_{n,\theta}=\lambda_1 G^0+\lambda_2 F^0\otimes \hat {F}_{n,\theta}^1
+\lambda_3\hat {F}_{n,\theta} ^1\otimes F^0+\lambda_4\hat {F}_{n,\theta} ^1\otimes \hat {F}_{n,\theta}^1,$$
where we recall that $\hat{F}_{n,\theta}^1=(\hat {F}_{n}-pF^0)/r.$
Reasoning similarly to the proof of Proposition~\ref{contrast}, analogously to equality \eqref{Ggamma}, we have:
$$\hat G_{n,\theta}=(1-2r+rb) \left(G^0-F^0\otimes F^0\right)+\left(\frac{b}{r}-1\right) ( \hat {F}_{n}-F^0)\otimes ( \hat {F}_{n}-F^0)
+\hat {F}_{n}\otimes \hat {F}_{n}.$$ 
Moreover $1-2r+rb=pa$ where we denote $a=1-\alpha$.
Using now the definition of $\Delta_n$,  see below Equation (\ref{emp_d_s}), we obtain:
\begin{eqnarray*}
\Delta_n(\theta,\cdot )-\Delta_n(\theta',\cdot )
=(p'a'-pa) \left(G^0-F^0\otimes F^0\right)+\left(\frac{b'}{r'}-\frac{b}{r}\right) ( \hat {F}_{n}-F^0)\otimes ( \hat {F}_{n}-F^0).
\end{eqnarray*}
Now let observe that $G^0,F^0$, and $\hat F_n$ are all  bounded by 1. This allows to obtain the following majorization
\begin{eqnarray*}
|\Delta_n(\theta,\cdot)-\Delta_n(\theta',\cdot )|
&\leq &|p'a'-pa|+\left|\frac{b'}{r'}-\frac{b}{r}\right|\\
&\leq &|p'(a'-a)+a(p'-p)|+\left|\frac{b'(r-r')+r'(b'-b)}{rr'}\right|.
\end{eqnarray*}
Next we use the fact that $\alpha$ and $\beta$ belong to $[\delta, 1-\delta] $ and then $1/r=1+\beta/\alpha\in [1,1/\delta]$.  This leads to
\begin{eqnarray*}
|\Delta_n(\theta,\cdot )-\Delta_n(\theta',\cdot )|
\leq 
|a'-a|+|p'-p|+\frac{1}{\delta^2}|r-r'|+\frac{1}{\delta}|b-b'|.
\end{eqnarray*}
Moreover 
$(r'-r)(\alpha'+\beta')=p(\alpha'-\alpha)+r(\beta-\beta')$ which provides
$$|r'-r|\leq \frac{1}{2\delta}\left(|\alpha'-\alpha|+|\beta'-\beta|\right).$$
Thus, since $p=1-r$, $a=1-\alpha$, and $b=1-\beta$,  we  have 
\begin{eqnarray*}
|\Delta_n(\theta,\cdot)-\Delta_n(\theta',\cdot)|
\leq \left(\frac{1}{2\delta^3}+
\frac1{2\delta}+1 \right)|\alpha'-\alpha|+
\left(\frac{1}{2\delta^3}+
\frac{3}{2\delta}\right)|\beta'-\beta|,
\end{eqnarray*}
and using $\delta\leq 1$, we finally obtain
$$
|\Delta_n(\theta,\cdot )-\Delta_n(\theta',\cdot )|
\leq \frac1{\delta^3}\left(2|\alpha'-\alpha|+
2|\beta'-\beta|\right)\leq 2 \delta^{-3}\|\theta-\theta'\|_1.
$$
Moreover, since  
$|\Delta_n(\theta,\cdot)|\leq 1+|pa|+\left|\frac{b}{r}-1\right|\leq (3+\delta^{-1})\leq 4\delta^{-1}$,  we obtain  
$$
|\Delta_n(\theta,\cdot )-\Delta_n(\theta',\cdot)|^2\leq 
 (4\delta^{-1}+4\delta^{-1})2\delta^{-3} \|\theta-\theta'\|_1,
$$
and $|{\mathbf d}_n(\theta)-{\mathbf d}_n(\theta')|\leq 
 16 \delta^{-4}\|\theta-\theta'\|_1.$
Similarly, we have  
$|{\mathbf d}(\theta)-{\mathbf d}(\theta')|\leq 
 16 \delta^{-4}\|\theta-\theta'\|_1.$
\end{proof}

\begin{proposition}\label{convergencecontraste}
If  $\Theta=[\delta,1-\delta]^2$, with $\delta\in ]0,1/2[$, we have: 
\begin{eqnarray*}
\sup_{\theta\in \Theta}
|{\mathbf d}_n(\theta)-{\mathbf d}(\theta) |
\leq (6+2\delta^{-1} )\left(\|\hat G_n-G\|_{\infty}
+(2+4\delta^{-1}) \|\hat F_{n}-F\|_{\infty}\right).
\end{eqnarray*}
A similar bound is valid replacing ${\mathbf d}_n(\cdot)-{\mathbf d}(\cdot)$ by ${\mathbf s}_n(\cdot)-{\mathbf s}(\cdot)$.
\end{proposition}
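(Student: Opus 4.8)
The plan is to bound the sup-norm distance between $\mathbf{d}_n$ and $\mathbf{d}$ uniformly in $\theta$ by reducing everything to the uniform distances $\|\hat G_n - G\|_\infty$ and $\|\hat F_n - F\|_\infty$, which then go to zero by the Glivenko--Cantelli-type result (Proposition~\ref{GlivenkoCantelli}, referenced but not yet seen). The starting point is the algebraic identity already derived in the proof of Lemma~\ref{lipschitz}: writing $G_\theta$ for the parametric reconstruction built from the true $F, F^0, G^0$ and $\hat G_{n,\theta}$ for the one built from the empirical $\hat F_n$ in place of $F$, both admit the same affine-in-$(r,b)$ representation. Subtracting,
\begin{eqnarray*}
\Delta_n(\theta,\cdot) - \Delta(\theta,\cdot) = \left(\hat G_n - G\right) + \left(\frac{b}{r}-1\right)\left[(\hat F_n - F^0)\otimes(\hat F_n - F^0) - (F - F^0)\otimes(F-F^0)\right] + \hat F_n\otimes \hat F_n - F\otimes F,
\end{eqnarray*}
since $G = G_{\theta^*}$ and $\hat G_n$ replaces $G_{\theta^*}$ evaluated at the true parameter (the $G^0-F^0\otimes F^0$ term, carrying the coefficient $1-2r+rb$ that does not involve $\hat F_n$, cancels exactly).

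Next I would bound $\|\Delta_n(\theta,\cdot)-\Delta(\theta,\cdot)\|_\infty$ uniformly in $\theta$. The tensor differences are handled by the telescoping trick $\phi_1\otimes\phi_1 - \phi_2\otimes\phi_2 = (\phi_1-\phi_2)\otimes\phi_1 + \phi_2\otimes(\phi_1-\phi_2)$, so each contributes at most $2\|\hat F_n - F\|_\infty$ (using that all the cdfs involved are bounded by $1$, hence so are $\hat F_n - F^0$ and $F - F^0$ in sup norm up to the factor already absorbed). The coefficient $|b/r - 1| = |b-r|/r \le 1/\delta$ since $r\in[\delta,1-\delta]$ and $|b-r|\le 1$, so the middle term is at most $(2/\delta)\cdot 2\|\hat F_n-F\|_\infty$ — I will need to be a little careful bookkeeping the constants to land on the stated $(2+4\delta^{-1})$. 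Collecting, $\|\Delta_n(\theta,\cdot)-\Delta(\theta,\cdot)\|_\infty \le \|\hat G_n - G\|_\infty + (2+4\delta^{-1})\|\hat F_n - F\|_\infty =: \rho_n$, uniformly in $\theta$.

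Finally I would pass from $\Delta$ to the contrasts. For $\mathbf{d}$, write $\mathbf{d}_n(\theta) - \mathbf{d}(\theta) = \iint (\Delta_n^2 - \Delta^2)\,dH = \iint (\Delta_n - \Delta)(\Delta_n+\Delta)\,dH$; bounding $|\Delta_n+\Delta| \le |\Delta_n - \Delta| + 2|\Delta|$ with the a priori bound $|\Delta(\theta,\cdot)| \le 1 + |pa| + |b/r - 1| \le 3 + \delta^{-1}$ from Lemma~\ref{lipschitz}, together with $\iint dH = 1$, gives $|\mathbf{d}_n(\theta)-\mathbf{d}(\theta)| \le \rho_n(\rho_n + 2(3+\delta^{-1}))$; since $\rho_n \le 2 + 4\delta^{-1} + \|\hat G_n - G\|_\infty \le$ (a constant, as the sup norms are bounded by $1$ anyway — and for the qualitative statement $\rho_n\to 0$), one can absorb into the cleaner linear bound $(6+2\delta^{-1})\rho_n$ stated in the proposition, which is what a slightly coarser estimate $|\Delta_n + \Delta|\le 2(3+\delta^{-1})$ (valid once $\rho_n\le$ something, or simply using that each of $\Delta_n,\Delta$ is bounded by $3+\delta^{-1}$ plus the extra from $\hat F_n$) yields. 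For $\mathbf{s}$, the reverse triangle inequality for suprema gives directly $|\mathbf{s}_n(\theta) - \mathbf{s}(\theta)| = \big|\sup|\Delta_n| - \sup|\Delta|\big| \le \sup|\Delta_n - \Delta| \le \rho_n$, which is even stronger, so the same bound holds a fortiori. The main obstacle is purely cosmetic: matching the constants to the exact form $(6+2\delta^{-1})(\|\hat G_n-G\|_\infty + (2+4\delta^{-1})\|\hat F_n - F\|_\infty)$ announced in the statement, which requires choosing the coarsest convenient bounds on $|\Delta_n + \Delta|$ rather than the tightest; there is no conceptual difficulty, only careful arithmetic with the $\delta$-dependent factors.
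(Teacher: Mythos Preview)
Your approach is essentially the paper's: bound $|\Delta_n|,|\Delta|\le 3+\delta^{-1}$ separately up front (exactly your final parenthetical, using that $\hat G_n,\hat F_n,F^0$ are all $[0,1]$-valued so the same bound that works for $\Delta$ works verbatim for $\Delta_n$) to get $|\Delta_n^2-\Delta^2|\le(6+2\delta^{-1})|\Delta_n-\Delta|$ directly, with no quadratic $\rho_n^2$ term to absorb; then bound $\|\Delta_n-\Delta\|_\infty$ as you outline. Two cosmetic points: the signs in your displayed decomposition are both flipped (the $(b/r-1)$ bracket and the $\hat F_n\otimes\hat F_n-F\otimes F$ term should each carry a minus), which is harmless for the absolute-value estimate; and the paper reaches the same $\Delta_n-\Delta$ bound by going back through the original $\lambda_i,\hat F^1_{n,\theta}$ definition and simplifying to a coefficient $1+2b/r\le 1+2\delta^{-1}$ on $|\hat F_n-F|(x)+|\hat F_n-F|(y)$, which is what produces exactly $(2+4\delta^{-1})$.
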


\begin{proof}
Recalling the previous proof, we have the decomposition
$$ \Delta_{n}(\theta,\cdot)=\left(\hat G_n-\hat {F}_{n}\otimes \hat {F}_{n}\right)-pa \left(G^0-F^0\otimes F^0\right)-\left(\frac{b}{r}-1\right) ( \hat {F}_{n}-F^0)\otimes ( \hat {F}_{n}-F^0),$$
 which brings 
$|\Delta_n(\theta,\cdot)|\leq  1+|pa|+\left|\frac{b}{r}-1\right|\leq 3+\delta^{-1}$.
In the same way $|\Delta(\theta,\cdot)|\leq   3+\delta^{-1}$.
%
Hence we have the following majorizations: 
\begin{eqnarray*}
\left|\Delta_n^2-\Delta^2\right|
\leq  |\Delta_n-\Delta|
(|\Delta_n|+|\Delta|)
\leq  (6+2\delta^{-2})|\Delta_n-\Delta|,
\end{eqnarray*}
along with
\begin{eqnarray*}
|{\mathbf d}_n(\theta)-{\mathbf d}(\theta) |
\leq (6+2\delta^{-1} )\iint |\Delta_n-\Delta| (\theta,x,y)dH(x,y).
\end{eqnarray*}
Now
\begin{eqnarray*}
\Delta_n(\theta,\cdot)-\Delta(\theta,\cdot)&=&\left(\hat G_n-G\right)-\lambda_2 F^0\otimes \left(\hat F_{n,\theta}^1-F_\theta^1\right)
-\lambda_3\left(\hat F_{n,\theta}^1-F_\theta^1\right )\otimes F^0\\
&&
-\lambda_4\left(\hat F_{n,\theta}^1\otimes \hat F_{n,\theta}^1-F_\theta^1\otimes F_\theta^1\right),
\end{eqnarray*}
and using the definition of $\hat F_{\theta}^1$ and $\hat F_{n,\theta}^1$, given respectively in (\ref{cdfinversion}) and (\ref{inv-plug-emp-cdf}), it comes 
\begin{eqnarray*}
\Delta_n(\theta,\cdot)-\Delta(\theta,\cdot)&=&\left(\hat G_n-G\right)-\frac{\lambda_2}{r} F^0\otimes \left(\hat F_{n}-F\right)
-\frac{\lambda_3}{r}\left(\hat F_{n}-F\right)\otimes F^0\\
&&
-\frac{\lambda_4}{r^2}\left(\left(\hat F_{n}-pF^0\right)\otimes \left(\hat F_{n}-pF^0\right)-\left(F-pF^0\right)\otimes \left(F-pF^0\right)\right),
\end{eqnarray*}
with $\lambda_2/r=\lambda_3/r=\beta$ and $\lambda_4/r=1-\beta=b$. Then 
\begin{eqnarray*}
\Delta_n(\theta,\cdot)-\Delta(\theta,\cdot)
&=&\left(\hat G_n-G\right )+\left(\frac{b}{r}-1\right) \left[F^0\otimes \left(\hat F_{n}-F\right)+\left(\hat F_{n}-F\right )\otimes F^0\right]
\\&&
-\frac{b}{r}\left(
\hat F_{n}\otimes \hat F_{n}-F \otimes F\right).
\end{eqnarray*}
Regarding  the last term we can note that
\begin{eqnarray*}
|\hat F_{n}(x) \hat F_{n}(y)-F (x) F(y) |&=&
|\hat F_{n}(x) (\hat F_{n}(y)-F(y))+F(y)(\hat F_n(x)-F (x)) |\\
&\leq &|\hat F_{n}(y)-F(y)|+|\hat F_n(x)-F (x)|,
\end{eqnarray*}
for all $(x,y)\in E^2$. This leads to 
\begin{eqnarray*}
|\Delta_n(\theta,x,y)-\Delta(\theta,x,y) |
\leq
|\hat G_n-G|(x,y)
+\left(1+\frac{2b}{r}\right ) \left[|\hat F_{n}-F |(x)+ |\hat F_{n}-F |(y)\right],
\end{eqnarray*}
with $\sup_{\theta\in \Theta} \left(1+2b/r\right)\leq 1+2\delta^{-1}$. We finally obtain
\begin{eqnarray*}
\sup_{\theta\in \Theta}
|{\mathbf d}_n(\theta)-{\mathbf d}(\theta) |
\leq (6+2\delta^{-1} )\left(\|\hat G_n-G\|_{\infty}
+(2+4\delta^{-1}) \|\hat F_{n}-F\|_{\infty}\right),
\end{eqnarray*}
by recalling that  we assumed $\iint dH=1$. 
\end{proof}

\begin{proposition}\label{GlivenkoCantelli}
If the process $(Z_i)_{i\geq 1}$ is stationary, we  have 
\begin{eqnarray*}
\|\hat G_n-G\|_{\infty}\stackrel{a.s} {\longrightarrow} 0, \quad \text{ and } \quad 
 \|\hat F_{n}-F\|_{\infty}\stackrel{a.s} {\longrightarrow} 0, \quad \mbox{as} \quad n\rightarrow +\infty.
\end{eqnarray*}
\end{proposition}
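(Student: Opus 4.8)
The statement is a Glivenko--Cantelli type result for the one- and two-dimensional empirical cumulative distribution functions of a stationary sequence $(Z_i)_{i\ge1}$, and the natural route is the classical reduction to a countable dense set combined with monotonicity and the ergodic theorem. First I would handle $\hat F_n$. The function $x\mapsto\I_{\{Z_1\le x\}}$ is, for each fixed $x$, an integrable function of the stationary sequence, so by Birkhoff's pointwise ergodic theorem (valid for stationary, not necessarily ergodic, sequences provided we interpret the limit as the conditional expectation given the invariant $\sigma$-field, but here $F$ is deterministic because we may condition on the invariant field and the one-dimensional marginal is fixed --- alternatively invoke the strong law for stationary sequences directly to get a.s.\ convergence to $F(x)$) we have $\hat F_n(x)\to F(x)$ a.s.\ for each fixed $x$. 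To upgrade pointwise convergence to uniform convergence I would use the standard monotonicity argument: fix $\varepsilon>0$, choose finitely many points $-\infty=t_0<t_1<\dots<t_m=+\infty$ with $F(t_j^-)-F(t_{j-1})\le\varepsilon$ (using right-continuity and the limits $F(-\infty)=0$, $F(+\infty)=1$; atoms are absorbed by also including the points where $F$ jumps), apply the a.s.\ pointwise convergence at the finitely many $t_j$ and $t_j^-$, and sandwich $\hat F_n(x)$ between $\hat F_n(t_{j-1})$ and $\hat F_n(t_j^-)$ for $x\in[t_{j-1},t_j)$. This is exactly the proof of the classical Glivenko--Cantelli theorem, the only modification being that the i.i.d.\ SLLN is replaced by the ergodic theorem, which applies verbatim to stationary sequences.

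**The two-dimensional part.** For $\hat G_n$ the argument is structurally identical but requires a little more care on two points. The sequence $W_i:=(Z_i,Z_{i+1})$ is itself stationary (as a function shift of the stationary sequence $(Z_i)$), so for each fixed $(x,y)$ the ergodic theorem gives $\hat G_n(x,y)=\frac{1}{n-1}\sum_{i=1}^{n-1}\I_{\{W_i\le(x,y)\}}\to G(x,y)$ a.s. The passage to uniform convergence over $\R^2$ then follows the bracketing-by-a-grid idea in two dimensions: cover $\overline{\R}^2$ by a finite grid of rectangles whose $G$-mass increments are $\le\varepsilon$ (possible because $G$ is a bona fide bivariate cdf, hence right-continuous with the usual monotonicity in each coordinate and with limits $0$ and $1$ at the corners), bound $\hat G_n$ on each cell from above and below by its values at the grid corners using the monotonicity of $\hat G_n$ in each argument, and take $n\to\infty$ along the finitely many grid points. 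The only genuinely new subtlety compared with the one-dimensional case is choosing the grid so that the ``slabs'' $\{x_{j-1}\le x<x_j\}\times\{y_{k-1}\le y<y_k\}$ have small $G$-increment simultaneously in both marginals and in the joint mass; this is handled by first taking a grid adapted to the marginal $F$ (which controls one-dimensional fluctuations) and refining it if necessary, and it is entirely routine.

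**Main obstacle.** There is no deep obstacle; the one point to be slightly careful about is that $(Z_i)$ is only assumed \emph{stationary}, not ergodic, so Birkhoff's theorem a priori gives convergence to $\bE[\I_{\{Z_1\le x\}}\mid\mathcal I]$ where $\mathcal I$ is the invariant $\sigma$-field. One must argue that this conditional expectation equals the deterministic number $F(x)$. This is immediate here: the limit random variable is invariant, and its expectation is $F(x)$; but more to the point, the statement of the proposition is about convergence to the fixed function $F$, so what is really being used is that the \emph{marginal} law is the prescribed stationary law --- equivalently one simply invokes the form of the ergodic theorem that states $\frac1n\sum_{i=1}^n\I_{\{Z_i\le x\}}\to F(x)$ a.s.\ whenever $F$ is the one-dimensional marginal of a stationary process, which is standard (and holds without ergodicity because $F(x)=\bE[\I_{\{Z_1\le x\}}]=\bE[\bE[\I_{\{Z_1\le x\}}\mid\mathcal I]]$ forces the invariant limit to equal $F(x)$ a.s.\ only if one additionally knows ergodicity --- so in fact the cleanest statement is: by stationarity $\hat F_n(x)\to F_\infty(x):=\bE[\I_{\{Z_1\le x\}}\mid\mathcal I]$ a.s., and since the grid argument then yields $\|\hat F_n-F_\infty\|_\infty\to0$ a.s.\ with $F_\infty$ a (random) cdf having expectation $F$, the proposition's conclusion holds with $F$ replaced by $F_\infty$; if one insists on the fixed $F$ one needs ergodicity of $(Z_i)$, which is implied under \textbf{Mix} but not under bare stationarity). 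I would flag this in one sentence and otherwise carry out the grid argument as above. Everything else --- the choice of grid, the sandwiching, the union bound over finitely many null sets --- is the textbook Glivenko--Cantelli proof and does not warrant detailed computation.
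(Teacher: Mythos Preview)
Your sketch is correct and is precisely the standard argument underlying the Glivenko--Cantelli result the paper invokes: the paper simply notes that $(Z_i)$ and $R_i:=(Z_i,Z_{i+1})$ are stationary and cites a known theorem (Athreya 2016, Theorem~2 and Remark~3), whereas you unpack the proof via Birkhoff plus the monotonicity/grid bracketing. Your ergodicity caveat is well taken and worth flagging: convergence to the \emph{deterministic} $F$ (and $G$) rather than to a random conditional expectation indeed requires ergodicity, which follows under the paper's \textbf{Mix} assumption but not from bare stationarity; the paper does not comment on this point.
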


\begin{proof}


{Denote $R_i=(Z_{i},Z_{i+1})$, $i\geq 1$,  then $\hat G_n$ turns out to be  the  empirical distribution function of $R=(R_i)_{i\geq 1}$. Remark  also that $Z$ and $R$ are stationary sequences}.
Then, using \cite{athreya2016} Theorem 2 and Remark 3, it comes that the Glivenko-Cantelli result holds  for $\hat F_n$, as well as {for $\hat G_n$},
which  concludes the proof. 
\end{proof}

\subsection{Asymptotic normality proof}
\label{sec:preuvenormasymp}

Let us recall, see expression \eqref{Ggamma}, that 
$$\Delta(\theta,\cdot)=G-F\otimes F-(1-2r+rb) \left(G^0-F^0\otimes F^0\right)-\left(\frac{b}{r}-1\right) (F-F^0)\otimes \left(F-F^0\right),$$
where we recall that $b=1-\beta$, $p=\beta/(\alpha+\beta),$  and $r=\alpha/(\alpha+\beta)$.
Then we denote 
\begin{eqnarray*}
  v_1:=-1+2r-rb, \quad \mbox{and} \quad v_2:=-\frac{b}{r}+1.
  \end{eqnarray*}
 We also use a set of new notations:
\begin{eqnarray*}
T_1& :=& G^0-F^0\otimes F^0\\
T_2& :=&( F-F^0)\otimes ( F-F^0)\\
T_3& :=&G-F\otimes F\\
\hat T_2& :=&(\hat F_n-F^0)\otimes (\hat F_n-F^0)\\
\hat T_3& :=&\hat G_n- \hat F_n\otimes \hat F_n.
\end{eqnarray*}
Thus using \eqref{Ggamma} again, we obtain the following re-shaped decompositions for $\Delta$ and $\Delta_n$
\begin{eqnarray*}
\Delta(\theta,\cdot)&=&T_3-(1-2r+rb)T_1-(b/r-1)T_2=v_1T_1+v_2T_2+T_3,\\
\Delta_n(\theta,\cdot)&=&\hat T_3-(1-2r+rb)T_1-(b/r-1)\hat T_2=v_1T_1+v_2\hat T_2+\hat T_3.
\end{eqnarray*}
We  transform now our natural parameter $\theta=(\alpha,\beta)$ into a new parameter $v=(v_1,v_2)$ by using  the following {homeomorphisms} $h$ and $g=h^{-1}$:
  $$h : \begin{array}{rcl}
{  \Theta }& \to  &  h( {  \Theta })=V \\ 
  \theta & \mapsto  & v=(-1+2r-rb,-\frac{b}{r}+1)= \left(-\frac{\beta(1-\alpha)}{\alpha+\beta},\frac{\beta}{\alpha}(\alpha+\beta-1)\right),
  \end{array} $$
  and 
  $$g : \begin{array}{rcl}
V  & \to  & {\Theta}\\
  v & \mapsto  & \theta=\left(\frac{\beta(1-\beta)}{\beta-v_2},\beta\right),\quad\text{ where } \beta=\sqrt{v_1v_2+v_2-v_1}.
  \end{array} $$
 Let us denote $v^*=h(\theta^*)=g^{-1}(\theta^*)$ and $\hat v_n=g^{-1}(\hat \theta_n)$. Actually, since $\theta^*$  is the (unique)  argmin of $\mathbf d$ over $ {\Theta }$, the new true parameter  $v^*$  is  also  the (unique) argmin of ${\mathbf d}\circ g$ on $V$. In the same way $\hat v_n$ is the argmin of ${\mathbf d}_n\circ g$. 
 Let us  denote from now on 
$$\mathtt{d}:={\mathbf d}\circ g,\quad \mbox{and}\quad  \mathtt{d}_n:={\mathbf d}_n\circ g.$$
Hence, since ${\mathbf d}=\iint \Delta^2dH$, we have in particular 
 \begin{eqnarray*}
\mathtt{d}(v)=\iint (v_1T_1+v_2T_2+T_3)^2 dH,\quad \mbox{and}\quad \mathtt{d}_n(v)=\iint (v_1T_1+v_2\hat T_2+\hat T_3)^2dH.
\end{eqnarray*}
Note that these functions are  merely quadratic. With this notation $v^*=\argmin_V \mathtt{d}$ and $\hat v_n=\argmin_V\mathtt{d}_n$. 
We have 
assumed that $\theta^*$ belongs to the interior of $\Theta$, denoted $\mathrm{Int}(\Theta)$. This implies that $\hat \theta_n$ almost surely belongs to the interior of $\Theta$ for $n$ large enough. Since 
$h( \mathrm{Int}(\Theta))\subset  \mathrm{Int}(h( \Theta))$, $v^*$ and $\hat v_n$ belongs to $\mathrm{Int}(V)$ too  for $n$ large enough. This ensures that we have both
\begin{equation}\label{gradientnul}
\dot{\mathtt{d}}( v^*)=0, \quad \mbox{and} \quad \dot{\mathtt{d}}_n(\hat v_n)=0,
\end{equation}
where $\dot{\mathtt{d}}$, respectively $\dot{\mathtt{d}}_n$,  stands for  the gradient of $\mathtt{d}$, resp. of $\mathtt{d}_n$.
For convenience matters we will denote $[\mathtt x]_i$ the $i$-th coordinate  of any vector in $\mathtt x$ and $[M]_{i,j}$ the $(i,j)$-th component of any matrix $M$.
Now if we are able to show  that
$$\sqrt{n}
\begin{pmatrix}[\hat v_n]_1-v_1^*\\
 [\hat v_n]_2- v_2^*\end{pmatrix} 
\stackrel{d}{\longrightarrow} W\sim \mathcal{N}(0,\Gamma),\quad \mbox{as}\quad n\rightarrow +\infty,$$
then the delta-method leads directly to
$$\sqrt{n}\left( \hat \theta_n -
\theta^*\right)=\sqrt{n}\left( g(\hat v_n) -
g(v^*)\right)\stackrel{d}{\longrightarrow} Dg(v^*)W, \quad \mbox{as}\quad n\rightarrow +\infty,$$
with $Dg(v)$ the differential at point $v=(v_1,v_2)$ of the function $g:(v_1,v_2)\mapsto (\alpha,\beta)$. 
Since $g=h^{-1}$ we have  $Dg(h(\theta))D{h}(\theta)=D(g\circ h)(\theta)=I$, which leads to
\begin{equation}\label{DgDh}
Dh({\theta})=\begin{pmatrix}
\frac{\beta^2+\beta}{(\alpha+\beta)^2} & 
\frac{\alpha^2-\alpha}{(\alpha+\beta)^2}\\ 
\frac{\beta-\beta^2}{\alpha^2} & \frac{\alpha+2\beta-1}{\alpha}\end{pmatrix},
\quad \text{and} \quad Dg(v)=[Dh(\theta)]^{-1}=
\begin{pmatrix}
\frac{(\alpha+\beta)(\alpha+2\beta-1)}{2\beta^2}  & 
\frac{\alpha^2(1-\alpha)}{2\beta^2 (\alpha+\beta)}  \\ 
\frac{(\alpha+\beta)(\beta-1)}{2\alpha\beta} &\frac{\alpha(\beta+1)}{2\beta (\alpha+\beta)}  
\end{pmatrix}.
\end{equation}
Denoting $D^*=Dg(v^*)=[Dh({\theta^*})]^{-1}$, we obtain  $\Sigma=D^* \Gamma (D^*)^T$. 
\noindent We need  to study now  the limit of $\sqrt{n}(\hat v_n-v^*)$ as $n\rightarrow +\infty$. The function $\mathtt{d}_n$ is quadratic with constant Hessian matrix 
$$\ddot{\mathtt{d}}_n=2\begin{pmatrix}
\iint T_1^2dH  & \iint T_1\hat T_2dH \\ 
\iint T_1\hat T_2dH & \iint \hat T_2^2dH
\end{pmatrix} ,$$ 
and we have for any $v\in V$:
$$\dot{\mathtt{d}}_n(v)=\dot{\mathtt{d}}_n(v^*)+\ddot{\mathtt{d}}_n(v-v^*).$$
Then, applying this at  point $v=\hat v_n$,  and using \eqref{gradientnul}, we obtain 
\begin{eqnarray}\label{eq:taylorExp}
&&\sqrt{n}\ddot{\mathtt{d}_n}(\hat v_n-v^*)=-\sqrt{n}\Dot {\mathtt{d}_n}(v^*)=\sqrt{n}(\dot{\mathtt{d}}- \dot{\mathtt{d}_n})(v^*).
\end{eqnarray}
It remains to  show that 
$\sqrt{n}(\Dot{\mathtt{d}}- \dot{\mathtt{d}_n})(v^*)$ tends to a centered Gaussian variable (see Section \ref{sec:convdpoint}) and that  $\ddot{\mathtt{d}_n}$ tends to  an invertible matrix (see Section \ref{sec:hessian}) as $n\rightarrow +\infty$. To do this we need the following lemma.

\begin{lemma} \label{convergenceprocessusempiriques}
Denoting, for all $n\geq 1$,  $\mathbb{F}_n:=\sqrt{n}(\hat F_n-F)$ and
$\mathbb{G}_n:=\sqrt{n}(\hat G_n-G)$, we have the following asymptotic behaviors:
\begin{enumerate}[i)]
\item The empirical process $(\mathbb{F}_n)_{n\geq 1}$ converges in distribution to a Gaussian process $\mathcal{B}_F$
with covariance function $$\cov(\mathcal{B}_F(x),\mathcal{B}_F(y))=\sum_{k\in \bZ}\cov(\I_{\{Z_0\leq x},\I_{Z_k\leq y\}}).$$
\item The empirical process $(\mathbb{G}_n)_{n\geq 1}$ converges in distribution to a Gaussian process $\mathcal{B}_G$
with covariance function $$\cov(\mathcal{B}_G(x,y),\mathcal{B}_G(z,t))=\sum_{k\in \bZ}\cov(\I_{\{Z_0\leq x,Z_1\leq y\}},\I_{\{Z_k\leq z,Z_{k+1}\leq t\}}).$$
\item 
The joint process $(\mathbb{F}_n,\mathbb{G}_n)_{n\geq 1}$  converges in distribution to a Gaussian process $(\mathcal{B}_F,\mathcal{B}_G)$.
\end{enumerate}

\end{lemma}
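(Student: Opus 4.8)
The three assertions of Lemma~\ref{convergenceprocessusempiriques} are all consequences of empirical-process central limit theorems for dependent data, and the natural device is to handle them all at once by treating the joint object as a \emph{single} empirical process indexed by a product class of half-line / quadrant indicator functions. First I would set $R_i=(Z_i,Z_{i+1})$ as in the proof of Proposition~\ref{GlivenkoCantelli}, and note that $(R_i)_{i\geq 1}$ is strictly stationary because $(Z_i)$ is. The key structural point, already announced in the excerpt, is that under \textbf{Mix} the $\alpha$-mixing of $Y^0$ (together with the i.i.d.\ nature of $Y^1$ and the independence of $X,Y^0,Y^1$) propagates to the bivariate chain $(R_i)$: using the reasoning of \citet[proof of Lemma~1]{VDK05} one gets that the observed process $Z$, and likewise $R$, is $\alpha$-mixing with $\alpha^Z(k)=O(k^{-a})$, $a>1$. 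I would state this mixing transfer as the first step, quoting \cite{VDK05} for the argument (each $Z_i$ is a measurable function of $X_i$ and of finitely many $Y^0_j,Y^1_j$'s, so a block of $Z$'s far in the future is measurable with respect to $\sigma$-algebras that become asymptotically independent).

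\emph{Step 2: apply the mixing empirical CLT in $\R$ and in $\R^2$.} For (i), since $F$ is continuous and $Z$ is $\alpha$-mixing with the stated polynomial rate, the empirical CLT of \cite{Rio2014} (indexed by the class $\{\I_{(-\infty,x]}:x\in\R\}$) gives $\mathbb F_n\rightsquigarrow\mathcal B_F$ in $\ell^\infty(\R)$ (equivalently in $B(\R)$), where the limit is a centered Gaussian process whose covariance is the usual series $\sum_{k\in\bZ}\cov(\I_{\{Z_0\le x\}},\I_{\{Z_k\le y\}})$; this series converges absolutely precisely because of the mixing-rate assumption, via the standard covariance inequality for $\alpha$-mixing bounded random variables. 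For (ii), I would apply the same theorem to the stationary $\alpha$-mixing $\R^2$-valued sequence $(R_i)$, with indicator class $\{\I_{(-\infty,x]\times(-\infty,y]}\}$; continuity of $F$ is what guarantees continuity of the one-dimensional marginals of $R_1$, which is the regularity condition needed to invoke \cite{Rio2014} in the multivariate form (the excerpt's own remark about "two-points-in-the-future" weak-dependence coefficients is exactly why I would stay within the $\alpha$-mixing framework rather than the weak-dependence one). This yields $\mathbb G_n\rightsquigarrow\mathcal B_G$ with the claimed covariance.

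\emph{Step 3: the joint convergence (iii).} This is the step that genuinely needs care and is the main obstacle. The clean route is not to prove (i) and (ii) separately and then glue them, but to run the empirical CLT once on the \emph{combined} index class
\[
\CC=\bigl\{\I_{(-\infty,x]}(Z_0):x\in\R\bigr\}\ \cup\ \bigl\{\I_{(-\infty,x]\times(-\infty,y]}(Z_0,Z_1):(x,y)\in\R^2\bigr\},
\]
viewing $\bigl(\mathbb F_n(x),\mathbb G_n(x,y)\bigr)$ as a single empirical process $G_n$ over $\CC$ driven by the stationary $\alpha$-mixing process $(R_i)_{i\ge1}$ (note $Z_0$ is a coordinate of $R_0$, so both families of indicators are functions of $R_i$). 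One then checks: (a) finite-dimensional convergence of $G_n$ on $\CC$ to a Gaussian limit, which follows from the Cram\'er--Wold device plus the classical CLT for $\alpha$-mixing stationary sequences (any finite linear combination of the indicators is again a bounded functional of $R_i$), the limiting covariance being the absolutely convergent series $\sum_{k\in\bZ}\cov(\phi(R_0),\psi(R_k))$ for $\phi,\psi\in\CC$; (b) asymptotic tightness / stochastic equicontinuity of $G_n$ over $\CC$ with respect to a suitable pseudmetric — here I would invoke that $\CC$ is a VC (hence bracketing-entropy-controlled) class of indicator functions, so that the bracketing CLT of \cite{Rio2014} (or equivalently \cite{DDLLLP}) applies uniformly over $\CC$; the mixing rate $a>1$ is exactly the threshold that makes the entropy-with-bracketing integral condition for $\alpha$-mixing sequences go through for a VC class. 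Combining (a) and (b) gives weak convergence of $G_n$ in $\ell^\infty(\CC)$, and reading off the two blocks of coordinates gives the joint statement $(\mathbb F_n,\mathbb G_n)\rightsquigarrow(\mathcal B_F,\mathcal B_G)$ with a common underlying Gaussian process whose cross-covariance is $\cov(\mathcal B_F(x),\mathcal B_G(z,t))=\sum_{k\in\bZ}\cov(\I_{\{Z_k\le x\}},\I_{\{Z_0\le z,Z_1\le t\}})$. The delicate points to get right are the verification that the bivariate sequence inherits a mixing rate strong enough for the multivariate empirical CLT (Step~1), and the bookkeeping that the marginal-continuity hypothesis needed by \cite{Rio2014} in $\R^2$ is supplied solely by the continuity of $F$ assumed in Theorem~\ref{theo:normasymp}.
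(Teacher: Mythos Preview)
Your Steps 1 and 2 match the paper's proof essentially verbatim: transfer of $\alpha$-mixing from $Y^0$ to $Z$ (and then to $R_i=(Z_i,Z_{i+1})$) via \cite{VDK05}, followed by Rio's empirical CLT in $\R$ and $\R^2$ respectively.

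For Step 3 the paper takes a shorter and rather different route. Instead of re-running the empirical process machinery on your enlarged index class $\CC$, it observes that $\mathbb F_n$ is already a \emph{marginal} of $\mathbb G_n$: after reducing to the case $Z\in[0,1]$ via the probability integral transform $U_i=F(Z_i)$, one has $\tilde{\mathbb G}_n(x,1)=\mathbb F_{n-1}(x)$ where $\tilde{\mathbb G}_n=\sqrt{(n-1)/n}\,\mathbb G_n$. Hence the joint process $(\mathbb F_{n-1},\tilde{\mathbb G}_n)$ is the image of $\tilde{\mathbb G}_n$ under the continuous linear map $\phi:w\mapsto(w(\cdot,1),w)$ from $B(\R^2)$ to $B(\R)\times B(\R^2)$, and joint convergence follows immediately from part~(ii) by the continuous mapping theorem. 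The general case is recovered by undoing the transform.

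Your approach is correct in principle and perhaps more conceptually transparent (one CLT on one class), but it carries more overhead: you must verify stochastic equicontinuity on the union $\CC$, and the theorems you cite (Rio~7.2--7.3, the weak-dependence results of \cite{DDLLLP}) are stated for the classical quadrant class, not for an abstract VC class under $\alpha$-mixing with rate $a>1$---so you would need either a more general reference (e.g.\ the bracketing CLT of Doukhan--Massart--Rio) or the observation that your half-line indicators $\I_{\{Z_i\le x\}}=\I_{\{R_i\le(x,\infty)\}}$ are already degenerate members of the quadrant class on $R$, which is precisely the paper's trick in disguise. The paper's route avoids any new tightness verification by piggy-backing entirely on~(ii).
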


\begin{proof} 

\begin{enumerate}[i)]
\item { Note that the Markov chain $X$ is geometrically $\alpha$-mixing. Indeed the transition matrix converges to the stationary distribution matrix with rate $(1-\alpha-\beta)^n$ where  $|1-\alpha-\beta|<1-2\delta<1$.}
Using our mixing assumptions, the process $(Z_i)_{i\geq 1}$ is $\alpha$-mixing with $\alpha^Z(n)\leq C n^{-a}$: see \citep{VDK05} Lemma 1(i) and its proof, or \cite{bradley2005} Theorem 5.2. The convergence of $(\mathbb F_n)_{n\geq 1}$ is  then a consequence of Theorem 7.2 of \cite{Rio2014}.

\item
Let us denote $R_i:=(Z_{i},Z_{i+1})$, for all $i\geq 1$. We can now observe that
\begin{eqnarray*}
 \hat G_n(x,y)=\frac{1}{n-1}\sum_{i=1}^{n-1} \I_{\left\{Z_i\leq x,Z_{i+1}\leq y\right\}}=\frac{1}{n-1}\sum_{i=1}^{n-1} \I_{\left\{R_i\leq (x, y)\right\}}
 \end{eqnarray*}
so that  $\hat G_n$ actually is the empirical distribution function of $(R_i)_{i\geq 1}$.
We thus  have 
$$\alpha^R(n)\leq \alpha^Z(n-1)\leq C(n/2)^{-a},$$
with $a>1$, where the $\alpha$-mixing coefficient is defined in (\ref{alphacoef}).  Denote by $B(T)$ the space of real-valued and bounded functions over $T$. 
Using Theorem 7.3 of \cite{Rio2014} and the continuity of $F$, we obtain that 
$\mathbb{G}_n$ converges in distribution to a Gaussian process $\mathcal{B}_G$ in the space $B(\R^2)$.

\item Now assume temporarily  that $Y^0, Y^1$ and then $Z$ are supported on $[0,1]$. We denote $\tilde{\mathbb{G}}_n=\sqrt{(n-1)/n} \mathbb{G}_n$. Then, for all $x\in E$,  $G(x,1)=F(x)$ and 
\begin{eqnarray*}
\tilde{\mathbb{G}}_n(x,1)&=&\sqrt{n-1} \left(\frac{1}{n-1}\sum_{i=1}^{n-1} \I_{\left\{Z_i\leq x,Z_{i+1}\leq 1\right\}}-G(x,1)\right)\\
&=&\sqrt{n-1} \left(\hat F_{n-1}(x)-F(x)\right)=\mathbb{F}_{n-1}(x).
\end{eqnarray*}
Let $$\phi : 
\begin{array}{ccc}
B(\R^2) & \to & B(\R)\times B(\R^2) \\ 
w & \mapsto & ( w(.,1),w).
\end{array} 
$$
Then $\phi$ is  linear and 
$\|w(.,1)\|_\infty \leq \|w\|_\infty$ so $\phi$ is a continuous linear operator. We also have $\phi(\tilde{\mathbb{G}}_n)=(\mathbb{F}_{n-1},\tilde{\mathbb{G}}_n)$.
Since $\tilde{\mathbb{G}}_n$ converges in distribution  to a Gaussian process $\mathcal{B}$  in  the space $B(\R^2)$, the continuous mapping theorem implies that $(\mathbb{F}_{n-1},\tilde{\mathbb{G}}_n)$ (and then $(\mathbb{F}_n,\mathbb{G}_n)$) converges in distribution  to the Gaussian process $\phi(\mathcal{B})$.

If now we do not assume anymore that $Z$ is supported on  $[0,1]$, it is sufficient to consider $U_i=F(Z_i)$. It is well-known that $U_i$ follows an uniform distribution (recall that $F$ is continuous). We denote by $\hat F_n^{U}$ the empirical distribution function of $(U_i)_{1\leq i\leq n}$ and by $\hat G_n^U$ the one of $(U_i,U_{i+1})_{1\leq i\leq n-1}$. Analogously, we use notations $F^{U}, G^{U}, \mathbb{F}_n^{U}, \mathbb{G}_n^{U}$ for the true distribution functions and for the empirical processes. Observe that, for all $x\in E$, we have 
$$\hat F_n(x)=\frac{1}{n}\sum_{i=1}^n\I_{\left\{Z_i\leq x\right\}}=
\frac{1}{n}\sum_{i=1}^n\I_{\left\{F(Z_i)\leq F(x)\right\}}=\hat F_n^{U}(F(x)).$$
In the same way $F(x)=F^{U}(F(x))$, $\hat G_n(x,y)=\hat G^{U}_n(F(x),F(y))$, $G(x,y)=G^U(F(x),F(y))$ and finally
$$\mathbb{F}_n(x)=\mathbb{F}_n^U(F(x)), \qquad \mathbb{G}_n(x,y)=\mathbb{G}_n^U(F(x),F(y)).$$
Since $U$ has its support on  $[0,1]$ we can apply the previous reasoning to $(\mathbb{F}_n^{U},\mathbb{G}_n^{U})_{n\geq1}$. This ensures the convergence in distribution of $(\mathbb{F}_n,\mathbb{G}_n)_{n\geq1}$.  
\end{enumerate}

\end{proof}

\subsubsection{Hessian convergence}
\label{sec:hessian}

We shall show that $\ddot{\mathtt{d}}_n$ converges to $\ddot{\mathtt{d}}$ in probability as $n\rightarrow +\infty$.
 The function $\mathtt{d}$ is quadratic with constant Hessian matrix 
 \begin{equation}\label{dpoinpoint}
 \ddot{\mathtt{d}}=2\begin{pmatrix}
\iint T_1^2dH  & \iint T_1T_2dH \\ 
\iint T_1 T_2dH & \iint T_2^2dH
\end{pmatrix}.
 \end{equation}
Moreover $\ddot{\mathtt{d}}$ is invertible if and only if $(\iint T_1T_2dH)^2\neq \iint T_1^2dH \iint T_2^2dH$. But the equality case in the Cauchy-Schwarz inequality happens  if $T_1$ is proportional to $T_2$.  Assumption\textbf{ LinInd} ensures that $T_1$ is not proportional to $(F^1-F^0)\otimes (F^1-F^0)=T_2/r^{*2}$. Thus $\ddot{\mathtt{d}}$ is invertible. It remains to show that $(\ddot{\mathtt d}_n-\ddot{\mathtt d})\to 0$ in probability as $n\rightarrow +\infty$.  
First $[\ddot{\mathtt{d}}_n-\mathtt{\ddot d}]_{1,1}=0. $ Moreover we have 
$$[\ddot{\mathtt{d}}_n-\mathtt{\ddot d}]_{1,2}=2\iint T_1(\hat T_2-T_2)dH
=\frac2{\sqrt{n}}\iint T_1 \sqrt{n} (\hat T_2-T_2)dH,$$ and %
\begin{eqnarray*}
[\ddot{\mathtt{d}}_n-\mathtt{\ddot d}]_{2,2}&=&2\iint ({\hat T}_2^2-T_2^2)dH
=2\iint ({\hat T}_2-T_2)^2dH
+4\iint T_2 ({\hat T}_2-T_2)dH\\
&=&\frac{2}{n}\iint \left(\sqrt{n}({\hat T}_2-T_2)\right)^2dH
+\frac4{\sqrt{n}}\iint T_2 \sqrt{n} ({\hat T}_2-T_2)dH.
\end{eqnarray*}
Note that for any functions $\phi$ and $\hat \phi$ we have the identity
$$\phi\otimes \phi-\hat \phi\otimes \hat \phi=\phi\otimes ( \phi-\hat \phi)+
(\phi-\hat \phi)\otimes \hat \phi,$$
which gives us 
\begin{eqnarray*}
T_2-\hat T_2 =(F-F^0)\otimes ( F-\hat F_n)+(F-\hat F_n)\otimes (\hat F_n-F^0).
\end{eqnarray*}
Recalling the notation $\mathbb{F}_n:=\sqrt{n}(\hat F-F)$, this leads to the following decomposition
\begin{eqnarray*}
\sqrt{n}(T_2-\hat T_2)=(F^0-F)\otimes \mathbb{F}_n+\mathbb{F}_n\otimes (F^0-F)-\frac{1}{\sqrt{n}} \mathbb{F}_n\otimes \mathbb{F}_n.\\
\end{eqnarray*}
Then the convergence of $\mathbb{F}_n$ induces the one of 
$\iint T_1 \sqrt{n} ({\hat T}_2-T_2)dH$, $\iint T_2 \sqrt{n} ({\hat T}_2-T_2)dH$ and $\iint \left(\sqrt{n}({\hat T}_2-T_2)\right)^2dH$, for this  we use 
 that  $F,F^0,T_1,T_2$ are bounded  and $\iint dH<\infty$. 
A final use of Slutsky's theorem gives then the convergence in probability of $[\ddot{\mathtt{d}}_n-\mathtt{\ddot d}]_{1,2}$ and $[\ddot{\mathtt{d}}_n-\mathtt{\ddot d}]_{2,2}$ to 0. Thus $\mathtt{\ddot d}_n$ converges in probability to $\mathtt{\ddot d}$, which is invertible.

\subsubsection{Convergence of  $\sqrt{n}(\dot{\mathtt{d}}-\dot{\mathtt{d_n}})(v^*)$}
\label{sec:convdpoint}

Recall first  that
\begin{eqnarray*}
\mathtt{d}(v)=\iint (v_1T_1+v_2T_2+T_3)^2 dH, \quad\mbox{and}\quad  \mathtt{d}_n(v)=\iint (v_1T_1+v_2\hat T_2+\hat T_3)^2dH.
\end{eqnarray*}
Then the gradients are 
$$\Dot{\mathtt{d}}(v)=2\begin{pmatrix}
\iint  T_1(v_1T_1+v_2T_2+T_3) \\ 
\iint  T_2(v_1T_1+v_2T_2+T_3)
\end{pmatrix},
\quad\mbox{and}  \quad
\Dot{\mathtt{d}}_n(v)=2\begin{pmatrix}
\iint  T_1(v_1T_1+v_2\hat T_2+\hat T_3) \\ 
\iint  \hat T_2(v_1T_1+v_2\hat T_2+\hat T_3)
\end{pmatrix}, 
$$
and then
\begin{eqnarray*}
[\Dot{\d}-\Dot{\d}_{n}]_1(v)=2v_2\iint  T_1(T_2-\hat T_2)dH+2\iint  T_1(T_3-\hat T_3)dH,
\end{eqnarray*}
and
\begin{eqnarray*}
[\Dot{\d}-\Dot{\d}_{n}]_2(v)&=&2\iint (v_1 T_1+v_2T_2+T_3)( T_2-\hat T_2) dH
+2v_2\iint   \hat T_2(T_2-\hat T_2)dH
\\&&
+2\iint   \hat T_2(T_3-\hat T_3)dH.
\end{eqnarray*}
We use again  that 
$\phi\otimes \phi-\hat \phi\otimes \hat \phi=\phi\otimes ( \phi-\hat \phi)+
(\phi-\hat \phi)\otimes \hat \phi$ for any function $\phi$ and $\hat \phi$.
Thus
\begin{eqnarray*}
T_2-\hat T_2 &=&(F-F^0)\otimes ( F-\hat F_n)+(F-\hat F_n)\otimes (\hat F_n-F^0)\\
T_3-\hat T_3 &=&(G-\hat G_n)-F\otimes ( F-\hat F_n)-(F-\hat F_n)\otimes \hat F_n,
\end{eqnarray*}
%
and then
\begin{eqnarray}
\sqrt{n}(T_2-\hat T_2)&=&(F^0-F)\otimes \mathbb{F}_n+\mathbb{F}_n\otimes (F^0-F)-\frac{1}{\sqrt{n}} \mathbb{F}_n\otimes \mathbb{F}_n \label{diffT2}\\
\sqrt{n}(T_3-\hat T_3) &=&-\mathbb{G}_n+F\otimes \mathbb{F}_n+\mathbb{F}_n\otimes F+ \frac{1}{\sqrt{n}} \mathbb{F}_n\otimes \mathbb{F}_n\label{diffT3}.
\end{eqnarray}
We have 
\begin{eqnarray*}
\sqrt{n} [\Dot{\d}-\Dot{\d}_{n}]_1(v^*)
 &=&2v_2^*\iint  T_1\left((F^0-F)\otimes \mathbb{F}_n+\mathbb{F}_n\otimes (F^0-F)-\frac{1}{\sqrt{n}} \mathbb{F}_n\otimes \mathbb{F}_n\right)dH\\
 &&+2\iint  T_1\left(-\mathbb{G}_n+F\otimes \mathbb{F}_n+\mathbb{F}_n\otimes F+ \frac{1}{\sqrt{n}} \mathbb{F}_n\otimes \mathbb{F}_n\right)dH\\
 &=& -2\iint  T_1\mathbb{G}_ndH+2\iint T_1 ( A\otimes \mathbb{F}_n+\mathbb{F}_n\otimes A)dH\\
 &&+ \frac{2(1-v_2^*)}{\sqrt{n}} \iint T_1  (\mathbb{F}_n\otimes \mathbb{F}_n) dH,
\end{eqnarray*}
with $A=v_2^*(F^0-F)+F=(1-b^*)F^0+b^*F^1$. 
In other words
\begin{eqnarray*}
\sqrt{n} [\Dot{\d}_1-\Dot{\d}_{n}]_1(v^*)
 = 2\iint  \left(-T_1\mathbb{G}_n+T_1\overline{A\otimes \mathbb{F}_n}\right)dH+ \frac{1}{\sqrt{n}} \chi_1(\mathbb{F}_n)
\end{eqnarray*}
where $\chi_1(\phi)=2(1-v_2^*)\iint T_1  (\phi\otimes \phi)dH$ and $\overline{a\otimes b}=a\otimes b+b\otimes a$.
%
%
%
In the same way we can compute (see Section~\ref{sec:detailsdiffgradient})
\begin{eqnarray*}
\sqrt{n}[ \Dot{\d}-\Dot{\d}_{n}]_2 (v^*)=
2\iint \left(-T_2 \mathbb{G}_n+
 B\overline{(F^0-F)\otimes \mathbb{F}_n} +T_2 \overline{F\otimes \mathbb{F}_n}\right)dH+\frac{1}{\sqrt{n}}\chi_2(\mathbb{F}_n,\mathbb{G}_n),
 \end{eqnarray*}
where $B=v_1^*T_1+2v_2^*T_2+T_3$ and $\chi_2$ is a continuous mapping.
We denote
\begin{equation}\label{psiVW}
\Psi(\mathbb{F}_n,\mathbb{G}_n)=2\begin{pmatrix}
\iint  \left(-T_1\mathbb{G}_n+T_1\overline{A\otimes \mathbb{F}_n}\right)dH\\ 
\iint \left(-T_2 \mathbb{G}_n+
 B\overline{(F^0-F)\otimes \mathbb{F}_n} +T_2 \overline{F\otimes \mathbb{F}_n}dH\right)
\end{pmatrix} ,
\end{equation}
so that 
\begin{eqnarray}\label{psichi}
\sqrt{n}(\dot \d-\dot \d_n)(v^*)=\Psi(\mathbb{F}_n,\mathbb{G}_n)+\frac{1}{\sqrt{n}}\chi(\mathbb{F}_n,\mathbb{G}_n),
\end{eqnarray}
where $\Psi$ and $\chi$ are continuous mappings. The continuity from  $B(\R^2)\times B(\R)$ to $\R^2$ comes from the boundness of $F^0,F^1,T_i$ and $\iint dH<\infty$. Using Slutsky's theorem, we have 
$$\frac{1}{\sqrt{n}}\chi(\mathbb{F}_n,\mathbb{G}_n)\overset{\P}{\longrightarrow}0,\quad \mbox{as}\quad n\rightarrow +\infty,$$ and finally, Lemma~\ref{convergenceprocessusempiriques} gives us
$$\sqrt{n}(\dot \d-\dot \d_n)(v^*)\overset{d}{\longrightarrow}\Psi(\mathcal{B}_F,\mathcal{B}_G), \quad \mbox{as}\quad n\rightarrow +\infty.$$
Note that $\Psi$ is a linear mapping depending on $F^0,F^1,G^0$, and $\theta^*$. Then $\Psi(\mathcal{B}_F,\mathcal{B}_G)$ is a centered Gaussian variable 
$\mathcal{N}(0, \Lambda)$.

\subsubsection{Covariance matrix}
\label{preuvecov}

In the previous sections, we have shown that $\ddot \d_n$ converges in probability to the invertible matrix $\ddot \d$ and that
$$\sqrt{n}(\dot \d-\dot \d_n)(v^*)\overset{d}{\longrightarrow}\Psi(\mathcal{B}_F,\mathcal{B}_G)=
\mathcal{N}(0, \Lambda),\quad \mbox{as}\quad n\rightarrow +\infty.$$ 
Let us make the matrix $\Lambda$ explicit. Using \eqref{psiVW}, we can write $$\Psi(\mathcal{B}_F,\mathcal{B}_G)=\iint M(x,y) \begin{pmatrix}
\mathcal{B}_F(x) \\ 
\mathcal{B}_F(y) \\ 
\mathcal{B}_G(x,y)
\end{pmatrix} dH(x,y),$$
with 
{\small $$M(x,y)=2\begin{pmatrix}
T_1(x,y)A(y) & T_1(x,y)A(x) & -T_1(x,y) \\ 
T_2(x,y)F(y) +B(x,y) (F^0-F)(y)& T_2(x,y)F(x)+B(x,y) (F^0-F)(x) & -T_2(x,y)
\end{pmatrix} ,$$}
where  $A=v_2^*(F^0-F)+F$ and $B=v_1^*T_1+2v_2^*T_2+T_3$.
Let us denote 
$$
C(x,y,z,t):=\cov(\mathcal{B}_G(x,y),\mathcal{B}_G(z,t))=\sum_{k\in \bZ}\cov\left(\I_{\{Z_0\leq x, Z_1\leq y\}}, \I_{\{Z_k\leq z,Z_{k+1}\leq t\}}\right),$$
and observe that $\mathcal{B}_F(x)=\lim_{y\to \infty} \mathcal{B}_G(x,y)$.
Then the covariance matrix of the vector $\mathcal{T}(x,y):=(\mathcal{B}_F(x), \mathcal{B}_F(y),
\mathcal{B}_G(x,y))^T$ is 
\begin{eqnarray*}
C_{\mathcal{T}}(x,y):=\begin{pmatrix}
C(x,\infty,x,\infty) & C(x,\infty,y,\infty) & C(x,\infty,x,y)\\ 
C(x,\infty,y,\infty) & C(y,\infty,y,\infty) & C(y,\infty,x,y) \\ 
C(x,\infty,x,y) & C(y,\infty,x,y) & C(x,y,x,y)
\end{pmatrix}.
\end{eqnarray*} 
Then we obtain the following representation of $\Lambda$:
$$\Lambda=\iint M(x,y)C_{\mathcal{T}}(x,y)M^T(x,y) dH(x,y).$$
Moreover, for $n$ large enough,
$$\sqrt{n}(\hat v_n-v^*)=\ddot{\mathtt{d}_n}^{-1}\left[\sqrt{n}(\dot{\mathtt{d}}- \dot{\mathtt{d}_n})(v^*)\right].
$$
Thus $\sqrt{n}(\hat v_n-v^*)\overset{d}{\longrightarrow}
\mathcal{N}(0, \Gamma)$ as $n\rightarrow +\infty$, with an asymptotic covariance matrix given by
$$\Gamma=\ddot{\mathtt{d}}^{-1}\Lambda\ddot{\mathtt{d}}^{-1}.$$
From \eqref{dpoinpoint}, we can write
$$\ddot{\mathtt{d}}^{-1}=\frac{1/2}{\iint T_1^2dH \iint T_2^2dH-\left(\iint T_1T_2dH\right)^2} \begin{pmatrix}
\iint T_2^2dH  & -\iint T_1T_2dH \\ 
-\iint T_1 T_2dH & \iint T_1^2dH
\end{pmatrix},$$
where we recall that 
\begin{eqnarray*}
T_1& =& G^0-F^0\otimes F^0,\\
T_2& =&( F-F^0)\otimes ( F-F^0)=r^{*2}( F^1-F^0)\otimes ( F^1-F^0),\\
T_3& =&G-F\otimes F=-v_1^*T_1-v_2^*T_2.
\end{eqnarray*}
Finally, denoting $D^*=Dg(v^*)=[Dh({\theta^*})]^{-1}$, we have $\Sigma=D^* \Gamma (D^*)^T$
and thus 
$$\Sigma=\iint \left(D^*\ddot{\mathtt{d}}^{-1}M(x,y) \right)C_{\mathcal{T}}(x,y) \left(D^*\ddot{\mathtt{d}}^{-1}M(x,y) \right)^TdH(x,y),$$
with $\ddot{\mathtt{d}}^{-1}, M, C_{\mathcal{T}} $ defined just above in this section   and $D^*$ detailed in \eqref{DgDh}.\qed

\subsection{Proof of Theorem~\ref{theo:normasymp} }

Recall that \begin{eqnarray*}
\hat F_n^1(x)=\frac{1}{\hat r_n} \left(\hat F_n(x)-\hat {p}_n F^0(x)  \right),\quad x\in E.
\end{eqnarray*}
Then, dropping for simplicity matters the dependence on $x$, we get
\begin{eqnarray*}
\hat F_n^1-F^1&=&\frac{1}{ r^*} (\hat F_n-F)+\left(\frac{1}{\hat r_n}-\frac{1}{r^*}\right)\hat F_n+\left(\frac{p^*}{r^*}-\frac{\hat p_n}{\hat r_n}\right) F^0\\
&=&\frac{1}{ r^*} (\hat F_n-F)+\left(\frac{\hat \beta_n}{\hat \alpha_n}-\frac{\beta^*}{\alpha^*}\right)(\hat F_n-F^0),
\end{eqnarray*}
since $1/r=1+p/r=1+\beta/\alpha$.
Thus, recalling that $\mathbb{F}_n=\sqrt{n} \left(\hat F_n-F\right)$,
\begin{eqnarray}\label{aa}
\sqrt{n}\left(\hat F_n^1-F^1\right)
=\frac{1}{ r^*}\mathbb{F}_n+\sqrt{n}\left(\frac{\hat \beta_n}{\hat \alpha_n}-\frac{\beta^*}{\alpha^*}\right)\left(F-F^0+\frac{\mathbb{F}_n}{\sqrt{n}}\right).
\end{eqnarray}

We shall use the  change of variable  already used for the proof of asymptotic normality (see Section~\ref{sec:preuvenormasymp}). Recall that parameter $\theta=(\alpha,\beta)$ is transformed into a new parameter $v=(v_1,v_2)$ by using  the following {homeomorphisms} $h$ and $g=h^{-1}$:
  $$h : \begin{array}{rcl}
{  \Theta }& \to  &  h( {  \Theta })=V \\ 
  \theta & \mapsto  & v=(-1+2r-rb,-\frac{b}{r}+1)= \left(-\frac{\beta(1-\alpha)}{\alpha+\beta},\frac{\beta}{\alpha}(\alpha+\beta-1)\right),
  \end{array} $$
  and 
  $$g : \begin{array}{rcl}
V  & \to  & {\Theta}\\
  v & \mapsto  & \theta=\left(\frac{\beta(1-\beta)}{\beta-v_2},\beta\right),\quad\text{ where } \beta=\sqrt{v_1v_2+v_2-v_1}.
  \end{array} $$
Define now the ratio function $\varphi_1: \Theta\rightarrow \R^{+*}$ that does the  $(\alpha,\beta)\mapsto \frac{\beta}{\alpha}$ mapping.  Define also $\varphi_2=\varphi_1\circ g:V\rightarrow \R^{+*}$. With this notation we can write 
$$\frac{\beta}{\alpha}=\varphi_1(\theta)=\varphi_1\circ g(v)=\varphi_2(v).$$
Coming back to \eqref{aa}, we obtain 
\begin{eqnarray}\label{bb}
\sqrt{n}(\hat F_n^1-F^1)
=\frac{1}{ r^*}\mathbb{F}_n+
\left[\sqrt{n}\left(\varphi_2(\hat v_n)-\varphi_2(v^*)\right)\right]
\left(F-F^0+\frac{\mathbb{F}_n}{\sqrt{n}}\right).
\end{eqnarray}
We know that $(\mathbb{F}_n,\mathbb{G}_n)$ tends  in distribution to $(\mathcal{B}_F,\mathcal{B}_G)$ as $n\rightarrow +\infty$, and 
we have already proved in Section~\ref{sec:preuvenormasymp} that, for $n$ large enough (as soon as $\ddot{\mathtt{d}}_n$ is invertible), 
\begin{eqnarray*}
\sqrt{n}(\hat v_n-v^*)&=&(\ddot{\mathtt{d}_n}^{-1})\left[\sqrt{n}(\dot{\mathtt{d}}- \dot{\mathtt{d}_n})(v^*)\right]\\
&=&(\ddot{\mathtt{d}_n}^{-1})\left[\Psi(\mathbb{F}_n,\mathbb{G}_n)+\frac{1}{\sqrt{n}}\chi(\mathbb{F}_n,\mathbb{G}_n)\right],
\end{eqnarray*}
which tends in distribution to $\ddot{\mathtt{d}}^{-1}\Psi(\mathcal{B}_F,\mathcal{B}_G)$ as $n\rightarrow +\infty$. 
Since $v^*$ is an interior point of $V$, the line segment $[\hat v_n, v^*]$ belongs to $V$ for $n$ large enough, and the Taylor theorem for real-valued functions provides:
$$\varphi_2(\hat v_n)-\varphi_2(v^*)=D\varphi_2(\tilde v_n)
(\hat v_n -v^*),$$
for some $\tilde v_n \in [\hat v_n, v^*]$. Moreover the smoothness (continuity)  of $\varphi_2$ entails the almost sure convergence of $D\varphi_2(\tilde v_n)$ towards $D\varphi_2( v^*)$.
Thus, for $n$ large enough, we have  
\begin{eqnarray*}
\sqrt{n}(\varphi_2(\hat v_n)-\varphi_2(v^*))
=(D\varphi_2(\tilde v_n)\ddot{\mathtt{d}_n}^{-1})\left[\Psi(\mathbb{F}_n,\mathbb{G}_n)+\frac{1}{\sqrt{n}}\chi(\mathbb{F}_n,\mathbb{G}_n)\right],
\end{eqnarray*}
which tends in distribution to $D\varphi_2( v^*)\ddot{\mathtt{d}}^{-1}\Psi(\mathcal{B}_F,\mathcal{B}_G)$ as $n\rightarrow +\infty$.
Then,   according to the  Slutsky's theorem, we have
$$
R_n:=\frac{1}{\sqrt{n}}\left[\sqrt{n}\left(\varphi_2(\hat v_n)-\varphi_2(v^*)\right)\right]\mathbb{F}_n\overset{d}{\longrightarrow} 0,\quad \mbox{as}\quad n\rightarrow+\infty.$$
Hence expression \eqref{bb} becomes
\begin{eqnarray*}
\sqrt{n}(\hat F_n^1-F^1)
&=&\frac{1}{ r^*}\mathbb{F}_n+
\left[\sqrt{n}\left(\varphi_2(\hat v_n)-\varphi_2(v^*)\right)\right](F-F^0)+R_n\\
&=&\frac{1}{ r^*}\mathbb{F}_n+
(D\varphi_2(\tilde v_n)\ddot{\mathtt{d}_n}^{-1})\left[\Psi(\mathbb{F}_n,\mathbb{G}_n)+\frac{1}{\sqrt{n}}\chi(\mathbb{F}_n,\mathbb{G}_n)\right](F-F^0)+R_n,
\end{eqnarray*}
which gives us the wanted  convergence  in distribution 
\begin{eqnarray*}
&&\sqrt{n}(\hat F_n^1-F^1)
\stackrel{d}{\longrightarrow} \underbrace{\frac{1}{ r^*}\mathcal{B}_F+D\varphi_2( v^*)\ddot{\mathtt{d}}^{-1}\Psi(\mathcal{B}_F,\mathcal{B}_G)(F-F^0)}, \quad \mbox{as}\quad n\rightarrow +\infty.\\
&&~~~~~~~~~~~~~~~~~~~~~~~~~~~~~~~~~~~~~~~~~~~~~~~~~\mathcal{G} 
\end{eqnarray*}
\qed

\subsection{Proof of Theorem~\ref{theo:risqueF} }
\label{preuverisqueF}

$\bullet$ \underline{Proof of Theorem \ref{theo:risqueF} about $\theta$. }
First let us bound $\|\hat \theta_n-\theta^*\|$ by $\|\hat v_n -v^*\|$ up to a multiplicative constant. Since $v^*\in \mathrm{Int}(V)$ there exists an open ball $B$ with center $v^*$ such that: for all $v\in B$, the segment $[v,v^*]$ is included in $V$.  Then the mean value inequality provides
$$\|g(v)-g(v^*)\|\leq \|Dg\|_{\infty}
\|v -v^*\|\leq \frac{2}{\delta^2}\|v -v^*\|.$$
Moreover, on the compact set $V\backslash B$, the function 
$\|g(v)-g(v^*)\|/\|v -v^*\|$ is continuous and  then bounded. Finally there exists a constant $C(\delta)>0$ such that for all $v\in V$
$$\|g(v)-g(v^*)\|\leq C(\delta)\|v -v^*\|.$$
Thus $\|\hat \theta_n-\theta^*\|=\|g(\hat v_n)-g(v^*)\|\leq C(\delta)\|\hat v_n -v^*\|$
  and it is sufficient to bound the sequence $(\sqrt{n}\bE\|\hat v_n -v^*\|)_{n\geq 1}$. 
But \eqref{eq:taylorExp} gives
$$
\ddot{\mathtt{d}_n}(\hat v_n-v^*)=(\dot{\mathtt{d}}- \dot{\mathtt{d}_n})(v^*),
$$
and we know from \eqref{psichi} that
$$\sqrt{n}(\dot \d-\dot \d_n)(v^*)=\Psi(\mathbb{F}_n,\mathbb{G}_n)+\frac{1}{\sqrt{n}}\chi(\mathbb{F}_n,\mathbb{G}_n).$$
Now we use the following lemma on the empirical processes $\mathbb{F}_n$ and $\mathbb{G}_n$.

\begin{lemma} \label{supprocessusempiriques}
Recall that $\mathbb{F}_n:=\sqrt{n}(\hat F_n-F)$ and
$\mathbb{G}_n:=\sqrt{n}(\hat G_n-G)$.  Assume $(Z_i)_{i\geq 0}$ and $(Z_i,Z_{i+1})_{i\geq 0}$ are $\alpha$-mixing with $\alpha(n)=O(n^{-a})$ for $a>4$ (see  (\ref{alphacoef}) for the $\alpha$ coefficient definition). Then we have the following properties:
\begin{enumerate}[i)]
\item With probability 1 : $\forall n\geq 1 ,\quad  \|\mathbb{F}_n-\mathcal{B}\|_{\infty}\leq C(\log n)^{-\lambda}$, for some  $\lambda>0$, where  
$\mathcal{B}$ is a Brownian bridge.
\item For all positive integer $q$, there exists $C_F(q)>0$ such that : $\forall n\geq 2 ,\: \bE\left(\|\mathbb{F}_n\|_{\infty}^q\right)\leq C_F(q)$.
\item For all positive integer $q$, there exists $C_G(q)>0$ such that: $\forall n\geq 2 ,\: \bE\|\mathbb{G}_n\|_{\infty}\leq C_G(q)$.
\end{enumerate}

\end{lemma}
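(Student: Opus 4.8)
The plan is to treat the three items uniformly, noting that $\mathbb{F}_n$ is the centered, $\sqrt n$-rescaled empirical process of the $\R$-valued $\alpha$-mixing sequence $(Z_i)_{i\ge1}$ indexed by the half-lines $\{(-\infty,x]:x\in\R\}$, while $\mathbb{G}_n$ is the analogous process for the $\R^2$-valued sequence $R_i=(Z_i,Z_{i+1})$ indexed by the lower quadrants $\{(-\infty,x]\times(-\infty,y]\}$; both index families are Vapnik--Chervonenkis (VC) classes, and, exactly as in the proof of Lemma~\ref{convergenceprocessusempiriques}, $(R_i)_{i\ge1}$ is $\alpha$-mixing with $\alpha^R(n)\le\alpha^Z(n-1)=O(n^{-a})$, $a>4$, so the same tools apply in dimension $1$ and $2$.

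For item i) I would invoke a Kiefer-process strong approximation theorem for empirical processes of strongly mixing sequences: under the polynomial mixing rate with $a>4$ one can build, on a suitably enlarged probability space, a Gaussian Kiefer process $\mathbb{K}$ whose section $\mathcal{B}_n:=n^{-1/2}\mathbb{K}(\cdot,n)$ is, for each $n$, a centered Gaussian bridge with the limiting covariance of $\mathbb{F}_n$ (that of $\mathcal{B}_F$ in Lemma~\ref{convergenceprocessusempiriques}) and such that, almost surely, $\|\mathbb{F}_n-\mathcal{B}_n\|_\infty=O((\log n)^{-\lambda})$ for some $\lambda>0$, uniformly in $n$; reading ``$\mathcal{B}$'' as this sequence of bridges yields item i). The requirement $a>4$ is precisely the rate such KMT-type constructions need in the dependent (and, for the $\R^2$ version used below, multivariate) setting.

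Items ii) and iii) then follow from the two-term bound $\|\mathbb{F}_n\|_\infty\le\|\mathbb{F}_n-\mathcal{B}_n\|_\infty+\|\mathcal{B}_n\|_\infty$ and its $\mathbb{G}_n$ analogue. The Gaussian term is the supremum of a centered separable Gaussian process whose covariance function does not depend on $n$, so by standard Gaussian concentration (Borell's inequality) its $q$-th moments are finite and bounded uniformly in $n$. For the remainder term I would use the $L^q$ version of the strong approximation, or equivalently a Rosenthal-type moment inequality for the supremum of the empirical process of an $\alpha$-mixing sequence over a VC class combined with a chaining argument --- either route again only needs $a>4$ --- to get $\bE(\|\mathbb{F}_n-\mathcal{B}_n\|_\infty^q)$ bounded uniformly in $n\ge2$. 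Adding the two bounds gives $\bE(\|\mathbb{F}_n\|_\infty^q)\le C_F(q)$; the same computation with $R_i=(Z_i,Z_{i+1})$ and the $\R^2$-indexed Gaussian bridge gives $\bE(\|\mathbb{G}_n\|_\infty^q)\le C_G(q)$.

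The hard part is item i) and its bivariate counterpart: one must apply a strong approximation result with an explicit (here only logarithmic) rate for empirical processes of $\alpha$-mixing sequences and check that such a result is available for $\R^2$-valued observations indexed by quadrants, since the classical Kiefer-process theorems are usually stated for i.i.d. data and the transfer to the mixing, multivariate case under the rate $a>4$ is where the real work lies. Once i) and its $\R^2$ analogue are in hand, ii) and iii) are immediate.
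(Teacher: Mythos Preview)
Your proposal is correct and takes essentially the same approach as the paper: Kiefer-process strong approximation (the paper cites Dhompongsa for $d=1$ and Philipp--Pinzur for the $d=2$ Kiefer process, the condition $a>d+2$ being exactly why $a>4$ is required), followed by the triangle inequality with Gaussian-supremum moment bounds (the paper uses the explicit Kolmogorov--Smirnov law in $d=1$ and the Adler--Brown tail estimate $\P(\|\mathcal{B}\|_\infty>\lambda)\le C\lambda^2 e^{-2\lambda^2}$ in $d=2$ rather than Borell). One simplification you missed: since the bound in i) holds for \emph{all} $n\ge1$ with probability~1, it is deterministic, so $\bE\|\mathbb{F}_n-\mathcal{B}\|_\infty^q\le C^q(\log n)^{-\lambda q}$ immediately --- no $L^q$ strong approximation or Rosenthal/chaining machinery is needed for the remainder term.
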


\begin{proof} 
\begin{enumerate}[i)]
\item Denote by $\mathbb{K}(.,.)$ the  Kiefer process. Theorem 3 of \cite{dhompongsa84} ensures that with probability 1, for all $n\geq 2 $
$$\sup_{s\in \R} |\sqrt{n} \mathbb{F}_n(s)-\mathbb{K}(s,n)|\leq C\sqrt{n} (\log n)^{-\lambda},$$
for some $\lambda >0$. To conclude it is sufficient to remind that 
$\mathcal{B}:=\mathbb{K}(.,n)/\sqrt{n}$ is a Brownian bridge, whose distribution does not depend on $n$.

\item It is known that 
$\|\mathcal{B}\|_{\infty}$ follows the Kolmogorov-Smirnov distribution whose cumulative distribution function is given by  $1-2\sum_{k=1}^\infty (-1)^{k-1}\exp(-2k^2x^2)$ (see for instance \cite{shorackwellner}). This distribution has finite moments : 
$\bE(\|\mathcal{B}\|_{\infty}^q)=q2^{-q/2}\Gamma(q/2)\eta(q)<\infty$
. Then, for all $n\geq 2$, 
\begin{eqnarray*}
\bE(\|\mathbb{F}_n\|_{\infty}^q)&\leq &2^{q-1}\bE(\|\mathcal{B}\|_{\infty}^q)+ 2^{q-1}C^q(\log n)^{-\lambda q}\\
&\leq & \underbrace{q2^{q/2-1}\Gamma(q/2)\eta(q)+ 2^{q-1}C^q(\log 2)^{-\lambda q}}.\\
&&~~~~~~~~~~~~~~~~~~~~~~C_F(q)
\end{eqnarray*}

\item The proof is the same considering the process $R_i=(Z_{i},Z_{i+1})$. Indeed we have used a theorem valid for processes valued in $\R^d$ as soon as the mixing rate  verifies $a>d+2$. The definition of the 2-dimensional Kiefer process can be found in \cite{PhilippPinzur80}. Again the distribution of $\mathcal{B}:=\mathbb{K}(.,n)/\sqrt{n}$  does not depend on $n$ and is a 2-dimensional Brownian bridge. The distribution of the supremum of a Brownian bridge in dimension two is less known than in dimension one, but \cite{AdlerBrown86} have shown that 
$\P(\|\mathcal{B}\|_{\infty}>\lambda)\leq C \lambda ^2 e^{-2\lambda^2}$, which is sufficient to ensure finite moments, {\it i.e.}
$\bE(\|\mathcal{B}\|_{\infty}^q)<\infty.$ 
\end{enumerate}
\end{proof}
Now, we are in position to bound $\Psi_1(\mathbb{F}_n,\mathbb{G}_n)$. We compute
\begin{eqnarray*}
\bE|\Psi_1(\mathbb{F}_n)|&= &2\bE\left|\iint  \left(-T_1\mathbb{G}_n+T_1\overline{A\otimes \mathbb{F}_n}\right)dH\right|\\
&\leq & 2\bE\|\mathbb{G}_n\|_{\infty}\iint  |T_1|dH+ 2\bE\|\mathbb{F}_n\|_{\infty}\iint |T_1(x,y)|(|A(x)|+|A(y)|)dH(x,y)\\
&\leq & 2C_G(1)+ 8C_F(1).
\end{eqnarray*}
Using the same approach for $\Psi_2$ and $\chi_1,\chi_2$, we obtain
$$\bE\|\sqrt{n}(\dot \d-\dot \d_n)(v^*)\|\leq C,$$
and then 
\begin{equation}\label{majodnv}
\sqrt{n}\bE\|\ddot{\mathtt{d}_n}(\hat v_n-v^*)\|\leq C.
\end{equation}
Now $\hat v_n-v^*=\ddot\d^{-1}\ddot \d (\hat v_n-v^*) $, since $\ddot\d$ is invertible under \textbf{LinInd}. For the sake of simplicity, we denote by $x_n$ the random sequence $x_n=(\hat v_n-v^*)$.  That gives us
$$\hat v_n-v^*=x_n=\ddot\d^{-1}\ddot \d x_n=\ddot\d^{-1}\left(\ddot \d_n x_n+(\ddot \d -\ddot \d_n) x_n\right),$$
and
$$\bE\|\hat v_n-v^*\|\leq \|\ddot\d^{-1}\|\left(\bE\|\ddot \d_n x_n\|+\bE(\|\ddot \d -\ddot \d_n\| \|x_n\|)\right),$$
where we denote by $\|M\|$ the spectral norm of any matrix $M$. 
Note that, for all $\theta \in \Theta$, $v=h(\theta)\in [-1,0]\times [-\frac{1}{\delta}, 1]$, then with probability 1, for all $n\geq 1$: 
$|[x_n]_1|\leq 1$ and $|[x_n]_2|\leq 1+\delta^{-1}.$  Using this and \eqref{majodnv}, we obtain
\begin{eqnarray}\label{v_diff}
\bE\|\hat v_n-v^*\|\leq \|\ddot\d^{-1}\|\left(\frac{C}{\sqrt{n}}+c\delta^{-1}\bE(\|\ddot \d -\ddot \d_n\|)\right).
\end{eqnarray}
From Section~\ref{sec:hessian} we know that $\ddot \d_n-\ddot \d=\frac{1}{\sqrt{n}}\Phi(\mathbb{F}_n)$ with $[\Phi(\mathbb{F}_n)]_{1,1}=0$ and

\begin{eqnarray*}
[\Phi(\mathbb{F}_n)]_{1,2}&=&[\Phi(\mathbb{F}_n)]_{2,1}=2\iint T_1 \sqrt{n} (\hat T_2-T_2)dH, \\
\left[\Phi(\mathbb{F}_n)\right]_{2,2}&=&\frac{2}{\sqrt{n}}\iint \left(\sqrt{n}({\hat T}_2-T_2)\right)^2dH
+4\iint T_2 \sqrt{n} ({\hat T}_2-T_2)dH,
\end{eqnarray*}
where
$
\sqrt{n}(T_2-\hat T_2)=\overline{\mathbb{F}_n\otimes (F^0-F)}-\frac{1}{\sqrt{n}} \mathbb{F}_n\otimes \mathbb{F}_n.
$
Thus
\begin{eqnarray*}
[|\Phi(\mathbb{F}_n)|]_{1,2}&\leq &2\iint |T_1| \left|\overline{\mathbb{F}_n\otimes (F^0-F)}-\frac{1}{\sqrt{n}} \mathbb{F}_n\otimes \mathbb{F}_n\right|dH\\
&\leq &2\|\mathbb{F}_n\|_\infty+\frac{2}{\sqrt{n}}\|\mathbb{F}_n\|_\infty^2.
\end{eqnarray*}
so that  $\bE[|\Phi(\mathbb{F}_n)|]_{1,2}\leq 2 C_F(1)+2C_F(2)$, see Lemma \ref{supprocessusempiriques}. In the same way we can prove that  the sequence $\bE[|\Phi(\mathbb{F}_n)|]_{2,2}$ is bounded by a positive finite constant. This ensures that
$\bE(\|\ddot \d -\ddot \d_n\|)\leq C'/\sqrt{n}$ and, from \eqref{v_diff},  we finally obtain 
$\bE\|\hat v_n-v^*\|=O(1/\sqrt{n})$.

\bigskip

\noindent $\bullet$ \underline{Proof of Theorem \ref{theo:risqueF} about  $F^1$.}
We know that  for all $n\geq 1$
\begin{eqnarray*}
\hat F_n^1-F^1=\frac{1}{ r^*} (\hat F_n-F)+\left(\frac{\hat \beta_n}{\hat \alpha_n}-\frac{\beta^*}{\alpha^*}\right)(\hat F_n-F^0).
\end{eqnarray*}
Using that  $\mathbb{F}_n=\sqrt{n}(\hat F_n -F)$ and $\|\hat F_n-F^0\|_{\infty}\leq 1$,  it comes
\begin{eqnarray*}
\|\hat F_n^1-F^1\|_{\infty}&\leq &\frac{1}{ r^*} \frac{\|\mathbb{F}_n\|_{\infty}}{\sqrt{n}}+\left|\frac{\hat \beta_n}{\hat \alpha_n}-\frac{\beta^*}{\alpha^*}\right|\\
&\leq &\frac{1}{ r^*} \frac{\|\mathbb{F}_n\|_{\infty}}{\sqrt{n}}+\frac{\sqrt{2}}{\delta^2}\|\hat \theta_n-\theta\|.
\end{eqnarray*}
Thus, using again Lemma~\ref{supprocessusempiriques}, we have
\begin{eqnarray*}
\bE\|\hat F_n^1-F^1\|_{\infty}&\leq &\frac{1}{ r^*} \frac{1}{\sqrt{n}}\bE\|\mathbb{F}_n\|_{\infty}+\frac{\sqrt2}{\delta^2}\bE\|\hat \theta_n-\theta\|\\
&\leq &\left(\frac{C_F(1)}{ r^*}+\frac{C''}{\delta^2 }\right)\frac{1}{\sqrt{n}},
\end{eqnarray*}
which concludes the proof.
\qed

\section*{Aknowledgements}

The authors would like to express their  sincere gratitude  to Professors  Florence Merlevede and   J\'er\^{o}me Dedecker,   for their  insightful comments  about the convergence of $(\mathbb{F}_n,\mathbb{G}_n)$ and $\bE\|\mathbb{F}_n\|_{\infty}$,  resp.  the weak dependence of processes.

\section*{Fundings and grants}
Claire  Lacour and Pierre Vandekerkhove would like to aknowledge that this work was supported by the CNES under the BIOSWOT-AdAC project and the MIO Axes Transverses. \\
Pierre  Vandekerkhove would also like to acknowledge the support received from the Research Chair ACTIONS under the aegis of the Risk Foundation, an initiative by BNP Paribas Cardif and the French Institute of Actuaries.

\bibliographystyle{abbrvnat}
\bibliography{biblio}

\appendix
\section{Appendix}
\subsection{Details for the computation of $\dot{\mathtt{d}}-\dot{\mathtt{d_n}}$}
\label{sec:detailsdiffgradient}

We start from 
\begin{eqnarray*}
[ \Dot{\d}-\Dot{\d}_{n}]_2(v) &=&2\iint (v_1 T_1+v_2T_2+T_3)( T_2-\hat T_2) dH
+2v_2\iint   \hat T_2(T_2-\hat T_2)dH
 \\&&
 +2\iint   \hat T_2(T_3-\hat T_3)dH.
\end{eqnarray*}
We can write 
\begin{eqnarray*}
[\Dot{\d}-\Dot{\d}_{n}]_2(v^*) &=&2\iint \Delta(\theta^*,.)( T_2-\hat T_2) dH
+2v_2^*\iint   (\hat T_2-T_2+T_2)(T_2-\hat T_2)dH
 \\&&
 +2\iint   (\hat T_2-T_2+T_2)(T_3-\hat T_3)dH\\
 &=&2\iint \left[\Delta(\theta^*,.)+v_2^*T_2\right]( T_2-\hat T_2) dH
 +2\iint  T_2(T_3-\hat T_3)dH\\
 \\&&
+2v_2^*\iint   (\hat T_2-T_2)(T_2-\hat T_2)dH
 +2\iint   (\hat T_2-T_2)(T_3-\hat T_3)dH\\
 &=:&2I_1+2I_2+2I_3+2I_4.
\end{eqnarray*}
Denoting $\overline{a\otimes b}=a\otimes b+b\otimes a$, equalities \eqref{diffT2} and \eqref{diffT3} become
\begin{eqnarray}
\sqrt{n}(T_2-\hat T_2)&=&\overline{(F^0-F)\otimes \mathbb{F}_n}-\frac{1}{\sqrt{n}} \mathbb{F}_n\otimes \mathbb{F}_n,\label{T2decomp}\\
\sqrt{n}(T_3-\hat T_3) &=&-\mathbb{G}_n+\overline{F\otimes \mathbb{F}_n}+ \frac{1}{\sqrt{n}} \mathbb{F}_n\otimes \mathbb{F}_n.\label{T3decomp}
\end{eqnarray}
This gives according to (\ref{T2decomp})
\begin{eqnarray*}
\sqrt{n}I_1
 &=&\iint \left[\Delta(\theta^*,.)+v_2^*T_2\right]( T_2-\hat T_2)dH\\
 &=& \iint \left[\Delta(\theta^*,.)+v_2^*T_2\right]\left(\overline{(F^0-F)\otimes \mathbb{F}_n}-\frac{1}{\sqrt{n}} \mathbb{F}_n\otimes \mathbb{F}_n\right)dH\\
  &=& \iint B\left(\overline{(F^0-F)\otimes \mathbb{F}_n}-\frac{1}{\sqrt{n}} \mathbb{F}_n\otimes \mathbb{F}_n\right)dH,
\end{eqnarray*}
with $B=v_1^*T_1+2v_2^*T_2+T_3$.
Now according to (\ref{T3decomp})
\begin{eqnarray*}
\sqrt{n}I_2=\iint  T_2(T_3-\hat T_3)dH
 =\iint T_2 \left(-\mathbb{G}_n+\overline{F\otimes \mathbb{F}_n}+ \frac{1}{\sqrt{n}} \mathbb{F}_n\otimes \mathbb{F}_n\right)dH.
\end{eqnarray*}
Regarding the third term, we write thanks to (\ref{T2decomp})
\begin{eqnarray*}
 \sqrt{n}I_3=
-v_2^*\iint  (T_2-\hat T_2)^2dH=-\frac{v_2^*}{\sqrt{n}}\iint \left(\overline{(F^0-F)\otimes \mathbb{F}_n}-\frac{1}{\sqrt{n}} \mathbb{F}_n\otimes \mathbb{F}_n\right)^2dH.
\end{eqnarray*}
Finally, according to (\ref{T3decomp}) we have 
\begin{eqnarray*}
 \sqrt{n}I_4 &=&
\iint   (\hat T_2-T_2)(T_3-\hat T_3)dH
\\&=&
\frac{1}{\sqrt{n}}\iint \left(\overline{(F^0-F)\otimes \mathbb{F}_n}-\frac{1}{\sqrt{n}} \mathbb{F}_n\otimes \mathbb{F}_n\right)\left(\mathbb{G}_n-\overline{F\otimes \mathbb{F}_n}- \frac{1}{\sqrt{n}} \mathbb{F}_n\otimes \mathbb{F}_n\right)dH.
\end{eqnarray*}
Gathering all the terms we obtain  
\begin{eqnarray*}
\frac{\sqrt{n}}2[ \Dot{\d}-\Dot{\d}_{n}]_2 (v^*)&=&
\iint B\overline{(F^0-F)\otimes \mathbb{F}_n}dH+\iint T_2 \left(-\mathbb{G}_n+\overline{F\otimes \mathbb{F}_n}\right)dH+\frac{1}{\sqrt{n}}\chi_2(\mathbb{F}_n,\mathbb{G}_n)
\\&=&\iint \left(-T_2 \mathbb{G}_n+
 B\overline{(F^0-F)\otimes \mathbb{F}_n} +T_2 \overline{F\otimes \mathbb{F}_n}\right)dH+\frac{1}{\sqrt{n}}\chi_2(\mathbb{F}_n,\mathbb{G}_n),
 \end{eqnarray*}
 which concludes our $\Dot{\d}-\Dot{\d}_{n}$ analysis.
\section{Appendix}
\subsection{Bell-like regime of $\hat \theta_n$ and $\tilde \theta_n$ for models $\mbox{(S0)}_{strong}$ and  $\mbox{(S0)}_{weak}$}

\begin{figure}[!h]
	\begin{center}
	        \includegraphics[scale=0.2]{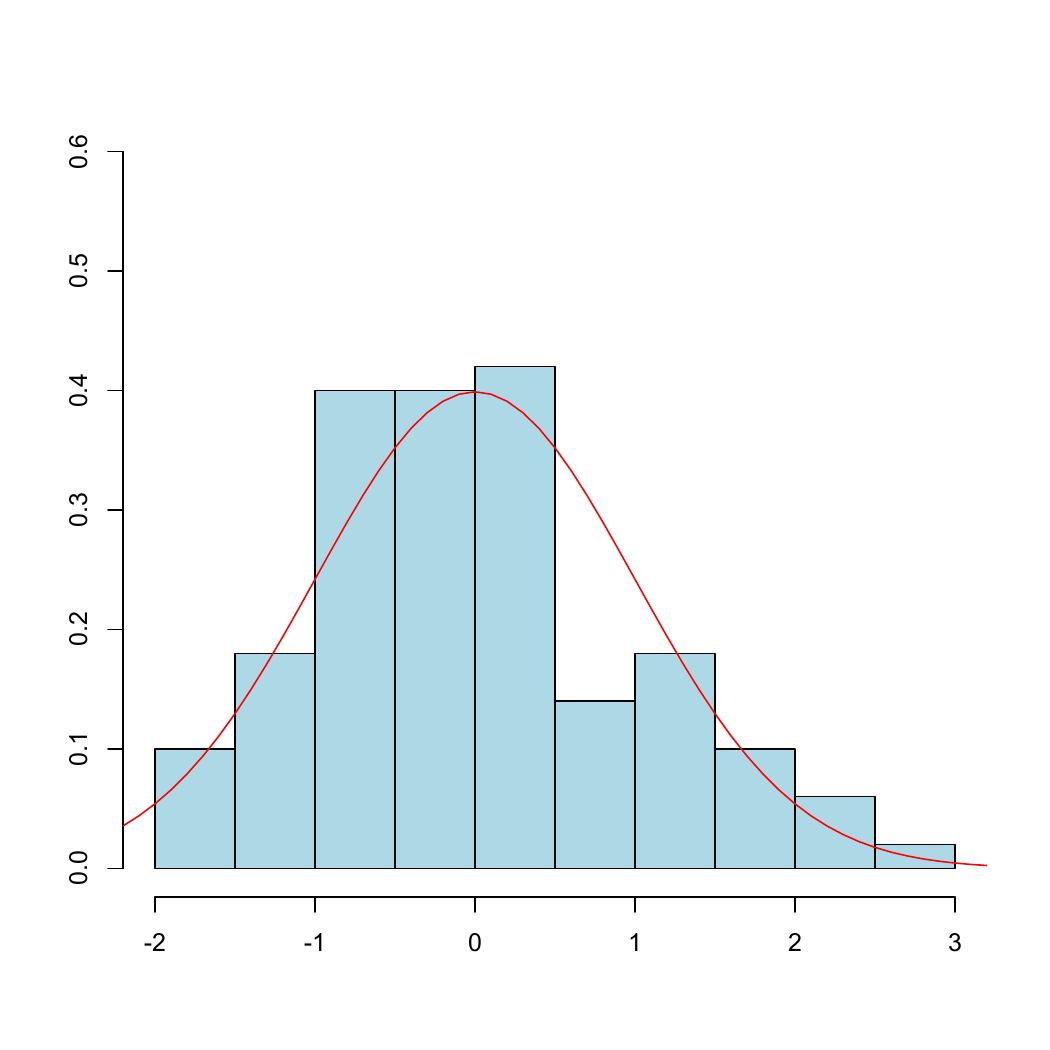}
		\includegraphics[scale=0.2]{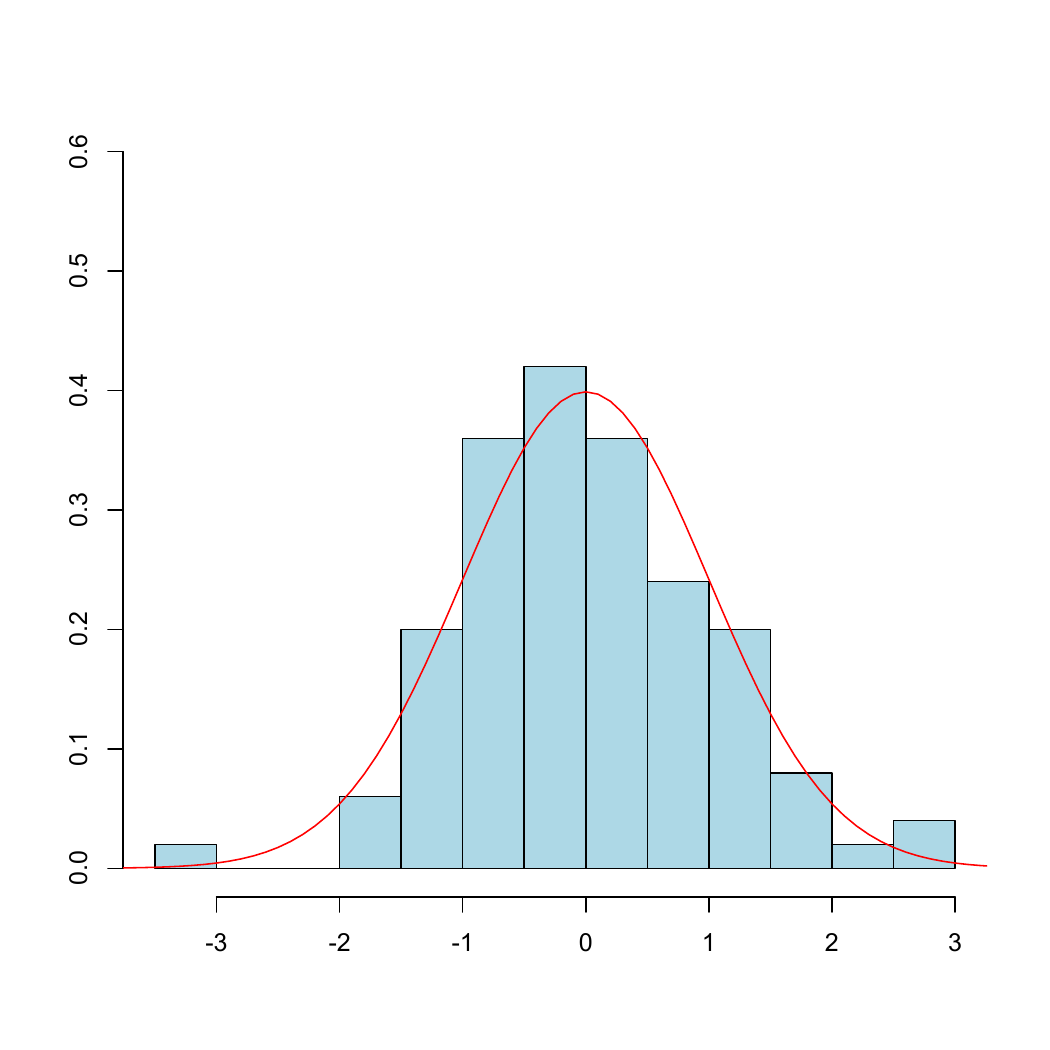} 
		 \includegraphics[scale=0.2]{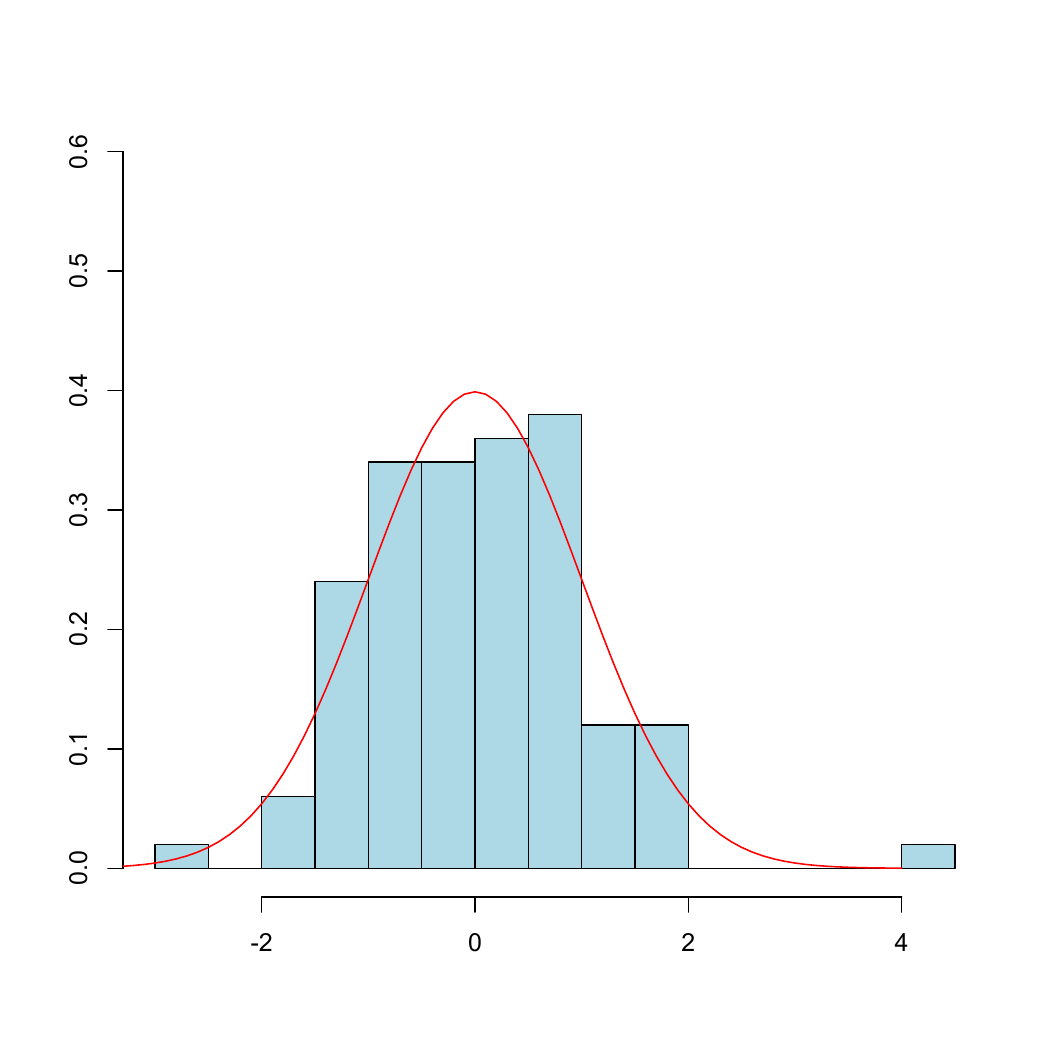}
		\includegraphics[scale=0.2]{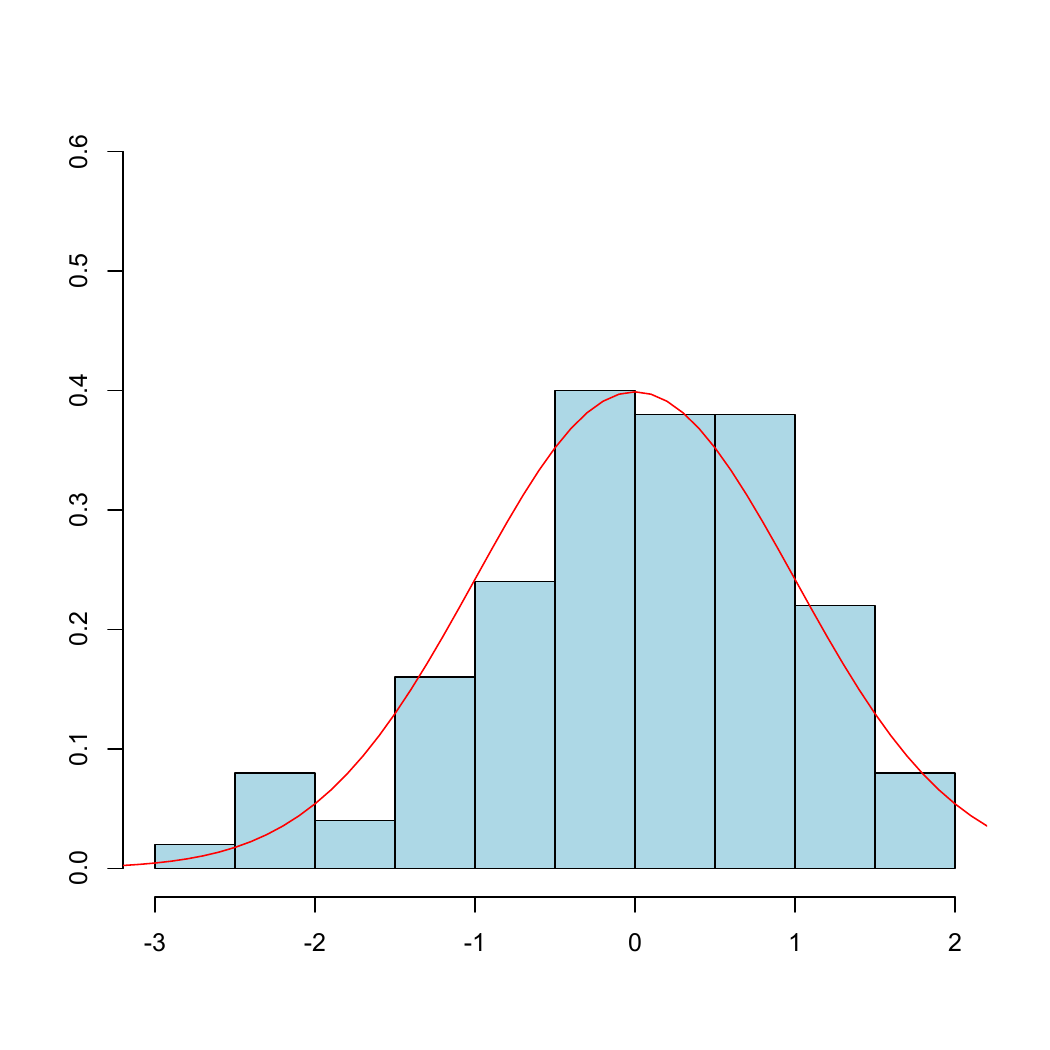} 
		\\
	        \includegraphics[scale=0.2]{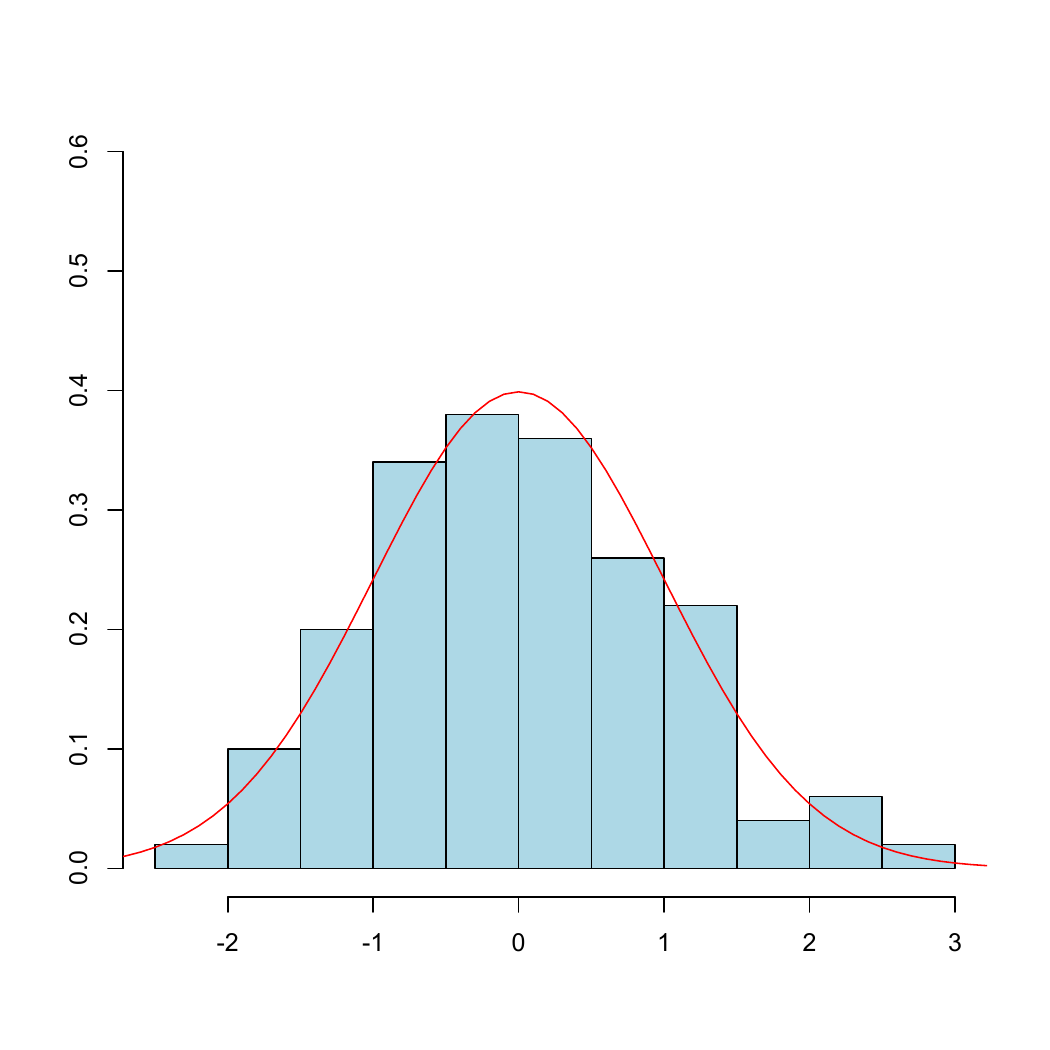}
		\includegraphics[scale=0.2]{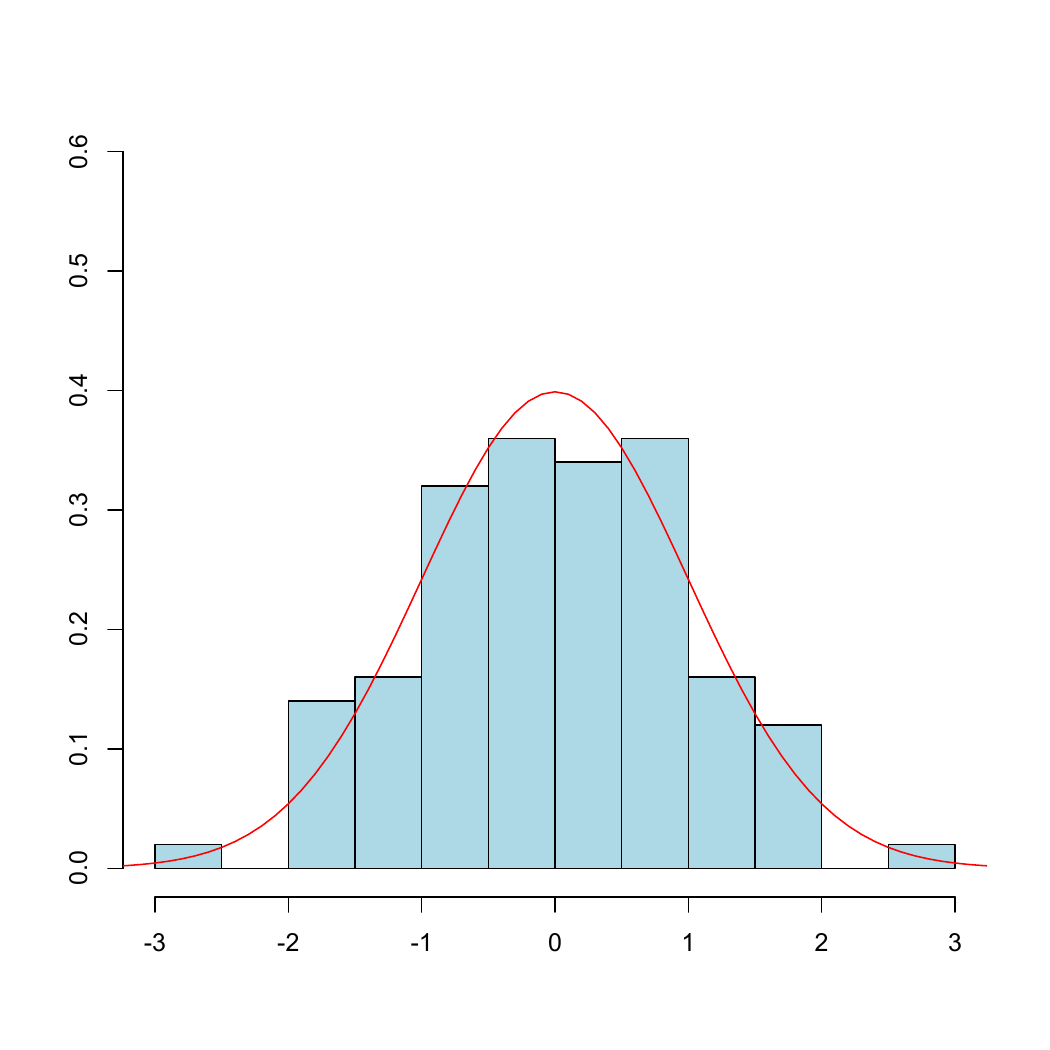} 
		\includegraphics[scale=0.2]{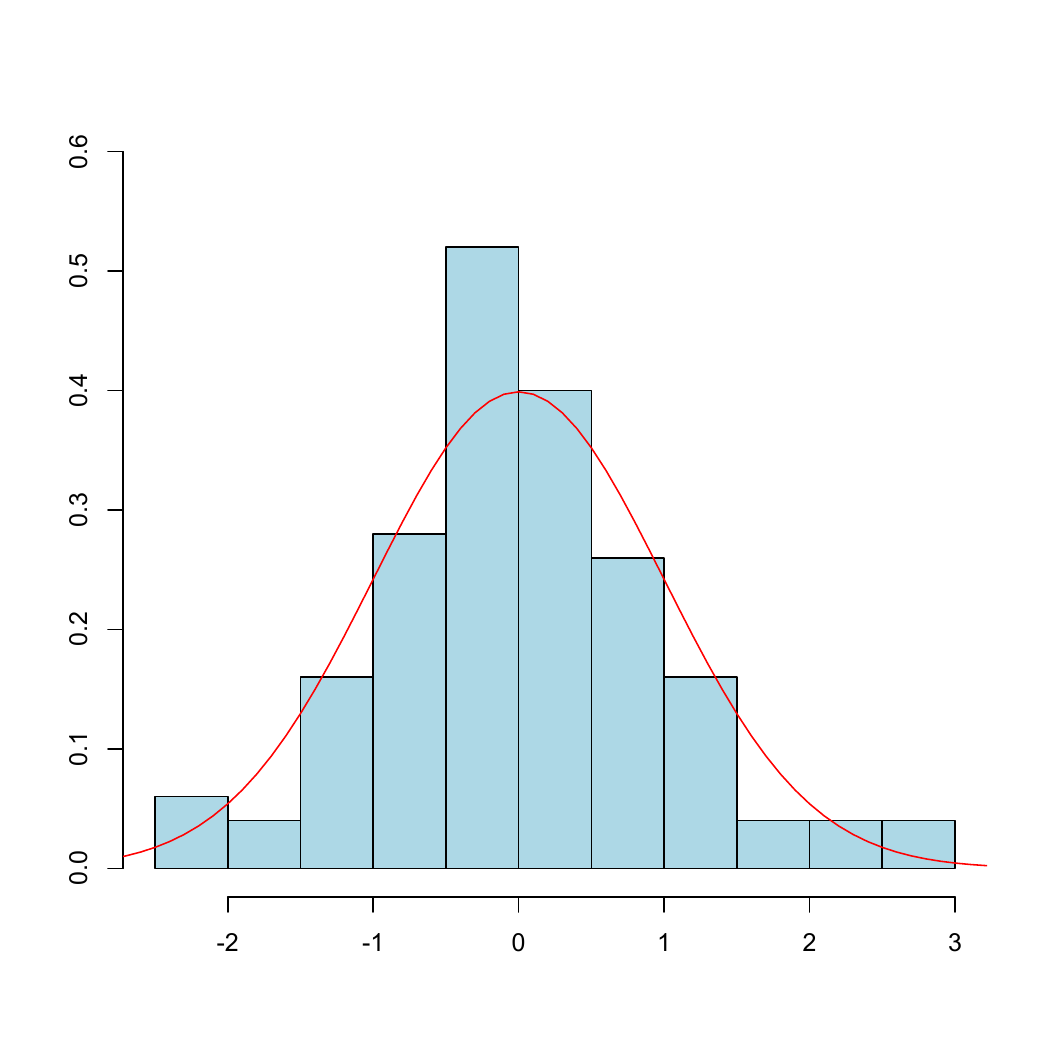}
		\includegraphics[scale=0.2]{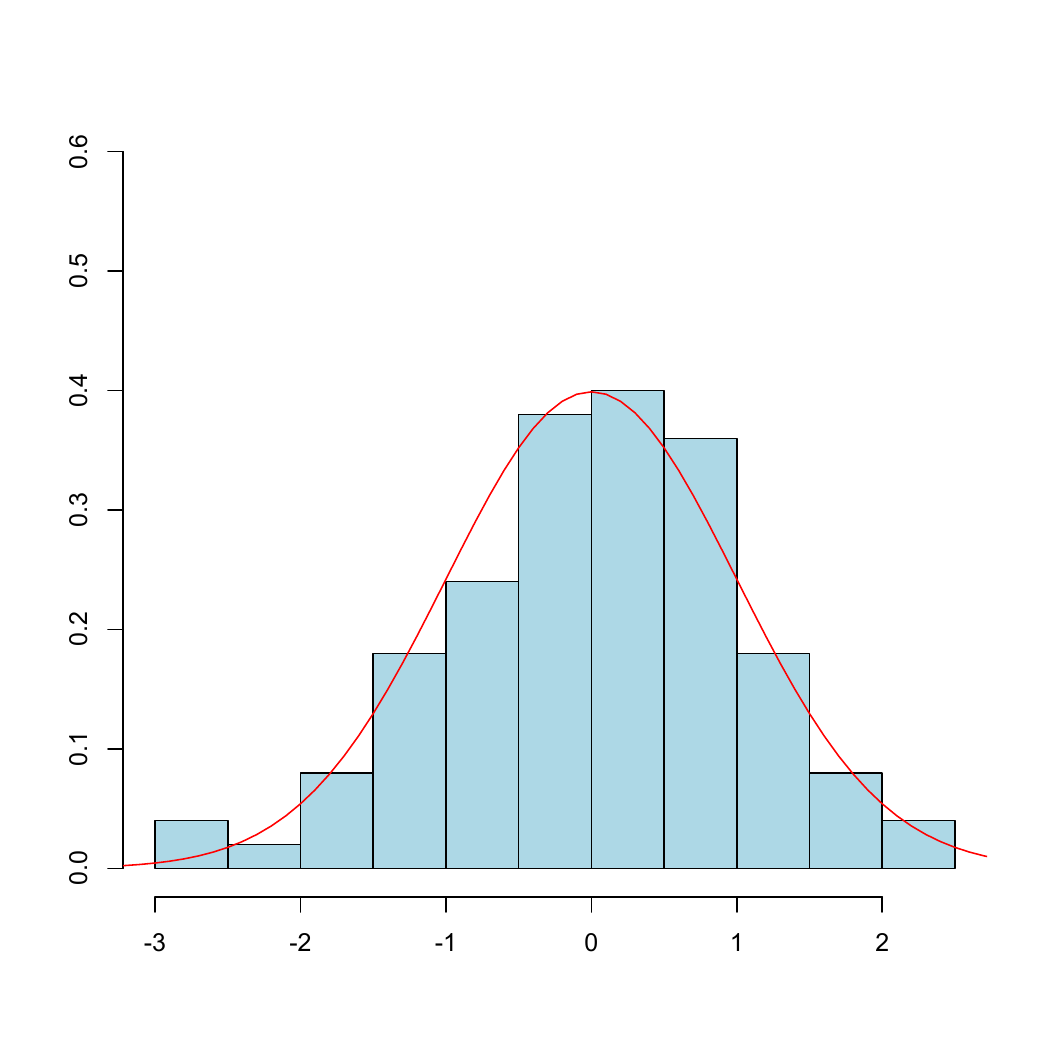}
		\\
		\includegraphics[scale=0.2]{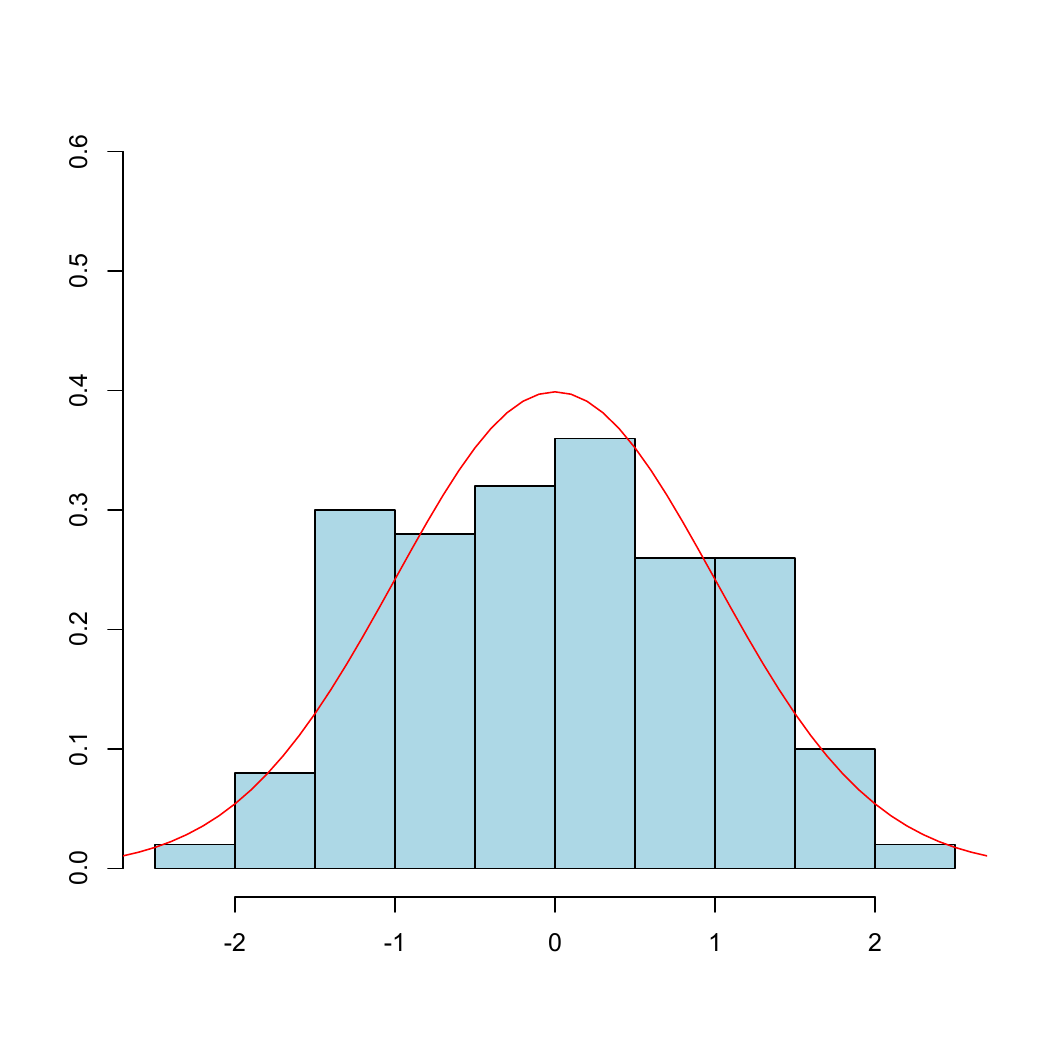}
		\includegraphics[scale=0.2]{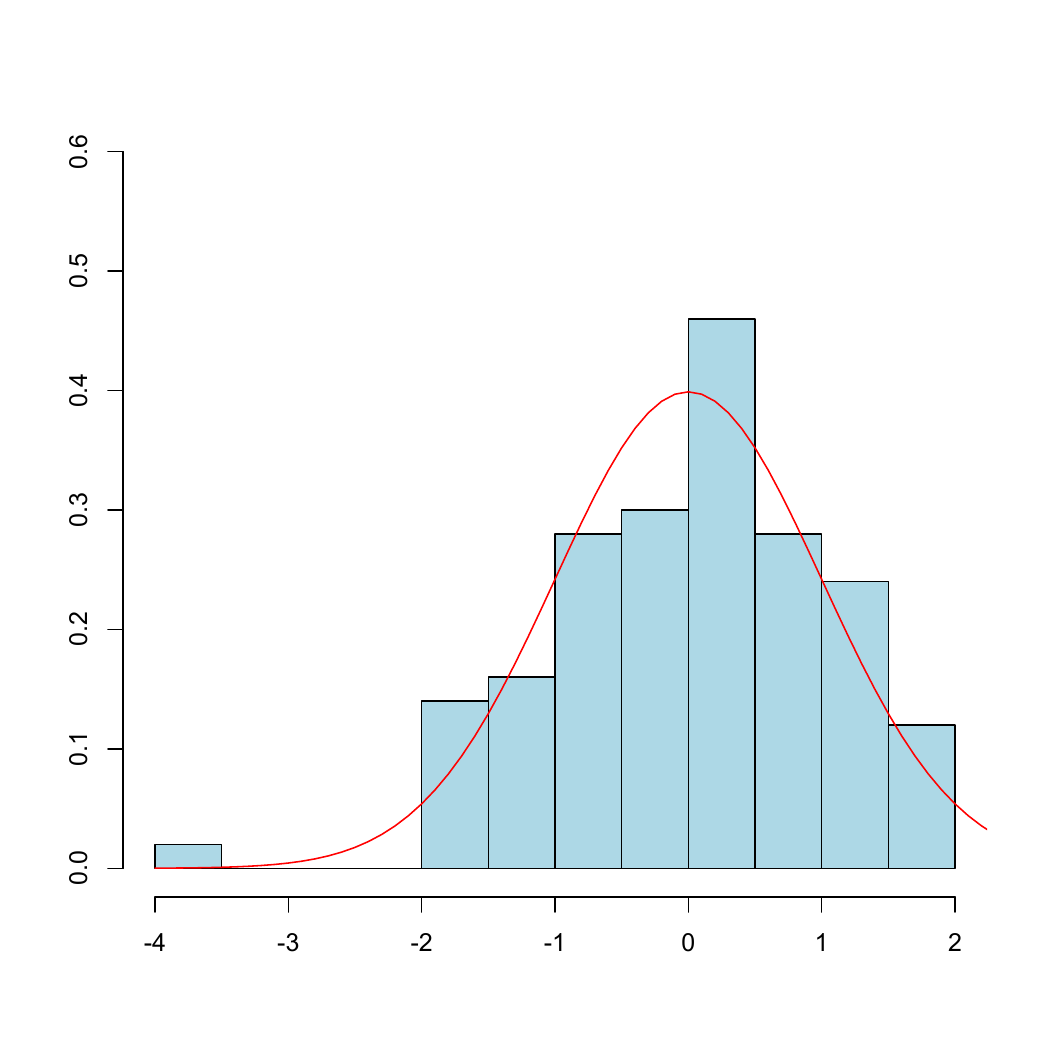} 
		\includegraphics[scale=0.2]{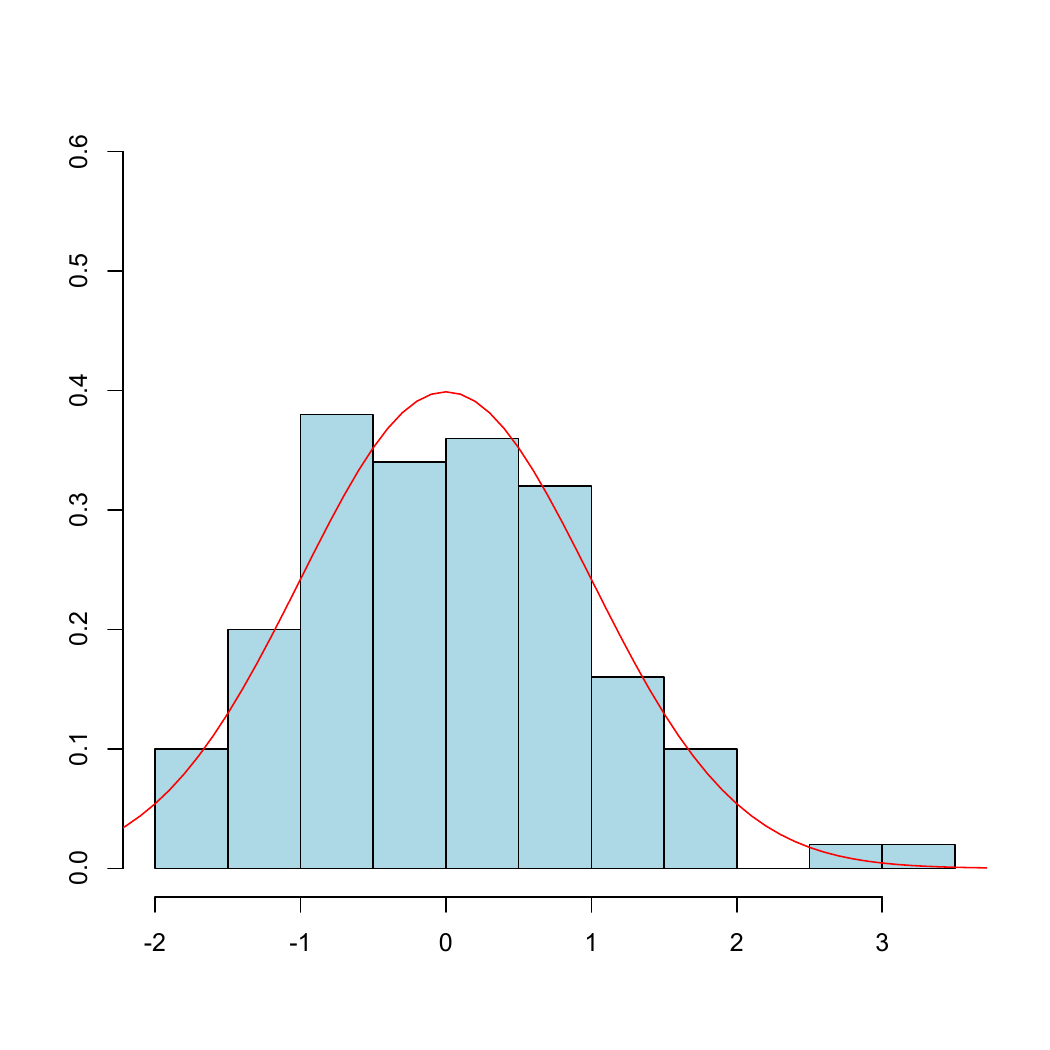}
		\includegraphics[scale=0.2]{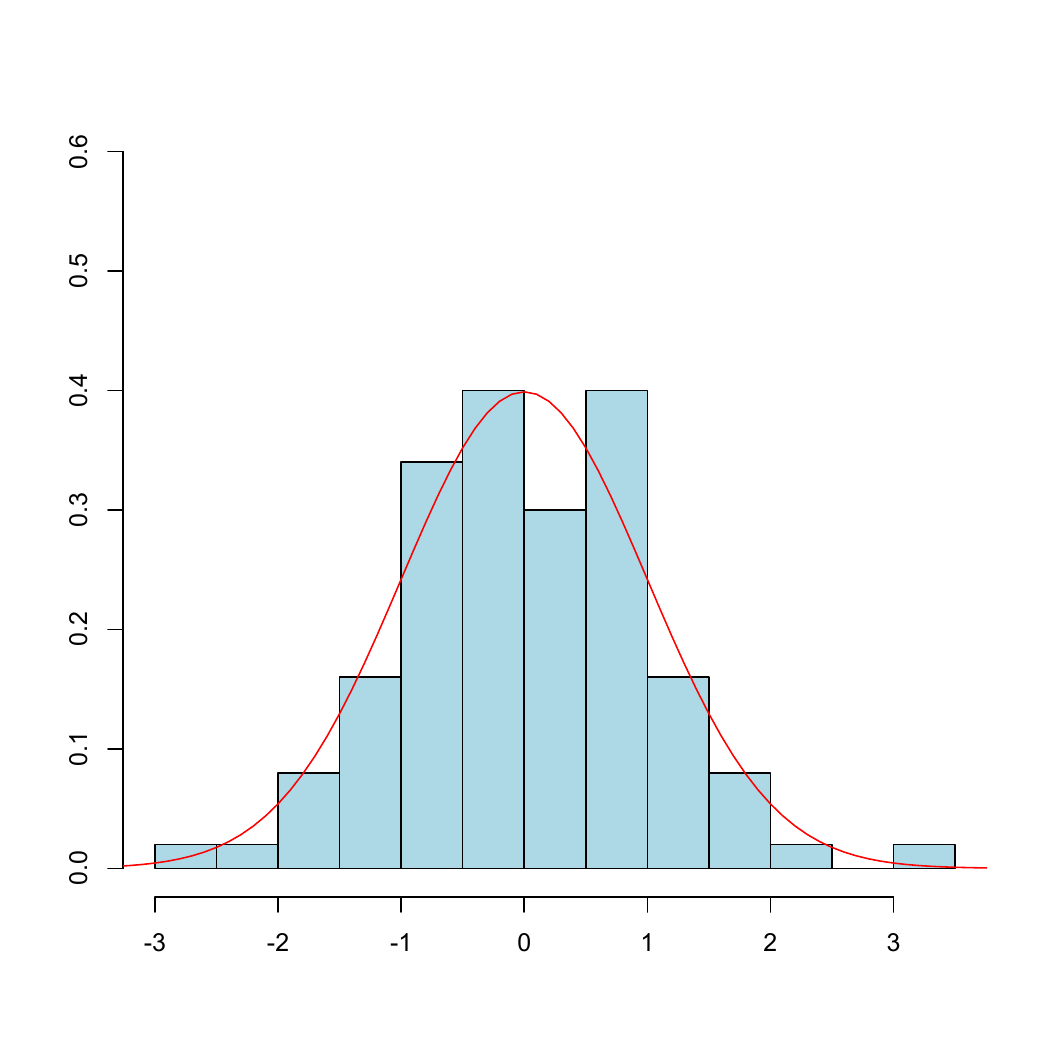} 
	\caption{Histogram of  $(\alpha,\beta)$ $\sqrt{n}$-normalised distribution under model $\mbox{(S0)}_{strong}$. First row $n=1,000$, second row $n=3,000$, third row $n=5,000$. First column $\hat \alpha_n$, second column $\hat \beta_n$, third column  $\tilde \alpha_n$, fourth column $\tilde \beta_n$.}
		\label{fig:normalityS0strong}
	\end{center}
\end{figure}
\newpage
\begin{figure}[!h]
	\begin{center}
	        \includegraphics[scale=0.2]{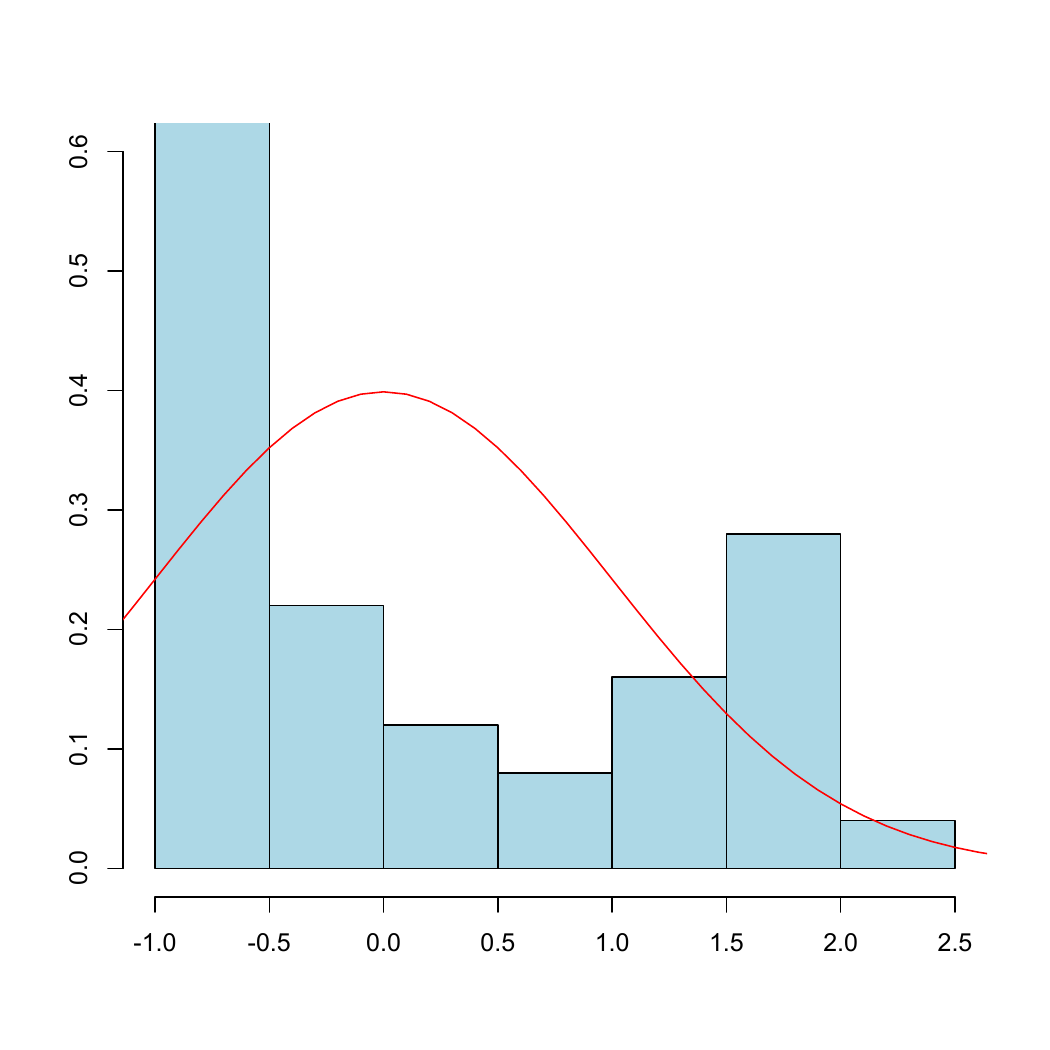}
		\includegraphics[scale=0.2]{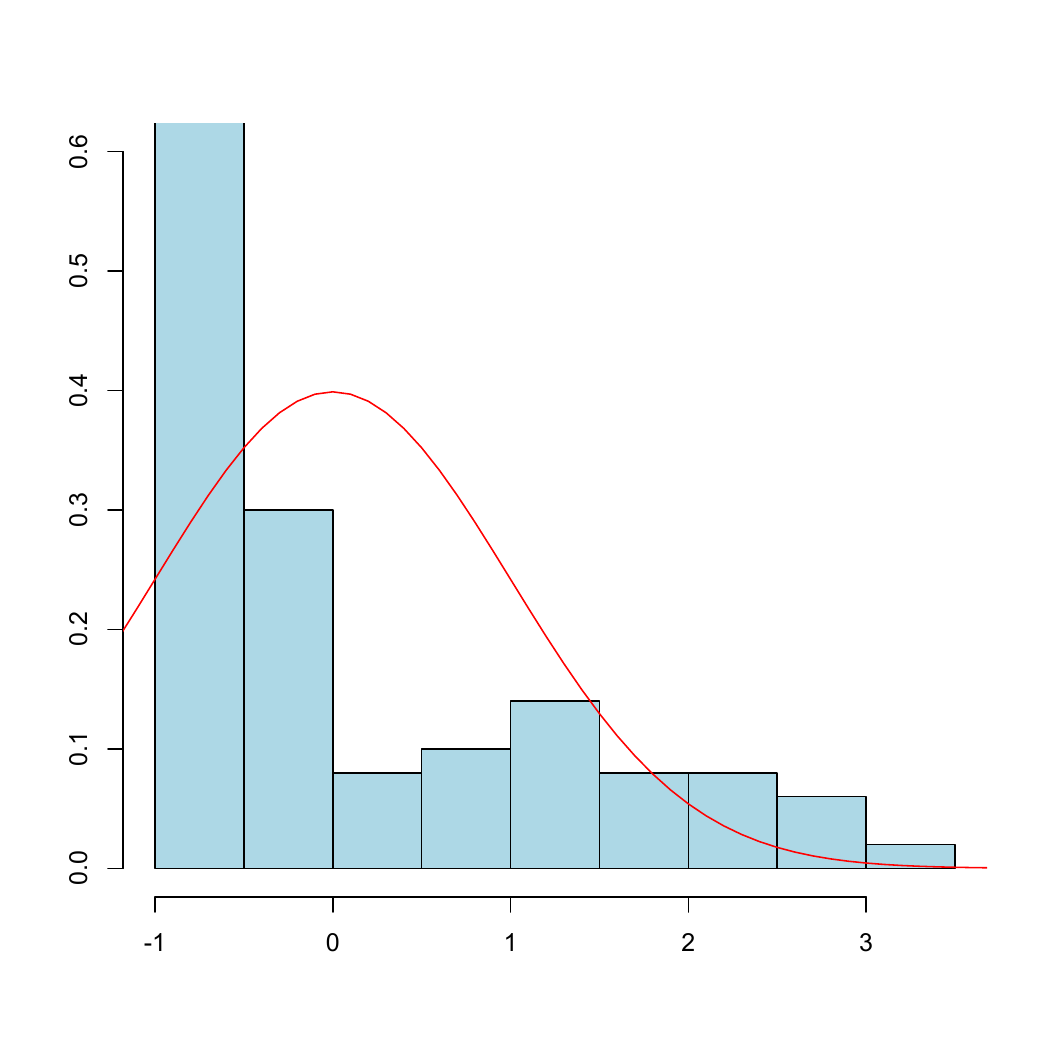} 
		 \includegraphics[scale=0.2]{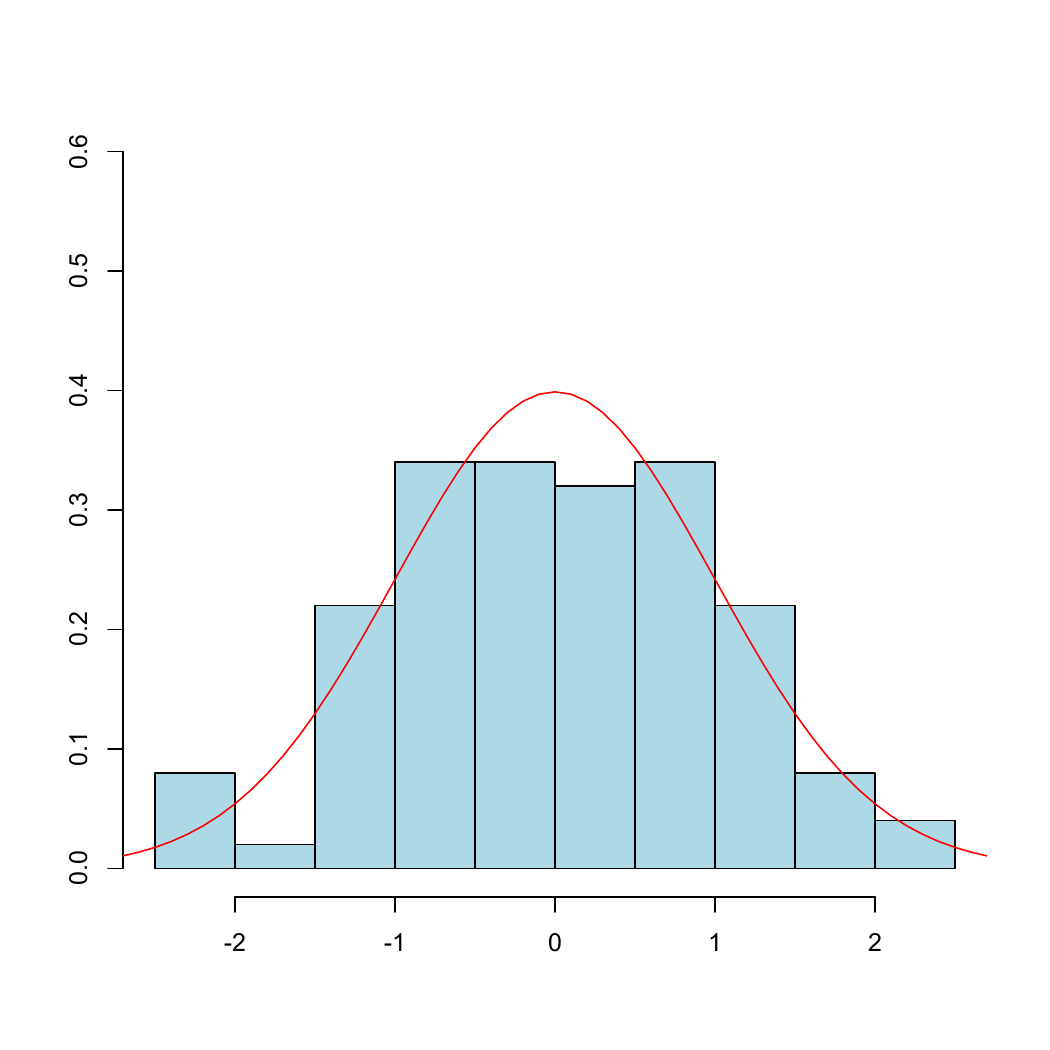}
		\includegraphics[scale=0.2]{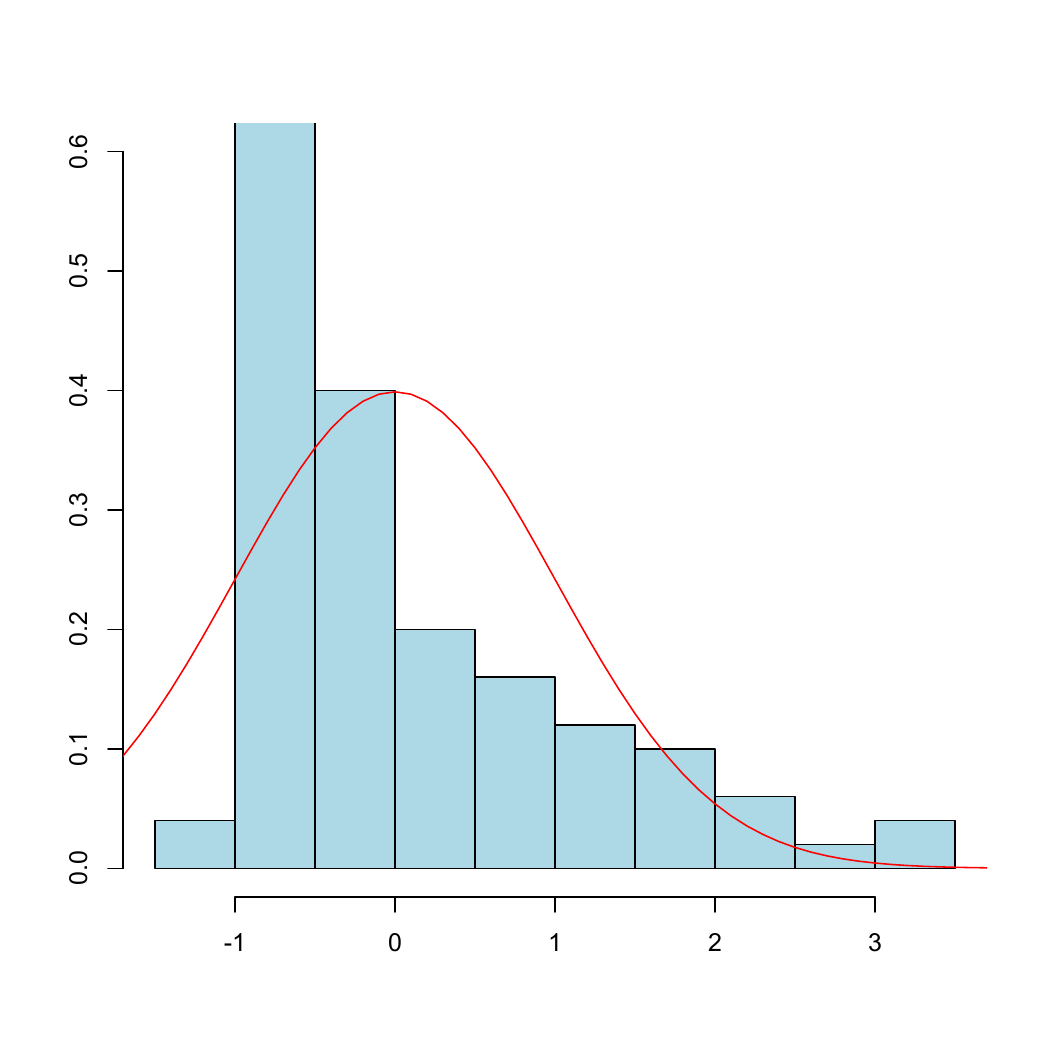} 
		\\
	        \includegraphics[scale=0.2]{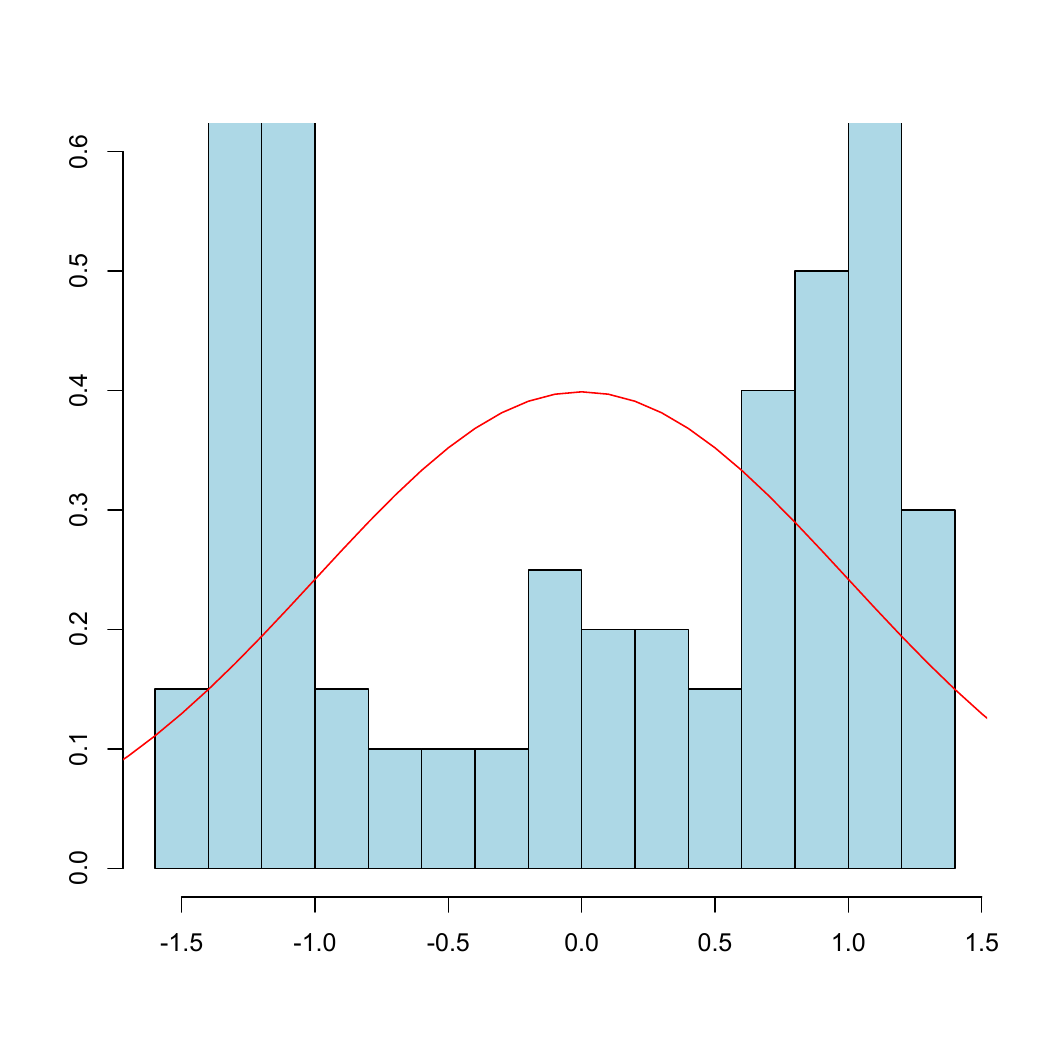}
		\includegraphics[scale=0.2]{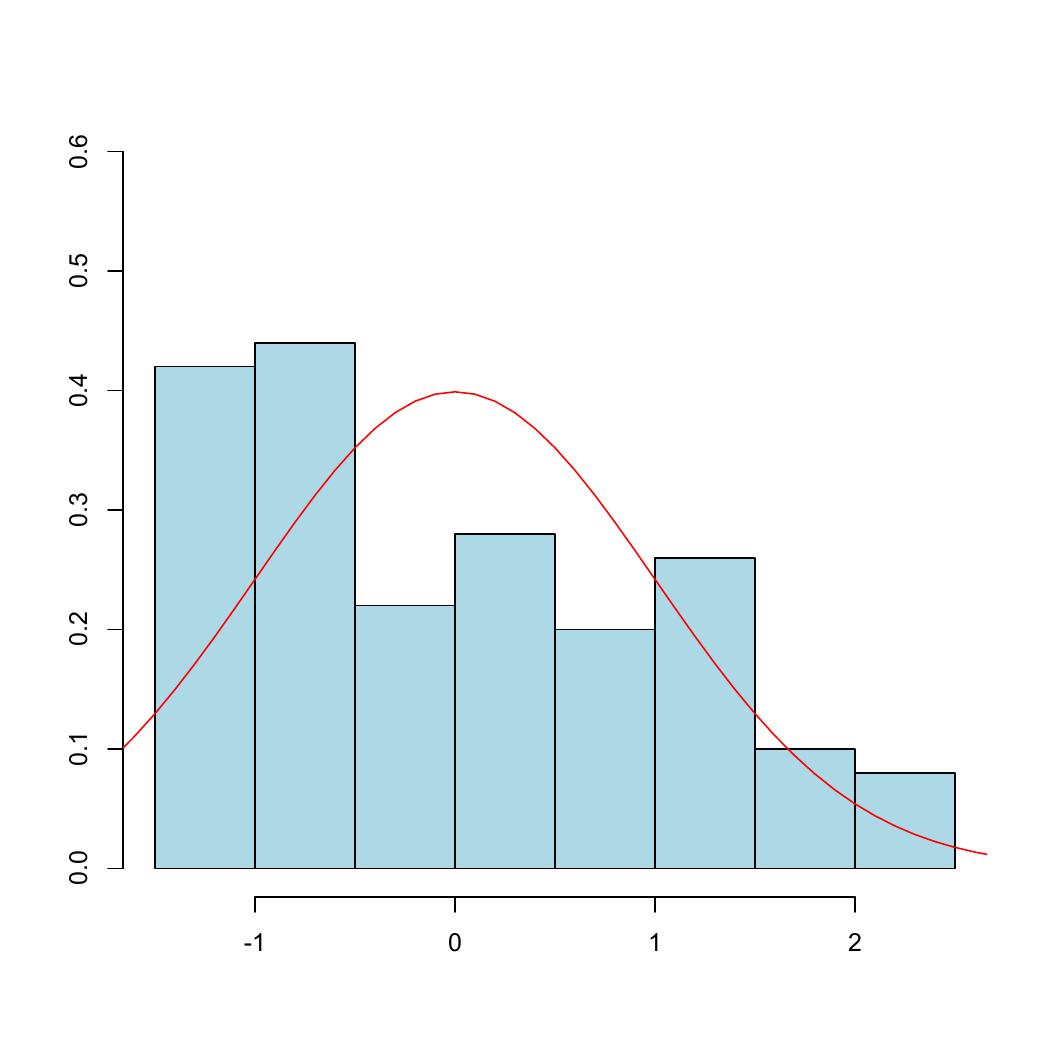} 
		\includegraphics[scale=0.2]{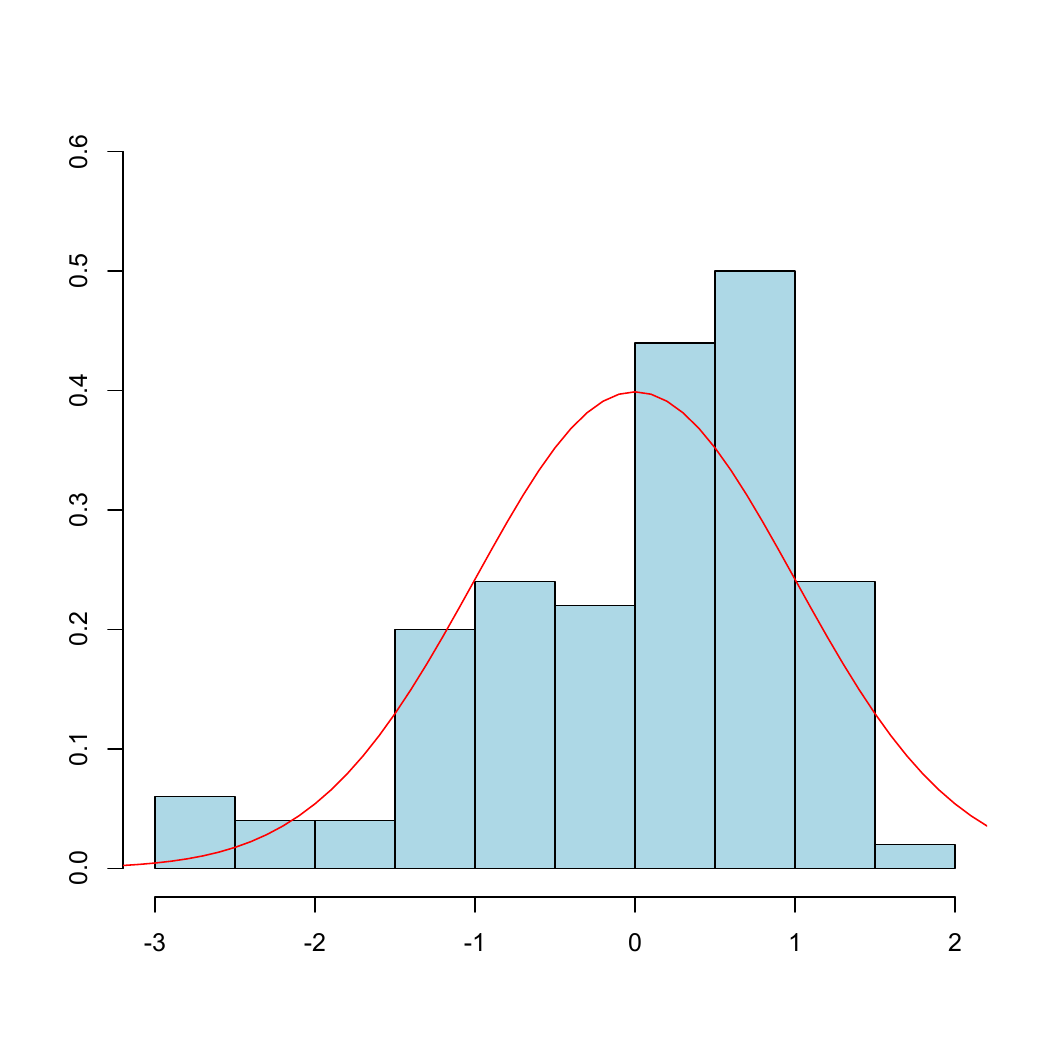}
		\includegraphics[scale=0.2]{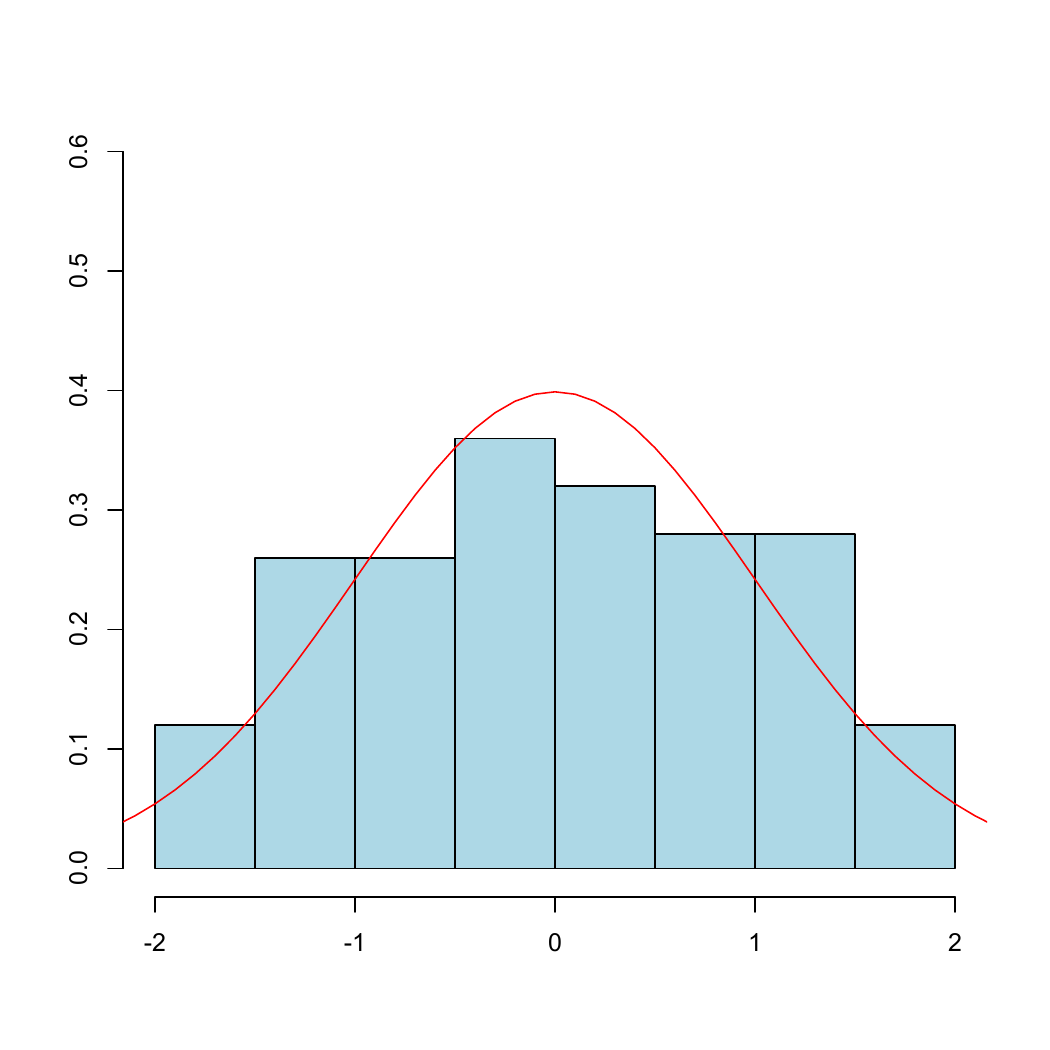}
		\\
		\includegraphics[scale=0.2]{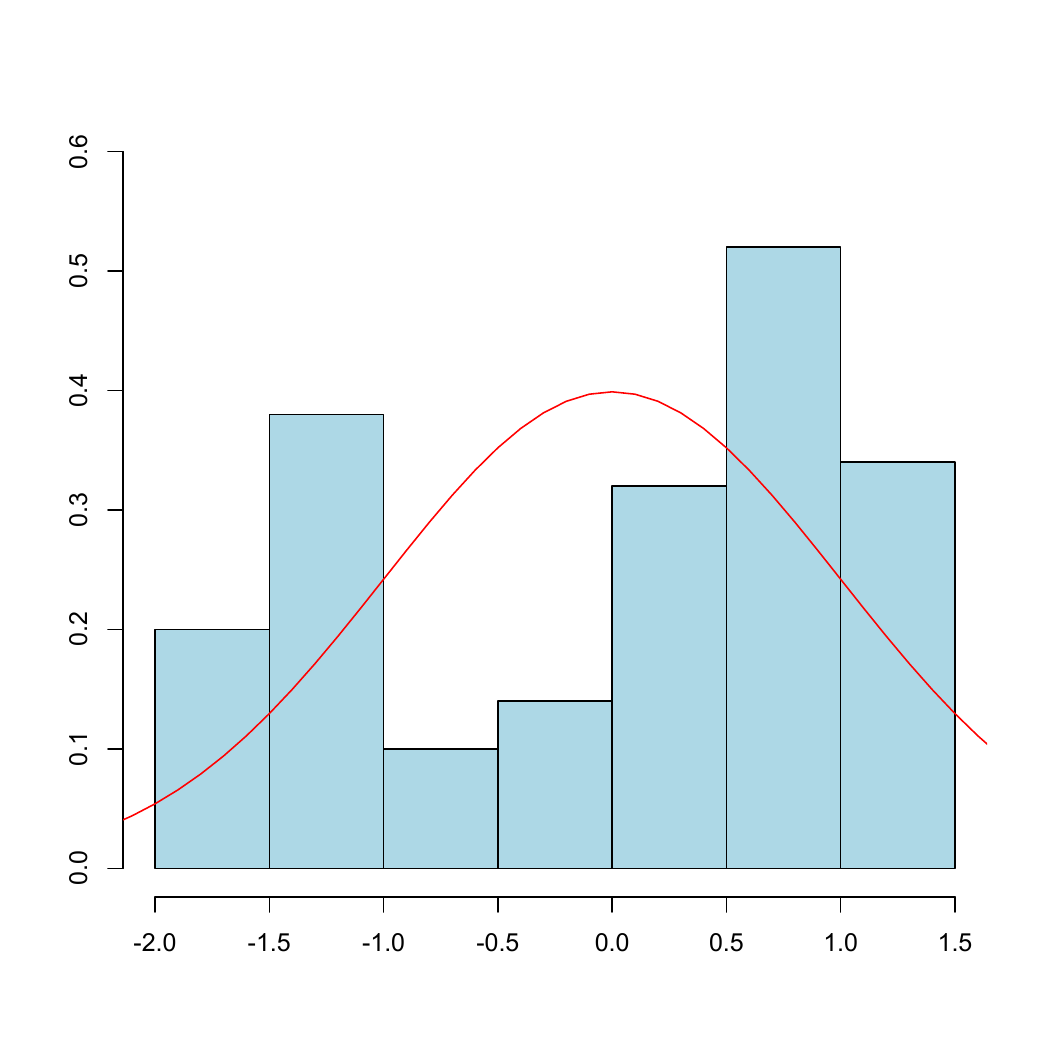}
		\includegraphics[scale=0.2]{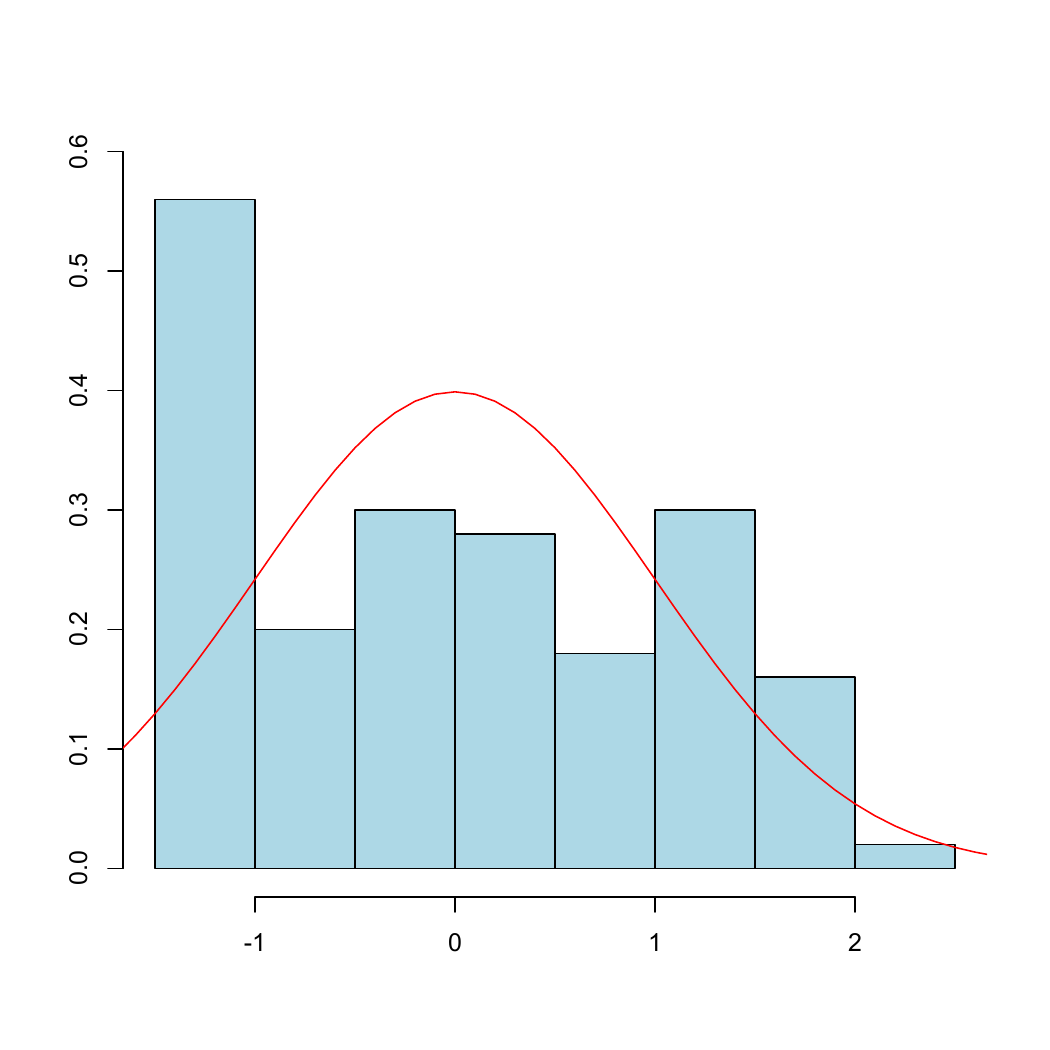} 
		\includegraphics[scale=0.2]{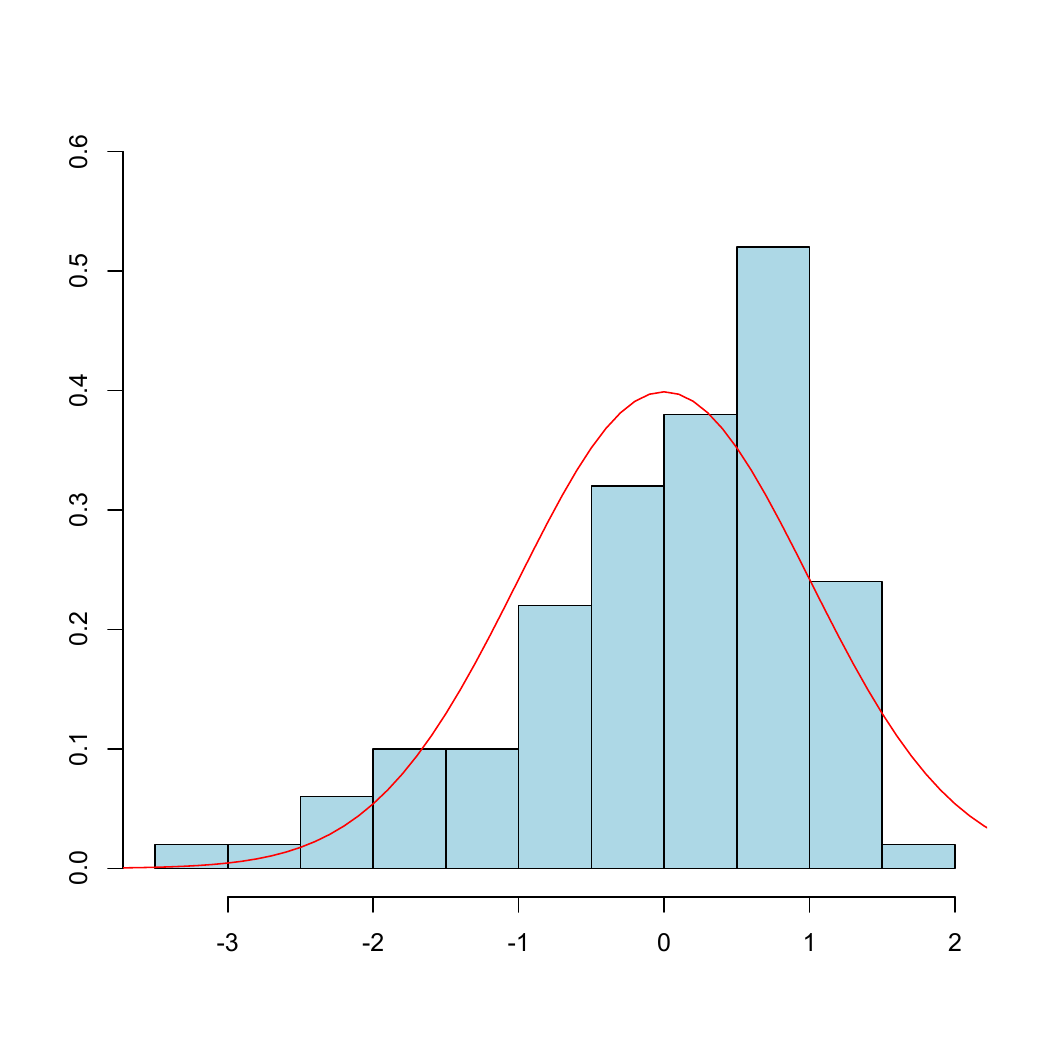}
		\includegraphics[scale=0.2]{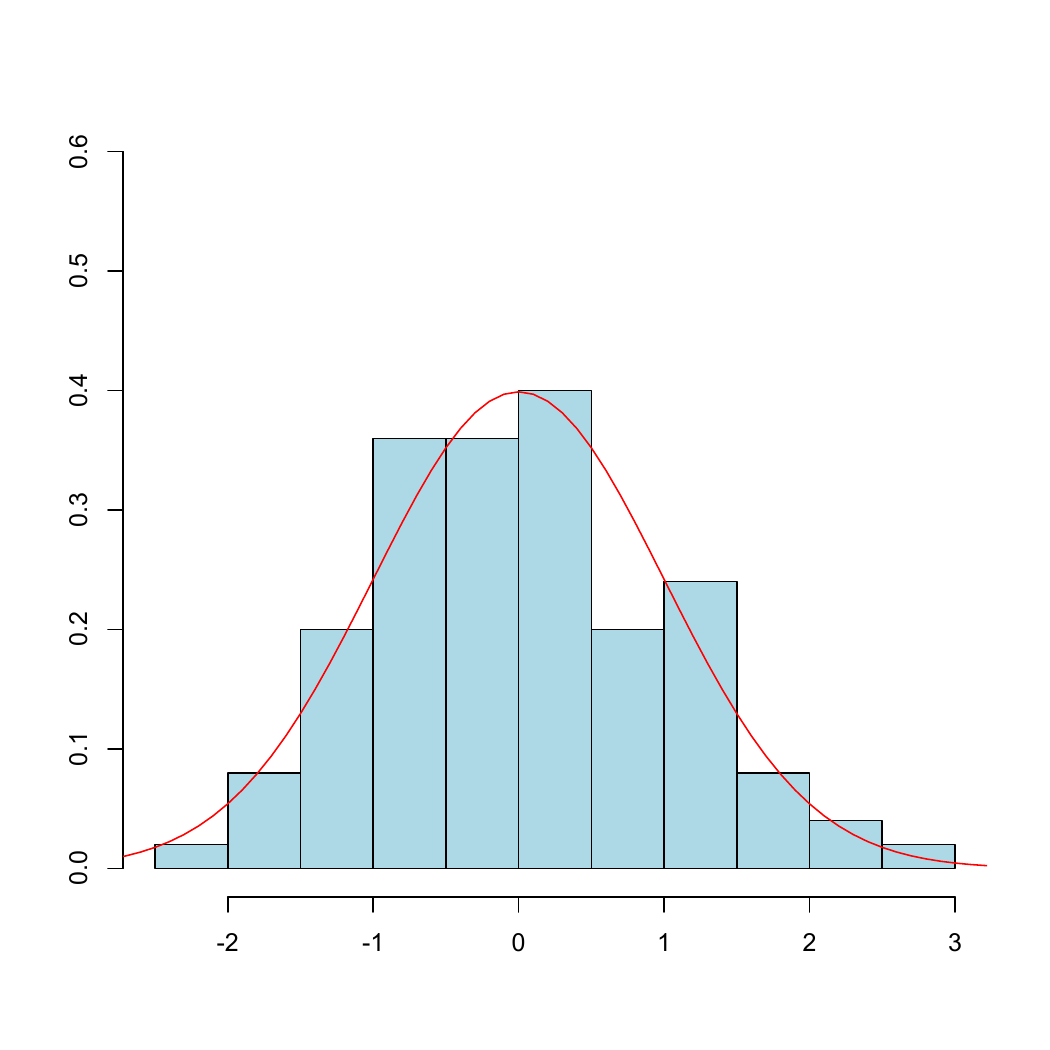} 
	\caption{Histogram of  $(\alpha,\beta)$ $\sqrt{n}$-normalised estimators distribution under model $\mbox{(S0)}_{weak}$. First row $n=1,000$, second row $n=3,000$, third row $n=5,000$. First column $\hat \alpha_n$, second column $\hat \beta_n$, third column  $\tilde \alpha_n$, fourth column $\tilde \beta_n$.}
		\label{fig:normalityS0weak}
	\end{center}
\end{figure}
\newpage
\subsection{Bell-like regime of $\hat \theta_n$ and $\tilde \theta_n$ for models (S1-4)}
\begin{figure}[!h]
	\begin{center}
	        \includegraphics[scale=0.2]{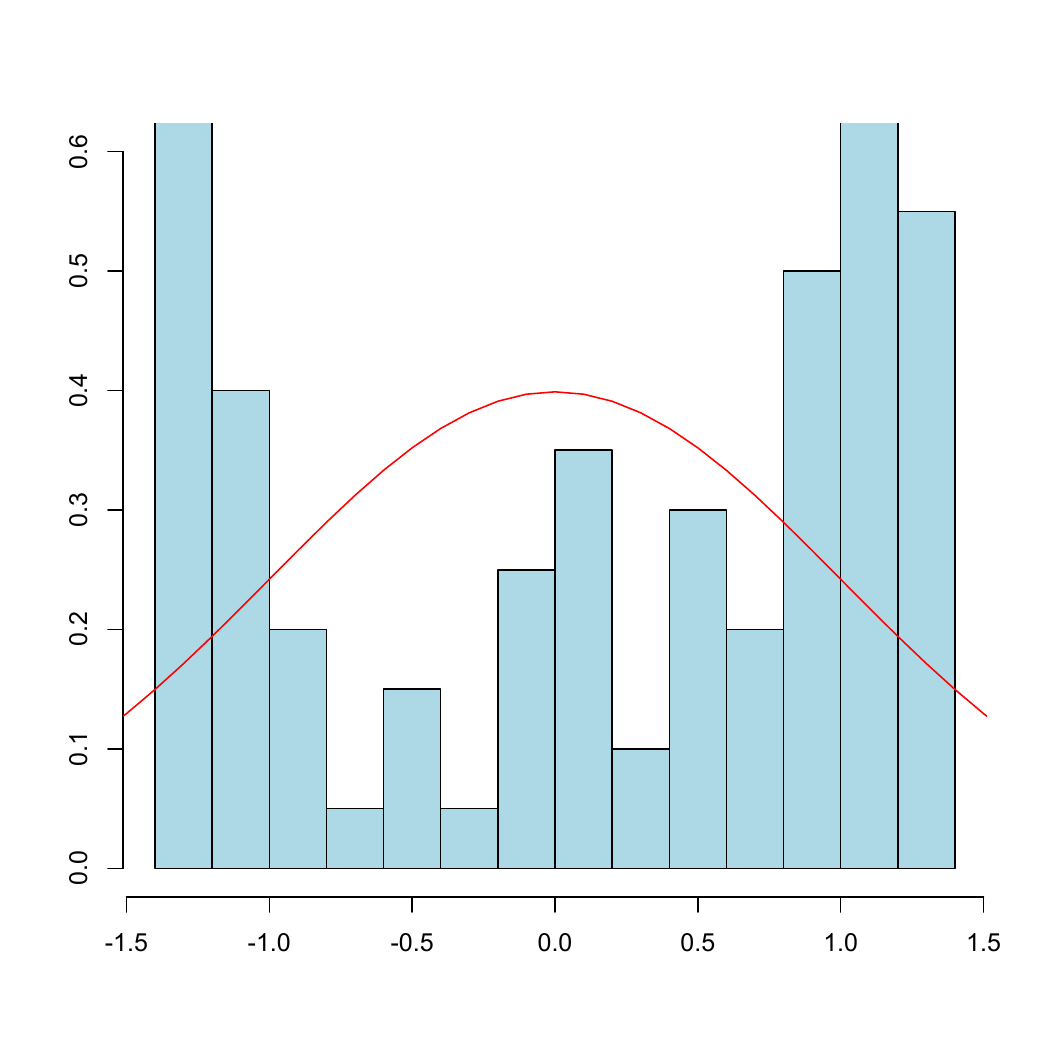}
		\includegraphics[scale=0.2]{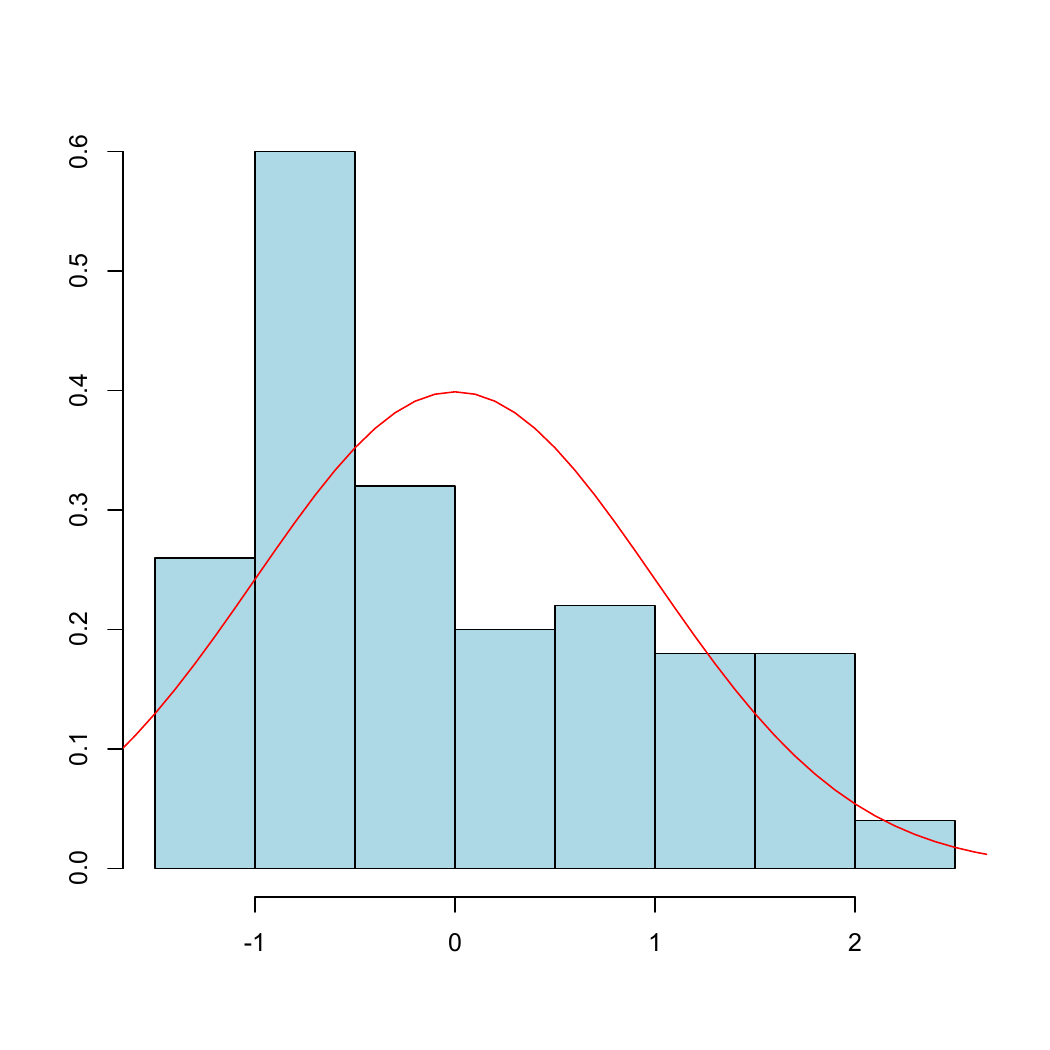} 
		 \includegraphics[scale=0.2]{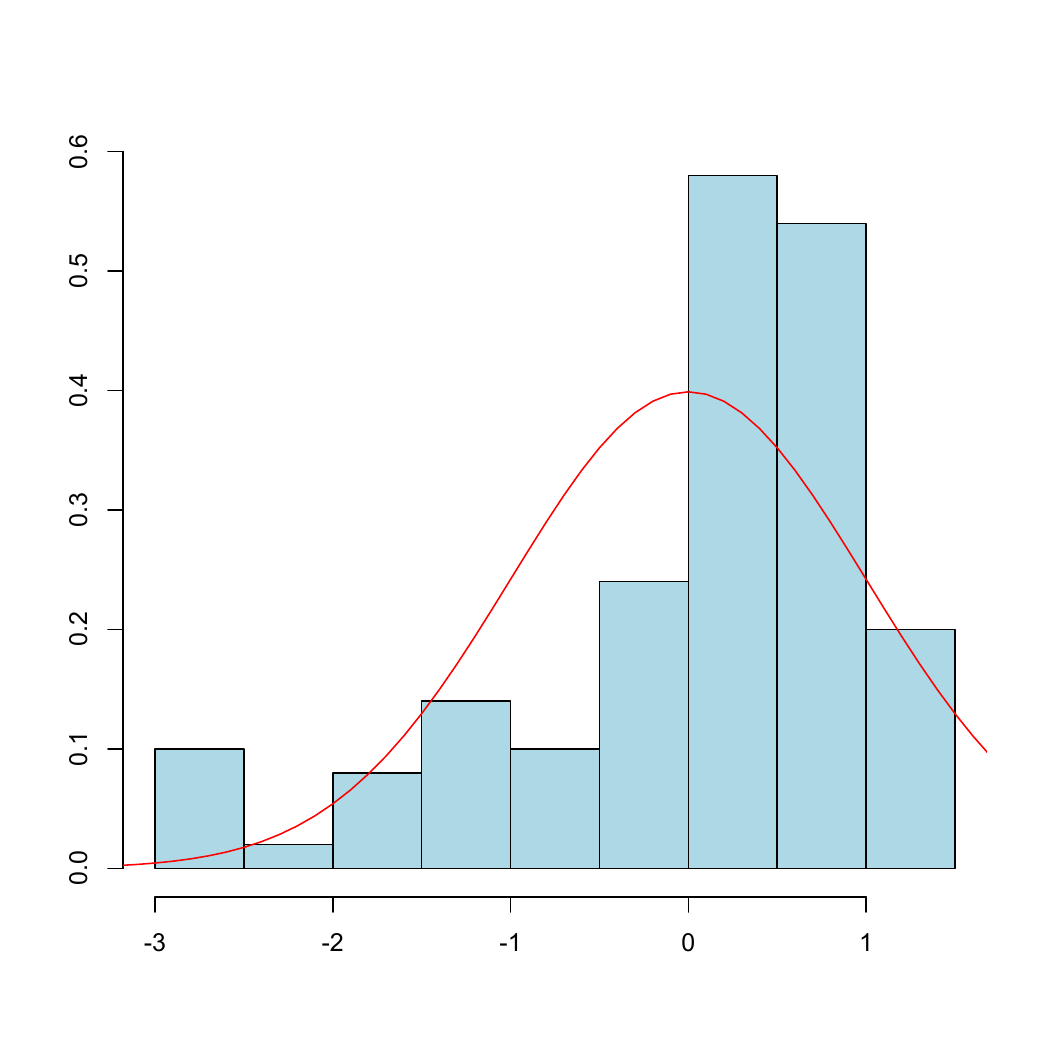}
		\includegraphics[scale=0.2]{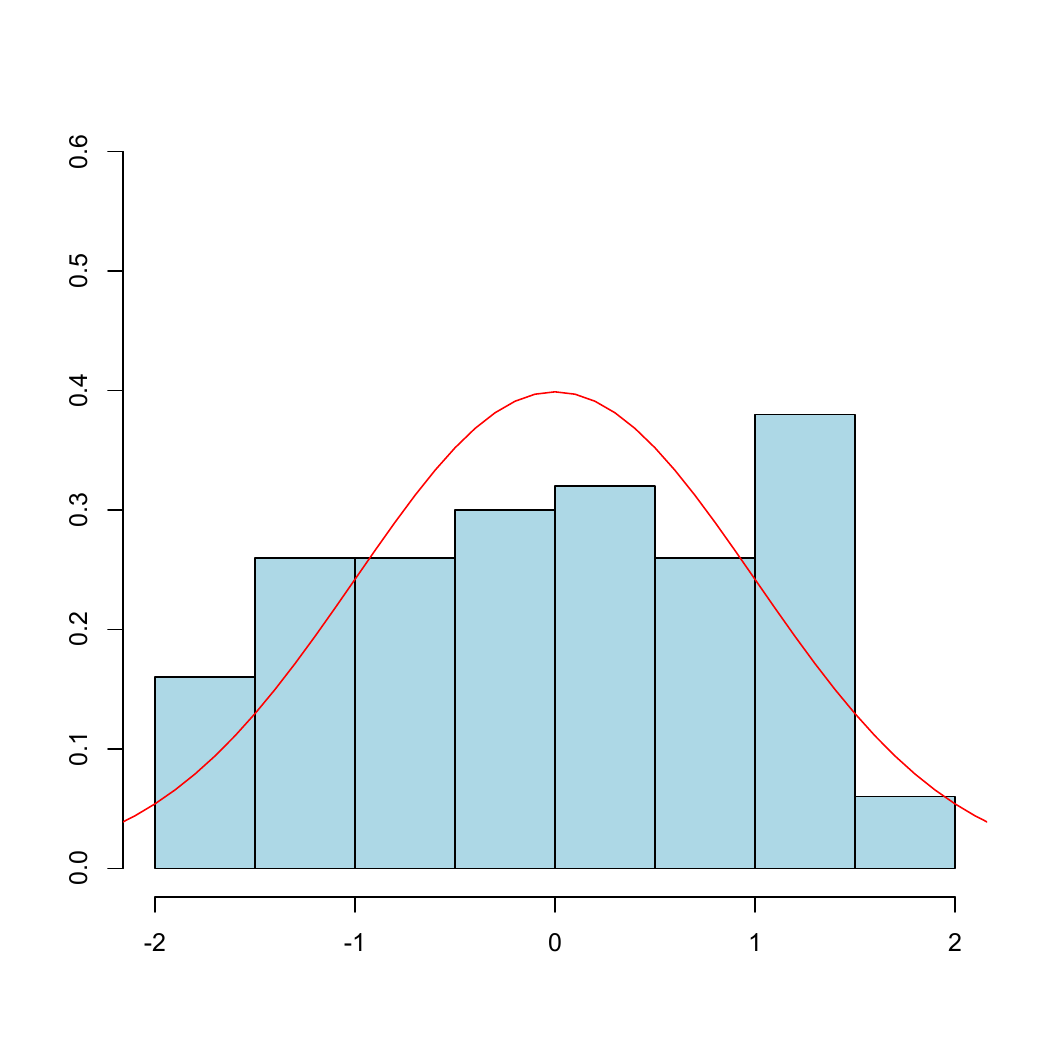} 
		\\
	        \includegraphics[scale=0.2]{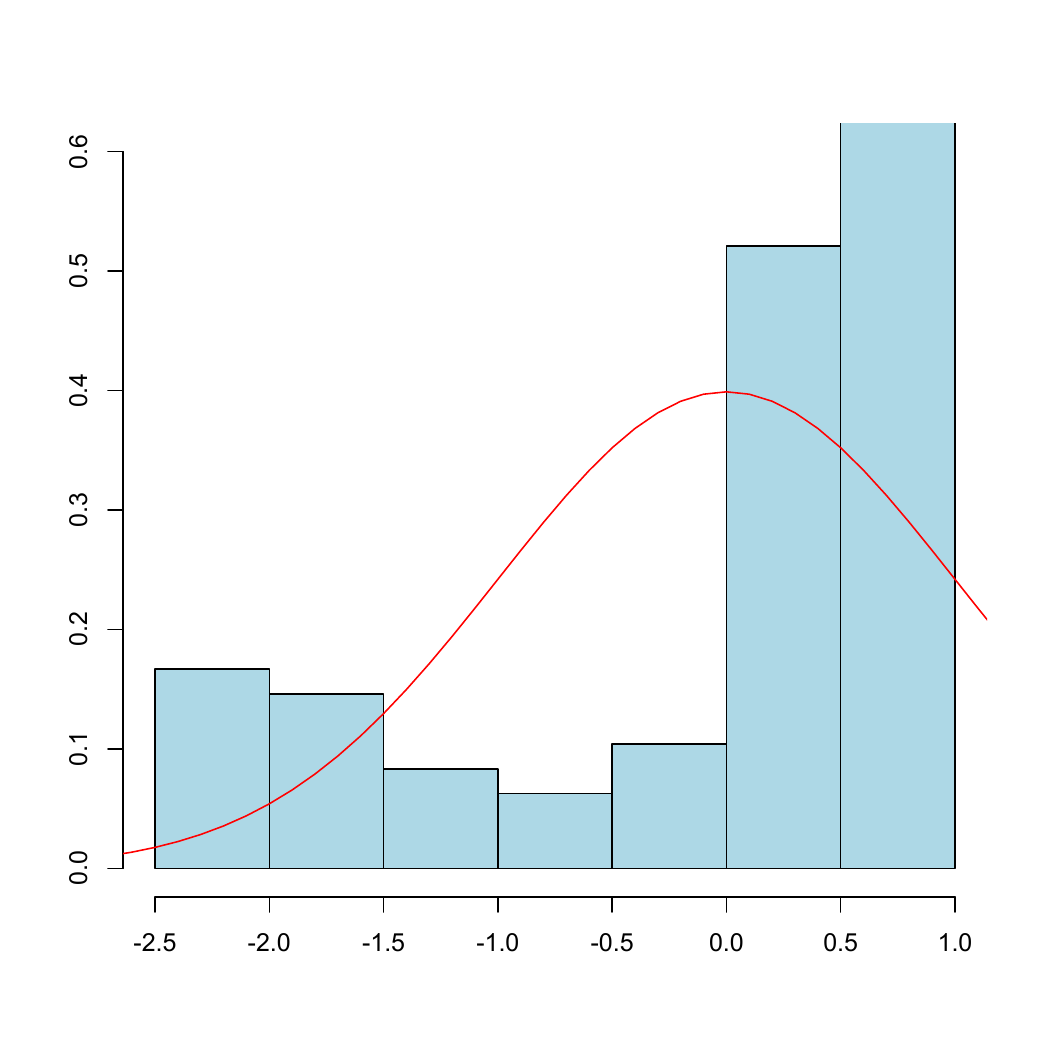}
		\includegraphics[scale=0.2]{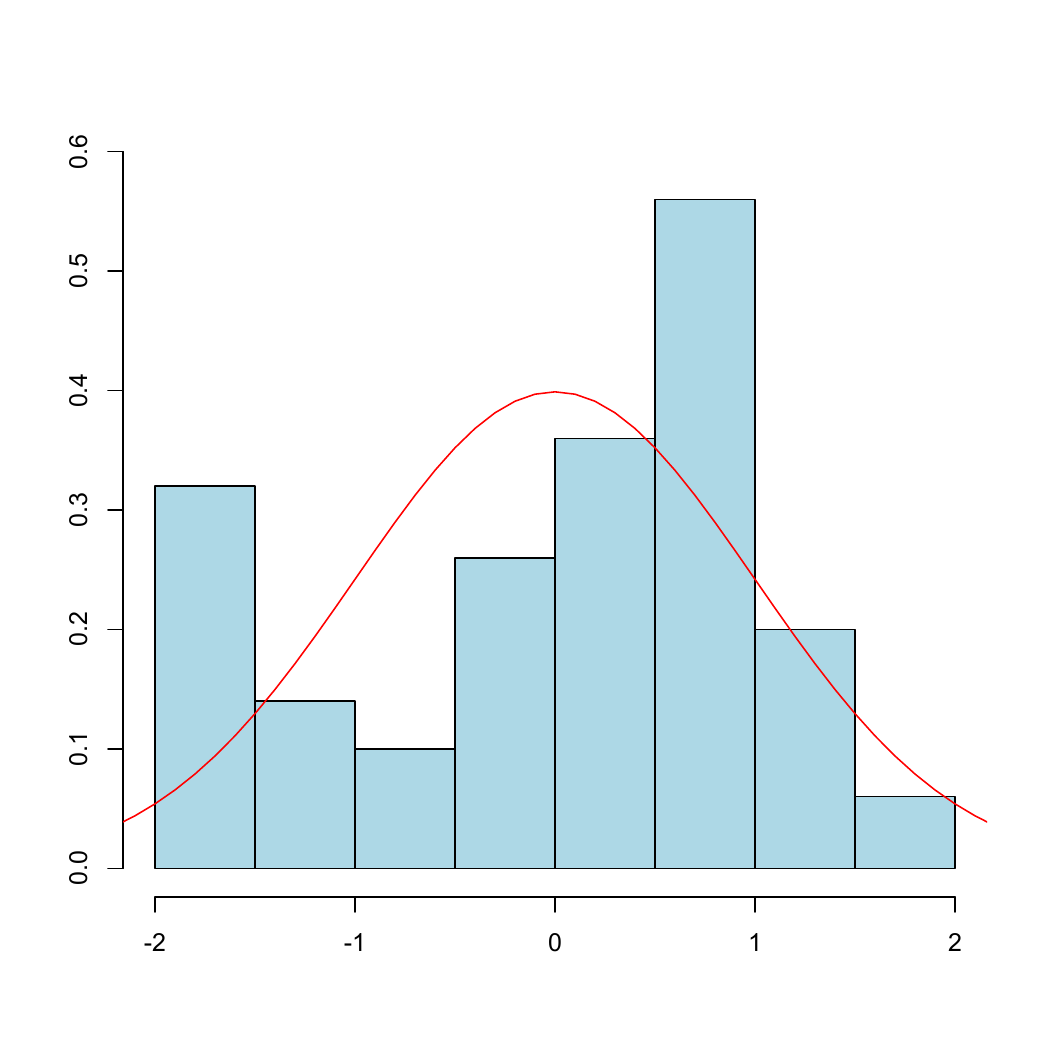} 
		\includegraphics[scale=0.2]{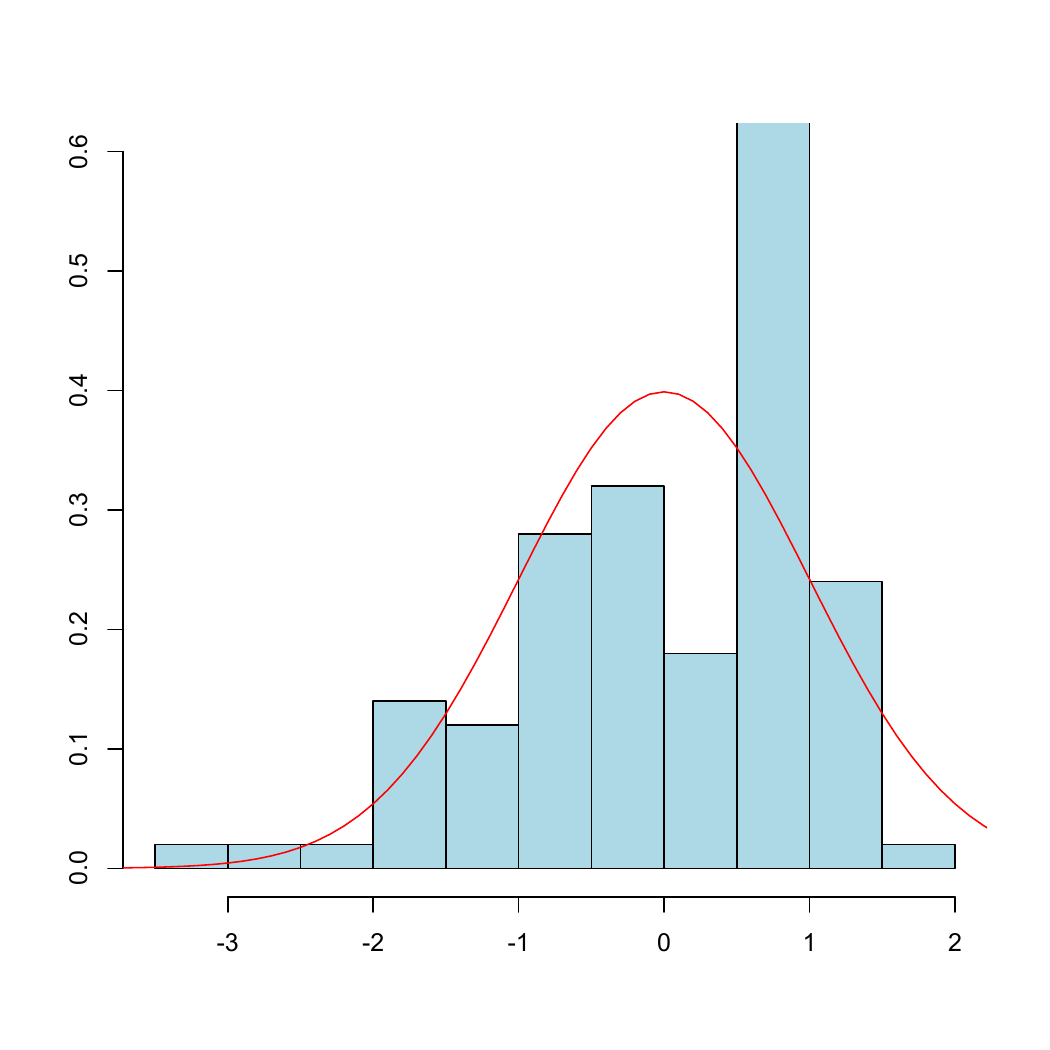}
		\includegraphics[scale=0.2]{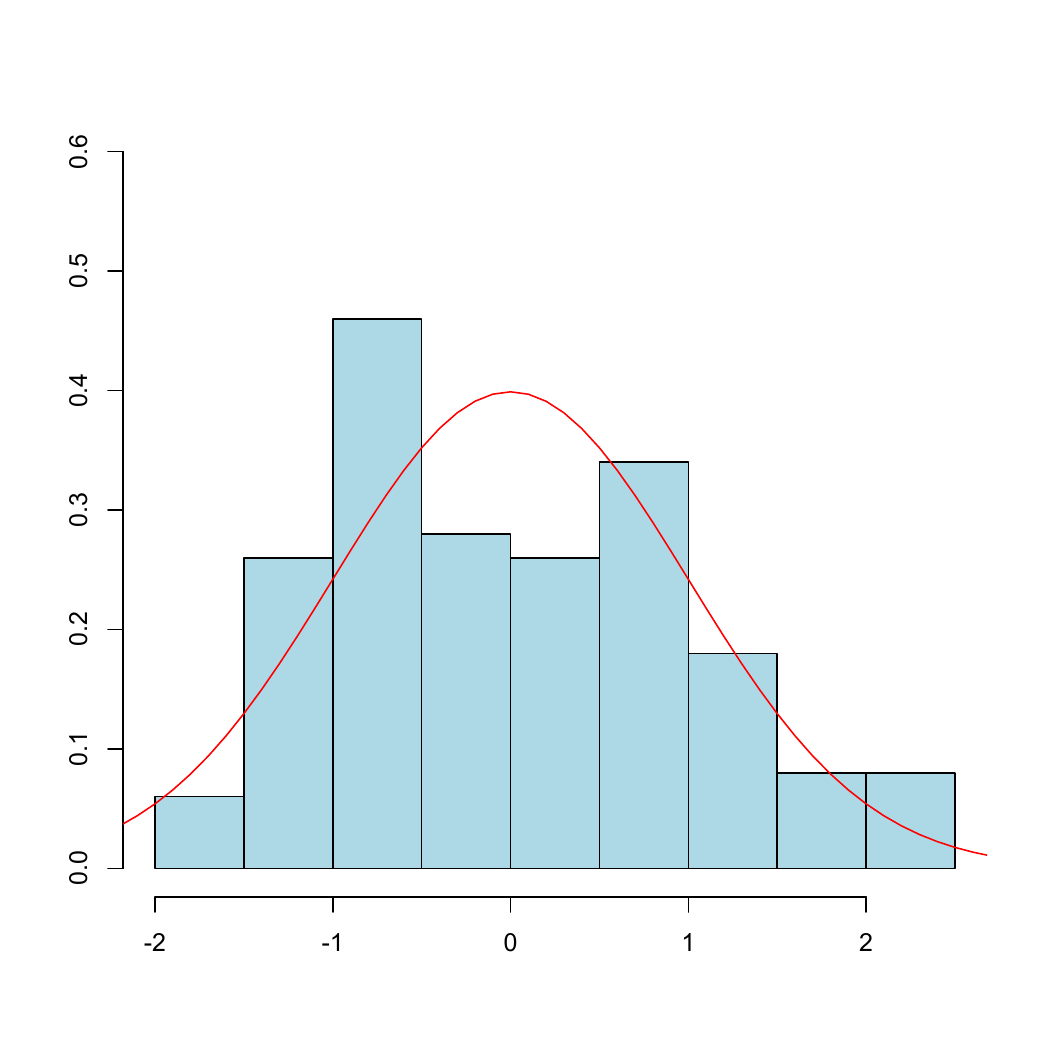}
		\\
		\includegraphics[scale=0.2]{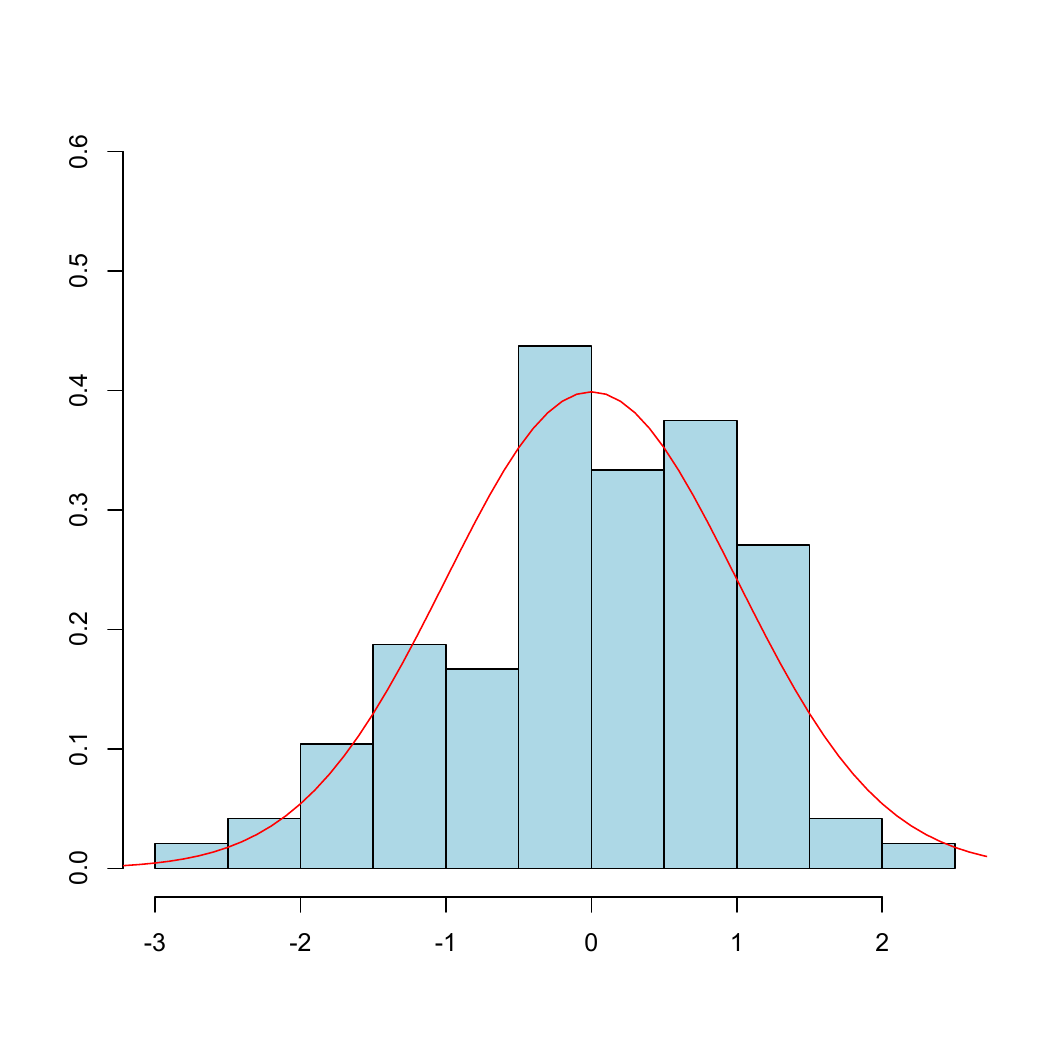}
		\includegraphics[scale=0.2]{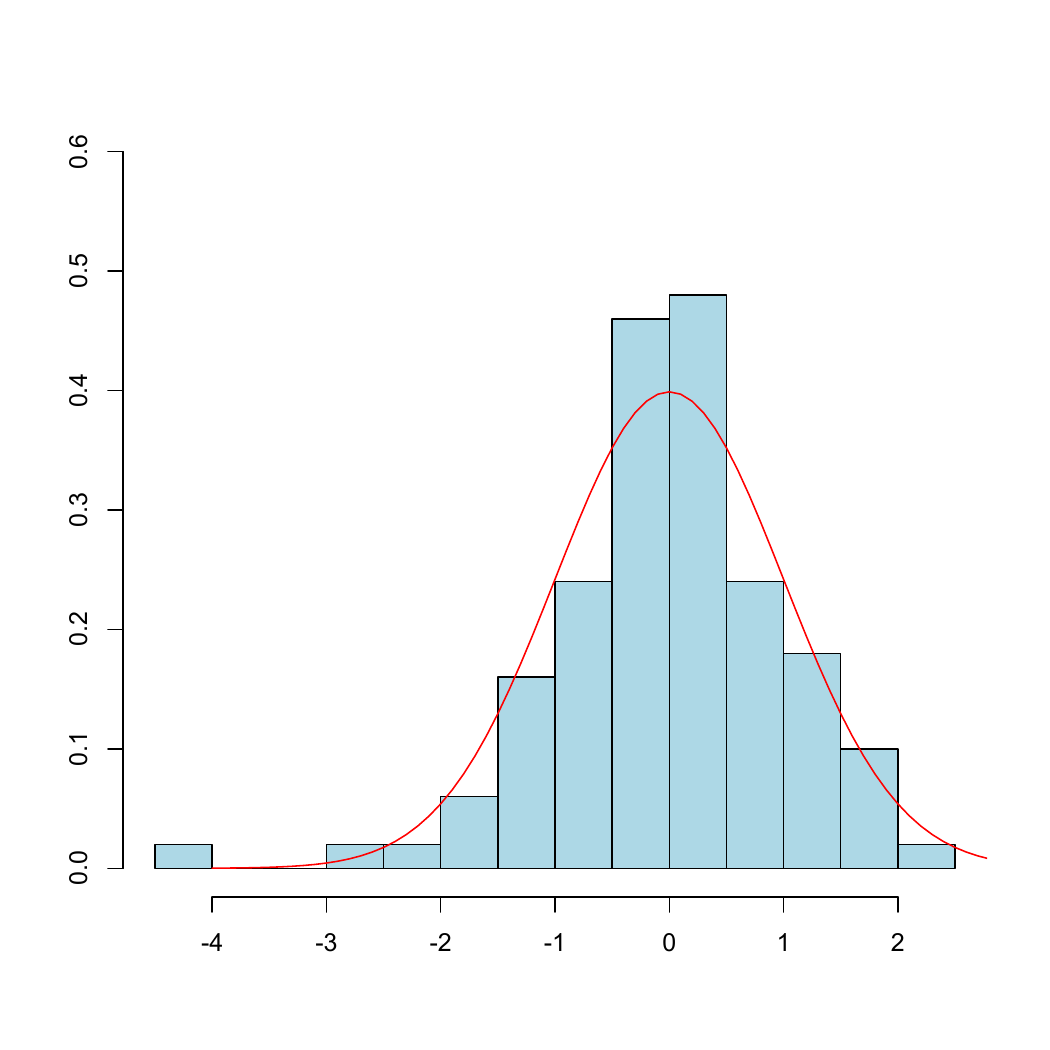} 
		\includegraphics[scale=0.2]{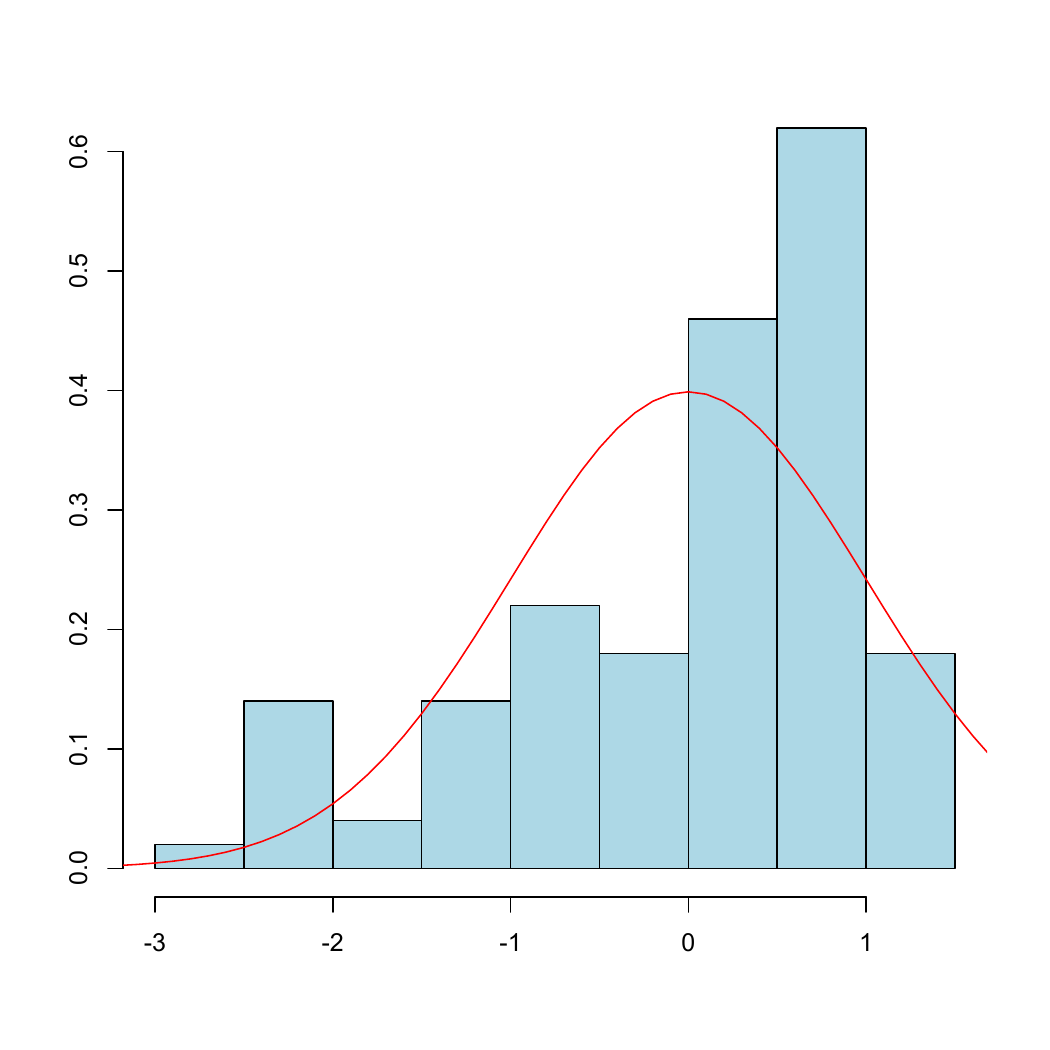}
		\includegraphics[scale=0.2]{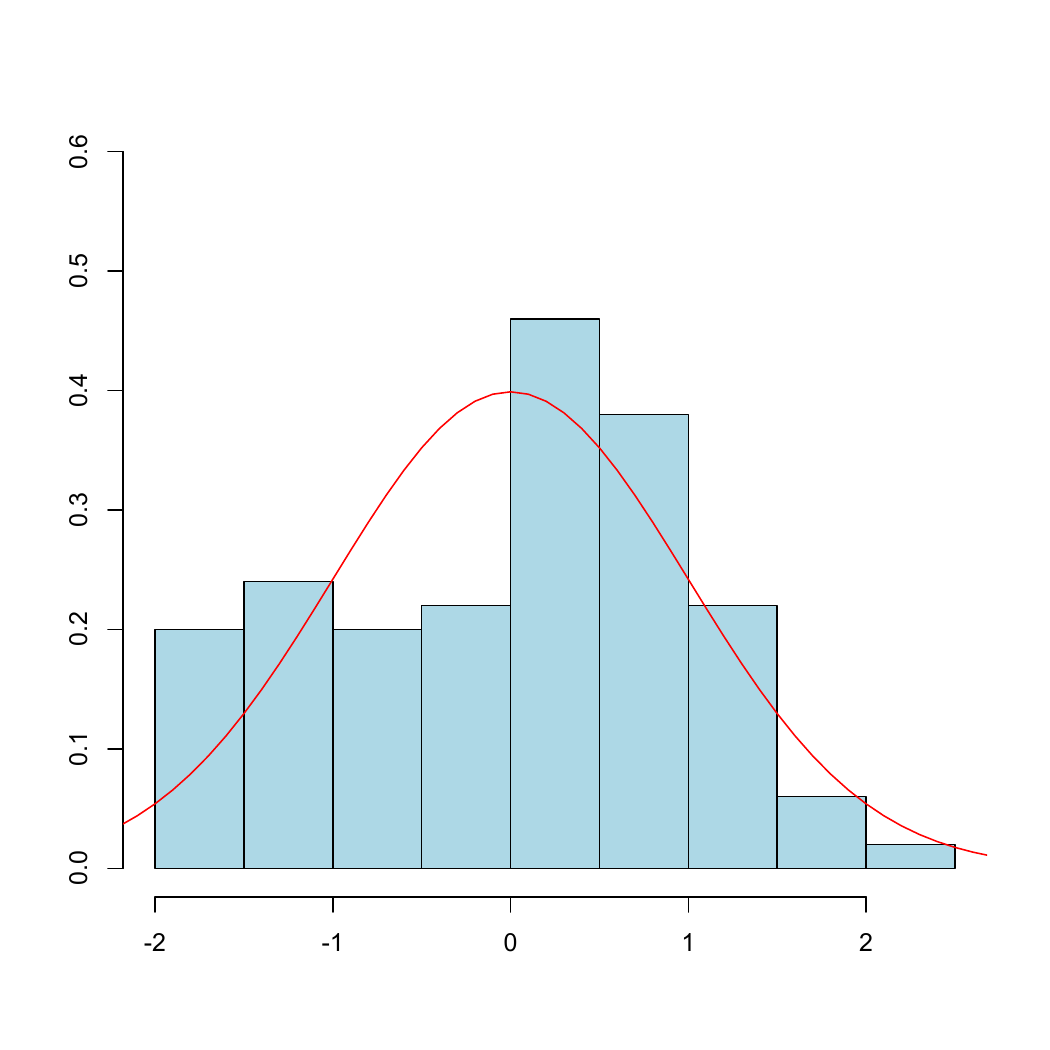} 
	\caption{Histogram of  $(\alpha,\beta)$ $\sqrt{n}$-normalised estimators distribution under model (S1). First row $n=5,000$, second row $n=10,000$, third row $n=20,000$. First column $\hat \alpha_n$, second column $\hat \beta_n$, third column  $\tilde \alpha_n$, fourth column $\tilde \beta_n$.}
		\label{fig:normalityS1}
	\end{center}
\end{figure}
\newpage
\begin{figure}[!h]
	\begin{center}
	        \includegraphics[scale=0.2]{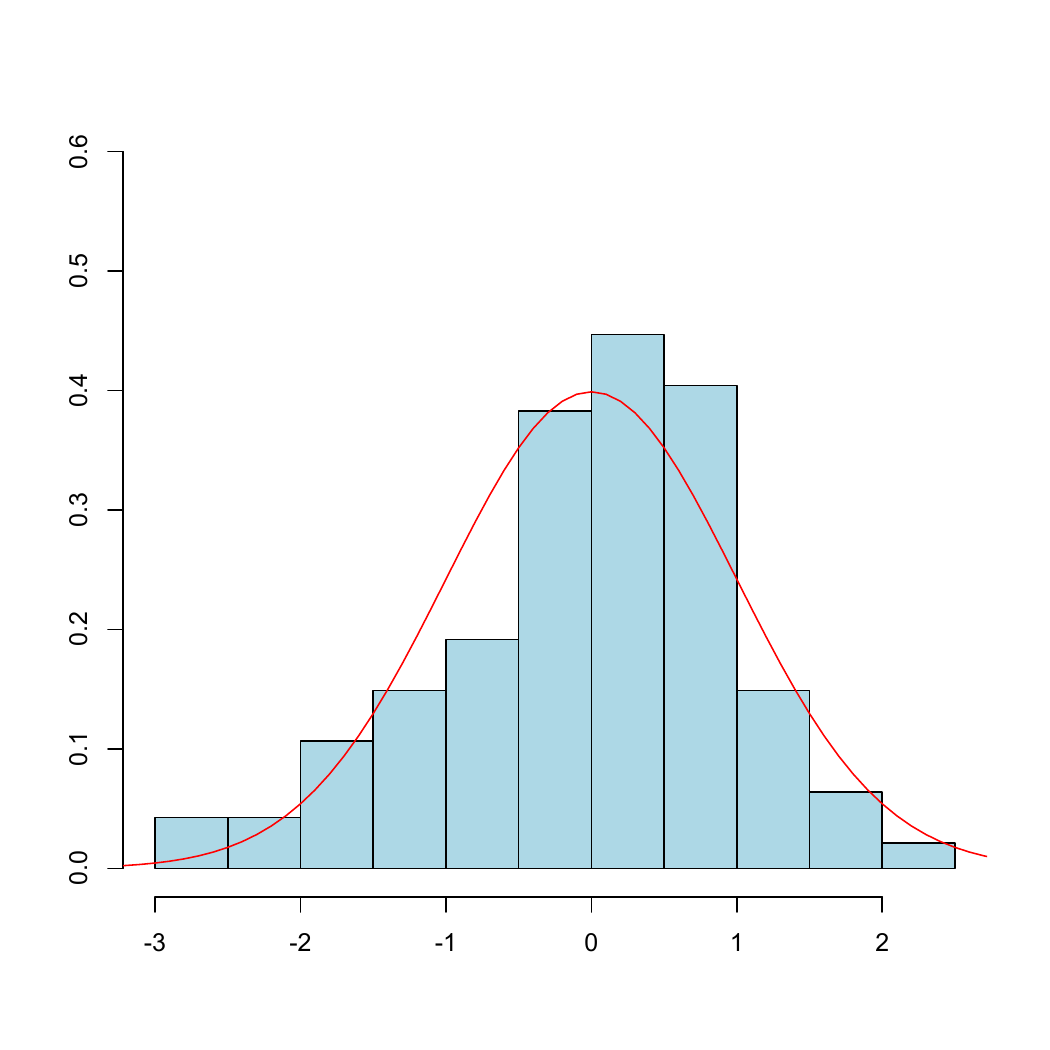}
		\includegraphics[scale=0.2]{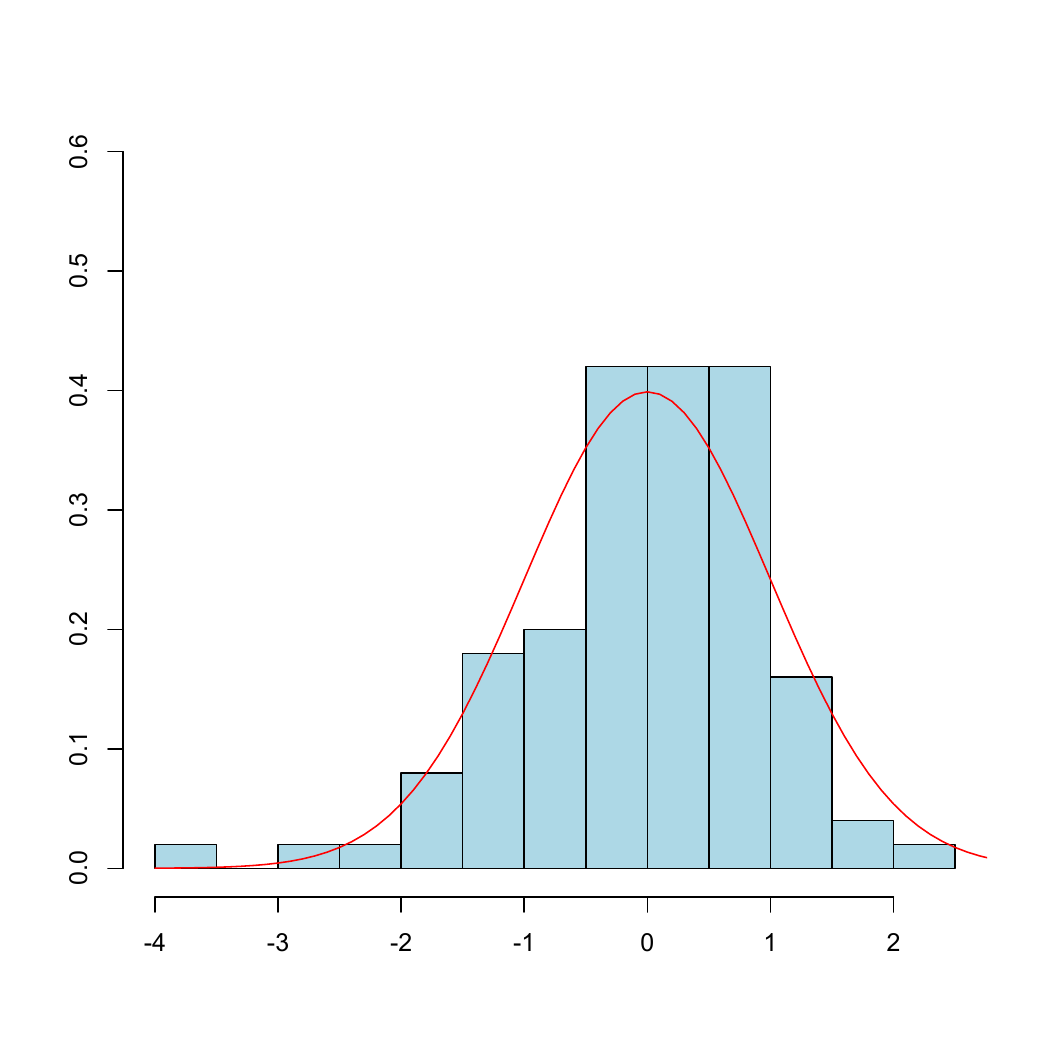} 
		 \includegraphics[scale=0.2]{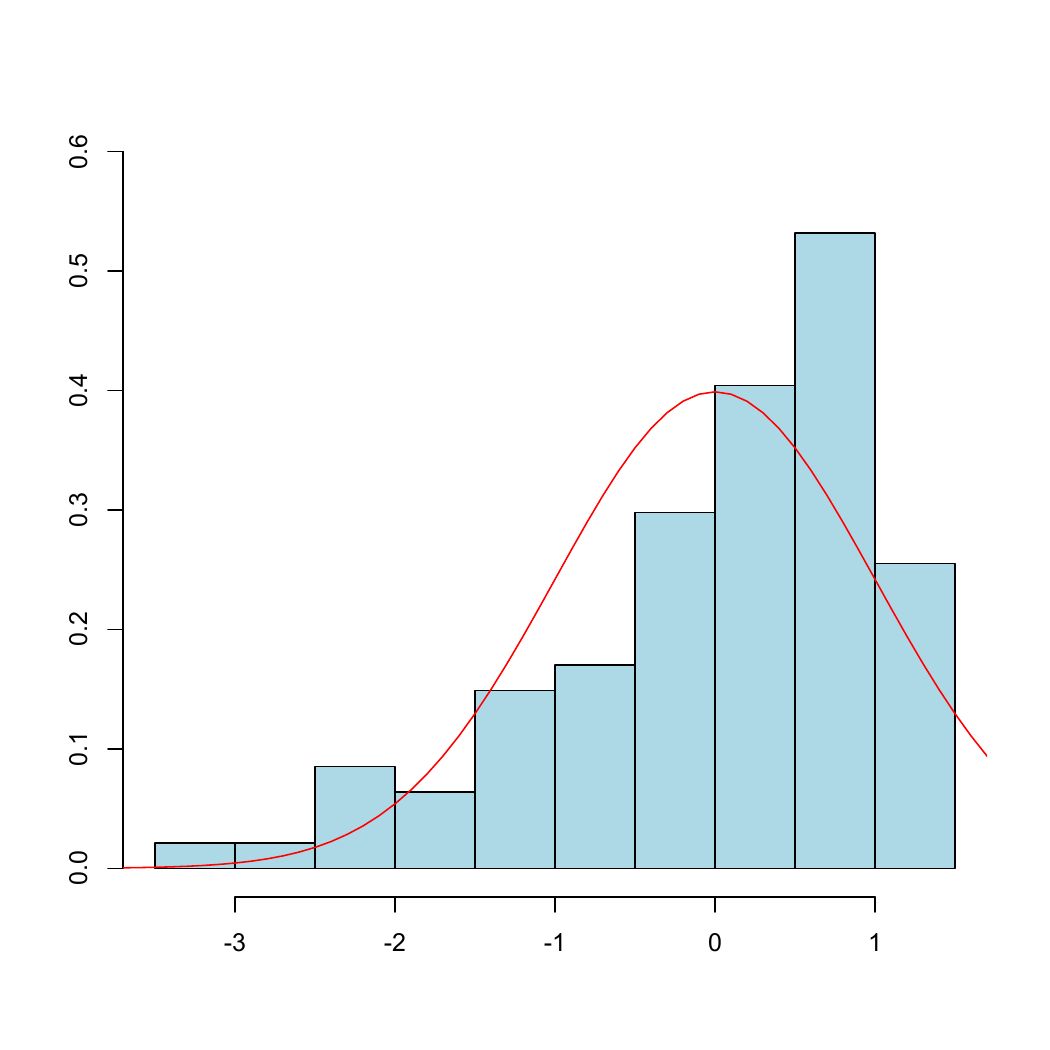}
		\includegraphics[scale=0.2]{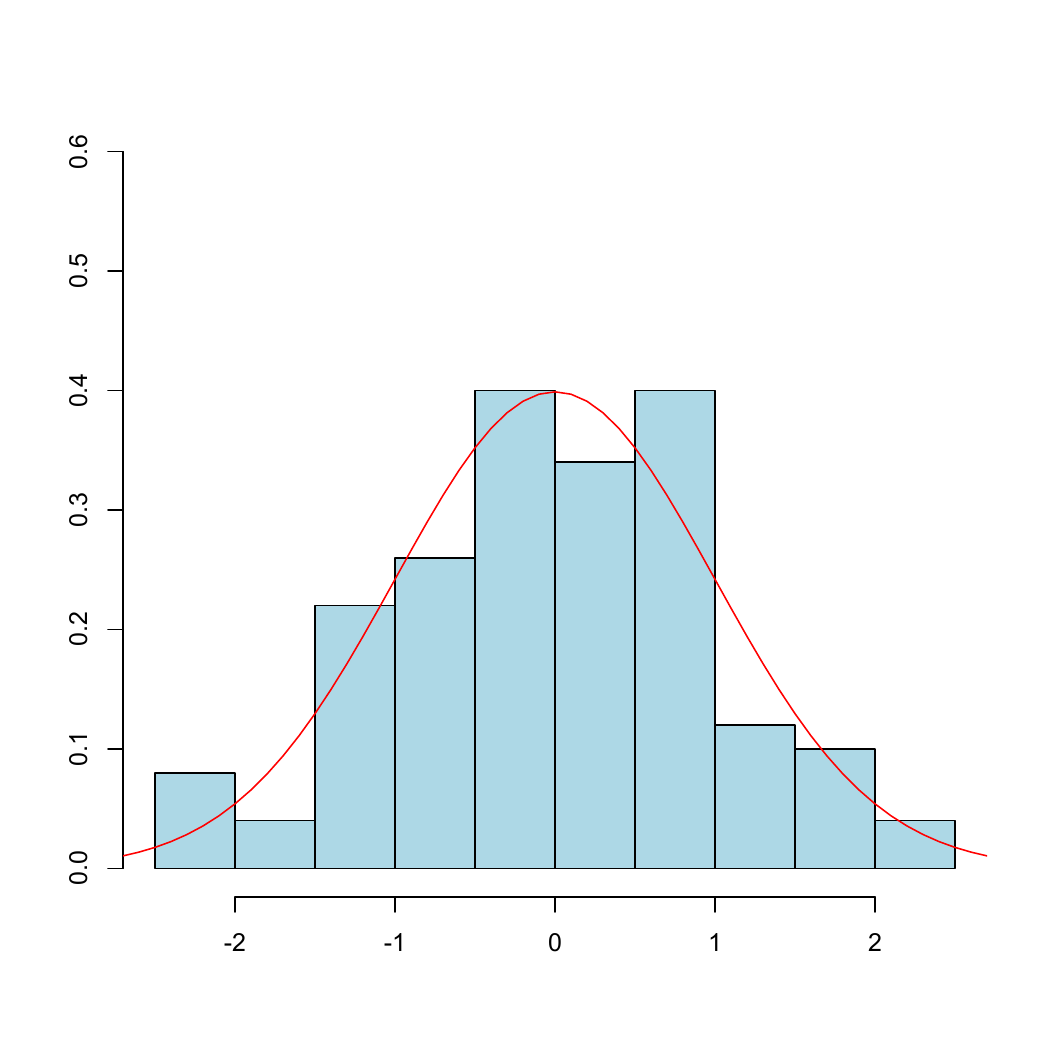} 
		\\
	        \includegraphics[scale=0.2]{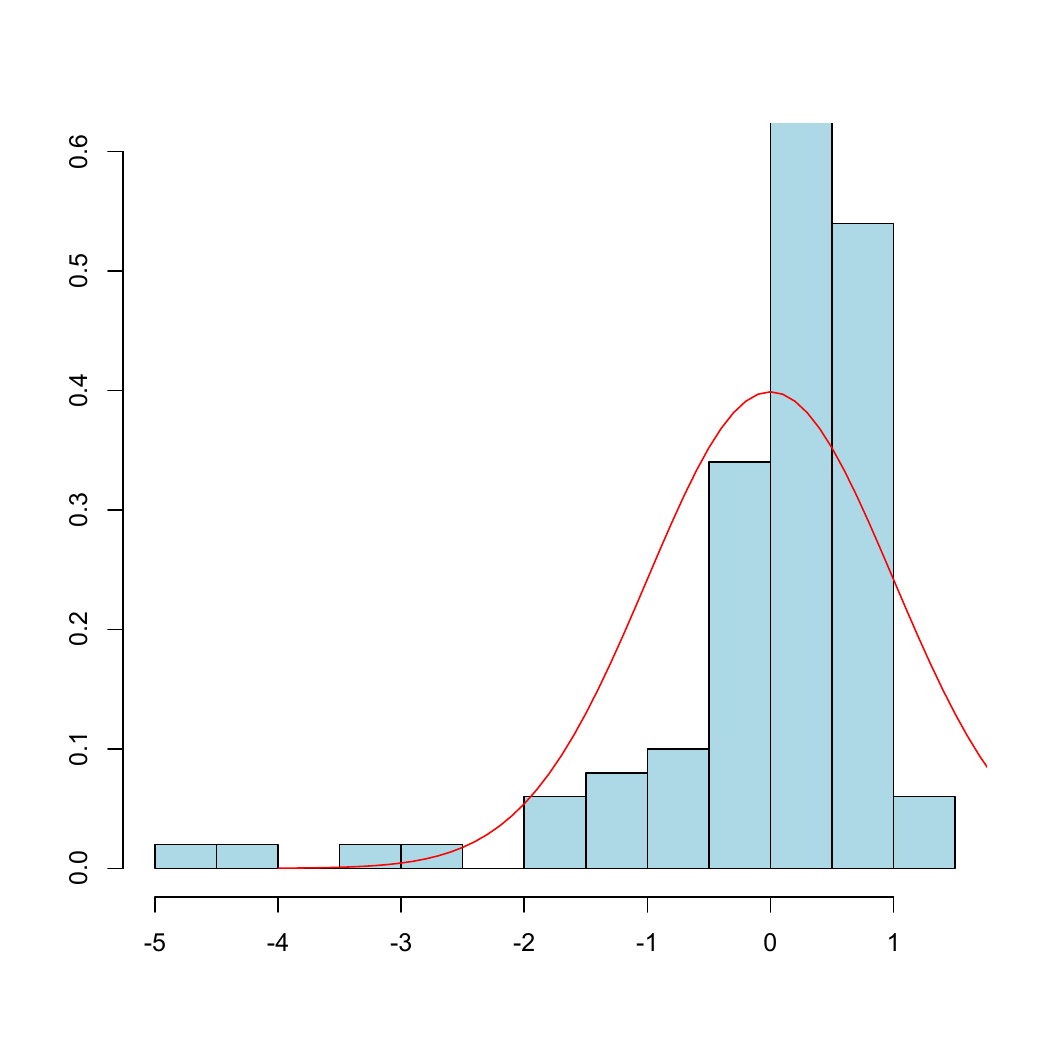}
		\includegraphics[scale=0.2]{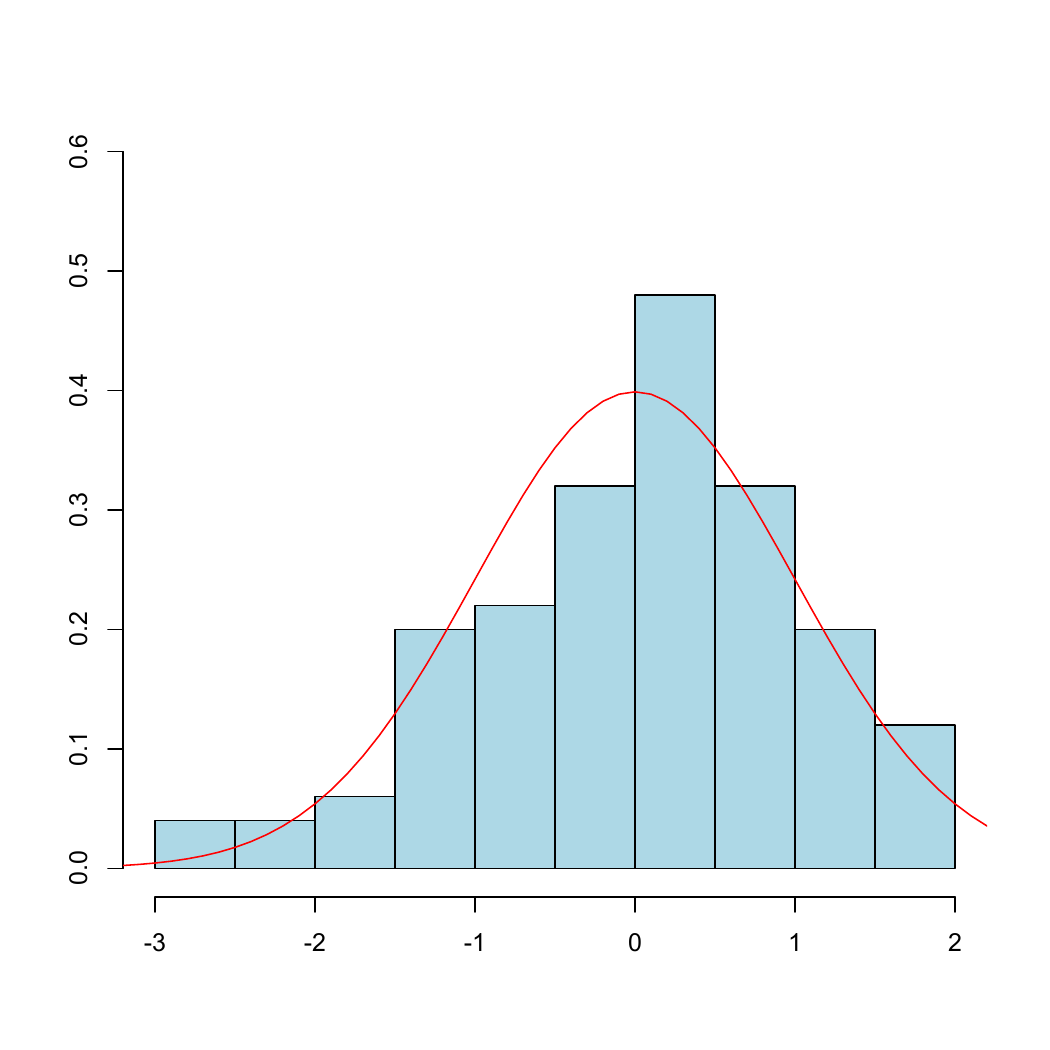} 
		\includegraphics[scale=0.2]{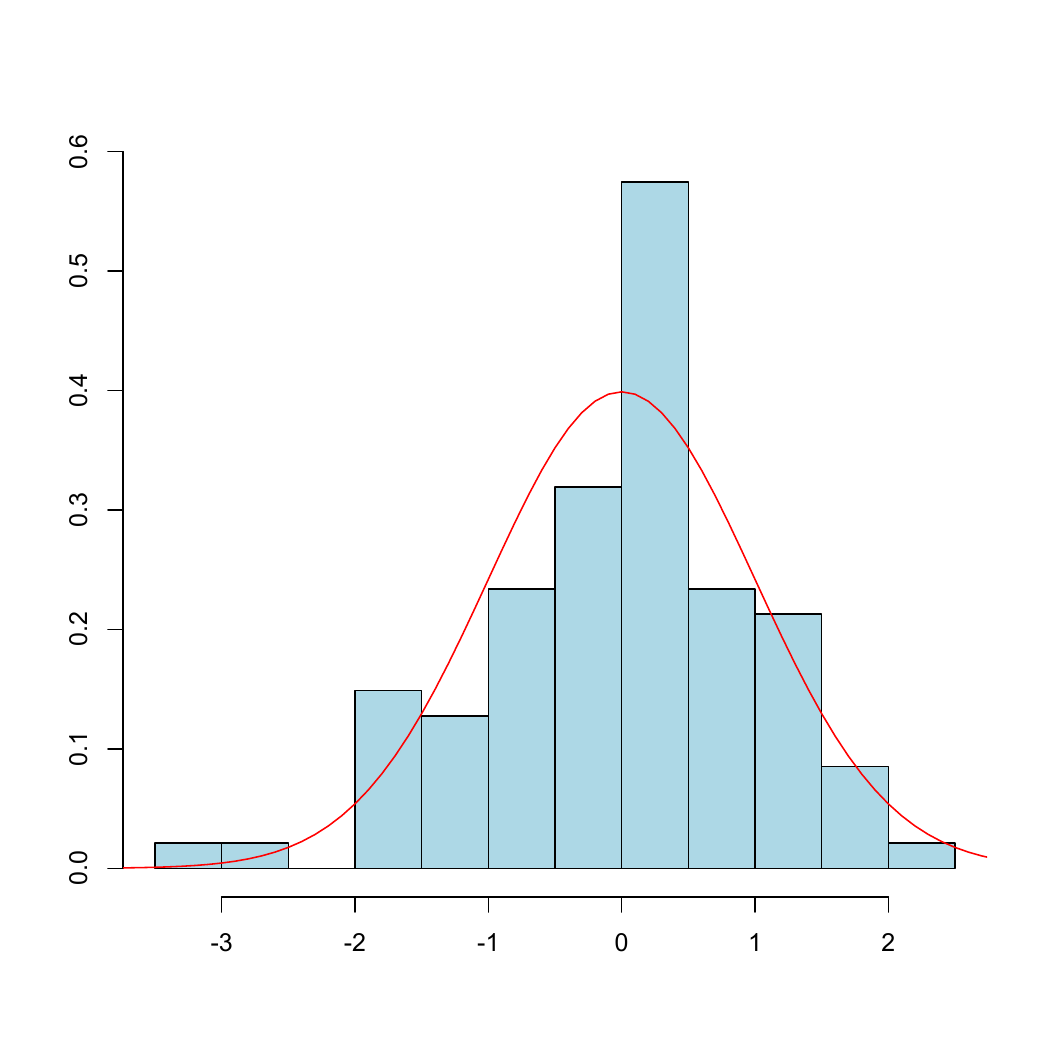}
		\includegraphics[scale=0.2]{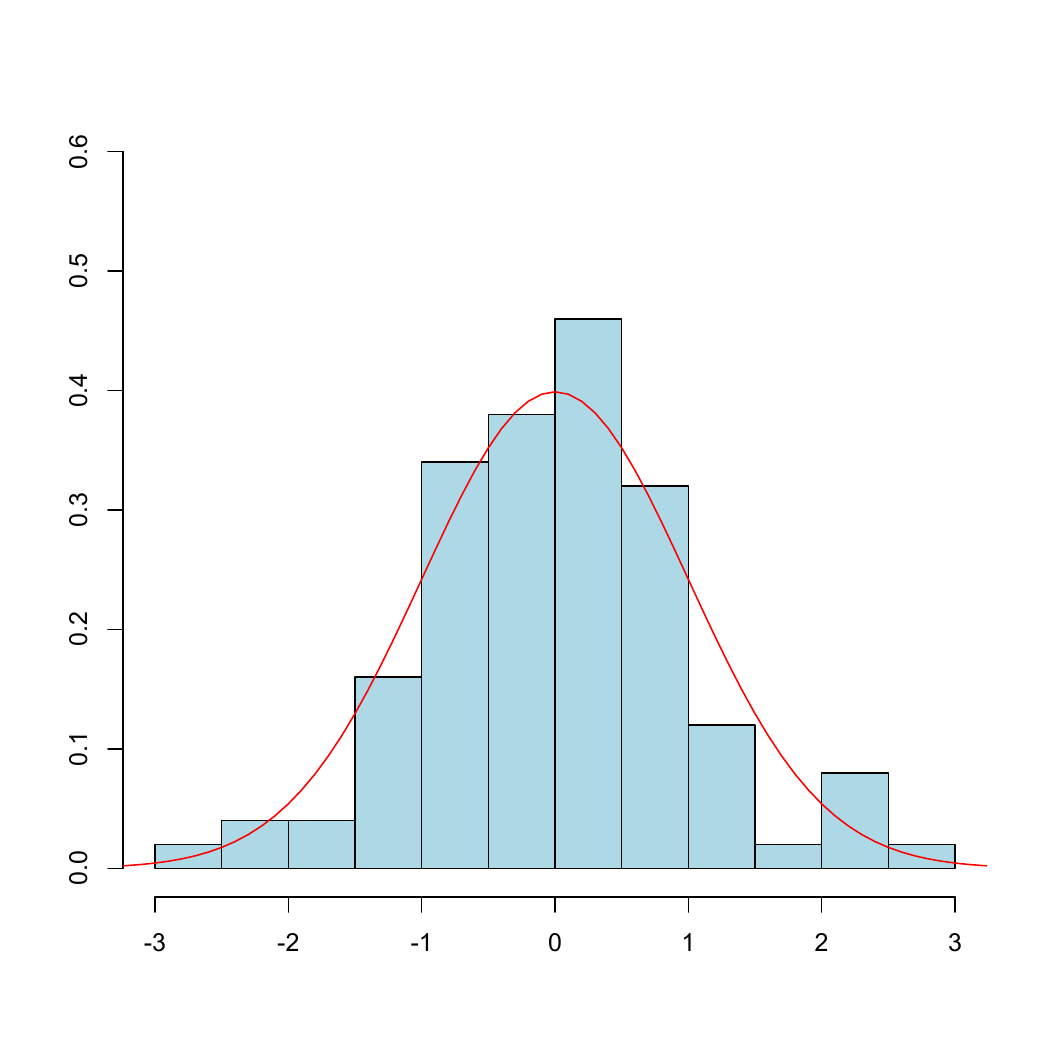}
		\\
		\includegraphics[scale=0.2]{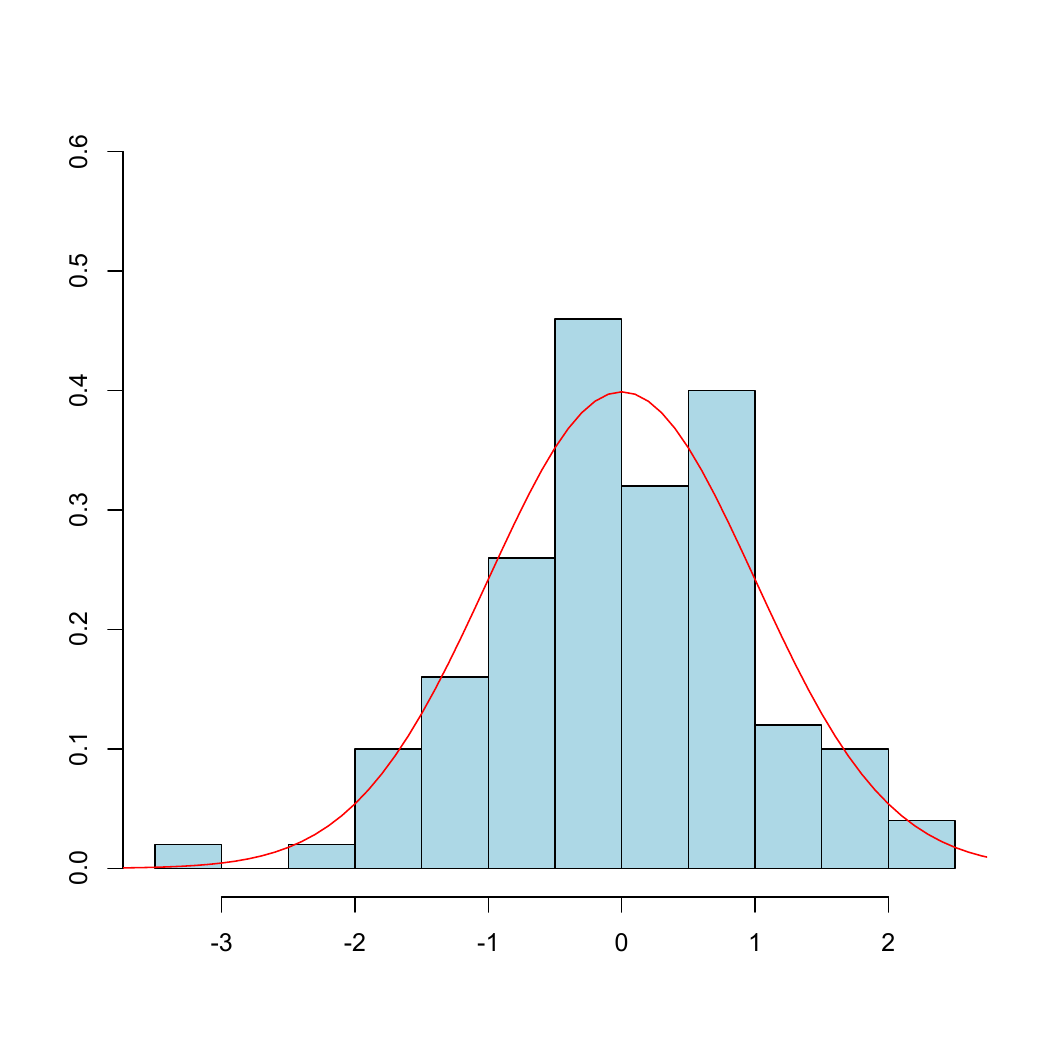}
		\includegraphics[scale=0.2]{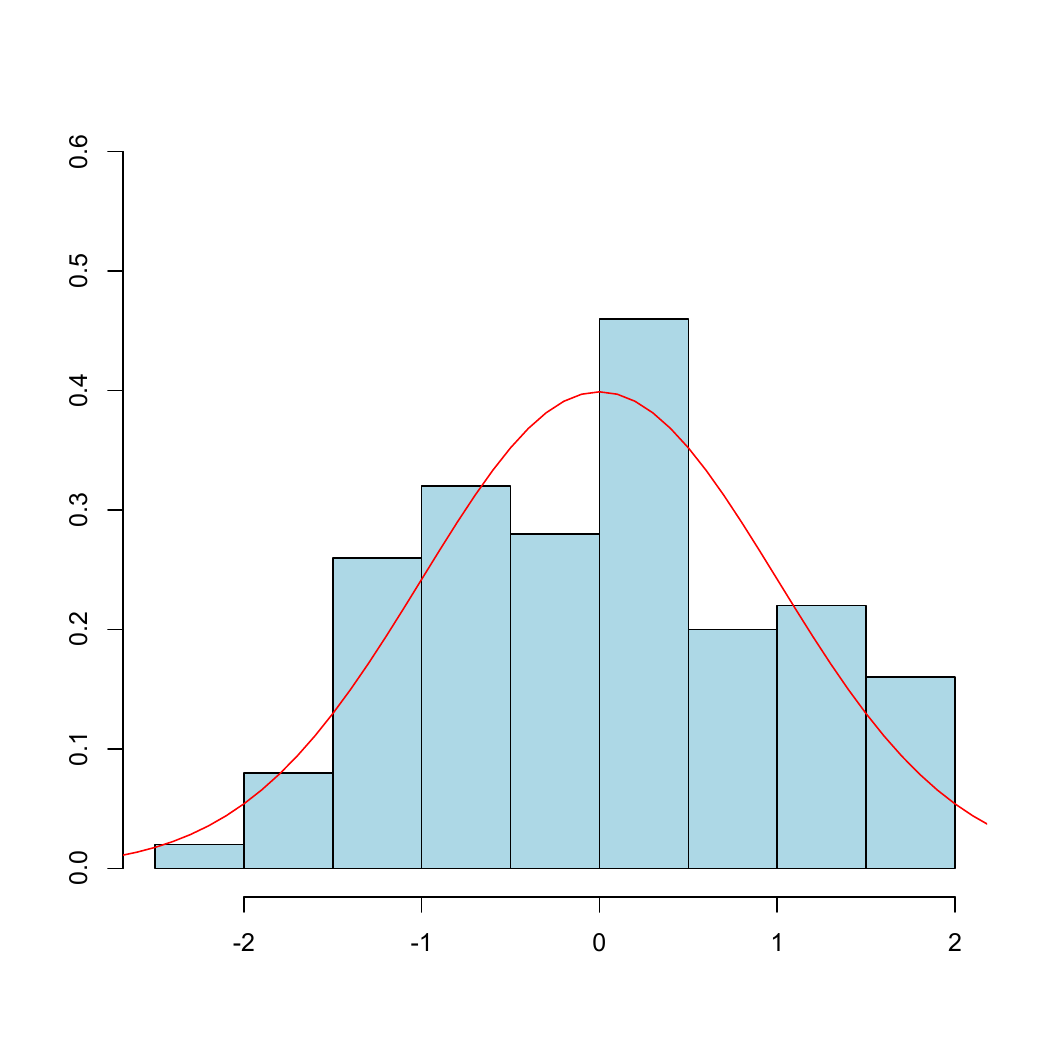} 
		\includegraphics[scale=0.2]{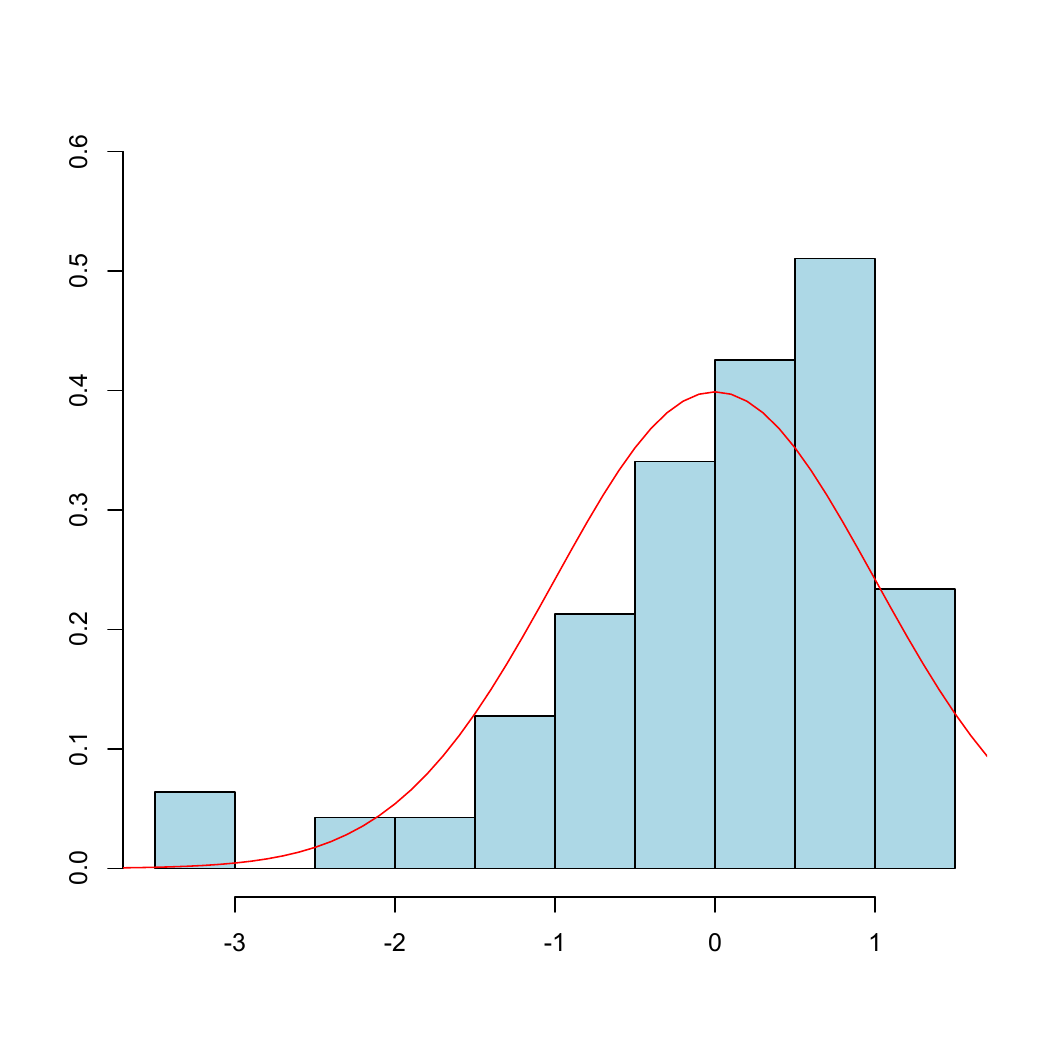}
		\includegraphics[scale=0.2]{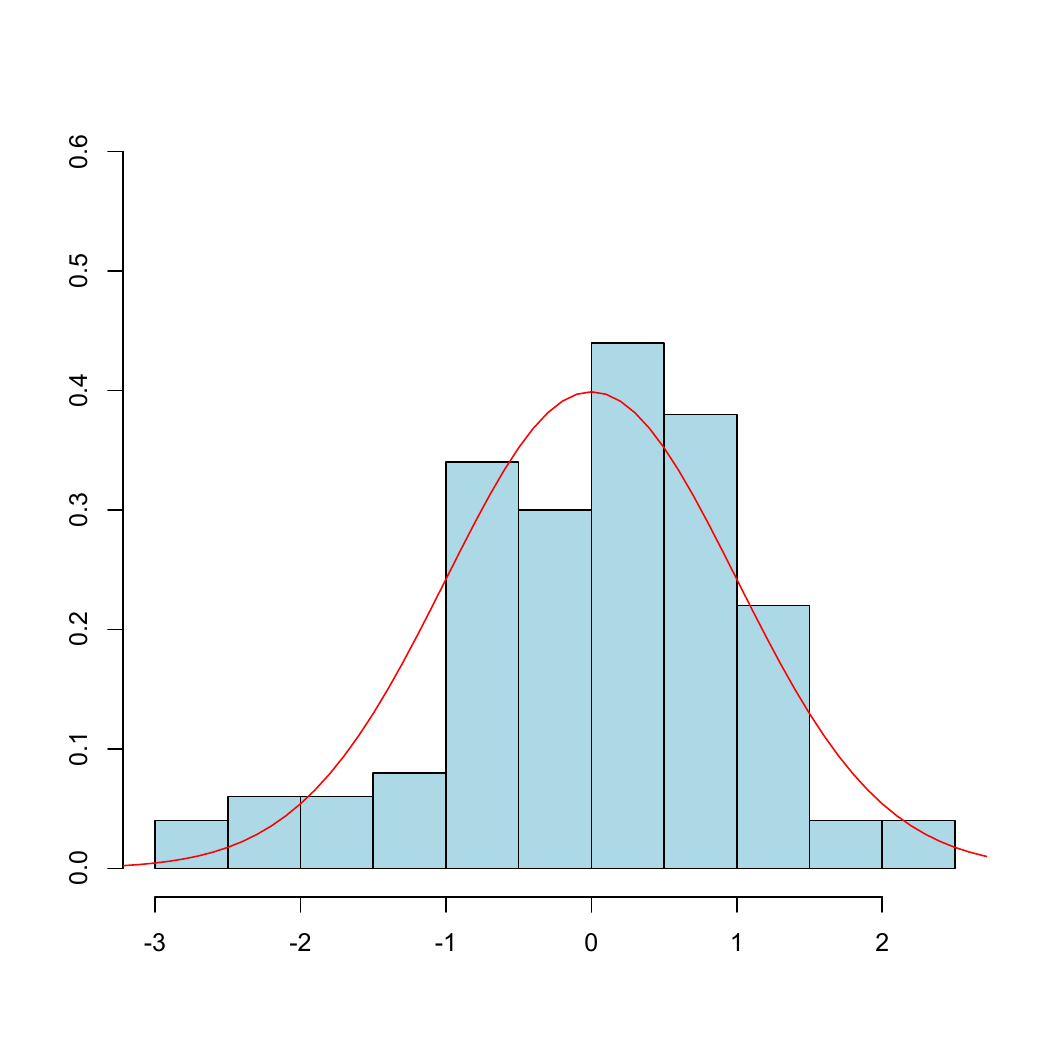} 
	\caption{Histogram of  $(\alpha,\beta)$ $\sqrt{n}$-normalised estimators distribution under model (S2). First row $n=5,000$, second row $n=10,000$, third row $n=20,000$. First column $\hat \alpha_n$, second column $\hat \beta_n$, third column  $\tilde \alpha_n$, fourth column $\tilde \beta_n$.}
		\label{fig:normalityS2}
	\end{center}
\end{figure}
\newpage
\begin{figure}[!h]
	\begin{center}
	        \includegraphics[scale=0.2]{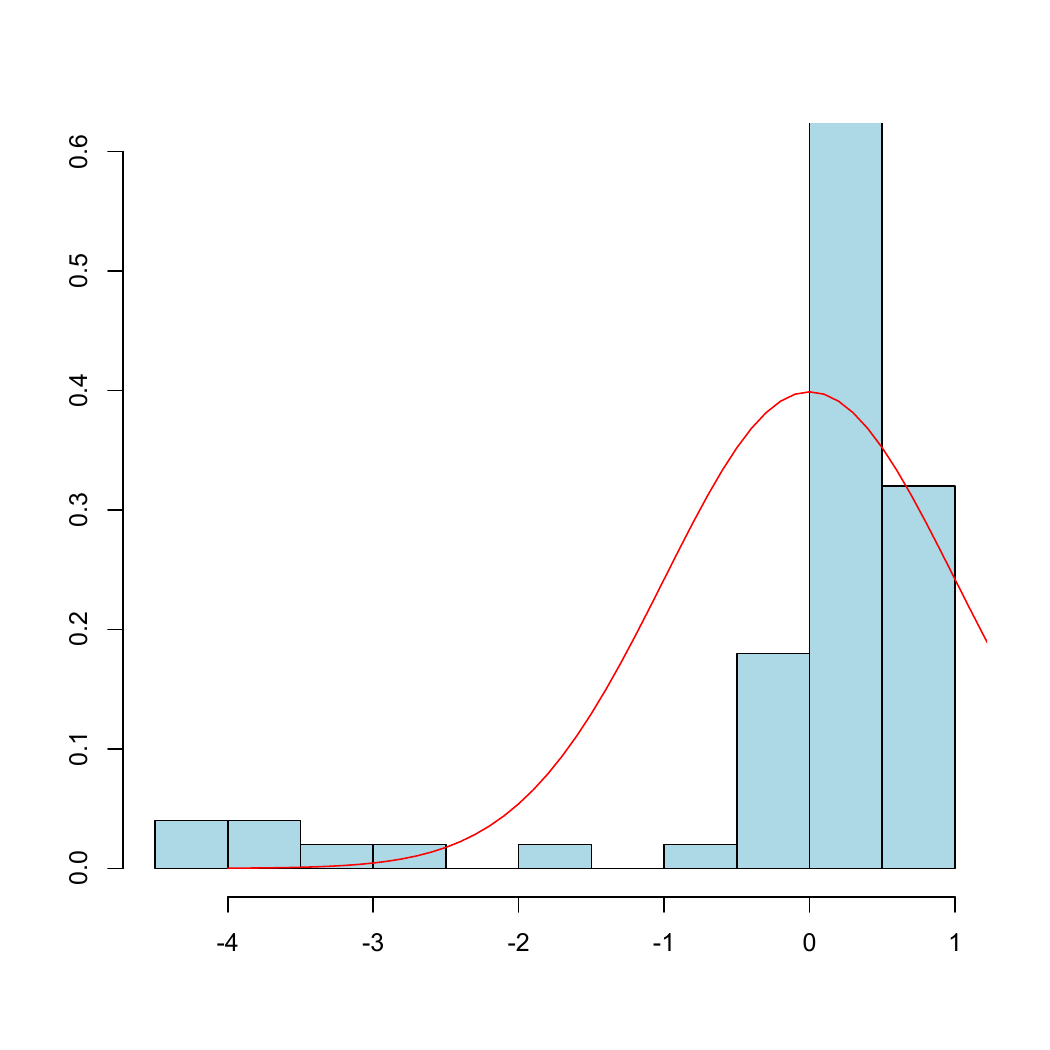}
		\includegraphics[scale=0.2]{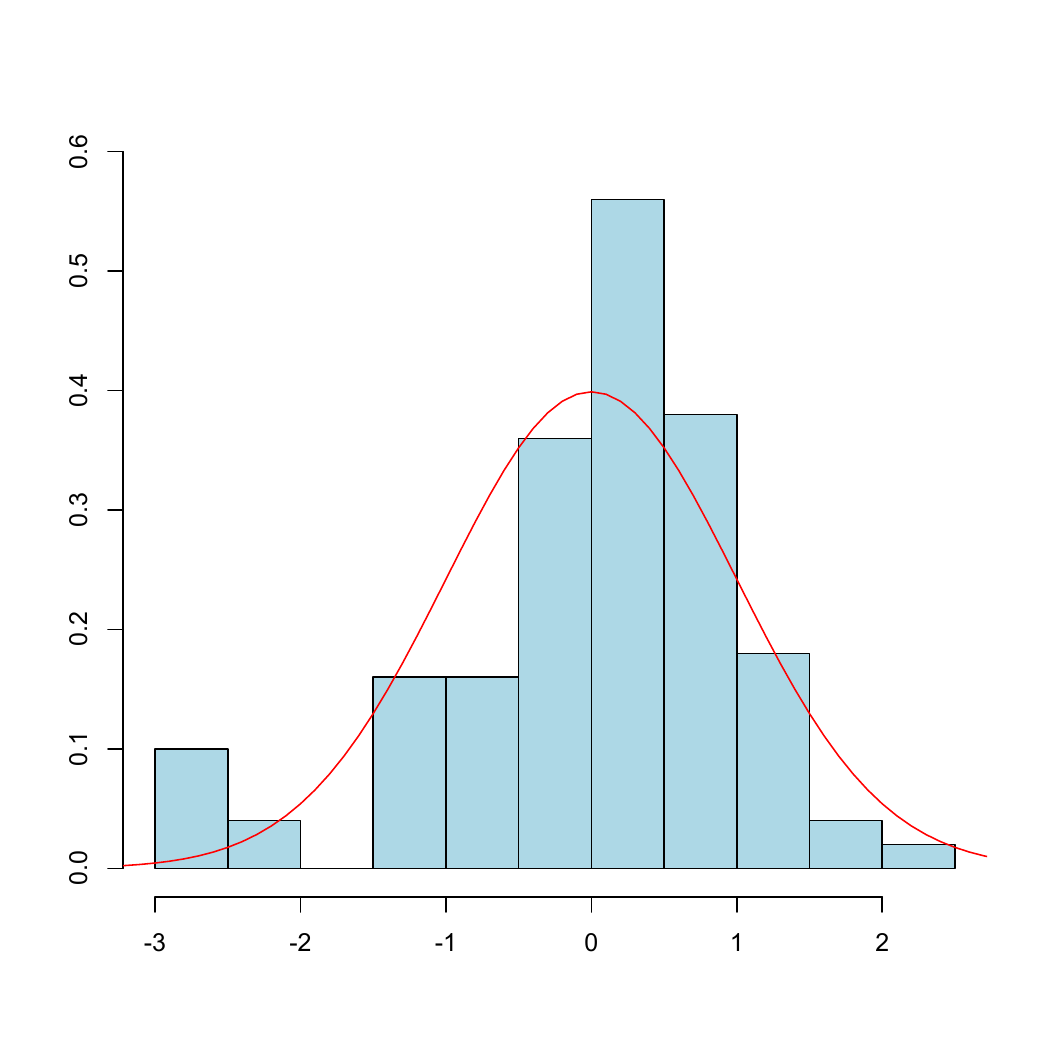} 
		 \includegraphics[scale=0.2]{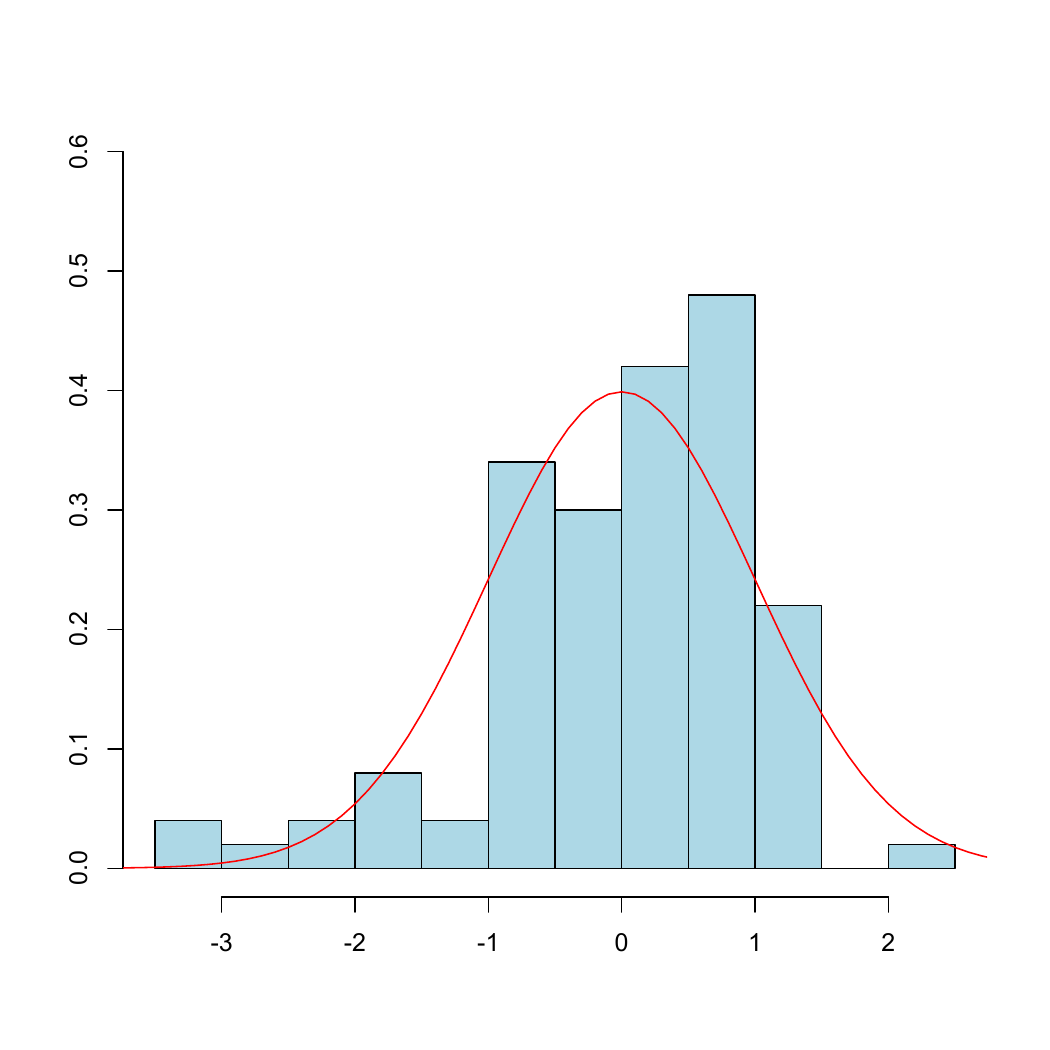}
		\includegraphics[scale=0.2]{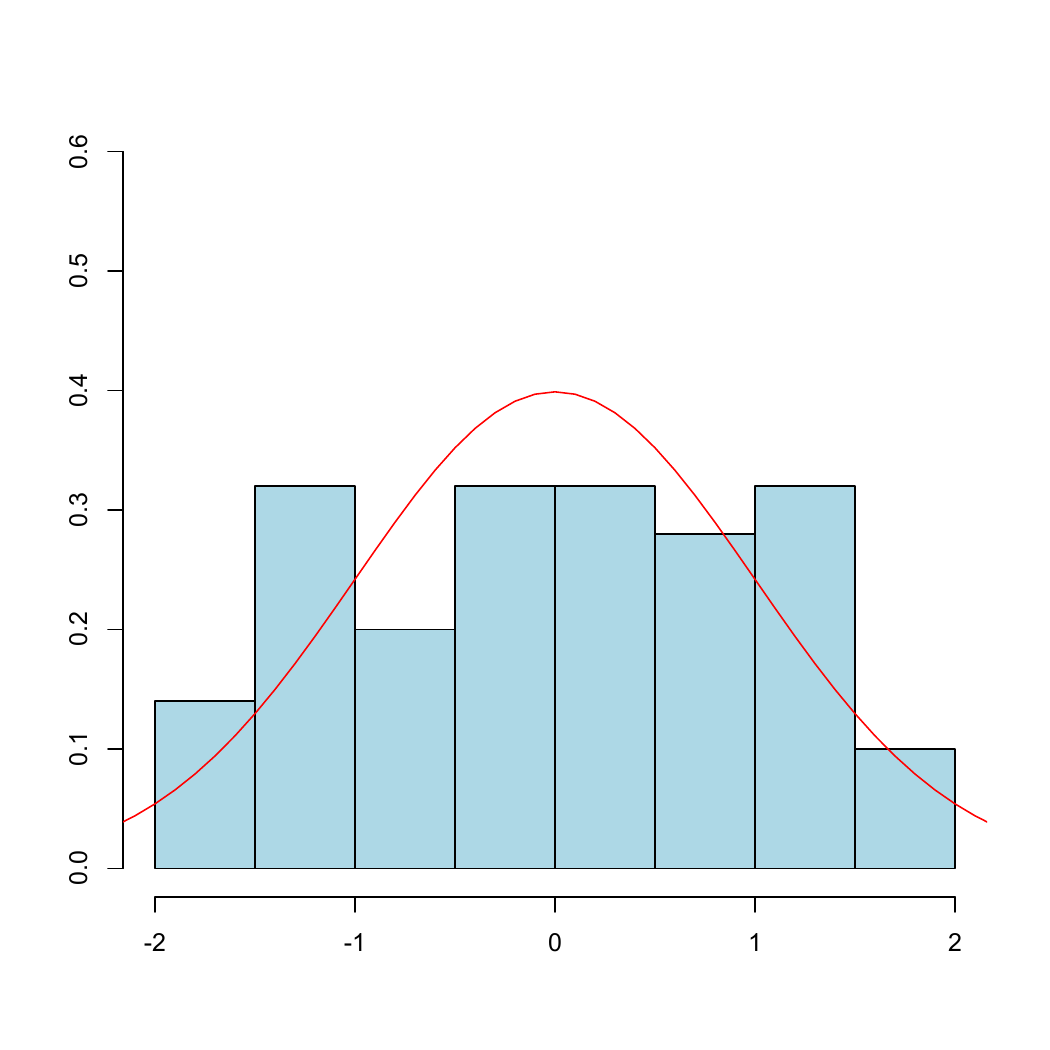} 
		\\
	        \includegraphics[scale=0.2]{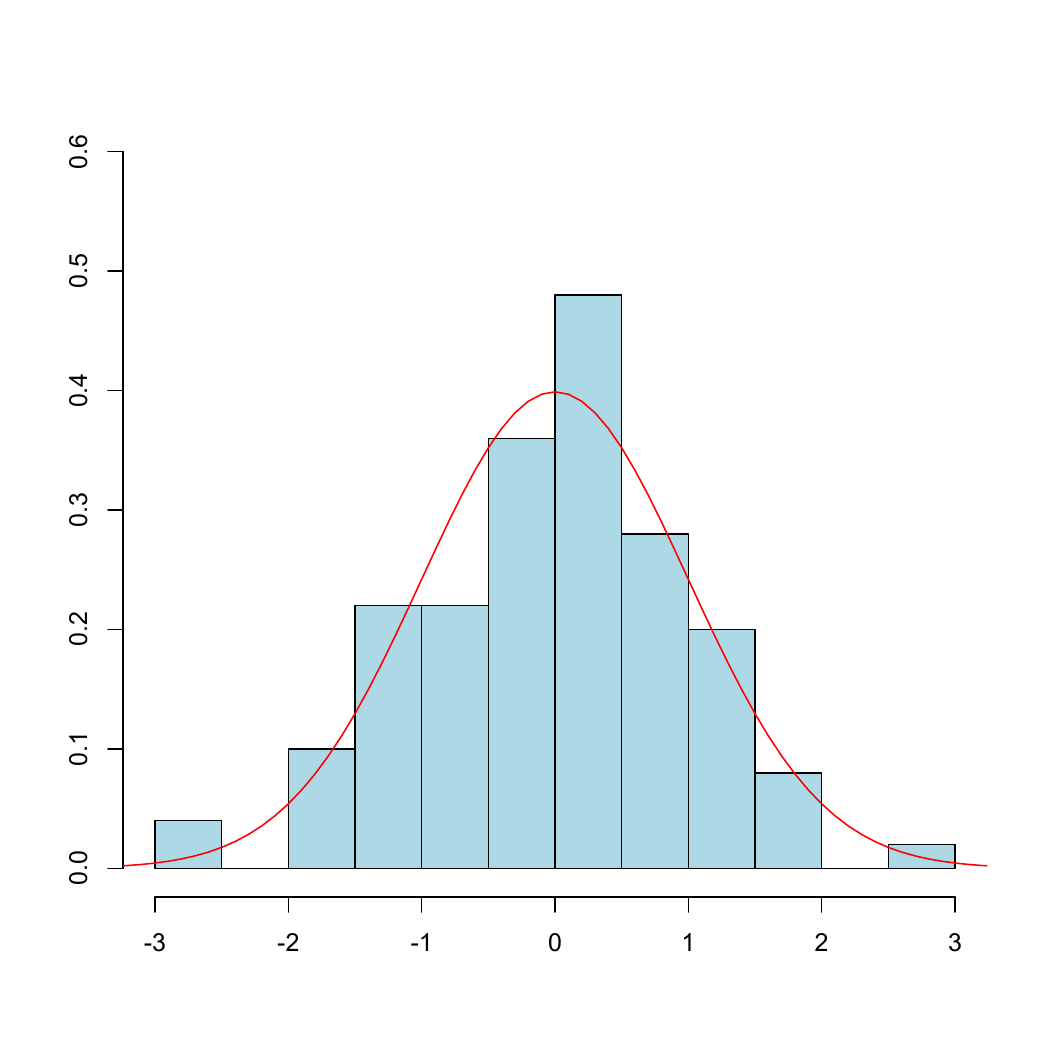}
		\includegraphics[scale=0.2]{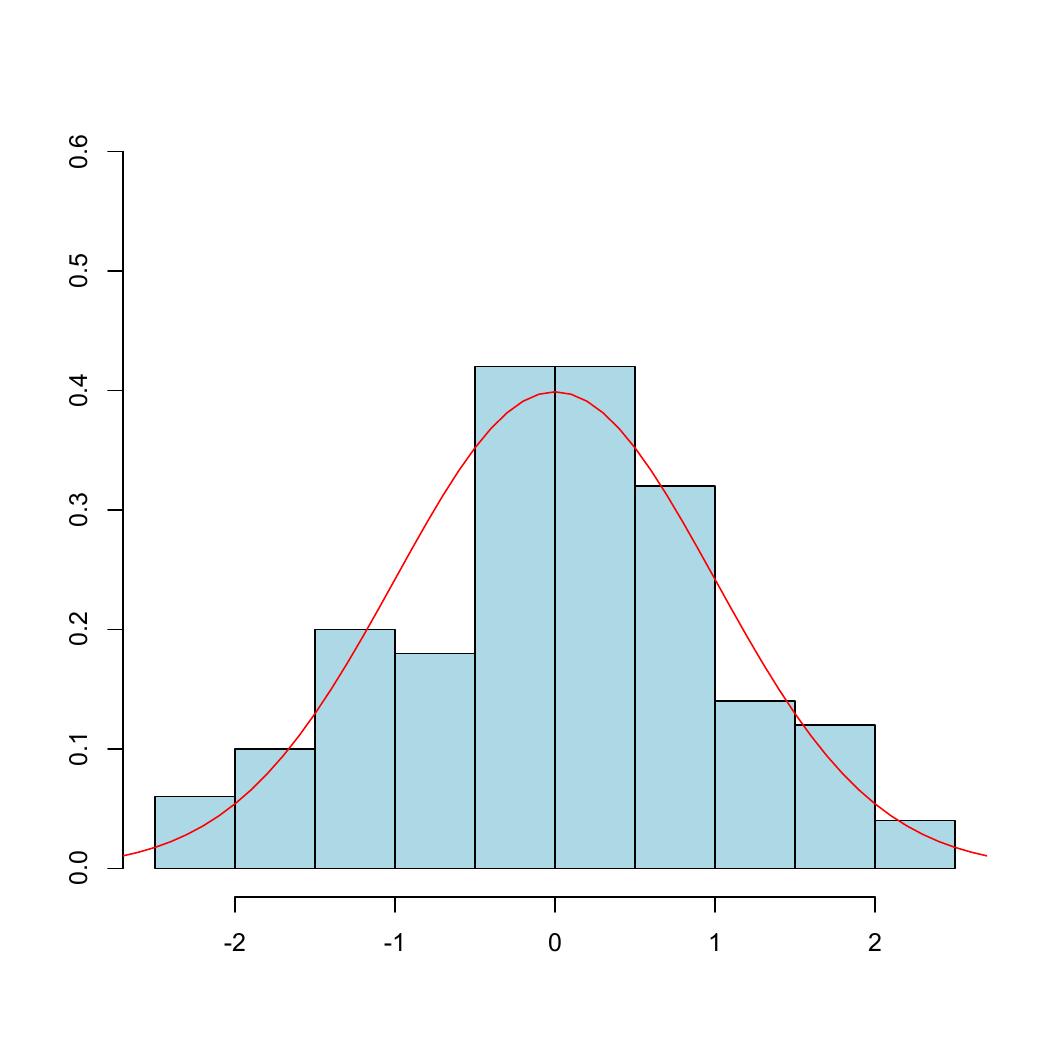} 
		\includegraphics[scale=0.2]{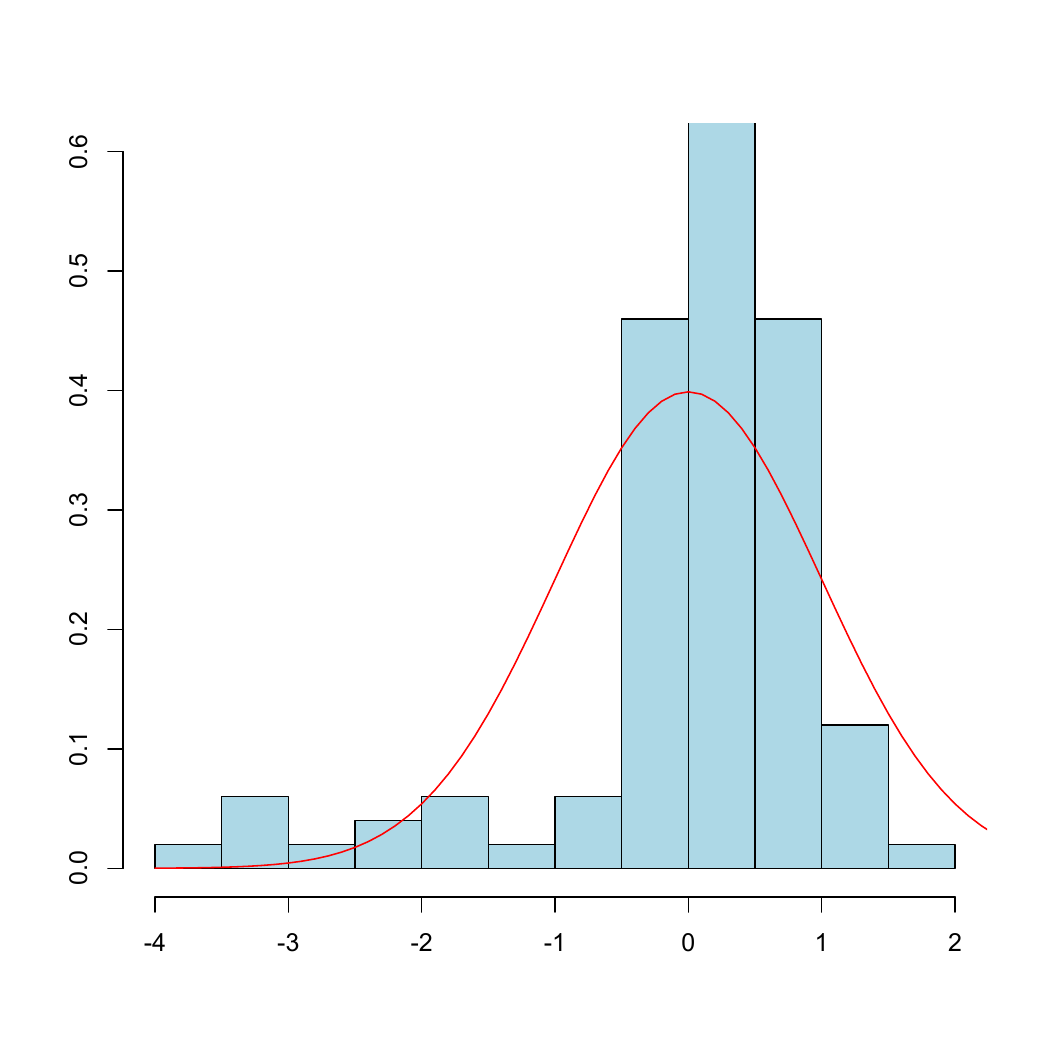}
		\includegraphics[scale=0.2]{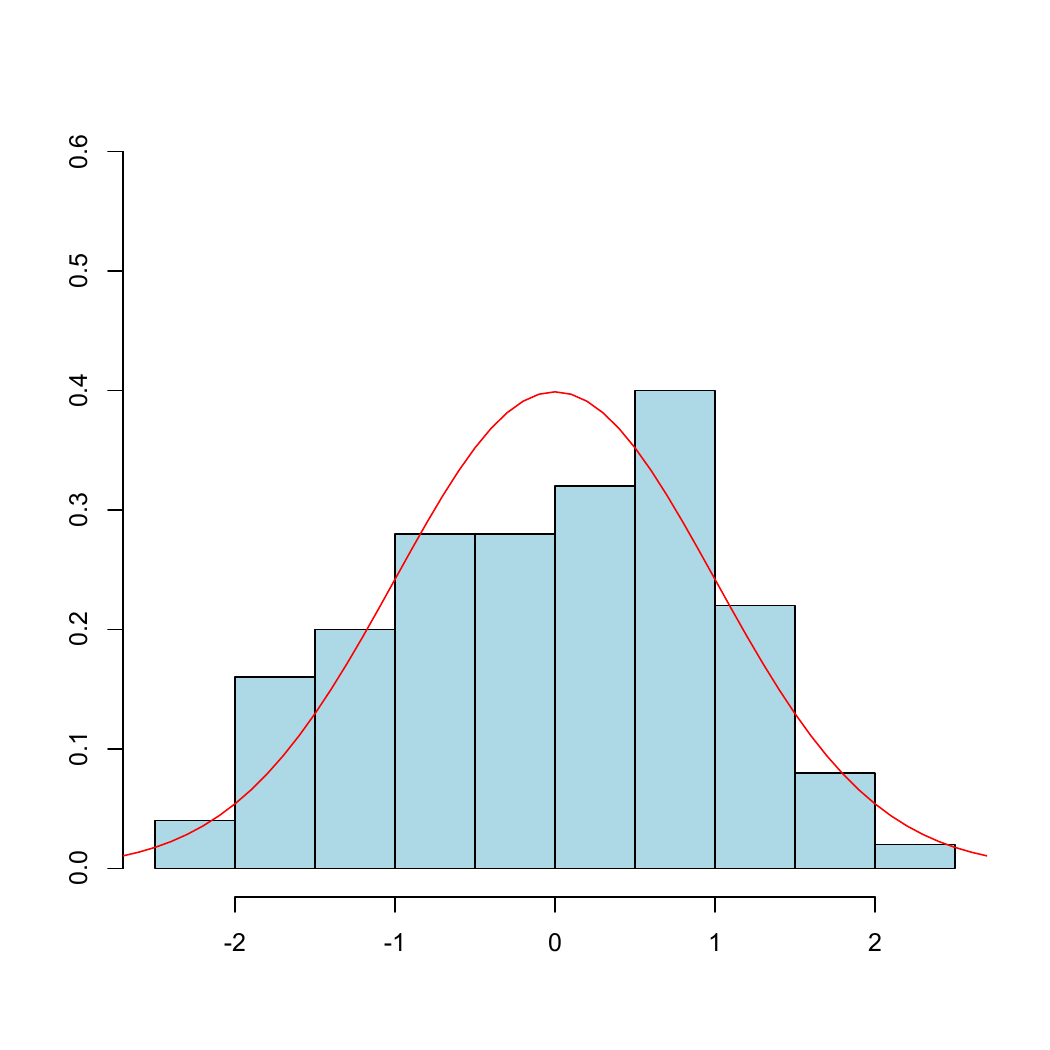}
		\\
		\includegraphics[scale=0.2]{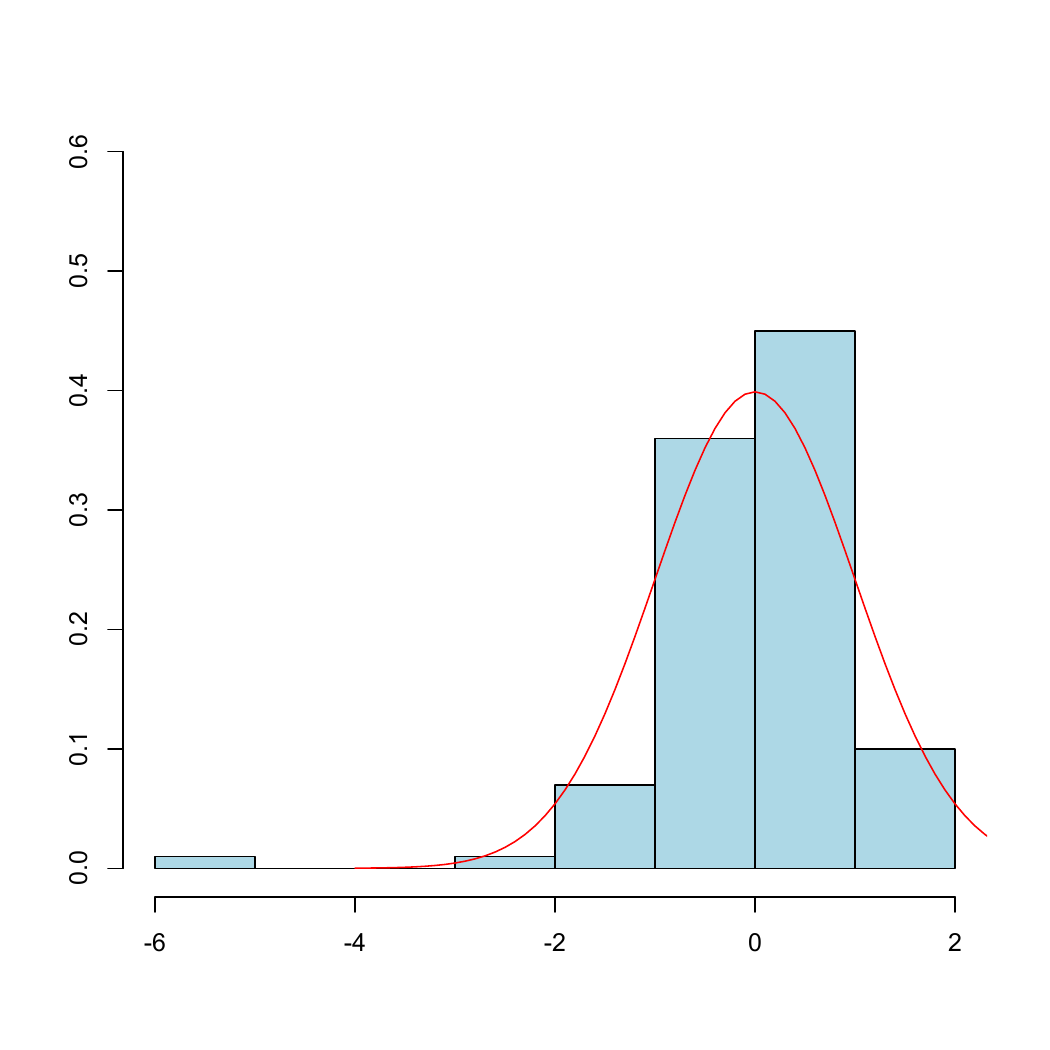}
		\includegraphics[scale=0.2]{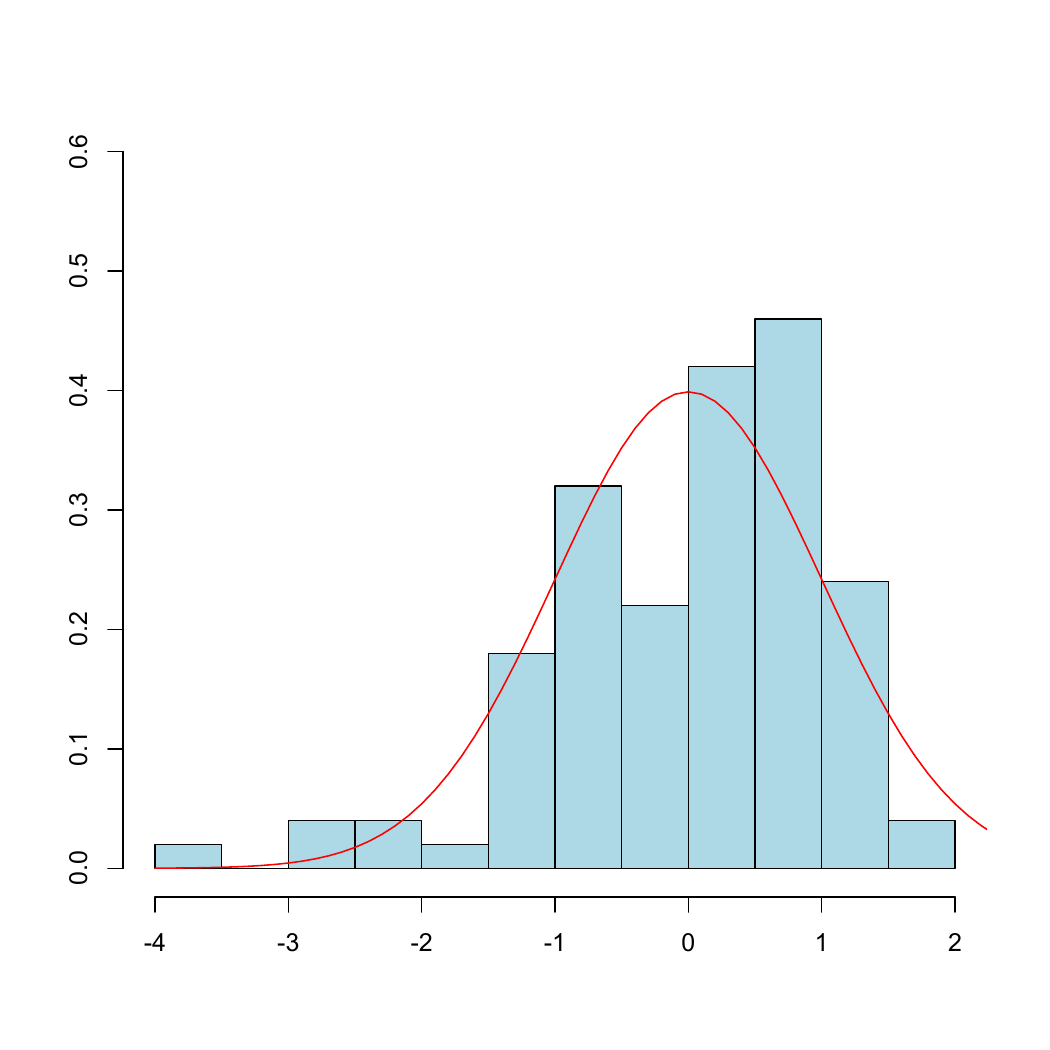} 
		\includegraphics[scale=0.2]{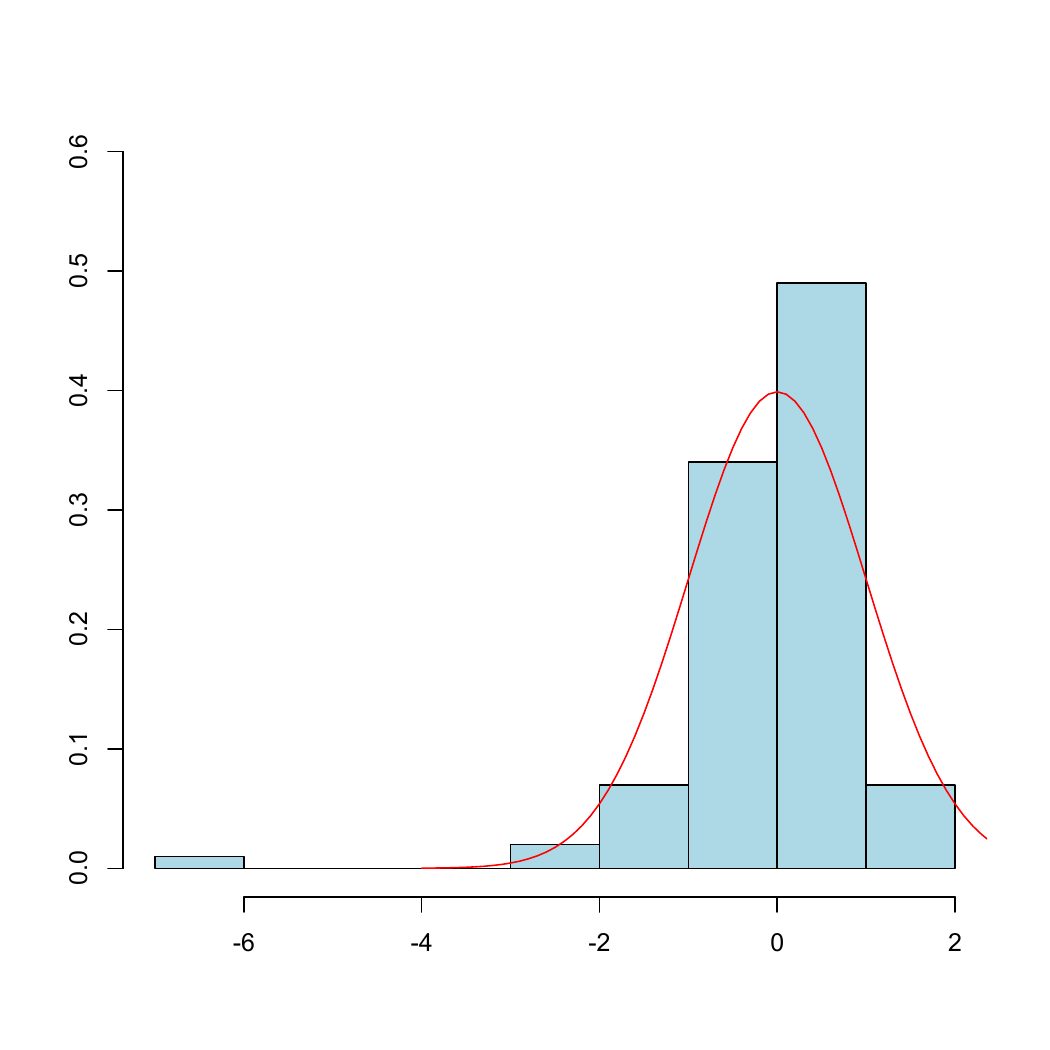}
		\includegraphics[scale=0.2]{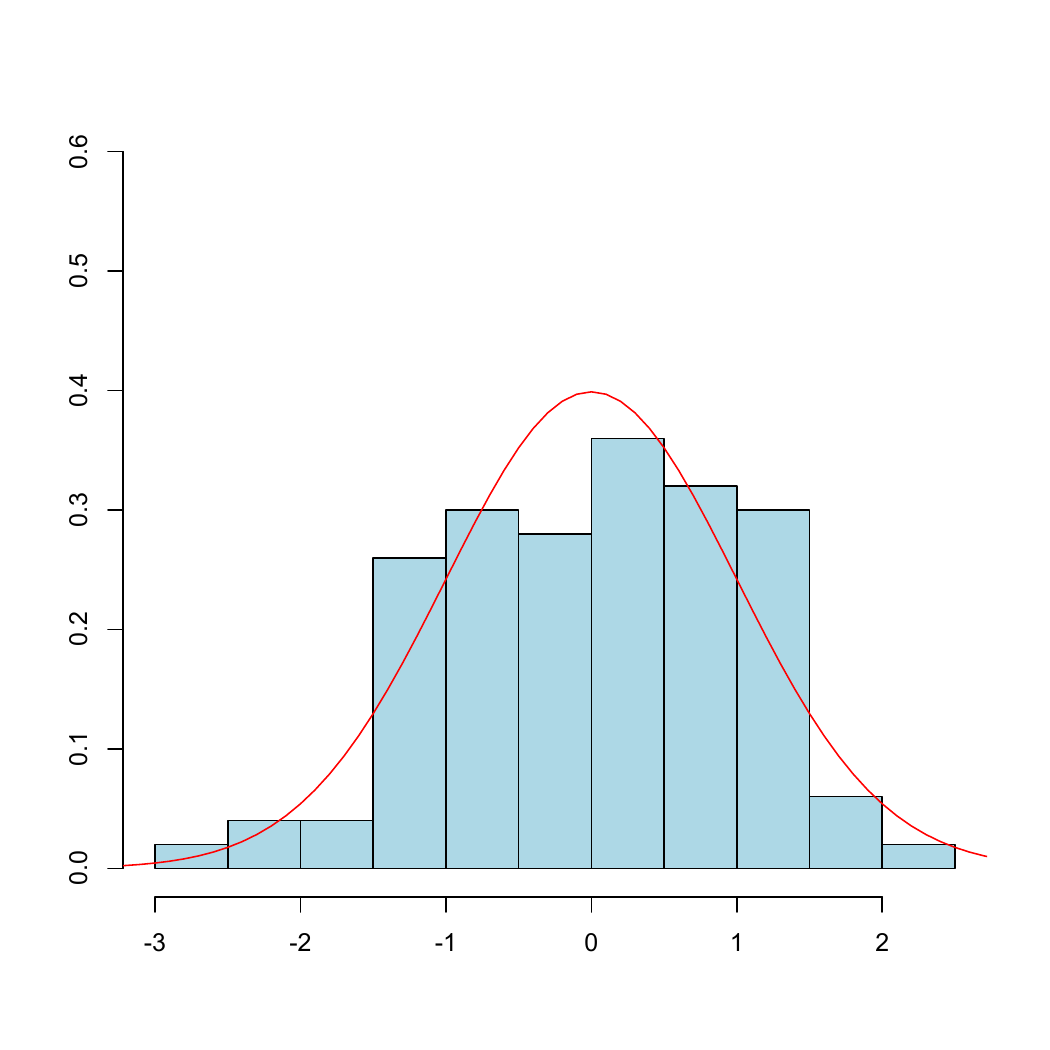} 
	\caption{Histogram of  $(\alpha,\beta)$ $\sqrt{n}$-normalised estimators distribution under model (S3). First row $n=5,000$, second row $n=10,000$, third row $n=20,000$. First column $\hat \alpha_n$, second column $\hat \beta_n$, third column  $\tilde \alpha_n$, fourth column $\tilde \beta_n$.}
		\label{fig:normalityS3}
	\end{center}
\end{figure}
\newpage
	
		\begin{figure}[!h]
	\begin{center}
	        \includegraphics[scale=0.2]{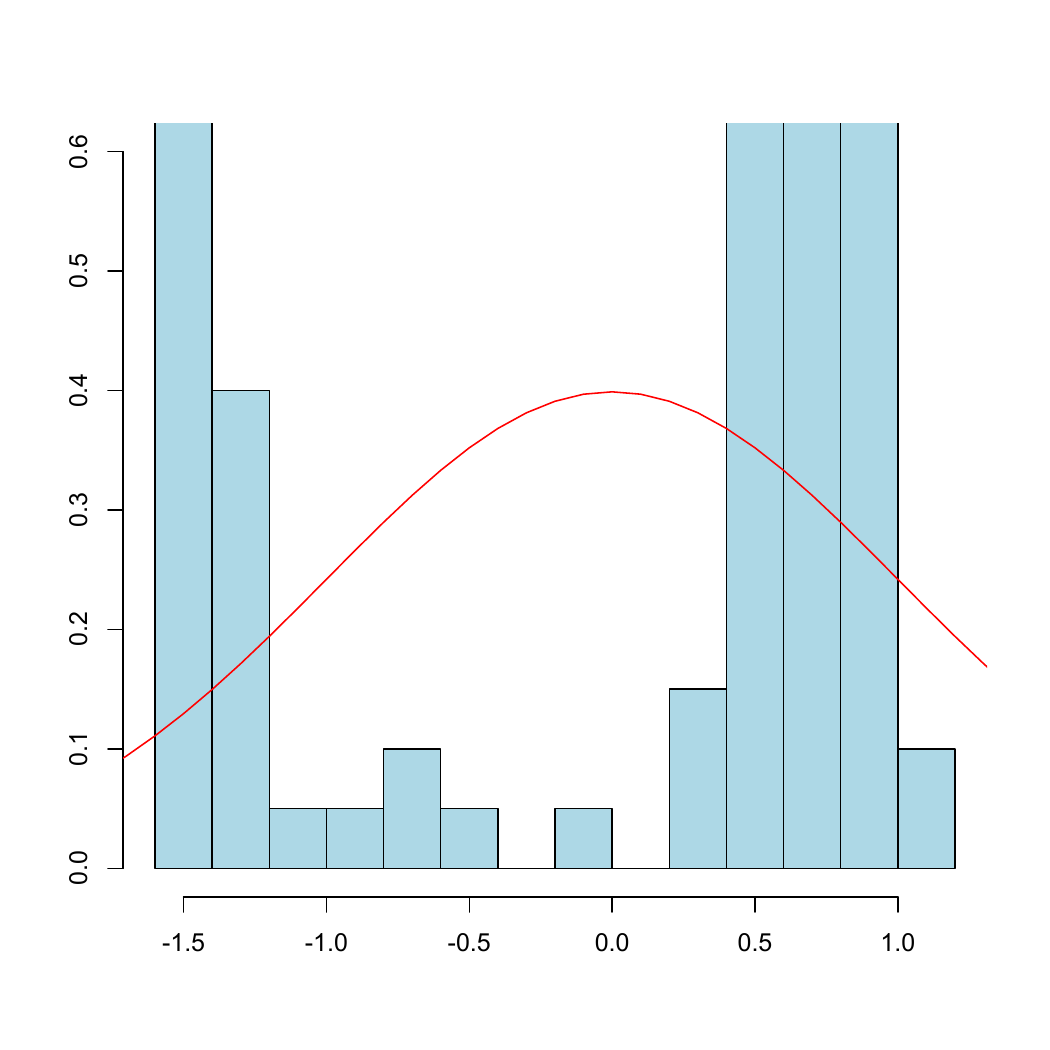}
		\includegraphics[scale=0.2]{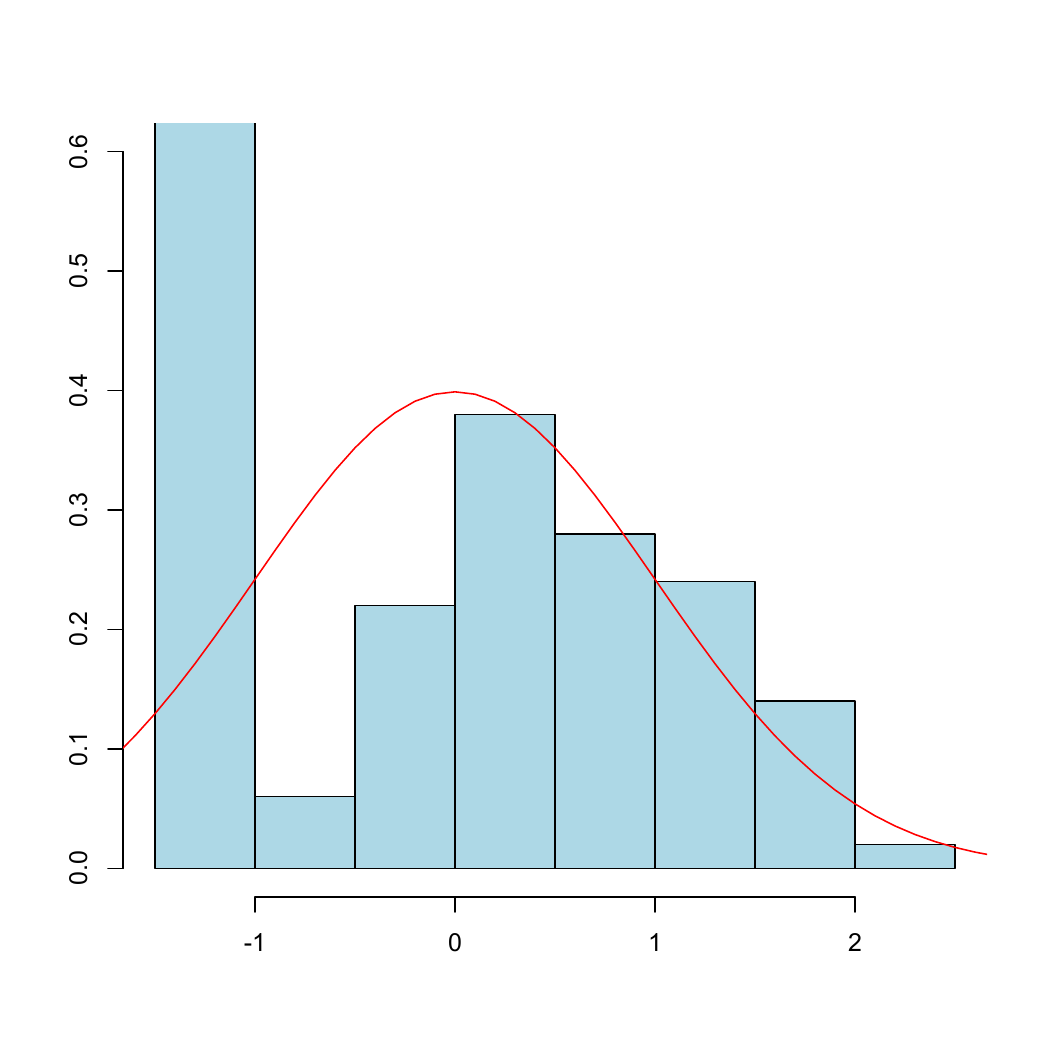} 
		 \includegraphics[scale=0.2]{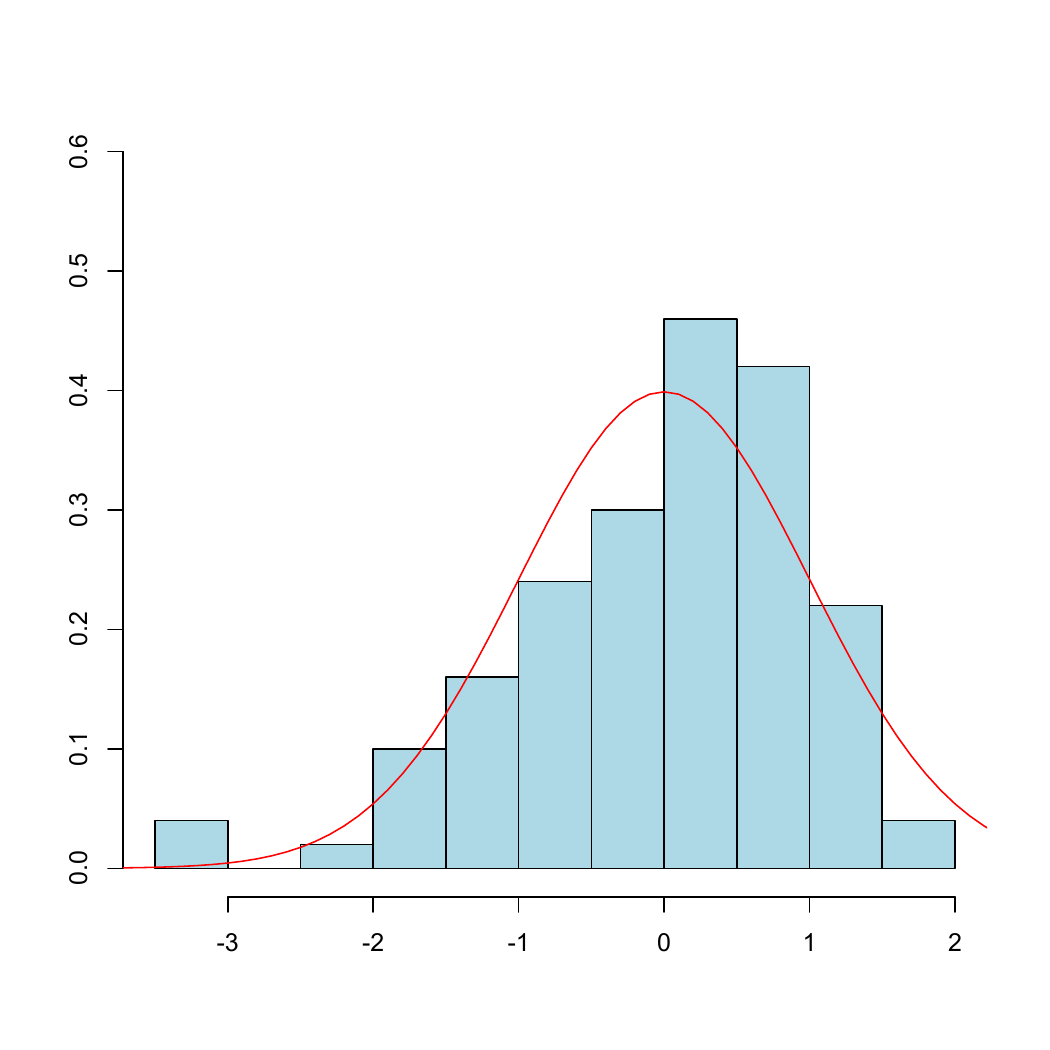}
		\includegraphics[scale=0.2]{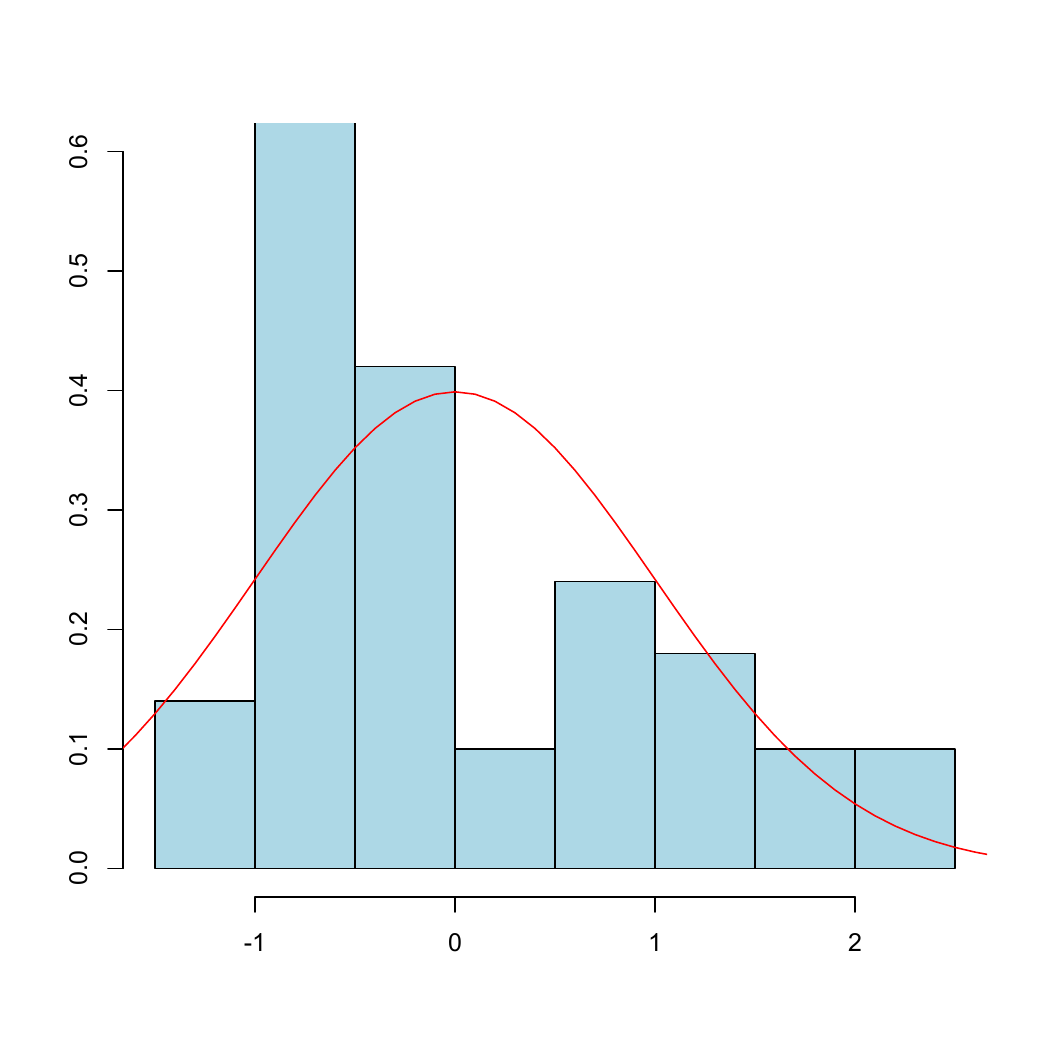} 
		\\
	        \includegraphics[scale=0.2]{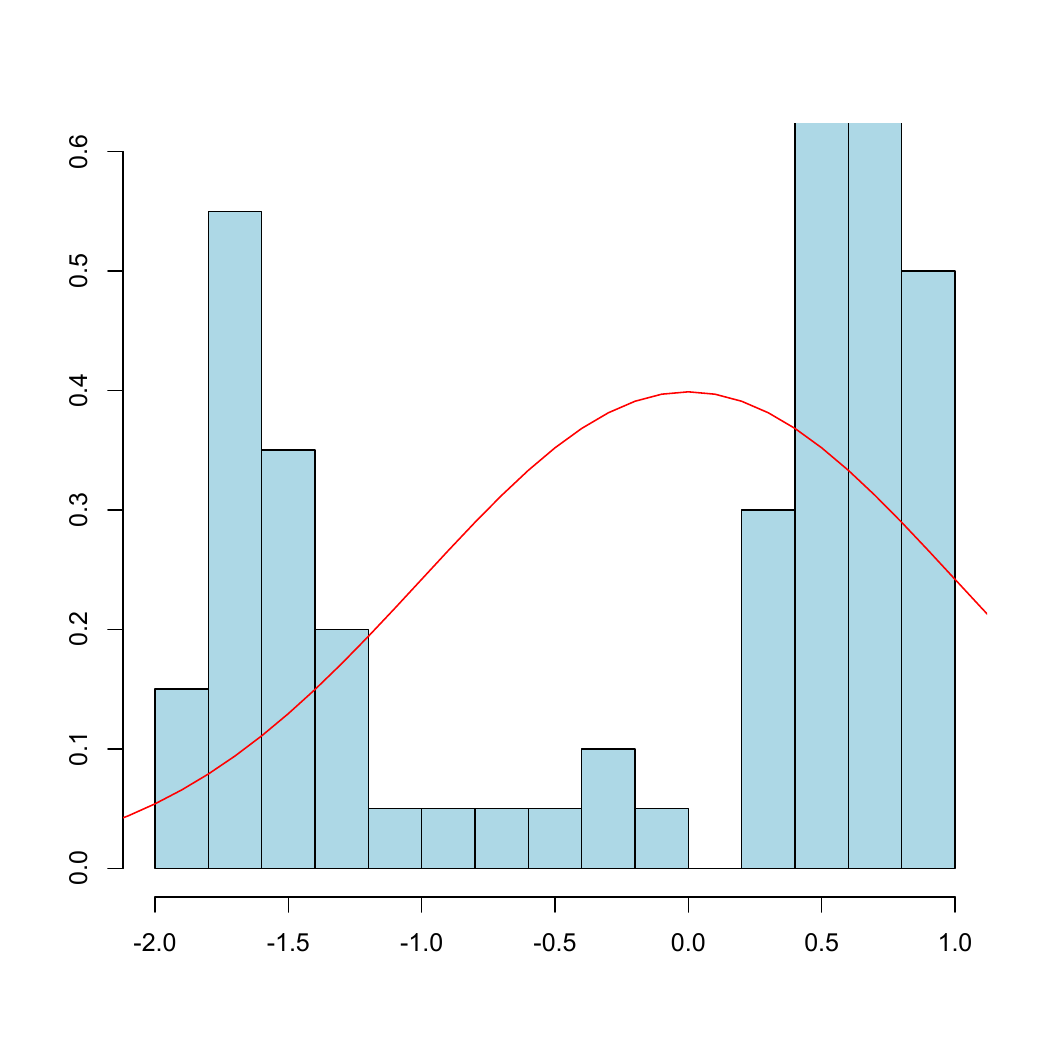}
		\includegraphics[scale=0.2]{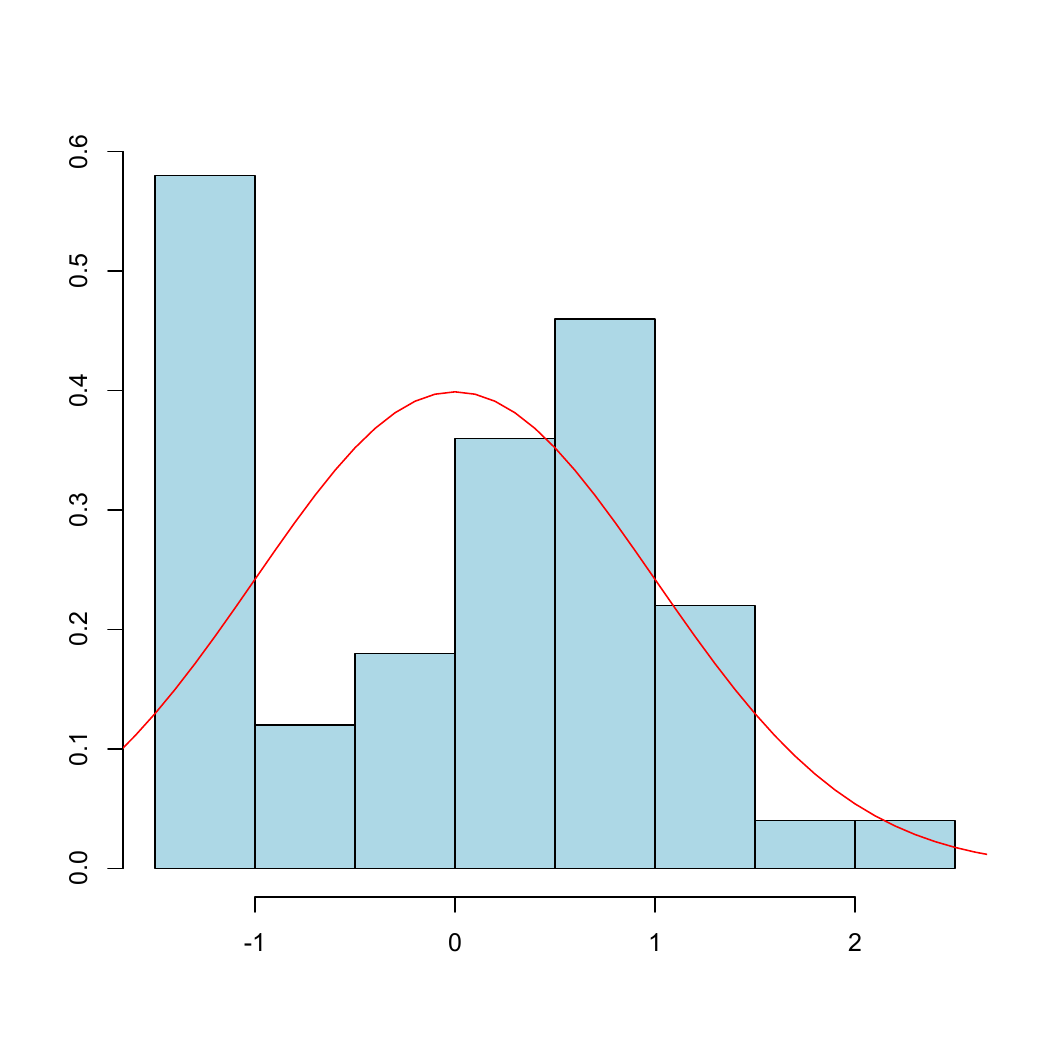} 
		\includegraphics[scale=0.2]{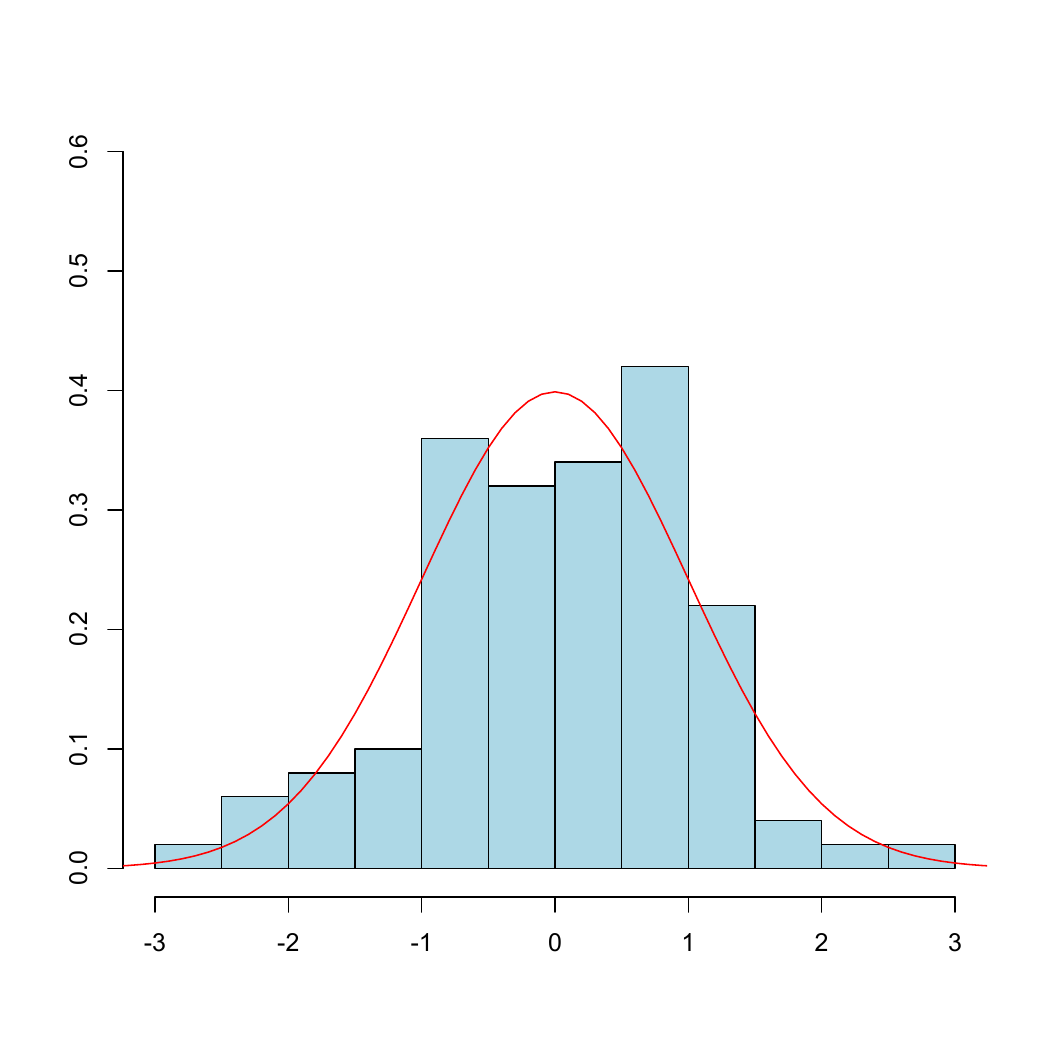}
		\includegraphics[scale=0.2]{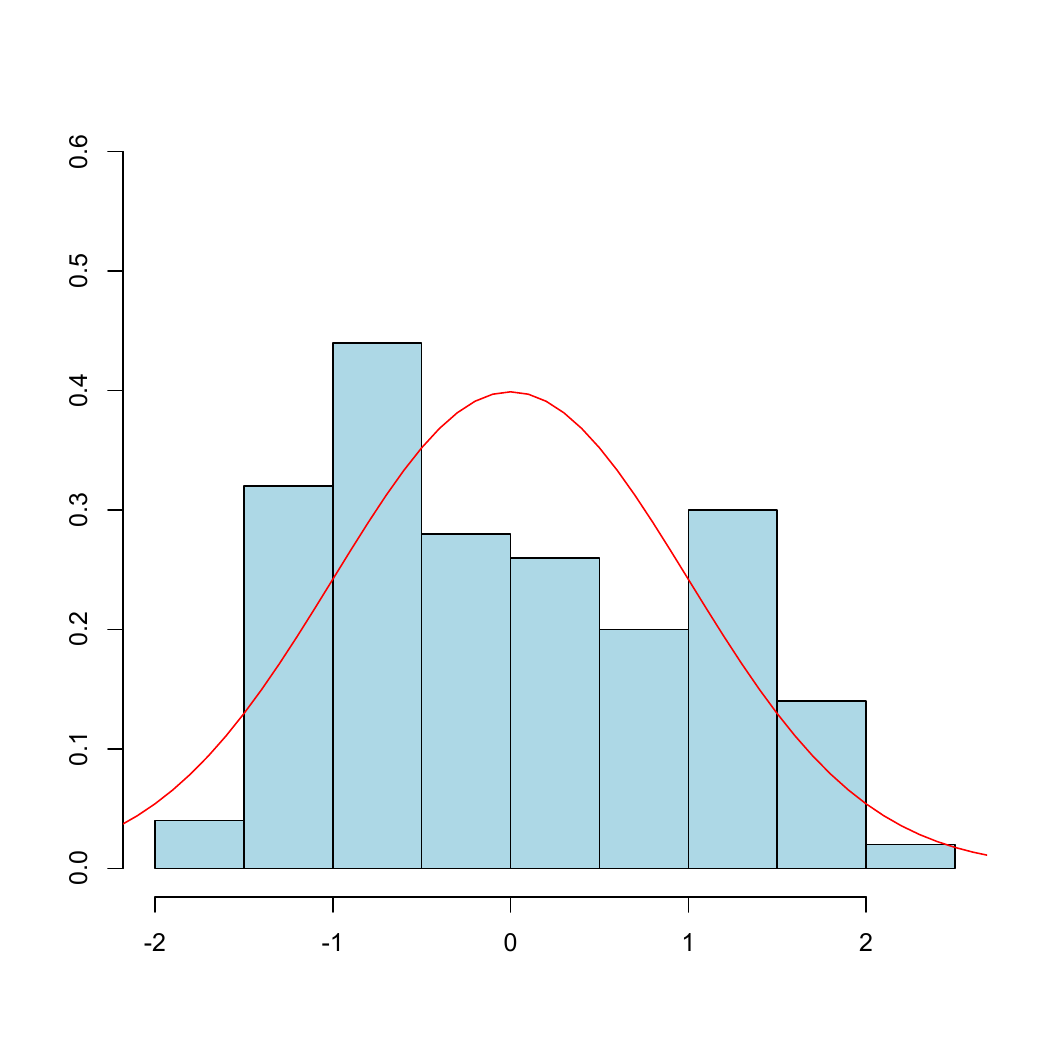}
		\\
		\includegraphics[scale=0.2]{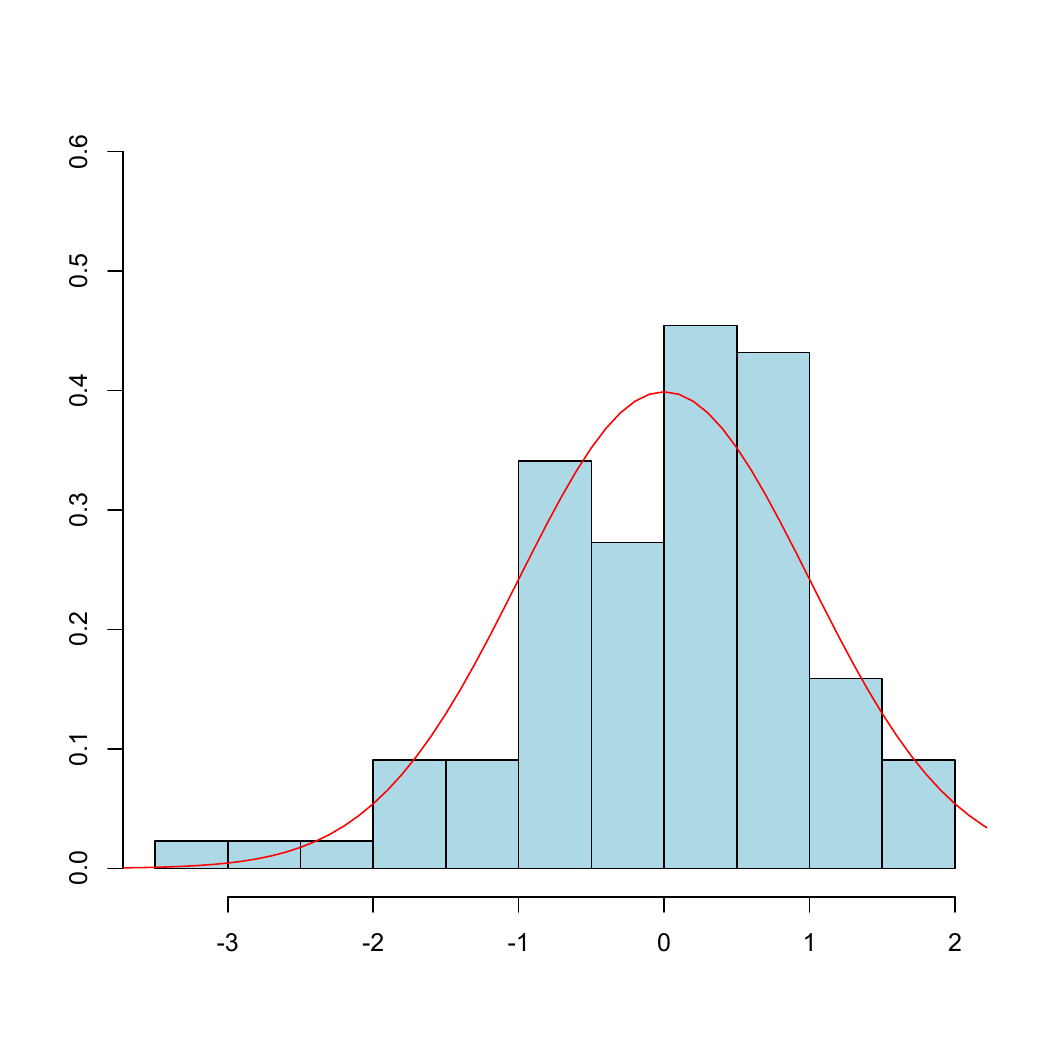}
		\includegraphics[scale=0.2]{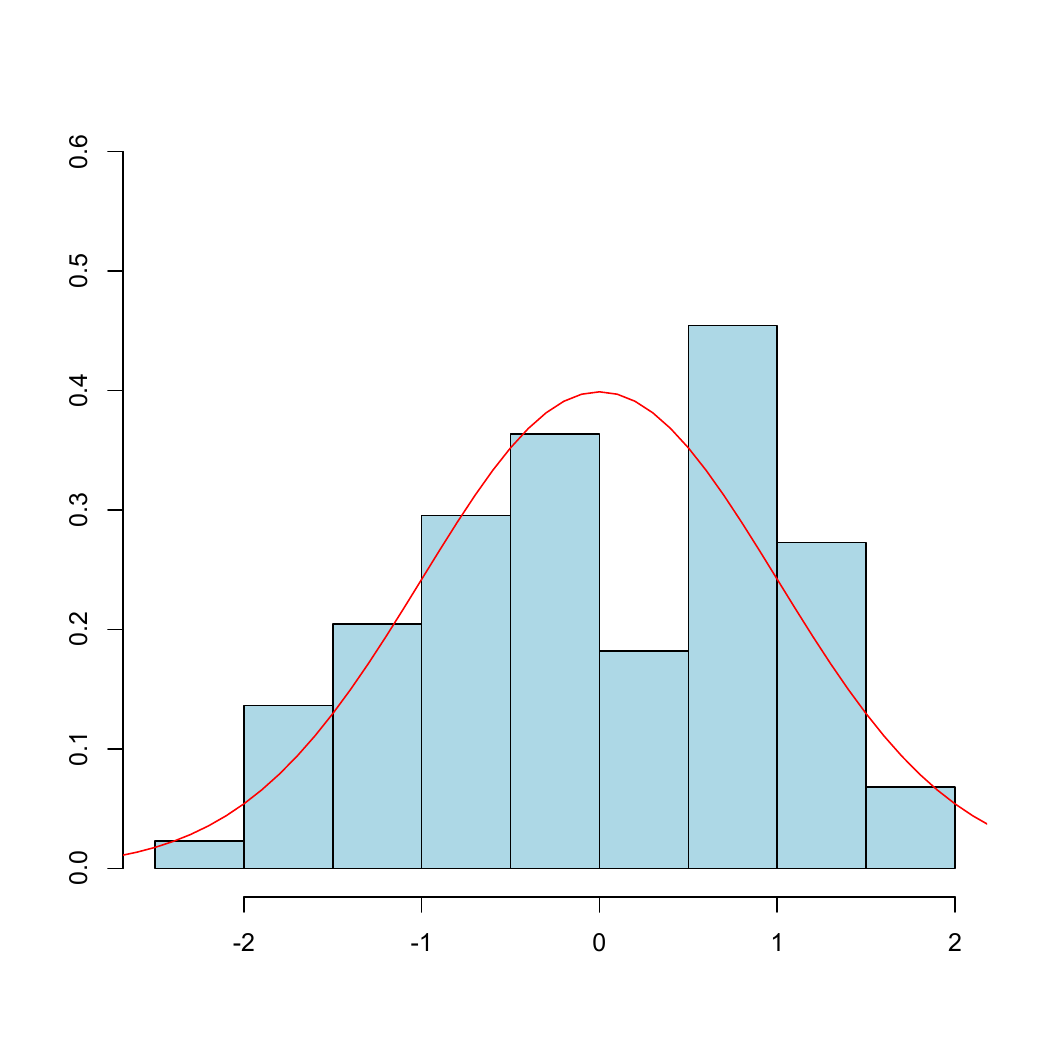} 
		\includegraphics[scale=0.2]{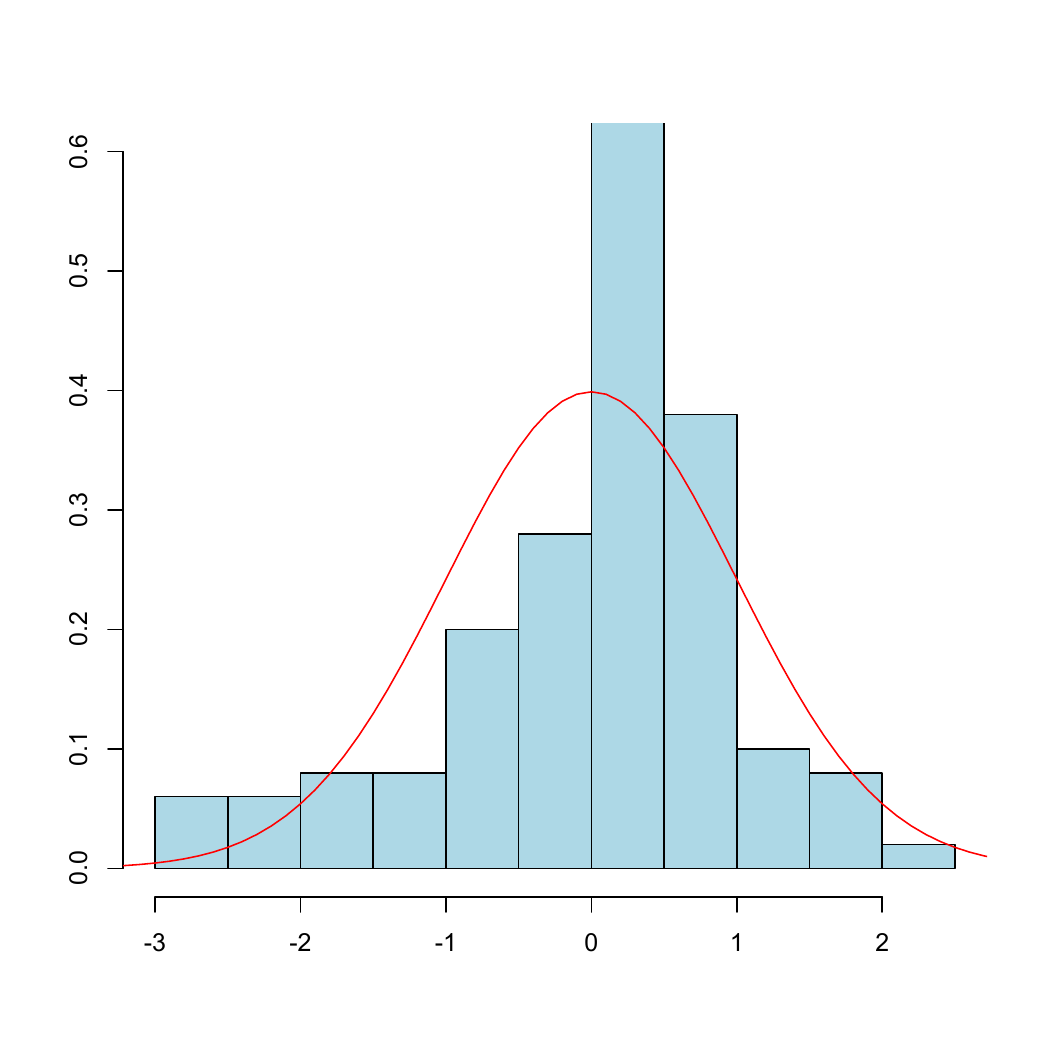}
		\includegraphics[scale=0.2]{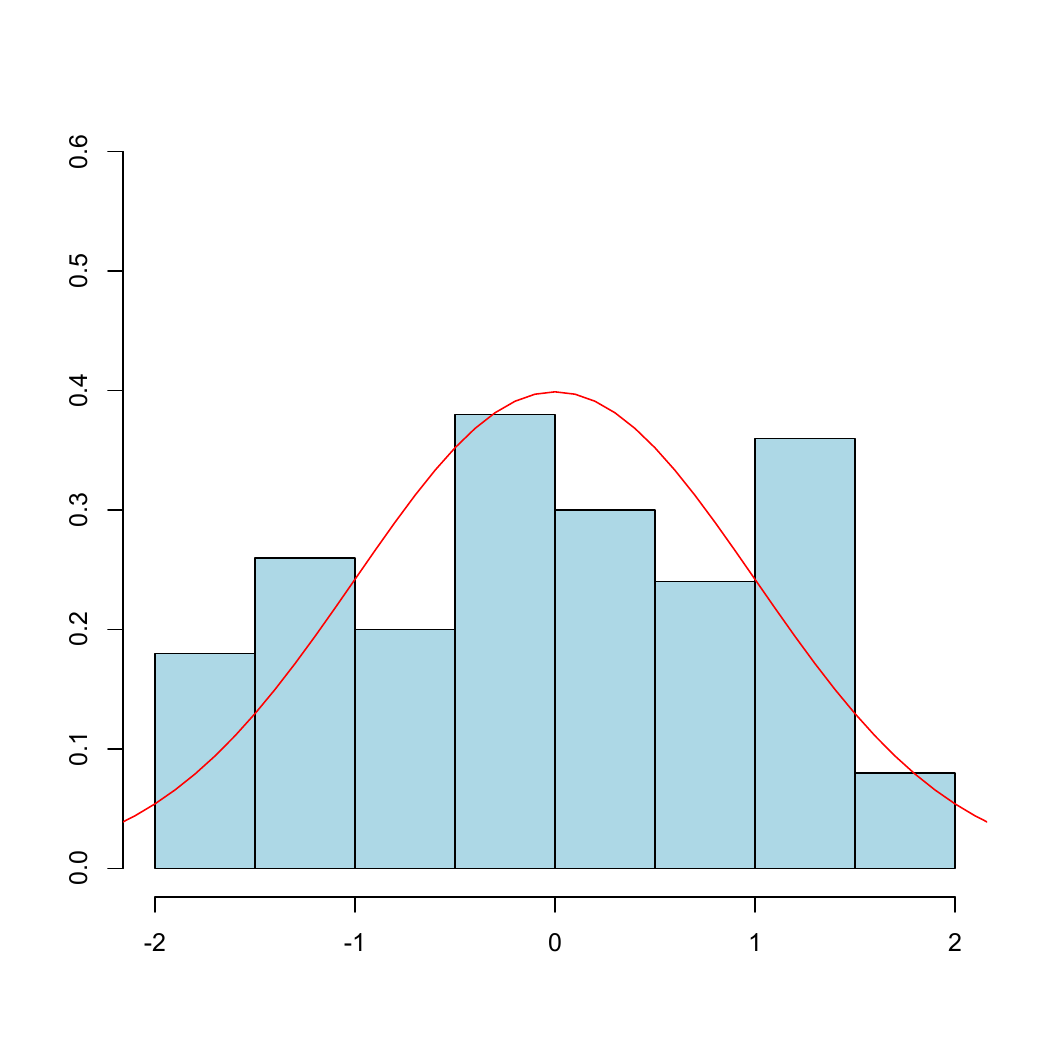} 
	\caption{Histogram of  $(\alpha,\beta)$ $\sqrt{n}$-normalised estimators distribution under model (S4). First row $n=5,000$, second row $n=10,000$, third row $n=20,000$. First column $\hat \alpha_n$, second column $\hat \beta_n$, third column  $\tilde \alpha_n$, fourth column $\tilde \beta_n$.}
		\label{fig:normalityS4}
	\end{center}
\end{figure}

\end{document}